\newcommand{\MM}{\mathbb{M}}
\newcommand{\FF}{\mathbb{F}}
\newcommand{\CC}{\mathbb{C}}
\newcommand{\TT}{\mathbb{T}}
\newcommand{\SSS}{\mathbb{S}}
\newcommand{\EE}{\mathbb{E}}
\newcommand{\ZZ}{\mathbb{Z}}
\newcommand{\mfrak}{\mathfrak{m}}
\newcommand{\pfrak}{\mathfrak{p}}
\newcommand{\qfrak}{\mathfrak{q}}
\newcommand{\vfrak}{\mathfrak{v}}
\newcommand{\gfrak}{\mathfrak{g}}
\newcommand{\pitilde}{\widetilde{\pi}}
\newcommand{\Hcal}{\mathcal{H}}
\newcommand{\Fcal}{\mathcal{F}}
\newcommand{\Gcal}{\mathcal{G}}
\newcommand{\Ecal}{\mathcal{E}}
\DeclareMathOperator{\Hol}{Hol}
\DeclareMathOperator{\Thol}{THol}
\DeclareMathOperator{\GL}{GL}
\DeclareMathOperator{\Mat}{M}
\DeclareMathOperator{\ev}{ev}
\newcommand{\hyp}{\mathcal{D}}
\newcommand{\GLA}{\GL_2(A)}
\newcommand{\lquo}[2]{\displaystyle{\raisebox{-.1em}{$#1$}\left\backslash \raisebox{.1em}{$#2$}\right.}}
\newcommand{\ec}{\mathfrak{e_c}}
\newcommand{\dd}{\bm{d}}
\newtheorem{theorem}{Theorem}[section]
\newtheorem{proposition}[theorem]{Proposition}
\newtheorem{lemma}[theorem]{Lemma}
\newtheorem{corollary}[theorem]{Corollary}
\newtheorem{problem}[theorem]{Problem}
\theoremstyle{remark}
\newtheorem{remark}[theorem]{Remark}
\theoremstyle{definition}
\newtheorem{definition}[theorem]{Definition}
\numberwithin{equation}{section}
\title{Vectorial Drinfeld modular forms over Tate algebras}
\author{F. Pellarin \& R. Perkins}
\email{federico.pellarin@univ-st-etienne.fr}
\email{rudolph.perkins@iwr.uni-heidelberg.de}
\address{Federico Pellarin. 
Institut Camille Jordan, Universit\'e Claude Bernard Lyon 1, 43 boulevard du 11 novembre 1918, 69622 Villeurbanne cedex, France}
\address{Rudolph B. Perkins. 
Institut Camille Jordan, Universit\"at Heidelberg, Im Neuenheimer Feld 368, 69120 Heidelberg, Germany}
\keywords{Function fields, Anderson-Thakur Function, Anderson generating functions, Drinfeld modular forms, Eisenstein series, $L$-series, Tate algebras}
\subjclass{11F52, 11M38}
\begin{document}

\begin{abstract}
In this text, we develop the theory of vectorial modular forms with values in Tate algebras
introduced by the first author, in a very special case (dimension two, for a very particular 
representation of $\Gamma:=\operatorname{GL}_2(\FF_q[\theta])$). Among several results that we prove here, we determine the complete structure
of the modules of these forms, we describe their specializations at roots of unity and their
connection with Drinfeld modular forms for congruence subgroups of $\Gamma$
and we prove that the modules generated by these forms are stable under the actions
of Hecke operators.
\end{abstract}

\maketitle

\tableofcontents

\section{Introduction}

It might be surprising at first sight, to read that 
the origin of the present paper, and of the functions of the title, in fact goes back to the 
article of Kaneko and Koike \cite{KAN&KOI}. The main idea is that the contiguity relations associated to the
{\em differential equations of hypergeometric type} introduced therein are formal
analogues of certain linear, homogeneous twisted Frobenius difference equations satisfied by 
{\em vectorial modular forms with values in Tate algebras.} 

The following very particular classical example can guide the reader in quest of analogies with the
classical world of complex valued elliptic and modular forms. 
Consider indeed the vectorial function $$z\mapsto\binom{\eta_1(z)}{\eta_2(z)},\quad \Re(z)>0$$ associating to $z$ in the complex upper-half plane, the fundamental quasi-periods $\eta_1(z),\eta_2(z)$ of the lattice $z\ZZ+\ZZ$
which is also a vectorial modular form for the group $\operatorname{SL}_2(\ZZ)$. 

The analogue of this map in the settings of first author's paper \cite{FPannals} has a deformation (in our terminology) into a {\em 
weak} vectorial modular form with values in the standard one dimensional affinoid algebra
(also called {\em Tate $\CC_\infty$-algebra})
$\TT=\widehat{\CC_\infty[t]}$, where $\CC_\infty$ is the complete, algebraically closed 
field $\widehat{\FF_q((\theta^{-1}))^{ac}}$ (completion of an algebraic closure of the local field 
$K_\infty:=\FF_q((\theta^{-1}))$) and where the completion of $\CC_\infty[t]$ is taken for the Gauss valuation, trivial over $\FF_q[t]$.

The above deformation is in fact deeply connected with a family of non-singular $2\times 2$ matrices 
$\Psi(z,t)$ with entries in $\TT$, for $z$ a modular parameter, which occur, for $z$ fixed,
as fundamental matrices of certain twisted Frobenius difference linear systems associated to
Anderson's $t$-motives associated to the lattice $Az+A$ with $A:=\FF_q[\theta]$, 
and described in \cite{FPbourbaki}. In the paper \cite{FPcrelles}, all the elements are given to track the above mentioned 
analogy.

In the classical theory of vectorial modular forms for $\operatorname{SL}_2(\ZZ)$ or for its 
congruence subgroups (the reference \cite{MAS} is perhaps the closest one to the scope and spirit of the present paper, but the literature is by far more vast), natural generalizations of Eisenstein series,
Poincar\'e series etc. occur. Similarly, we can easily construct such series in our framework,
but as far as we can see, no classical analogue of Anderson's matrix function $\Phi(z,t)$ has been observed. Since moreover,
these functions have been used in a crucial way in \cite{FPannals} to obtain certain new functional identities
between {\em zeta-values} in $\TT$ (see also the subsequent works \cite{APinv,APTR}), we think that
there are sufficiently many reasons to deepen the study of vectorial modular forms (abridged to 
VMF in all the following) with values in Tate algebra, in the direction suggested by the papers \cite{FPcrelles,FPannals}.

With this paper, we have tried to make the theory of VMF for the representation $\rho_t^*$.
Let $\Omega:=\CC_\infty\setminus K_\infty$ be the Drinfeld upper-half plane as defined in \cite{EGinv}. Explicitly, a {\em VMF of weight $k$ and 
type $m$ for $\rho_t^*$} is a vector holomorphic function
$f:\Omega\rightarrow\TT^2$, in the sense of \cite{FPRP} (see also \S \ref{tvaluedholom}), satisfying the following collection of 
functional equations:
$$f(\gamma(z))=(cz+d)^k\det(\gamma)^{-m-1}\begin{pmatrix} d(t) & -c(t) \\ -b(t) & a(t)\end{pmatrix}\cdot f(z),\quad \gamma=\begin{pmatrix} a & b \\ c & d\end{pmatrix}\in\Gamma,$$
where $\Gamma$ is the Drinfeld modular group $\operatorname{GL}_2(A)$, acting 
over $\Omega$ by homographies in the usual way; the vector function $f$ must also satisfy
a growth condition at infinity (see Definition \ref{VMFdef}). We are going, for fixed $k,m$, to
study the structure of the $\TT$-module of these vector functions.

The paper starts with a review of the basic tools we need to use; the Drinfeld upper-half plane, Tate 
algebras, uniformizers etc. (see \S \ref{uniformizers}). The Section \ref{valuedvectorialmodularforms} starts with the essential definitions of weak 
vectorial modular form and vectorial modular form (in our setting)
and provides the first examples: vectorial Eisenstein series and 
the so-called Anderson generating functions, which however are not,
properly speaking, VMF, but VMF${}^!$, that is, {\em weak} vectorial modular forms
(indeed, the growth condition at infinity of Definition \ref{VMFdef} fails).
After this, we immediately state and prove our structure result (Theorem \ref{structurethm}) which is a refinement of \cite[Proposition 19]{FPannals}.
This result is then applied to the computation of a $\tau$-difference
equation satisfied by the Eisenstein series of weight one (in \S \ref{differenceE1}) and several properties related to evaluation at
$t=\theta^{q^k}$ with $k\geq 0$ an integer, such as {\em Drinfeld quasi-modular forms} as in \cite{BvPfimrn} (in \S \ref{qthpowersevalsect}), and 
Petrov's special families of Drinfeld modular forms with {\em $A$-expansion}.
Other topics explored in this \S \ref{valuedvectorialmodularforms} are: an explicit computation of $A$-expansions of our vectorial Eisenstein series (in Theorem \ref{eisAexp}),
and Ramanujan-Serre derivatives in \S \ref{Ramanujan-Serre}.

In \S \ref{InterpolationofDrinfeldmodularforms}, we focus on the 
intricate interplay between VMF and, via specialization at roots of unity,
Drinfeld modular forms for congruence subgroups of $\Gamma$ with 
prime level. It is precisely at this point that the reader will realize how subtle 
is the condition of regularity at the cusp infinity of Definition \ref{VMFdef}.
Indeed, this condition is the weakest possible, ensuring that, 
given a VMF, the evaluation at roots of unity of its coordinate functions
are Drinfeld modular forms for the group $\Gamma_0(P)$
for some $P$, and with character. 
Interesting specialization
properties are known; for instance, specializing the weight one vectorial Eisenstein series at roots of a prime $P$ of degree $d$
allows to span a canonical $2d$-dimensional sub-vector space of the space
of modular forms for the group $\Gamma_1(P)$ which are also  modular
for $\Gamma_0(P)$ with a character $$\Gamma_0(P)/\Gamma_1(P)\rightarrow\FF_{q^d}^\times,$$ to only mention  one result of this section.

The main result of this Section is thus 
Proposition \ref{eigenevalprop}, immediately yielding the results
of \S \ref{exEWCwt1} on specializations of the vectorial Eisenstein series
of weight $1$ at roots of unity, and various other results also including 
powers of primes levels. In particular, in Theorem \ref{reginftythm} the reader will find various equivalent characterizations of the growth condition 
at infinity, in terms of specializations of VMF${}^!$ at roots of unity. 

Finally, in \S \ref{Heckeoperators} we use various properties obtained 
in \S \ref{valuedvectorialmodularforms} and \S \ref{InterpolationofDrinfeldmodularforms} to analyze the action of Hecke operators on our modules of VMF. Indeed, it is not at all trivial that our condition of regularity at infinity is preserved under action of Hecke operators, but this is so (see \S \ref{regularityhecke}). Thanks to this, examples of
vectorial Hecke eigenforms are given in \S \ref{eigenformsFirstexamples},
notably vectorial Eisenstein series, as detailed in Proposition \ref{EisHeckEFs}.

As a final remark of this introduction, we shall say something about further possible developments of the theory. For example, we should consider the irreducible representation
$$\rho_s:=\rho_{t_1}^*\otimes\cdots\otimes\rho_{t_s}^*$$
for independent variables $t_1,\ldots,t_s$ and the associated VMF
$$\Omega\rightarrow\TT_s^{2^s},$$ with $\TT_s=\widehat{\CC_\infty[t_1,\ldots,t_s]}$.
It would be nice to introduce a suitable condition of regularity at
infinity and generalize our Definition \ref{VMFdef} in such a way that
the results of the present paper could be extended to this natural 
setting. This would be very interesting for the theory of 
$L$-values as in \cite{FPannals}. In particular, we address the following problem, in which it is understood a notion of regularity at infinity which is compatible with the various specializations at $t_1,\ldots,t_s$ roots of unity and good behavior of Hecke operators which is unknown at the moment.
\begin{problem} Let $s$ be congruent to $1$ modulo $q-1$.
Show that the $\TT_s$-module of $VMF$ of weight $1$ and type $0$ for the representation $\rho_s$ is free of rank one, 
generated by the vectorial Eisenstein series of weight one.
\end{problem}

\section{Basic tools}\label{uniformizers}

Let $R$ be a ring. In all the following, we denote by $\operatorname{Mat}_{n\times m}(R)$ the set of 
matrices with $n$ rows and $m$ columns with entries in $R$. We more simply write
$R^l$ for $\operatorname{Mat}_{n\times 1}(R)$.
We also denote by $R^\times$
the group of the invertible elements of $R$. 

\subsection{The Drinfeld upper-half space}
Define the set $\Omega := \CC_\infty \setminus K_\infty$, and equip it with the \emph{imaginary part}
map
\[|z|_\Im := \inf_{\kappa \in K_\infty} |z-\kappa|.\]
%Give some properties of this here?
We give $\Omega$ the structure of a connected rigid-analytic space by equipping it with the affinoid open cover $\Omega = \cup_{n \geq 0} \Omega_n$, where, for each $n \geq 0$,
\[\Omega_n := \{ a \in \Omega : |z| \leq q^n \text{ and } |z|_\Im \geq q^{-n} \}.\]
We refer to $\Omega$ as the \emph{Drinfeld upper-half space}, in analogy with the setting over the classical complex numbers. Indeed, here $\CC_\infty, K_\infty$ and $|\cdot|_{\Im}$ play the role of the complex numbers, real numbers and the classical imaginary part of a complex number, respectively. 

\subsubsection{Action of $\GL_2(A)$ on $\Omega$}
The group $\GL_2(A)$ acts on $\Omega$ via fractional linear transformations $$\left(\begin{matrix} a & b \\ c & d  \end{matrix} \right)z = \frac{az+b}{cz + d}$$ in a way that is compatible with the rigid analytic structure. In other words, the quotient space $\lquo{\GLA}{\Omega}$ inherits the structure of a rigid analytic space.

\subsection{Tate algebra}
\begin{definition}
The \emph{Tate algebra} $\TT$ (standard of dimension one) is the completion of the polynomial ring $\CC_\infty[t]$ equipped with the \emph{Gauss norm}, defined by $\left\|\sum_{i \geq 0} f_i t^i \right\| := \max_{i \geq 0} |f_i|$.
\end{definition}

We can identify $\TT$ with the ring of formal series $f=\sum_{i\geq 0}f_it^i$, with $f_i\in\CC_\infty$
and $f_i\rightarrow0$, so that $\|f\|=\sup_i|f_i|=\max_i|f_i|$. It is isomorphic to the algebra of rigid analytic functions in the variable $z$ over the 
disk $\{z\in\CC_\infty;|z|\leq 1\}$ and contains as a subring the ring $\EE$ of entire functions $\CC_\infty\rightarrow\CC_\infty$. The isomorphism is defined by sending the indeterminate $t$
to the variable $z$.

\subsubsection{Anderson twists}
The space $\TT$ comes equipped with a continuous action of the twisted polynomial ring 
$\CC_\infty\{\tau\}$ determined by the continuous $\FF_q[t]$-linear algebra action $\tau$ given by
\[\tau\left( \sum_{i \geq 0} f_i t^i \right) := \sum_{i \geq 0} f_i^q t^i. \] It is well known that 
$$\TT^{\tau=1}:=\{f\in\TT;\tau(f)=f\}=\FF_q[t],$$ see e. g. Papanikolas' \cite{PAP}.

Viewing $\CC_\infty$ embedded in $\TT$ via $z \mapsto z \cdot 1$, we observe that the action of $\tau$ is an extension of the $q$-power Frobenius of $\CC_\infty$.

\subsubsection{$\TT^{\tau=1}$-valued representations}
We define the $\FF_q$-algebra map $\chi_t: A \rightarrow \FF_q[t] \subset \TT$ via $\theta \mapsto t$.
Occasionally, we shall also write $a(t)$ in place of $\chi_t(a)$.
We observe that the invariant elements $\TT^\tau$ under the action of $\tau$ are exactly those of the ring $\FF_q[t]$, and thus we consider $\chi_t$ as an extension of the notion of Dirichlet character with values in $\TT$, see \cite{APTR} for more on this point of view.

The character $\chi_t$ gives rise to the representation $\rho_t : \GL_2(A) \rightarrow \GL_2(\FF_q[t])$ given by $(a_{ij}) \mapsto (\chi_t(a_{ij}))$, and we write $\rho^*_t : \GL_2(A) \rightarrow \GL_2(\FF_q[t])$ for $\rho_t$ followed by taking inverse and transpose; i.e. $\rho^*_t(a_{ij}) := \rho_t(a_{ij})^{-tr}$, for all $(a_{ij}) \in \GL_2(A)$.
Explicitly:
$$\rho_t^*\left(\begin{matrix} a & b \\ c & d\end{matrix}\right):=\delta^{-1}\left(\begin{matrix} \chi_t(d) & -\chi_t(c) \\ -\chi_t(b) & \chi_t(a)\end{matrix}\right),$$ where $\delta := ad-bc \in \FF_q^\times$ is the determinant of $(\begin{smallmatrix} a & b \\ c & d \end{smallmatrix})$.

Together with the representations $\rho_t$ and $\rho_t^*$, we also need a symbol to designate the trivial 
representation 
$$\boldsymbol{1}:\GLA\rightarrow\{1\}$$
which sends any $\gamma\in\GLA$ to $1\in\TT$.

\subsubsection{$\TT$-valued rigid analytic functions}\label{tvaluedholom}
Let $$u(z) := \frac{1}{\pitilde} \sum_{a \in A} \frac{1}{z-a}$$ be Goss' uniformizer for the cusp at infinity on $\Omega$, where 
%%%New 23.02.16 -R
\begin{align} \label{pitildedef}
\pitilde := -\iota_\theta \theta \prod_{i \geq 1} \left(1 -\frac{\theta}{\theta^{q^i}}\right)^{-1} \end{align}
is the fundamental period of the Carlitz module, and $\iota_\theta^{q-1} = -\theta$ is a fixed element of Carlitz $\theta$-torsion.
We refer to \cite{EGinv} for the basic theory of Drinfeld modular forms.
In particular, we will adopt the same notations and terminology of ibid. The have the quasi-modular $E$ form of weight $2$ type $1$ and depth $1$,
the modular form $h$ of weight $q+1$ and type $1$ (a Poincar\'e series), and the modular form
$g$ of weight $q-1$ and type $0$ (an Eisenstein series). The $\CC_\infty$-algebra
of Drinfeld modular forms is equal to $\CC_\infty[g,h]$ and isomorphic to 
a polynomial ring in two indeterminates with coefficients in $\CC_\infty$.
More precisely, the functions $E,g,h$ have the following properties.

The function $g$ is proportional to an Eisenstein series
(see \cite[Section 2]{GossCM} and \cite[Section (6.4) p. 683]{EGinv}):
$$g(z)=\widetilde{\pi}^{1-q}\sideset{}{'}\sum_{a,b\in A}(az+b)^{1-q}.$$
We recall that $g$ modular of weight $q-1$ means that
for all $\gamma=\begin{pmatrix}a&b\\ c&d\end{pmatrix}\in\Gamma$ and $z\in\Omega$:
$$g(\gamma(z))=(cz+d)^{q-1}g(z).$$
Moreover, there is a locally convergent $u$-expansion whose first terms are: 
\begin{equation}g(z)=1-[1]v-[1]v^{q^2-q+1}+\cdots,\label{explicitg}\end{equation}
where $[1]$ denotes the polynomial $\theta^q-\theta$ and $v=u^{q-1}$ (that is, convergent for $z\in\Omega$ such that $|u|$ is small enough, with $u=u(z)$).

As for the function $E$, it can be defined by the
conditionally convergent series \cite[p. 686]{EGinv}:
$$E(z)=\widetilde{\pi}^{-1}\sum_{a\in A^+}\sum_{b\in A}\frac{a}{az+b},$$
where $A^+$ denotes the subset of monic polynomials of $A$.
It is easy to show that for $\gamma\in\Gamma$ as above,
\begin{equation}E(\gamma(z))=(cz+d)^2\det(\gamma)^{-1}\left(E(z)-\frac{c}{\widetilde 
{\pi}(cz+d)}\right),\label{formE}\end{equation}
with $u$-expansion
\begin{equation}E(z)=u(1+v^{q-1}+\cdots).\label{explicitE}\end{equation}
For the function $h$, finally,
we have, by using 
a variant of Ramanujan's derivative of modular forms:
$$h(z)=\partial g(z)=\widetilde{\pi}^{-1}\frac{dg(z)}{dz}-E(z)g(z),$$ as in \cite[Theorem (9.1) p. 687]{EGinv}.
We verify that for $\gamma\in\Gamma$ as above,
$$h(\gamma(z))=(cz+d)^{q+1}\det(\gamma)^{-1}h(z)$$
and there is a $u$-expansion defined over $A$:
\begin{equation}h(z)=-u(1+v^{q-1}+\cdots).\label{explicith}\end{equation}

We have, for $z\in\Omega$ such that $|u(z)|$ is small enough, series expansions
with coefficients in $A$, locally convergent at $0$
(with $u=u(z)$ and  $v=u^{q-1}$):
\begin{eqnarray}
E(z)&=&u\sum_{n\geq 0}\epsilon_nv^n,\nonumber\\
g(z)&=&\sum_{n\geq 0}\gamma_nv^n,\label{formalseriesEgh}\\
h(z)&=&u\sum_{n\geq 0}\rho_nv^n,\nonumber,
\end{eqnarray}
and $\epsilon_0=\gamma_0=1$, $\rho_0=-1$. See \cite{EGinv} for proofs and more properties.

\begin{definition}
1. A function $f : \Omega \rightarrow \TT$ shall be called \emph{rigid analytic on} $\Omega$ if, for each $n \geq 0$, the restriction of $f$ to $\Omega_n$ is the uniform limit of a sequence of rational functions in $\TT(z)$ with no poles in $\Omega_n$. The set of such functions is denoted $\Hol(\Omega,\TT)$.

2. A rigid analytic function $f:\Omega \rightarrow \TT$ shall be called \emph{tempered (at infinity)} if there exists a non-negative integer $n$ such that $u(z)^n f(z) \rightarrow 0$ as $|z|_\Im \rightarrow \infty$. We write $\Thol(\Omega,\TT)$ for the space of all such functions.

3. For all positive integers $l$, we extend the definitions of rigid analytic and tempered to vector valued functions $\mathcal{F} : \Omega \rightarrow \TT^l$ by requiring that each coordinate function be rigid analytic or tempered, respectively, and we write $\Hol(\Omega, \TT^l)$ and $\Thol(\Omega,\TT^l)$, respectively.
\end{definition}

\subsubsection{Convention}
For a tempered, periodic, rigid analytic function $\Hcal \in \Hol(\Omega, \TT^l)$, we shall write $\Hcal \in \TT((u))^l$ (or $\TT[[u]]^l, u\TT[[u]]^l$, etc\ldots) to mean that there exists a formal series $\mathcal{G} \in \TT((u))^l$ (or $\TT[[u]]^l, u\TT[[u]]^l$, etc\ldots) and $n\geq 0$ with $u^n\Hcal(z) = u^n\Gcal(z)$ for all $|z|_\Im$ big enough (that is to say, for all $z$ such that $|u(z)|$ is small enough). Explicitly, if we write
$$\mathcal{G}=\left(\begin{matrix} g_1 \\ \vdots \\ g_l\end{matrix}\right)$$
with $$g_i=\sum_jg_{i,j}u^i,\quad g_{i,j}\in\TT,$$
then $u(z)^nh_i(z)=u(z)^n\sum_{j}g_{i,j}u(z)^j$ for all $i$ and for all $z\in\Omega$ with $|z|_{\Im}$ big enough.
The representative $\Gcal$ determines $\Hcal$ uniquely since $\Omega$ is a connected rigid analytic space.

%Examples? and obvious extensions of the definitions to vector valued functions.

%%Moved \TT-valued modular forms from here down to section three\ldots

\subsection{Matrix uniformizers and $\chi_t$-quasiperiodicity}
%Define AGFs, etc\ldots Include $\pitilde$ in def of $\ec$\ldots
This section summarizes results following from the work done in \cite{FPRP}.
For all $j \geq 0$, we define $D_0=1$ and  $D_j := (\theta^{q^j} - \theta^{q^{j-1}})(\theta^{q^j} - \theta^{q^{j-2}})\cdots (\theta^{q^j} - \theta)$, i.e. the product of all elements in $A_+$ of degree equal to $j$, see \cite[3.1.6]{Gbook}.
We set, for all $z\in\CC_\infty$, $$\mathfrak{e_c}(z) := \sum_{j \geq 0} D_j^{-1} (\pitilde z)^{q^j}.$$ Note that,
for all $z\in\CC_\infty\setminus A$, $u(z)$ is well-defined, non-zero, and 
\[\mathfrak{e_c}(z) = u(z)^{-1}.\] 
It is easy to prove that $\mathfrak{e_c}$ is equal to $\pitilde$ multiplied by the exponential uniquely associated to the lattice $A \subset \CC_\infty$, that is, for all $z\in\CC_\infty$:
$$\mathfrak{e_c}(z)=\widetilde{\pi}z \sideset{}{'}\prod_{a\in A}\left(1-\frac{z}{a}\right).$$

Recall that the \emph{Anderson generating function for the Carlitz module} is defined for each $z \in \CC_\infty$ as
\begin{align}  \label{AGFdef}
f_t(z) := \sum_{j \geq 0} \mathfrak{e_c}(z \theta^{-j-1}) t^j = \sum_{j \geq 0} \frac{1}{D_j} \tau^j\left(\frac{\pitilde z}{\theta-t}\right) = \sum_{j\geq0}\frac{\widetilde{\pi}^{q^j}z^{q^j}}{(\theta^{q^j}-t)D_j}\end{align}
by $\FF_q[t]$-linearity, we see that
$$\tau^j\left(\frac{\widetilde{\pi}z}{\theta-t}\right)=\frac{\widetilde{\pi}^{q^j}z^{q^j}}{\theta^{q^j}-t}.$$
The second equality is due to the first author and demonstrates that $f_t$ is a $\TT$-valued rigid analytic map on $\CC_\infty$ for any choice of $z\in\CC_\infty$. 

Let
\begin{align} \label{omegadef}
\omega(t) := f_t(1) = \sum_{j\geq0}\frac{\widetilde{\pi}^{q^j}}{(\theta^{q^j}-t) D_j}\end{align}

As used in \cite{FPRP}, the function $z \mapsto \omega(t)^{-1} f_t(z)$ on $\CC_\infty$ gives an $\FF_q$-linear $\TT$-valued rigid analytic extension of the character $\chi_t : A \rightarrow \FF_q[t]$ defined above. Thus it makes sense to write
\[\chi_t(z) := \omega(t)^{-1} f_t(z), \text{ for all } z \in \CC_\infty.\]

We recall that a $\TT$-valued function $\phi:\CC_\infty\rightarrow\TT$ is called \emph{$\EE$-entire}, if for all $z \in \CC_\infty$, the image function $\phi(z) \in \CC_\infty[[t]]$ is entire in the variable $t$ in the sense of 
ibid.; the following more precise result about $\chi_t$ is proved therein.

\begin{lemma} \label{chipropslem}
The map $\chi_t : \CC_\infty \rightarrow \TT$ is in fact $\EE$-entire.
It is the unique $\FF_q$-linear, $\EE$-entire function satisfying $\chi_t(a) = a(t)$ for all $a \in A$, and
\begin{align} \label{chigrowtheq}
\| \chi_t(z) \| \leq \max\{ 1, |\mathfrak{e_c}(z)|^{\frac{1}{q}} \} \ \text{ for all } z \in \CC_\infty.\end{align}
Further, 
\begin{align}
\label{chitevals} \chi_t(z)|_{t = \theta^{q^j}} = z^{q^j}, \quad \forall j \geq 0, \forall z \in \CC_\infty , \text{ and} 
\end{align}
$\chi_t$ satisfies the following $\tau$-difference equation,
\begin{align} \label{taudifferencechi}
\tau(\chi_t)(z) = \chi_t(z) + \frac{1}{u(z)\tau(\omega(t))}.
\end{align}
\end{lemma}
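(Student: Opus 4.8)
The plan is to treat the four assertions—$\EE$-entireness, the uniqueness characterisation with the growth bound \eqref{chigrowtheq}, the evaluation formula \eqref{chitevals}, and the $\tau$-difference equation \eqref{taudifferencechi}—in an order dictated by a single computational engine: a $\tau$-difference identity for the Anderson generating function $f_t$. First I would establish, for every $z\in\CC_\infty$,
\[ \tau(f_t(z)) = (t-\theta)f_t(z) + \ec(z). \]
This is the one genuine computation. Starting from the middle expression in \eqref{AGFdef}, $f_t(z)=\sum_{j\ge0}D_j^{-1}\tau^j(\pitilde z/(\theta-t))$, I apply $\tau$ termwise and use the standard recursion $D_{j+1}=(\theta^{q^{j+1}}-\theta)D_j^{q}$ together with $\tau^{j}(\pitilde z/(\theta-t))=\pitilde^{q^j}z^{q^j}/(\theta^{q^j}-t)$. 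After reindexing one splits each numerator via $\theta^{q^k}-\theta=(\theta^{q^k}-t)+(t-\theta)$: the ``constant'' piece telescopes into $\ec(z)-\pitilde z$ and the remaining piece into $(t-\theta)(f_t(z)-\pitilde z/(\theta-t))$, and the two spurious $\pitilde z$ terms cancel, yielding the identity.

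Specialising at $z=1$ and using $\ec(1)=0$ (which holds because $1\in A$ and $\ec$ vanishes on $A$, as its product expansion shows) gives the functional equation $\tau(\omega)=(t-\theta)\omega$ for $\omega=f_t(1)$. The equation \eqref{taudifferencechi} then follows formally: since $\tau$ is multiplicative on $\TT$ and $\omega\chi_t=f_t$, applying $\tau$ to the identity above and dividing by $\tau(\omega)=(t-\theta)\omega$ produces
\[ \tau(\chi_t)(z)=\chi_t(z)+\frac{\ec(z)}{(t-\theta)\omega}=\chi_t(z)+\frac{1}{u(z)\tau(\omega)}, \]
where I have used $\ec(z)=u(z)^{-1}$ for $z\notin A$.

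For $\EE$-entireness I would first record that $\omega^{-1}$ is entire in $t$: one checks that the entire product $\iota_\theta^{-1}\prod_{i\ge0}(1-t\theta^{-q^i})$ satisfies the reciprocal of the functional equation $\tau(\omega)=(t-\theta)\omega$ (here $\iota_\theta^{q-1}=-\theta$ enters) with the correct leading term, so by uniqueness of such solutions it equals $\omega^{-1}$; in particular $\omega^{-1}$ has simple zeros exactly at $t=\theta^{q^i}$. Since $f_t(z)$, in the variable $t$, is meromorphic with at worst simple poles at those same points, the product $\chi_t(z)=\omega(t)^{-1}f_t(z)$ is entire in $t$, i.e. $\EE$-entire. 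The evaluation \eqref{chitevals} drops out of the same picture: near $t=\theta^{q^j}$ both $f_t(z)$ and $\omega$ blow up through their $j$-th summands, with residues $\pitilde^{q^j}z^{q^j}/D_j$ and $\pitilde^{q^j}/D_j$, so $\chi_t(z)=f_t(z)/\omega$ extends with value $z^{q^j}$ there. The bound \eqref{chigrowtheq} I would obtain by comparing dominant terms: a Newton-polygon estimate on the coefficients $\pitilde^{q^j}/D_j$ shows $\|f_t(z)\|\le|\ec(z)|^{1/q}$ for $|z|$ large, while $\|\omega^{-1}\|\le1$, giving $\|\chi_t(z)\|\le\max\{1,|\ec(z)|^{1/q}\}$.

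Finally, for uniqueness, suppose $\psi$ is a second $\FF_q$-linear, $\EE$-entire function with $\psi(a)=a(t)$ on $A$ and the same bound. Then $\phi:=\chi_t-\psi$ is $\FF_q$-linear, $\EE$-entire, vanishes on all of $A$, and satisfies $\|\phi(z)\|\le\max\{1,|\ec(z)|^{1/q}\}$. Because $\ec$ has simple zeros precisely on $A$, the quotient $\phi/\ec$ is entire in $z$, and the bound gives $\|\phi(z)/\ec(z)\|\le\max\{1,|\ec(z)|^{1/q}\}/|\ec(z)|\to0$ as $|z|\to\infty$; by the ultrametric maximum principle a $\TT$-valued entire function of $z$ vanishing at infinity is identically zero, so $\phi=0$. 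The main obstacle is the existence half—specifically the two analytic inputs of the third paragraph, namely the entireness of $\omega^{-1}$ (equivalently its product formula) and the sharp comparison $\|f_t(z)\|\sim|\ec(z)|^{1/q}$. These are exactly the places where the norm bookkeeping on $\pitilde^{q^j}/D_j$ must be done with care, and they constitute the technical core carried out in \cite{FPRP}.
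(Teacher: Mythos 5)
First, a framing remark: the paper offers no proof of this lemma at all --- it is quoted verbatim from \cite{FPRP} (``the following more precise result about $\chi_t$ is proved therein''), so your argument has to stand on its own merits. Much of it does. The $\tau$-difference identity $\tau(f_t(z))=(t-\theta)f_t(z)+\ec(z)$ is correct, and your telescoping derivation via $D_{j+1}=(\theta^{q^{j+1}}-\theta)D_j^q$ works exactly as described; so do its consequences: $\tau(\omega)=(t-\theta)\omega$, equation \eqref{taudifferencechi}, the $\EE$-entireness of $\chi_t$ (the simple zeros of $\omega^{-1}$ at $t=\theta^{q^i}$ cancelling the simple poles of $f_t$), and \eqref{chitevals} as a ratio of residues.

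The genuine gap sits exactly where you locate the ``technical core'': the bound \eqref{chigrowtheq}. Your intermediate claim $\|f_t(z)\|\le|\ec(z)|^{1/q}$ is false, and not only for small $|z|$. At $z=1$ one has $\ec(1)=0$ while $\|f_t(1)\|=\|\omega\|=q^{1/(q-1)}>1$; and for $z=a+\epsilon$ with $a\in A_+\setminus\theta A$ of large degree and $|\epsilon|$ tiny, $|\ec(z)|=|\ec(\epsilon)|$ is arbitrarily small, while the coefficient $\ec(z/\theta)$ of $t^0$ in $f_t(z)$ (see \eqref{AGFdef}) has absolute value exactly $q^{1/(q-1)}$, being a small perturbation of the nonzero Carlitz $\theta$-torsion point $\ec(a/\theta)$. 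So the claimed inequality fails at points of arbitrarily large modulus. Moreover ``$\|\omega^{-1}\|\le1$'' is too lossy to produce the constant $1$ in $\max\{1,\cdot\}$: near lattice points it would only give $\|\chi_t(z)\|\le q^{1/(q-1)}$. The statement actually hinges on the exact value $\|\omega^{-1}\|=q^{-1/(q-1)}$ together with a dichotomy applied to each coefficient $\ec(w_j)$, $w_j:=z\theta^{-j-1}$, of $f_t(z)$ (note $\|f_t(z)\|=\max_j|\ec(w_j)|$ and the Gauss norm is multiplicative): either $|\ec(w_j)|\le q^{1/(q-1)}$, in which case $\|\omega^{-1}\|\,|\ec(w_j)|\le1$; or $|\ec(w_j)|>q^{1/(q-1)}$, in which case $\ec(\theta w)=\theta\ec(w)+\ec(w)^q$ forces, ultrametrically and by induction, $|\ec(z)|=|\ec(w_j)|^{q^{j+1}}>1$, whence $|\ec(w_j)|\le|\ec(z)|^{1/q}$. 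As written, your norm bookkeeping does not deliver \eqref{chigrowtheq} for all $z\in\CC_\infty$, which is what the lemma asserts.

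A secondary caveat: your uniqueness argument divides $\phi=\chi_t-\psi$ by $\ec$ and runs a Liouville-type argument in the variable $z$, which requires $\phi$ to be rigid analytic in $z$. Under the paper's literal definition of ``$\EE$-entire'' (a condition on the $t$-expansion, pointwise in $z$), this is not among the hypotheses; an $\FF_q$-linear, Hamel-basis perturbation of $\chi_t$ vanishing on $A$ and taking values in $\FF_q$ satisfies every stated hypothesis and differs from $\chi_t$. So your proof implicitly strengthens the hypothesis to include $z$-analyticity --- the reading under which your division-plus-growth argument is correct --- and this should be said explicitly rather than passed over.
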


\subsubsection{$\chi_t$-quasiperiodicity} \label{qperiodsect}
Recall that there is a group homomorphism 
\[ \alpha : A \rightarrow \GLA \text{ given by } a \mapsto \left( \begin{smallmatrix} 1 & a \\ 0 & 1 \end{smallmatrix} \right),\] 
and denote the image of $A$ under this map by $\Gamma_A$.%; it is a subgroup of $\Gamma$.

%Removed reference to CGF paper. 23.02.16 -R
In connection, one may study in \cite{FPRP} the $\EE$-entire matrix function 
%(\footnote{It is denoted by $\Xi^*_t(t)$ in ibid.})
\[\Theta_t(z) := \left( \begin{matrix} 1 & 0 \\ -\chi_t(z) & 1 \end{matrix} \right),\]
which satisfies 
\[\Theta_t(a) = \rho^*_t(\alpha(a)),\] for all $a \in A$.

We state the following result, which is a direct consequence of the work done in \cite{FPRP}, as motivation for Definition \ref{VMFdef} and Remark \ref{matrixuniformizer} below. 
%%%!!!Note: This matrix no longer appears in our CGF note! 1.2.16 -Rudy
It is of crucial importance in all that we do to follow, and it demonstrates the role that $\chi_t$ plays in giving rise to a uniformizer for the cusp at infinity for the vectorial modular forms defined below.

%Should this be stated more generally for any matrix function with tempered entries satisfying the functional equation of \Xi_t^*?
%%%Should this be called Goss' Lemma, or somewhere we need to state such a lemma! 3.2.16 -Rudy
\begin{lemma} \label{periodicitylem}
Suppose that $\Hcal : \Omega \rightarrow \TT^2$ is a tempered rigid analytic function such that
\begin{equation}\label{quasieq}
\Hcal(z+a) = \Theta_t(a)\Hcal(z) \text{ for all } a \in A \text{ and } z \in \Omega.
\end{equation}
Then $\Theta_t^{-1}\Hcal \in \TT((u))^2$. \hfill $\qed$
%Should just say what f \in \TT[[u]] means once and for all.
\end{lemma}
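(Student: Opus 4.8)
The plan is to convert the quasi-periodicity \eqref{quasieq} into genuine $A$-periodicity for the twisted function $\Gcal := \Theta_t^{-1}\Hcal$, and then to invoke the $u$-expansion theory for tempered, periodic, rigid analytic functions recalled in the Convention above. The first ingredient is the cocycle identity $\Theta_t(z+a) = \Theta_t(a)\Theta_t(z)$ for all $a\in A$, which follows immediately from the $\FF_q$-linearity of $\chi_t$, giving $\chi_t(z+a)=\chi_t(z)+a(t)$, together with the lower-triangular unipotent shape of $\Theta_t$. Combining this with \eqref{quasieq} yields
$$\Gcal(z+a) = \Theta_t(z+a)^{-1}\Hcal(z+a) = \Theta_t(z)^{-1}\Theta_t(a)^{-1}\Theta_t(a)\Hcal(z) = \Gcal(z),$$
so that $\Gcal$ is invariant under the translations $z\mapsto z+a$, i.e. under the action of $\Gamma_A$.

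Next I would verify the two remaining hypotheses of the Convention, rigid analyticity and temperedness. Writing $\Hcal=\begin{pmatrix} H_1 \\ H_2\end{pmatrix}$ and using $\Theta_t^{-1}=\begin{pmatrix} 1 & 0 \\ \chi_t & 1\end{pmatrix}$, a direct computation gives $\Gcal=\begin{pmatrix} H_1 \\ \chi_t H_1 + H_2\end{pmatrix}$. Rigid analyticity is then formal: since $\chi_t$ is $\EE$-entire by Lemma \ref{chipropslem}, its restriction to each affinoid $\Omega_n$ is a uniform limit of polynomials, hence $\chi_t\in\Hol(\Omega,\TT)$; as $\Hol(\Omega,\TT)$ is a ring and $H_1,H_2$ are rigid analytic, both coordinates of $\Gcal$ are rigid analytic.

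The delicate point, and the main obstacle, is the temperedness of the second coordinate $\chi_t H_1 + H_2$, because $\chi_t(z)$ grows without bound as $|z|_\Im\to\infty$. Here I would exploit the growth estimate \eqref{chigrowtheq} of Lemma \ref{chipropslem} in combination with the identity $\ec(z)=u(z)^{-1}$: for $|z|_\Im$ large one has $|u(z)|$ small, so $\|\chi_t(z)\|\leq |u(z)|^{-1/q}$, and hence, by multiplicativity of the Gauss norm,
$$\|u(z)\chi_t(z)\| \leq |u(z)|^{1-1/q}\longrightarrow 0 \quad\text{as}\quad |z|_\Im\to\infty,$$
the key being that $1-1/q>0$. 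Choosing $n_1\geq 0$ with $u^{n_1}H_1\to 0$ (temperedness of $H_1$), one gets $u^{n_1+1}(\chi_t H_1)=(u\chi_t)(u^{n_1}H_1)\to 0$, so $\chi_t H_1$ is tempered; adding the tempered $H_2$ shows the second coordinate is tempered, while $G_1=H_1$ is tempered by hypothesis.

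Having established that $\Gcal=\Theta_t^{-1}\Hcal$ is $\Gamma_A$-periodic, rigid analytic, and tempered, the Convention on $u$-expansions above applies and gives $\Gcal\in\TT((u))^2$, which is precisely the assertion. The heart of the argument is therefore the interaction between the order-$1/q$ growth of $\chi_t$ at the cusp and the definition of temperedness; the cocycle identity and the rigid-analyticity step are purely formal.
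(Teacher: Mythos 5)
Your reduction is correct as far as it goes: the cocycle identity $\Theta_t(z+a)=\Theta_t(a)\Theta_t(z)$, the resulting $A$-periodicity of $\Gcal:=\Theta_t^{-1}\Hcal$, the rigid analyticity of $\chi_t$ on each affinoid $\Omega_n$, and the temperedness estimate $\|u(z)\chi_t(z)\|\leq |u(z)|^{1-1/q}\rightarrow 0$ extracted from \eqref{chigrowtheq} are all sound. (For comparison, the paper gives no proof at all; it states the lemma as a direct consequence of the work in \cite{FPRP}.)

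The genuine gap is your final step. The Convention is purely notational: it defines what the symbol ``$\Gcal\in\TT((u))^2$'' \emph{means} for a tempered, periodic, rigid analytic function, namely that there \emph{exists} a formal series $\mathcal{G}'\in\TT((u))^2$ and $n\geq 0$ with $u^n\Gcal(z)=u^n\mathcal{G}'(z)$ for $|z|_\Im$ large; it does not assert that every tempered, periodic, rigid analytic function possesses such an expansion. That existence statement --- an $A$-periodic, tempered, rigid analytic $\TT$-valued function on $\Omega$ admits a Laurent expansion in $u$ with at most finitely many negative powers --- is precisely the nontrivial analytic content of Lemma \ref{periodicitylem}, and it is exactly what the paper imports from \cite{FPRP}; it is a $\TT$-coefficient analogue of Goss' Lemma (cited later in the paper, in the $\CC_\infty$-valued setting, as \cite[Theorem 4.2]{DGbams}). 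To close the gap you must either invoke the relevant result of \cite{FPRP} explicitly, or prove it: show that $u$ identifies the quotient of a neighborhood $\{z\in\Omega : |z|_\Im>c\}$ of the cusp by the translation action of $A$ with a punctured disc $\{0<|u|<\varepsilon\}$, descend the $A$-periodic function $\Gcal$ to a rigid analytic function there, expand it in Laurent series on subannuli, and use temperedness to bound the polar part by finitely many terms. As written, your argument only verifies that $\Gcal$ satisfies the hypotheses under which the Convention's notation applies, not that the asserted membership actually holds.
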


\begin{remark}
We call the coordinate functions of $\Hcal$ as in Lemma \ref{periodicitylem} above \emph{$\chi_t$-quasiperiodic}. Observe that the first coordinate is additionally $A$-periodic.
\end{remark}

\subsubsection{Uniformizers}
Recall the function 
\[\psi_1(z) := \frac{1}{\pitilde} \sum_{a \in A} \frac{\chi_t(a)}{z-a},\] 
introduced in \cite{RPmathz} and defined for all $z \in \CC_\infty \setminus A$.  The following identity holds in $\Hol(\Omega, \TT)$:
\begin{equation} \label{perkinsid}
\psi_1 = u \chi_t. %\quad z\in\CC_\infty\setminus A.
\end{equation}
as shown in \cite[Theorem 1.1]{RPmathz}; see \cite{FPRP} for an alternate proof and a generalization to an arbitrary number of variables $t_1,\dots,t_s$.

Further we set $$\Upsilon := \left(\begin{matrix} 1 & 0 \\ 0 & u \end{matrix} \right),\quad 
\Psi_1 := \left(\begin{matrix} 1 & 0 \\ -\psi_1 & 1 \end{matrix} \right).$$
We have the following immediate, but useful, consequence of Lemma \ref{periodicitylem}.

%What symbol to use for zero vector? Is it important?
\begin{corollary} \label{infequiv1}
Let $\Hcal$ be an element of $\Thol(\Omega,\TT^2)$ satisfying \eqref{quasieq}. The following conditions are equivalent:
\begin{align} \label{vanishcond} & (\Upsilon \Hcal)(z) \rightarrow 0 \text{ as } |z|_\Im \rightarrow \infty,  \\
\label{uexpcond} & \Upsilon \Theta^{-1}_t \Hcal \in u\TT[[u]]^2.
\end{align}
\end{corollary}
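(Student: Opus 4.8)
The plan is to relate the two conditions through the structural result of Lemma \ref{periodicitylem}, which already tells us that $\Theta_t^{-1}\Hcal \in \TT((u))^2$. First I would write $\mathcal{G} := \Theta_t^{-1}\Hcal$, so that $\mathcal{G}$ has a $u$-expansion in $\TT((u))^2$; the whole statement is then a comparison between the asymptotic decay of $\Upsilon\Hcal$ and the shape of the $u$-expansion of $\Upsilon\mathcal{G} = \Upsilon\Theta_t^{-1}\Hcal$. The key algebraic observation I would exploit is the relation $\psi_1 = u\chi_t$ from \eqref{perkinsid}, which lets me pass between $\Theta_t$ (built from $\chi_t$) and $\Psi_1$ (built from $\psi_1$). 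Specifically, since $\Theta_t = \left(\begin{smallmatrix} 1 & 0 \\ -\chi_t & 1 \end{smallmatrix}\right)$ and $\Psi_1 = \left(\begin{smallmatrix} 1 & 0 \\ -\psi_1 & 1 \end{smallmatrix}\right) = \left(\begin{smallmatrix} 1 & 0 \\ -u\chi_t & 1 \end{smallmatrix}\right)$, a direct matrix computation gives $\Upsilon\Theta_t^{-1} = \Psi_1\Upsilon$ (equivalently $\Upsilon \Theta_t^{-1}\Upsilon^{-1} = \Psi_1$), which I would record as the main identity driving the equivalence.

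Next I would establish the implication \eqref{uexpcond} $\Rightarrow$ \eqref{vanishcond}. If $\Upsilon\Theta_t^{-1}\Hcal \in u\TT[[u]]^2$, then each coordinate is a power series in $u$ with no constant term, hence a tempered function that tends to $0$ as $|u(z)| \to 0$, i.e. as $|z|_\Im \to \infty$. This direction is essentially the observation that an element of $u\TT[[u]]^2$ vanishes at the cusp, using the identification of $u$-expansions with genuine function behavior for $|z|_\Im$ large, as set up in the Convention. For the reverse implication \eqref{vanishcond} $\Rightarrow$ \eqref{uexpcond}, I would start from the fact that $\Upsilon\Hcal = \Upsilon\Theta_t\mathcal{G} = \Upsilon\Theta_t\Upsilon^{-1}(\Upsilon\mathcal{G})$, and that $\Upsilon\mathcal{G} = \Upsilon\Theta_t^{-1}\Hcal \in \TT((u))^2$ by Lemma \ref{periodicitylem}; I must rule out nonpositive powers of $u$ in this expansion. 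The matrix $\Upsilon\Theta_t\Upsilon^{-1} = \Psi_1^{-1} = \left(\begin{smallmatrix} 1 & 0 \\ \psi_1 & 1 \end{smallmatrix}\right)$ has entries in $\TT[[u]]$ (since $\psi_1 = u\chi_t \in u\TT[[u]]$, because $\chi_t$ is bounded near the cusp), and is unipotent lower-triangular with $1$'s on the diagonal, hence invertible over $\TT[[u]]$; therefore the $u$-adic valuations of $\Upsilon\Hcal$ and $\Upsilon\mathcal{G}$ agree. The hypothesis that $\Upsilon\Hcal \to 0$ forces the lowest-order term of its $u$-expansion to be a positive power of $u$, and by the valuation-preserving property the same holds for $\Upsilon\mathcal{G} = \Upsilon\Theta_t^{-1}\Hcal$, giving \eqref{uexpcond}.

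The main obstacle I anticipate is the careful handling of the $u$-expansion of $\psi_1$ near the cusp and the passage between analytic decay and the formal $u$-expansion in the sense of the Convention. Concretely, I need that $\psi_1 = u\chi_t$ lies in $u\TT[[u]]$ — this requires knowing that $\chi_t(z)$ remains bounded (indeed admits a $\TT[[u]]$-expansion with no negative powers of $u$) as $|z|_\Im \to \infty$, which should follow from the growth bound \eqref{chigrowtheq} together with the $\tau$-difference equation \eqref{taudifferencechi} governing $\chi_t$; and I need that a series in $\TT((u))^2$ tends to $0$ at the cusp if and only if it lies in $u\TT[[u]]^2$, which is the content of the Convention identifying such functions with their formal expansions for $|z|_\Im$ large. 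Once these two technical points are in place, the equivalence reduces to the purely formal statement that the $\TT[[u]]$-unipotent matrix $\Upsilon\Theta_t\Upsilon^{-1}$ preserves the $u$-adic valuation, and the proof closes quickly.
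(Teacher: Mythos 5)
Your skeleton is the paper's own: Lemma \ref{periodicitylem} plus the commutation identity coming from $\psi_1 = u\chi_t$ (modulo a sign slip: the identity is $\Upsilon\Theta_t^{-1} = \Psi_1^{-1}\Upsilon$, equivalently $\Upsilon\Theta_t\Upsilon^{-1} = \Psi_1$, not $\Psi_1^{-1}$; that by itself is harmless). But the way you close the argument contains a genuine gap. You claim that $\chi_t$ is bounded near the cusp, deduce $\psi_1 = u\chi_t \in u\TT[[u]]$, and then run a valuation-preservation argument over $\TT[[u]]$, speaking of ``the $u$-expansion of $\Upsilon\Hcal$''. None of this is available. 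First, $\chi_t$ is \emph{unbounded} as $|z|_\Im \to \infty$: by \eqref{taudifferencechi}, $\tau(\chi_t) - \chi_t = (u\tau(\omega))^{-1}$, whose norm blows up as $|u|\to 0$, so $\|\chi_t(z)\|$ must grow (like $|u|^{-1/q}$, the exponent appearing in \eqref{chigrowtheq}). Second, and more structurally, $\chi_t$, $\psi_1$ and $\Upsilon\Hcal$ are not $A$-periodic: $\chi_t(z+a) = \chi_t(z)+a(t)$, hence $\psi_1(z+a) = \psi_1(z) + a(t)u(z)$, and $[\Upsilon\Hcal]_2 = uh_2$ fails periodicity as soon as $h_1 \neq 0$. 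So none of these is a function of $u$ at all --- the paper's Convention attaches $u$-expansions only to \emph{periodic} tempered functions. Consequently ``$\psi_1 \in u\TT[[u]]$'', ``$\Psi_1$ invertible over $\TT[[u]]$'', and ``the lowest-order term of the $u$-expansion of $\Upsilon\Hcal$'' are not meaningful statements, and the formal step on which your reverse implication rests collapses.

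The repair is to argue analytically, which is what the paper's terse proof implicitly does. The only object carrying a $u$-expansion is $\Theta_t^{-1}\Hcal$ (Lemma \ref{periodicitylem}), hence also $\Upsilon\Theta_t^{-1}\Hcal \in \TT((u))^2$. What you need about $\psi_1$ is not a formal expansion but the estimate $\|\psi_1(z)\| = \|u\chi_t(z)\| \leq \max\{|u|, |u|^{1-1/q}\} \to 0$, which does follow from \eqref{chigrowtheq}. Since $\Psi_1^{\pm 1}$ are unipotent with off-diagonal entries $\mp\psi_1$ tending to $0$ at the cusp, the identity $\Upsilon\Theta_t^{-1}\Hcal = \Psi_1^{-1}\Upsilon\Hcal$ shows that, as functions, $\Upsilon\Hcal \to 0$ if and only if $\Upsilon\Theta_t^{-1}\Hcal \to 0$; and an element of $\TT((u))^2$ tends to $0$ at the cusp if and only if it lies in $u\TT[[u]]^2$ (a lowest nonzero term $a_N u^N$ with $N \leq 0$ would dominate as $|u|\to 0$). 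This yields both implications; note also that your forward direction, as written, only proves $\Upsilon\Theta_t^{-1}\Hcal \to 0$ and still needs the multiplication by $\Psi_1$ to reach the stated condition $\Upsilon\Hcal \to 0$.
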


\begin{proof}
The equivalence follows from Lemma \ref{periodicitylem} and the identity
\begin{equation}\label{upsiloncommeq}
\Upsilon \Theta^{-1}_t = \left(\begin{matrix} 1 & 0 \\ 0 & u \end{matrix} \right)\left(\begin{matrix} 1 & 0 \\ \chi_t & 1 \end{matrix} \right) = \left(\begin{matrix} 1 & 0 \\ \psi_1 & 1 \end{matrix} \right)\left(\begin{matrix} 1 & 0 \\ 0 & u \end{matrix} \right) = \Psi_1^{-1} \Upsilon,\end{equation}
a consequence of \eqref{perkinsid}.

%From Lemma \ref{periodicitylem}, we have $\Xi^{*-1}_t\Hcal = {h_1 \choose h_2} \in \TT((u))$. Thus the function $h_1$ is periodic and vanishes at infinity. Hence $h_1 \in u\TT[[u]]$. By assumption, the function $u(h_2 - \chi_t h_1)$ also vanishes at infinity, and by Lemma \ref{chipropslem} we conclude that $uh_2 \in u \TT[[u]]$.

%The converse follows immediately from Lemma \ref{chipropslem}.
\end{proof}

%DOES THIS DUPLICATE THE REMARK ABOVE???
%Thus, the matrix %DEFINE \psi_1\ldots
%\[ \widehat{\Psi}_1 := u\Theta_t = \left(\begin{smallmatrix} u & 0 \\ -\frac{\psi_1}{\pitilde} & u \end{smallmatrix} \right),\]
%also studied in \cite{FPRP}, acts as a new type of uniformizer for functions $\left(\begin{smallmatrix} 1 & 0 \\ 0 & u \end{smallmatrix} \right) \Hcal$ as in the previous lemma. This will be the prototypical behavior at infinity for the vectorial modular forms defined below. We shall give several equivalent conditions for \ref{vanishcond} in \S ??? below.

%%Continue here and mention \Psi_t \Psi_1\ldots

%Do I need to change everything below? Since, $u\Xi$ is maybe more convenient than the other guy I've defined below???

\section{$\TT$-valued vectorial modular forms}\label{valuedvectorialmodularforms}

\subsection{$\TT$-valued Drinfeld modular forms}
Before defining our $\TT$-valued vectorial modular forms, we recall the one-dimensional results from \cite{FPannals} which we use in the sequel.

Recall the factor of automorphy,
\[j(\gamma,z) := cz + d,\] 
defined and nonzero for all $\gamma = \left( \begin{smallmatrix} a & b \\ c & d  \end{smallmatrix} \right) \in \GLA$ and $z \in \Omega$.
%Make sure to define $\pitilde$

\begin{definition}
A \emph{weak $\TT$-valued modular form} of weight $k$ and type $m \pmod{q-1}$ for $\GLA$ is a function $f \in \Thol(\Omega, \TT)$ such that
\[f(\gamma (z)) = \frac{j(\gamma,z)^k}{\det\gamma^m} f(z) \text{ for all } \gamma \in \GLA, z \in \Omega.\]
In particular, it follows that $f \in \TT((u))$. The set of these functions is a $\TT\otimes_{\CC_\infty}\CC[j]$-module, where $j=\frac{g^q}{\Delta}$ is the Drinfeld modular $j$-invariant in the notations of Gekeler, \cite{EGinv}.
We denote this module by $$\MM_k^m(\boldsymbol{1})^!.$$ The $\CC_\infty[j]$-submodule of $\CC_\infty$-valued forms are denoted $M_k^m(\boldsymbol{1})^!$. 

Further, we denote by $$\MM_k^m(\boldsymbol{1})$$ the $\TT$-submodule of 
$\MM_k^m(\boldsymbol{1})^!$ whose elements belong to $\TT[[u]]$, and $M_k^m(\boldsymbol{1})$ denotes 
the $\CC_\infty$-sub-vector space spanned by the forms which are $\CC_\infty$-valued. 

An element of $\MM_k^m(\boldsymbol{1})$ is called {\em $\TT$-valued modular form} (of weight $k$ and type $m\pmod{q-1}$). Finally, any element of $\MM_k^m(\boldsymbol{1})\cap u\TT[[u]]$ is called \emph{cuspidal} of weight $k$ and type $m \pmod{q-1}$ (or cusp form). The cusp forms of weight $k$, type $m \pmod{q-1}$ span a $\TT$-submodule
of $\MM_k^m(\boldsymbol{1})$ which is denoted by $\mathbb{S}_k^m(\boldsymbol{1})$, and 
the $\CC_\infty$-sub-vector space of the $\CC_\infty$-valued cusp forms is denoted by $S_k^m(\boldsymbol{1})$.
\end{definition}

\begin{remark}
We note that, more generally, one may consider Drinfeld modular forms with values in a general $\CC_\infty$-Banach algebra such as the algebras $\mathbb{B}_s$ considered by Angl\`es, Tavares-Ribeiro and the first author in \cite{APTRandstark} or the multivariate Tate algebras $\TT_s$ which are now ubiquitous to the theory. 
\end{remark}

We have the following result, due to the first author in \cite[Lem. 13]{FPannals}.

\begin{lemma}\label{isomorphismtrivialchar}
For all non-negative integers $k$ and classes $m \pmod{q-1}$, we have
\[\MM_{k}^{m}(\boldsymbol{1})^! \cong M_k^m(\boldsymbol{1})^! \otimes_{\CC_\infty} \TT. \]
\end{lemma}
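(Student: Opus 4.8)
The plan is to show that the obvious ``scalar extension'' map is an isomorphism, so that the entire content is concentrated in its surjectivity. Concretely, I would define
\[\Phi : M_k^m(\boldsymbol{1})^! \otimes_{\CC_\infty} \TT \longrightarrow \MM_k^m(\boldsymbol{1})^!, \qquad f \otimes \lambda \mapsto \lambda f,\]
where $\lambda f$ denotes the $\TT$-valued function $z \mapsto \lambda \cdot f(z)$. This is well defined and $\TT\otimes_{\CC_\infty}\CC_\infty[j]$-linear: since the automorphy factor $j(\gamma,z)^k \det(\gamma)^{-m}$ lies in $\CC_\infty$ and carries no $t$, multiplying a $\CC_\infty$-valued weak form by a constant $\lambda\in\TT$ preserves both the functional equation and the temperedness condition, so $\lambda f\in\MM_k^m(\boldsymbol{1})^!$.

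Injectivity is the easy direction. I would write an arbitrary element of the source as a finite sum $\sum_a f_a\otimes\lambda_a$ with the $\lambda_a\in\TT$ chosen $\CC_\infty$-linearly independent (collecting terms against a $\CC_\infty$-basis of the span of the scalars appearing). If $\Phi(\sum_a f_a\otimes\lambda_a)=\sum_a\lambda_a f_a=0$, then evaluating at any $z\in\Omega$ gives $\sum_a f_a(z)\lambda_a=0$ in $\TT$ with each $f_a(z)\in\CC_\infty$; linear independence of the $\lambda_a$ forces $f_a(z)=0$ for every $a$ and every $z$, whence $f_a\equiv0$ and the element was zero.

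For surjectivity, given $f\in\MM_k^m(\boldsymbol{1})^!$ I would expand it in powers of $t$, writing $f(z)=\sum_{i\ge0}f_i(z)t^i$ with $f_i:\Omega\to\CC_\infty$. Comparing the coefficients of $t^i$ in the functional equation (whose scalar factor is $t$-free) shows that each $f_i$ is a $\CC_\infty$-valued weak form of the same weight and type, i.e. $f_i\in M_k^m(\boldsymbol{1})^!$. The temperedness of $f$ furnishes a single $n$ with $u^nf\in\TT[[u]]$; reading this off the $u$-expansion $f=\sum_\nu a_\nu u^\nu$ (with $a_\nu\in\TT$) shows $a_\nu=0$ for $\nu<-n$, hence every $f_i$ has pole order at most $n$ at infinity. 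Thus all the $f_i$ lie in the single space $V_n:=\{g\in M_k^m(\boldsymbol{1})^!: u^ng\in\CC_\infty[[u]]\}$, which is finite-dimensional over $\CC_\infty$: for instance $g\mapsto \Delta^N g$ embeds $V_n$ into the finite-dimensional space $M_{k+N(q^2-1)}^m(\boldsymbol{1})$ of holomorphic forms once $N(q-1)\ge n$, since $\Delta$ is a nonzero cusp form of weight $q^2-1$ vanishing to order $q-1$ at the cusp.

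Fixing a $\CC_\infty$-basis $b_1,\dots,b_r$ of $V_n$ I would write $f_i=\sum_{l}c_{il}b_l$ with $c_{il}\in\CC_\infty$, so that formally $f=\sum_l\mu_l b_l$ with $\mu_l:=\sum_i c_{il}t^i$; the remaining point, and the one I expect to be the main obstacle, is to check $\mu_l\in\TT$, that is, $c_{il}\to0$ in $\CC_\infty$ as $i\to\infty$. This is where finiteness is used a second time: since an element of $V_n$ is determined by finitely many of its $u$-expansion coefficients, each coordinate functional $g\mapsto c_l(g)$ is a fixed $\CC_\infty$-linear combination of finitely many of the coefficients $a_\nu(g)$. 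Applying this to $g=f_i$ expresses $c_{il}$ as such a combination of the $t^i$-coefficients of finitely many $a_\nu\in\TT$, each of which tends to $0$ as $i\to\infty$; hence $c_{il}\to0$ and $\mu_l\in\TT$. A coefficient-wise comparison in $u$ then confirms $f=\sum_l\mu_l b_l=\Phi(\sum_l b_l\otimes\mu_l)$, giving surjectivity and, with the injectivity above, the claimed isomorphism.
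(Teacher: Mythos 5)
Your proof is correct, but there is no in-text argument in the paper to compare it with: the lemma is not proved there, being quoted from the first author's earlier work \cite[Lem.~13]{FPannals}. Your argument is a legitimate self-contained proof, and it correctly isolates the two points that make the statement true with the \emph{algebraic} (non-completed) tensor product. First, temperedness gives one integer $n$ bounding the pole order at infinity of \emph{all} the $t$-coefficients $f_i$ simultaneously, so they all lie in a single finite-dimensional space $V_n$; your embedding $g\mapsto\Delta^N g$ into $M^{m}_{k+N(q^2-1)}(\boldsymbol{1})$ is the right way to see its finite-dimensionality (note the type is indeed preserved, since $\Delta=-h^{q-1}$ has type $0$). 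Second, the scalars $c_{il}$ must tend to $0$ as $i\to\infty$, and your mechanism for this --- a finite separating family of $u$-coefficient functionals on $V_n$ (which exists because the kernels of any separating family of functionals on a finite-dimensional space admit a finite subfamily with trivial intersection), through which the coordinate functionals factor --- is sound, and is the same kind of finiteness argument that underlies the cited proof. Two routine verifications are left implicit and should at least be flagged: (i) each $f_i$ is rigid analytic on $\Omega$, because extraction of the $t^i$-coefficient is a contractive $\CC_\infty$-linear functional on $\TT$ which carries a rational function in $\TT(z)$ with invertible denominator to a rational function in $\CC_\infty(z)$ (expand the inverse of the denominator as a geometric series in its positive $t$-degree part; only finitely many terms contribute to a fixed power of $t$), so uniform limits on each $\Omega_n$ pass through; (ii) the $u$-expansion of $f_i$ is obtained by applying the same coefficient extraction to each $a_\nu$, again by continuity of that functional. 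Neither point affects the structure of your argument.
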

We deduce that
$$\MM_{k}^{m}(\boldsymbol{1}) \cong M_k^m(\boldsymbol{1}) \otimes_{\CC_\infty} \TT,\quad 
\mathbb{S}_{k}^{m}(\boldsymbol{1}) \cong S_k^m(\boldsymbol{1}) \otimes_{\CC_\infty} \TT.$$

It is well known that the set $M=M(\boldsymbol{1})$ of all the $\CC_\infty$-valued Drinfeld modular forms 
for the group $\GLA$ is a $\CC_\infty$-algebra which is graded by weights and types (the group
$\mathbb{Z}\times\mathbb{Z}/(q-1)\mathbb{Z}$), and that
$$M=\bigoplus_{k\in\mathbb{Z},\atop m\in\frac{\mathbb{Z}}{(q-1)\mathbb{Z}}}M_k^m(\boldsymbol{1})=\CC_\infty[g,h]$$
(see Gekeler, \cite{EGinv}). We further denote by $\MM(\boldsymbol{1})$, or more simply, by $\MM$, the $\TT$-algebra $\TT\otimes_{\CC_\infty}M$. Then, we deduce from Lemma \ref{isomorphismtrivialchar} that 
$$\MM=\TT[g,h].$$

Now we may define the main object of our interest.

\begin{definition} \label{VMFdef}
1. We say that a function $\Hcal \in \Thol(\Omega, \TT^2)$ is a $\TT$-valued \emph{weak vectorial modular forms} (abbreviated VMF$^!$) of \emph{weight} $k$ and \emph{type} $m \pmod{q-1}$ for $\rho^*_t$ if the following condition is satisfied:
\begin{equation} \label{modularity}
 \Hcal (\gamma (z)) = \frac{j(\gamma,z)^k}{\det \gamma^m} \rho^*_t(\gamma) \Hcal(z), \ \text{ for all } \gamma \in \GLA \text{ and }  z \in \Omega. \\
\end{equation}
We denote the $\TT\otimes_{\CC_\infty}\CC_\infty[j]$-module of such forms by $\MM^{m}_{k}(\rho^*_t)^!$. \medskip

2. Recall that $\Upsilon := \left(\begin{smallmatrix} 1 & 0 \\ 0 & u \end{smallmatrix} \right)$. If
% \label{cholo}
$$ (\Upsilon\Hcal)(z) \rightarrow \binom{0}{0}$$ as $|z|_\Im \rightarrow \infty$ (see the equivalent conditions of Cor. \ref{infequiv1}),
we shall say that $\Hcal$ is a \emph{vectorial modular form} (abbreviated VMF) of weight $k$, type $m$ and representation $\rho_t^*$. We denote the $\TT$-module of such forms by $\MM_{k}^{m}(\rho^*_t)$. \medskip

3. If, additionally,
\begin{equation} \label{cuspidal}
\Hcal(z) \rightarrow \binom{0}{0} \text{ as } |z|_\Im \rightarrow \infty,
%\Xi^{-1}\Hcal \in (u\TT[[u]])^2 \text{ when } |z|_\Im \gg 0,
\end{equation}
we shall call $\Hcal$ a \emph{cuspidal} VMF (of the same weight, type and representation as above). We denote the $\TT$-module of such forms by $\SSS_{k}^m(\rho^*_t)$.
\end{definition}

It is easy to see that the $\TT$-span of all the $\TT$-valued vectorial modular forms
of $\MM_k^m(\rho_t^*)$ for all possible choices of $k$ and $m$ is a $\MM$-module that we denote by 
$$\MM(\rho_t^*).$$ Of course,
this module is graded by the weights and the types:
$$\MM(\rho_t^*)=\bigoplus_{k,m}\MM_k^m(\rho_t^*),$$ it is in fact a graded module over
the graded algebra $\MM$.

\begin{remark} \label{matrixuniformizer}
The condition $\Upsilon \Hcal \rightarrow \binom{0}{0}$ as $|z|_\Im \rightarrow \infty$ (condition \eqref{vanishcond} above) may seem arbitrary at first glance, however, it was chosen very carefully as the ``weakest'' condition for which our $\TT$-modules of vectorial forms of a given weight and type have finite rank and are stable under Anderson twists, stable under Hecke operators and include the Eisenstein series. 
%The check that the Poincar\'e series defined in \cite{FPannals} are VMF will essentially be free now (see below), though we do not yet know their exact place in the theory.

We shall discuss below several conditions equivalent with those coming from Cor. \ref{infequiv1} which we hope makes it clear that ours is a good notion of regularity at the cusp at infinity for the VMF just defined.
We also notice that our condition \eqref{vanishcond} is intimately related with the choice of the 
representation $\rho_t^*$. There are several other representations $\rho:\GLA\rightarrow\GL_2(\TT)$
and to each one we can of course associate $\TT\otimes_{\CC_\infty}\CC_\infty[j]$-modules $\MM_k^m(\rho)^!$; however, we ignore, in such a level of generality, what could be the good analogue of condition \eqref{vanishcond}.
\end{remark}

We shall often need to examine the individual coordinate functions of a VMF, and we introduce the following notation.

\subsubsection*{Notation}
For a column vector $\Hcal = (h_1, h_2, \cdots, h_l)^{tr}$, we write $[\Hcal]_i = h_i$, for $i= 1,2,\dots,l$. 

\subsection{Examples and non-examples}
\subsubsection{ Vectorial Eisenstein series} \label{Eisseriessec}
We define the \emph{vectorial Eisenstein series} of weight $k$ for $\rho^*_t$ by
\[\Ecal_k(z) := \sideset{}{'}\sum_{a,b \in A} (az+b)^{-k} \left( \begin{matrix} a(t) \\ b(t) \end{matrix}  \right),\]
with the primed summation indicating the absence of the term where both $a,b$ are zero. These functions may also be defined intrinsically as in \cite[\S 2.4]{FPannals}.

It follows by the work done in \cite[Prop. 22]{FPannals} that $\Ecal_k \neq 0$ for all $k \equiv 1 \pmod{q-1}$, and $\Upsilon \Ecal_k \rightarrow \binom{0}{0}$ but $\Ecal_k \not\rightarrow \binom{0}{0}$ as $|z|_\Im \rightarrow \infty$. Thus, for such $k$ we have $\mathcal{E}_k \in \MM_k^0(\rho^*_t) \setminus \SSS_k^0(\rho_t^*)$.

Below, we shall demonstrate that these Eisenstein series are Hecke eigenforms with explicit eigenvalues, and we shall have much to glean from their representation at the cusp at infinity $\Ecal_k \in \Theta_t \TT[[u]]$. Further, we observe that the map $\tau:\TT\rightarrow\TT$ induces 
a (homogeneous) endomorphism 
$$\tau:\MM\rightarrow\MM.$$
Hence, we can define the skew ring $\MM\{\tau\}$ whose elements are the finite sums
$$\sum_{i\geq 0}f_i\tau^i,\quad f_i\in\MM,$$ with the product defined by the rule $\tau f=\tau(f)\tau$
for $f\in \MM$. Then, it is easy to see that $\MM(\rho_t^*)$ is equipped with the structure of 
a $\MM\{\tau\}$-module, and we prove below that $\Ecal_1$ cyclically generates this module.

\subsubsection{Anderson generating functions in rank 2}
%Mention how this form fails to satisfy all desired properties of our space of forms (\tau-invariance, Hecke invariance, regularity by specialization), and hence the need to exclude it\ldots
The following example, which lies just outside the confines of Definition \ref{VMFdef}, arises from Anderson's theory of $t$-motives and rigid analytic trivializations; see \cite{FPbourbaki}.

We recall from \eqref{omegadef} the {\em Anderson-Thakur function}, given here in its equivalent product form,
$$\omega(t)=\iota_\theta\prod_{i\geq 0}\left(1-\frac{t}{\theta^{q^i}}\right)^{-1}\in\TT^\times,$$ 
where $\iota_\theta$ that is the same element of Carlitz $\theta$ torsion as in \eqref{pitildedef}; see \cite{APinv} for
a recent overview on the properties of this function. 
We note that $1/\omega$ is an entire function of $t$ on $\CC_\infty$ and vanishes, as a function of the variable $t$, if and only if $t\in \{\theta,\theta^q,\theta^{q^2},\ldots\}$.

Let $z \in \Omega$. By the general theory for Drinfeld modules (see e.g. \cite[Ch. 4]{Gbook}), associated to the lattice $zA + A \subset \CC_\infty$, one has the entire exponential function
$\mathfrak{e}^z:\CC_\infty\rightarrow\CC_\infty$ defined by
 $$\mathfrak{e}^z(w) := \sum_{i\geq 0}\alpha_i(z) \tau^i(w)$$ with $\alpha_i\in M_{q^i-1}^0$; 
%%%All new 23.02.16 -R 
the functions $\alpha_i$ have been referred to by Gekeler as \emph{para-Eisenstein series} \cite{EGpes11} due to their similarities with the Eisenstein series $E^{(q^k - 1)}$ of Goss. NB. that our normalizations are slightly different than Gekeler's, as we consider the exponential for the lattice $Az+A$, while he considers the exponential for $\pitilde(Az+A)$. 
 
 In \cite{FPcrelles,FPannals}, the normalized Anderson generating functions
\begin{align} \label{d1d2def}
\bm{d}_1(z;t) := \frac{1}{\omega(t)}\sum_{i \geq 0} \frac{\alpha_i(z)}{\theta^{q^i} - t} z^{q^i} \text{ and } \bm{d}_2(z;t) := \frac{1}{\omega(t)}\sum_{i \geq 0} \frac{\alpha_i(z)}{\theta^{q^i} - t} \end{align}
were introduced and studied. These series both define holomorphic functions $\Omega\rightarrow\EE$. 
We recall from \cite[Eq. (2.25)]{FPcrelles} that
\begin{equation}\label{eqd2}
\bm{d}_2=1-(t-\theta)[u^{q-1}-u^{(q-1)(q^2-q+1)}+u^{(q-1)q^2}+o(u^{(q-1)q^2})],
\end{equation}
in $A[t][[u^{q-1}]]$; this series converges for the Gauss norm of $\TT$ whenever $|u|$ is small enough. In ibid.
the first six non-zero terms of this series expansion have been computed\footnote{This long and difficult computation was pursued by V. Bosser.}.

%Boldmath \bm causes a problem in the title here.
\subsubsection{The function $d_3$}
We observe that the function 
\[\dd_3=\bm{d}_1-\chi_t(z)\bm{d}_2,\]
which is an entire function $\CC_\infty\rightarrow\EE$, is $A$-periodic; i.e.,
$\bm{d}_3(z+a)=\bm{d}_3(z)$ for all $a\in A$. We shall give a short account 
of the main properties of this function, without giving full details.

Setting
\begin{eqnarray*}
\boldsymbol{\psi}&:=&\tau(\omega)^{-1}\left(\frac{1}{u}\left(\bm{d}_2+\frac{\bm{d}_2-g\bm{d}_2^{(1)}}{(t-\theta^q)u^{q-1}}\right)\right)\\
&=&\tau(\omega)^{-1}u^{q-2}\{(\theta-t)+u^{(q-1)^2}+(\theta-\theta^q)u^{(q-1)q}+o(u^{(q-1)q})\},\end{eqnarray*}
it is quite simple, although rather intricate in the computations, to show that
$\bm{d}_3$ further satisfies a linear, non-homogeneous $\tau$-difference equation of order $2$:
 \begin{equation}\label{eqdelta2}
\bm{d}_3=(t-\theta^q)\Delta\tau^2(\bm{d}_3)+g\tau(\bm{d}_3)+\boldsymbol{\psi}.
\end{equation}
From this, we deduce that the $u$-expansion of $\tau(\omega)\bm{d}_3$ begins, for $q>2$, with the following terms:
\begin{equation}\label{qdifferent2}
-u^{q-2}(t-\theta+(t-\theta)u^{q(q-1)^2}+o(u^{q(q-1)^2})).
\end{equation}
If $q=2$, the $u$-expansion of $\tau(\omega)\bm{d}_3$ begins with the following terms:
\begin{equation}\label{qequal2}t+\theta+(1+t+\theta)u^2+o(u^2).
\end{equation}
Further, if $q>2$, we have the limit $\lim_{t\to\theta}\bm{d}_3=0$ for all $z\in\Omega$ and $\bm{d}_3$ is the only solution of (\ref{eqdelta2}) with this property. Note that if $q=2$, $\bm{d}_3$ does not vanish at $u=0$.
In all cases, it can be proved that $\bm{d}_3$ is not a modular form.

\subsubsection{The Legendre period form} \label{nonexample}
It follows from the work of the first author that the vectorial function $\Fcal^*:\Omega\rightarrow\EE^2$
defined by
$$\Fcal^* := {-\bm{d}_2 \choose \bm{d_1}}$$ is in $\MM_{-1}^{-1}(\rho_t^*)^!$.

We observe, by the definition of $\bm{d}_3$, that $$\Fcal^* = \Theta_t {-\bm{d}_2 \choose \bm{d_3}} \in \Theta_t \TT[[u]]^2.$$
However, by (\ref{eqd2}), $\Fcal^*$ fails condition \eqref{vanishcond}. In other words, $$\Fcal^*\in\MM_{-1}^{-1}(\rho_t^*)^!\setminus\MM_{-1}^{-1}(\rho_t^*).$$

More generally, if $k\geq q-1$ and $q-1$ divides $k$, the convergent series 
\begin{align} \label{GossESdef}
E^{(k)}(z) := \sideset{}{'}\sum_{a,b\in A}(az+b)^{-k}, \end{align}
{\em Goss' Eisenstein series} of weight $k$, with the notation of \cite[(5.9)]{EGinv}, is in $M_k^0\setminus\{0\}$
and the function $E^{(k)}\Fcal^*$ is an element of 
$\MM_{q-2}^{-1}(\rho_t^*)^!\setminus\MM_{q-2}^{-1}(\rho_t^*)$.
\begin{remark} As we shall demonstrate below, many problems arise for elements of $\MM_{k}^{-m}(\rho_t^*)^!$ whose first coordinate function does not vanish at infinity (which is only possible for type $m \equiv -1 \pmod{q-1}$). The requirement that such a form is representable in $\Theta_t \TT[[u]]^2$ is not a property which is stable under the action of the operator $\tau$ or stable under the action of the Hecke operators that we are going to introduce later, when the first coordinate function is non-vanishing, whereas the condition that the form is representable in $\Theta_t\Upsilon^{-1}(u\TT[[u]]^2)$ is stable for these actions. We shall have more to say about this below in the appropriate sections.\end{remark}

%Add more detail here.

%\subsubsection{Poincar\'e series}
%Let $H$ be the subgroup of $\Gamma$ given by
%\[H := \left\{ \left(\begin{smallmatrix} a & b \\ 0 & 1 \end{smallmatrix} \right) : a \in \FF_q^\times, b \in A \right\} \subset \Gamma.\]

%Let $k,m$ be non-negative integers such that $k \geq 2m+1$. The \emph{Poincar\'e series} of weight $k$ and type $m \pmod{q-1}$ is defined by
%\[\mathcal{P}_{k,m}(z) := \sum_{\gamma \in H \backslash \Gamma} j(\gamma,z)^{-k}(\det\gamma)^m\rho^*_t(\gamma)^{-1} \left( \begin{smallmatrix} 0 \\ u(\gamma z)^m \end{smallmatrix} \right). \]

%These functions were shown in \cite{FPannals} to be weakly modular and non-zero whenever $k \equiv 2m+1 \pmod{q-1}$ and $k \geq (q+1)m +1$. When $m = 0$, they are a $\TT$-multiple of the Eisenstein series above, and one may show, as done classically, that when $m \geq 1$, these functions are cuspidal VMF. \textbf{[I will add check here\ldots]}

\subsection{Module structures}
In this section we determine the explicit structure of $\MM_k^m(\rho_t^*)$ for $k \geq 0$ and $m\in\frac{\mathbb{Z}}{(q-1)\mathbb{Z}}$ as a $\TT$-module. To begin with, we give here an $A$-expansion for $\Ecal_1$ which was the starting point of this collaboration. We generalize the next result to all non-zero $\Ecal_k$ in Theorem \ref{eisAexp}. 

\begin{proposition} \label{AexpE1prop}
The following expansions hold for $|z|_\Im$ sufficiently large:
\begin{flalign*}  \Ecal_1(z) &= {0 \choose -L(\chi_t,1)} + \pitilde \sum_{a \in A_+} { - a(t) \choose \chi_t(az)} u(az), \\
\Ecal_q(z) &=  {0 \choose -L(\chi_t,q) + \frac{\pitilde^q}{\tau(\omega)}\sum_{a \in A_+} u(az)^{q-1} } + \pitilde^q \sum_{a \in A_+} { - a(t) \choose \chi_t(az)} u(az)^q .
\end{flalign*}
\end{proposition}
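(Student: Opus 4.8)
The plan is to compute the two coordinate functions $[\Ecal_k]_1$ and $[\Ecal_k]_2$ separately for $k\in\{1,q\}$, by first evaluating the inner sum over $b\in A$ in closed form and then averaging the resulting outer sum over $a$ against the units $\FF_q^\times$. Grouping the (Eisenstein-ordered) double sum as $\sum_a\sum_b$, the two inner sums I need are
\[T_k(w):=\sum_{b\in A}(w+b)^{-k},\qquad S_k(w):=\sum_{b\in A}\chi_t(b)\,(w+b)^{-k},\]
evaluated at $w=az$ with $a\in A\setminus\{0\}$ (note $az\in\Omega$ since $a\in K_\infty^\times$). Re-indexing $b\mapsto -b$ and using the definition of $u$ gives $T_1(w)=\pitilde u(w)$, while the same substitution together with the $\FF_q$-linearity of $\chi_t$ and the identity $\psi_1=u\chi_t$ of \eqref{perkinsid} gives $S_1(w)=-\pitilde u(w)\chi_t(w)$. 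Since $\chi_t$ kills the summand $a=0$ in the first coordinate, the $a=0$ terms contribute only to the second coordinate, where $\sum_{b\neq 0}\chi_t(b)b^{-k}$ reduces, after splitting $b=\zeta b'$ with $\zeta\in\FF_q^\times$, $b'\in A_+$, to $-L(\chi_t,k)=-\sum_{a\in A_+}\chi_t(a)a^{-k}$ because $k\equiv 1\pmod{q-1}$ in both cases.

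Next I would carry out the outer summation. Every nonzero $a$ factors uniquely as $a=\zeta a'$ with $\zeta\in\FF_q^\times$ and $a'\in A_+$, and using $u(\zeta w)=\zeta^{-1}u(w)$ together with $\chi_t(\zeta w)=\zeta\chi_t(w)$ the $\FF_q^\times$-sum introduces a factor $\sum_{\zeta\in\FF_q^\times}\zeta^{j}$, which equals $-1$ when $(q-1)\mid j$ and $0$ otherwise; in every case occurring here one finds $j\equiv 0\pmod{q-1}$, so the factor is $-1$. For $k=1$ this immediately produces
\[[\Ecal_1]_1=-\pitilde\sum_{a\in A_+}\chi_t(a)\,u(az),\qquad [\Ecal_1]_2=-L(\chi_t,1)+\pitilde\sum_{a\in A_+}\chi_t(az)\,u(az),\]
which is the first displayed formula.

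The key step, and the one I expect to be the main obstacle, is the passage from $k=1$ to $k=q$. Because Frobenius is additive and continuous on the convergent $b$-sums, $T_q(w)=T_1(w)^q=\pitilde^q u(w)^q$; more delicately, since $\chi_t(b)=b(t)\in\FF_q[t]$ is fixed by the Anderson twist, applying $\tau$ termwise yields $S_q(w)=\tau(S_1(w))$, where $\tau$ acts on the $\TT$-value. Expanding $\tau(-\pitilde u(w)\chi_t(w))=-\pitilde^q u(w)^q\,\tau(\chi_t(w))$ and inserting the inhomogeneous $\tau$-difference equation \eqref{taudifferencechi}, $\tau(\chi_t(w))=\chi_t(w)+\tfrac{1}{u(w)\tau(\omega)}$, gives
\[S_q(w)=-\pitilde^q u(w)^q\chi_t(w)-\frac{\pitilde^q}{\tau(\omega)}\,u(w)^{q-1}.\]
It is exactly the inhomogeneous term here that is responsible for the extra summand $\tfrac{\pitilde^q}{\tau(\omega)}\sum_{a\in A_+}u(az)^{q-1}$ appearing in $[\Ecal_q]_2$. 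Feeding $T_q$ and $S_q$ through the $\FF_q^\times$-averaging of the previous paragraph then produces the second displayed formula.

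Finally I would record the analytic justifications: for $|z|_\Im$ large the iterated sums converge (for the outer sum one uses $|u(az)|\to 0$ as $\deg a\to\infty$), so the regrouping into $\sum_a\sum_b$ and the termwise application of Frobenius and $\tau$ are legitimate, and for the conditionally convergent case $k=1$ I would appeal to the intrinsic definition and the convergence established in \cite[\S 2.4 and Prop.~22]{FPannals}.
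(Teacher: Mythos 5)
Your proof is correct and follows essentially the same route as the paper: the $a=0$ terms produce $-L(\chi_t,k)$, the inner $b$-sum is evaluated via Perkins' identity \eqref{perkinsid} (i.e.\ \cite[Th.~1.1]{RPmathz}), and the weight-$q$ formula is obtained by applying $\tau$ together with the difference equation \eqref{taudifferencechi}. The only immaterial differences are that you compute the first coordinate directly rather than citing \cite[Lem.~21]{FPannals}, and you twist the inner sums $S_1(w)$ termwise instead of applying $\tau$ to the completed identity for $\Ecal_1$ (using $\tau(\Ecal_1)=\Ecal_q$) as the paper does.
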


\begin{proof} The convergence of all the series involved in the above identities for $|z|_{\Im}$ big enough
is easily checked by using Lem. \ref{chipropslem}.
The first coordinate in both $\Ecal_1$ and $\Ecal_q$ was already handled in \cite[Lem. 21]{FPannals}. We focus on the second coordinate in $\Ecal_1$ and employ the identity 
\[\sum_{b \in A} \frac{b(t)}{z+b} = -\pitilde u(z) \chi_t(z),\]
which follows from \cite[Th. 1.1]{RPmathz}.
We have
\begin{flalign*}
\sideset{}{'}\sum_{a,b \in A} \frac{b(t)}{az+b} &= -L(\chi_t,1) -\sum_{a \in A_+} \sum_{b \in A} \frac{b(t)}{az+b} \\
&= -L(\chi_1,1) + \pitilde \sum_{a \in A_+} \chi_t(az) u(az).
\end{flalign*}
This gives the identity above for $\Ecal_1$.

Applying $\tau$ to both sides of the identity for $\Ecal_1$ and using Lem. \ref{chipropslem} finishes the proof.
\end{proof}

\begin{remark} We denote by $A^+$ the subset of $A$ of monic polynomials.
It is easy to see, from \cite[Examples (6.4)]{EGinv}, that the element
$$F=\frac{1}{\theta^q-t}-\sum_{a\in A^+}u(az)^{q-1}\in \TT[[u]]$$ can be evaluated at $t=\theta$
and satisfies
$$F|_{t=\theta}=\widetilde{\pi}^{1-q}E^{(q-1)}.$$ 
We recall that
$$L(\chi_t,1)=-\frac{\widetilde{\pi}}{\tau(\omega)},$$ so that, applying $\tau$:
$$L(\chi_t,q)=\frac{\widetilde{\pi}^q}{(\theta^q-t)\tau(\omega)}.$$
Since we have, for the second
component of the ``constant" term of the previous expansion of $\mathcal{E}_q$:
$$-L(\chi_t,q) + \frac{\pitilde^q}{\tau(\omega)}\sum_{a \in A_+} u(az)^{q-1}=-\frac{\widetilde{\pi}^q}{\tau(\omega)}\left(\frac{1}{\theta^q-t}-\sum_{a\in A^+}u(az)^{q-1}\right),$$ we notice that this coefficient, although not 
a modular form, evaluates at $t=\theta$ to the Eisenstein series $E^{(q-1)}$.

\end{remark}

The proof of the next result mirrors \cite[Prop. 19]{FPannals}. We recall that $\tau(\mathcal{E}_1)=\mathcal{E}_q$.

%%%Still needs clean up.
\begin{theorem} \label{structurethm}
For each positive integer $k$ and each $m \pmod{q-1}$, we have
\[\MM_k^m(\rho^*_t) \cong \MM_{k-1}^m \mathcal{E}_1 \oplus \MM_{k-q}^m \tau(\mathcal{E}_1),\]
as $\TT$-modules. Further, $\MM_k^m(\rho^*_t)\neq\{0\}$ if and only if $k\equiv 2m-1\pmod{q-1}$.
\begin{proof}
Consider the matrix $$\bm{E} := (\mathcal{E}_1, \tau(\mathcal{E}_1))\in\operatorname{Mat}_{2\times 2}(\Hol(\Omega, \TT)).$$ One easily sees that $\det(\bm{E})$ lies in $\MM_{q+1}^1(\boldsymbol{1})^!$ (see e.g.  \cite[Lem. 14]{FPannals}), and from Proposition \ref{AexpE1prop} and Lem. \ref{chipropslem} we see that this determinant is holomorphic at infinity. Hence, $\det(\bm{E})$ is a $\TT$-multiple of Gekeler's $$h = -u + o(u)^2.$$ Because $ \MM_{q+1}^1(\GL_2(A))$ is free of rank 1 as a $\TT$-module, to determine $\det(\bm{E})$ it suffices to determine the coefficient of $u$ in $\det(\bm{E})$. Using Proposition \ref{AexpE1prop} again, we see that this equals $\pitilde L(\chi_t,q)$. Hence, $$\det(\bm{E}) = -\pitilde L(\chi_t,q) h.$$ Finally, since $\Delta=-h^{q-1}$
(by \cite[Theorem (9.1)]{EGinv}), we see that $\det(\bm{E})$ does not vanish for any $z \in \Omega$. 

We prove that the left matrix multiplication by $\bm{E}$ gives an isomorphism $$\MM_{k-1}^m \oplus \MM_{k-q}^m \rightarrow \MM_k^m(\rho^*_t),$$ for each positive integer $k$ and class $m \pmod{q-1}$. Let us consider an element $$\mathcal{H}  = \left(\begin{smallmatrix} h_1 \\ h_2 \end{smallmatrix} \right) \in \MM_{k-1}^m \oplus \MM_{k-q}^m.$$ Clearly, $$\bm{E}\cdot\mathcal{H} = \Ecal_1 h_1 + \Ecal_q h_2 \in \MM_k^m(\rho^*_t)^!,$$ and it suffices to check $\eqref{vanishcond}$. We have
\[\Upsilon\bm{E}\cdot\mathcal{H} = h_1 \Upsilon \mathcal{E}_1 + h_2 \Upsilon \tau(\mathcal{E}_1),\]
and hence $\Upsilon\bm{E}\cdot\mathcal{H} \rightarrow 0$, as $|z|_\Im \rightarrow \infty$
because we have, from Proposition \ref{AexpE1prop}, that $\Upsilon \mathcal{E}_1,\Upsilon \mathcal{E}_q\rightarrow\binom{0}{0}$, and $h_1(z),h_2(z)$ are obviously bounded for $|z|_{\Im}$ big enough.

%%%FIX\ldots
Conversely, let us choose $$\mathcal{G} = \left(\begin{smallmatrix} g_1 \\ g_2 \end{smallmatrix} \right) \in \MM_k^m(\rho^*_t).$$ From \eqref{upsiloncommeq} and \eqref{uexpcond}, we obtain $\Upsilon \mathcal{G} \in u\Psi_1 \TT[[u]]^2$. %Define \Psi!
Writing out explicit entries for $\bm{E}$, one sees that the entries of $(\Upsilon \bm{E})^{-1}$ grow at most like $u^{-1}$ at infinity. Thus we see that the entries of $\bm{E}^{-1} \mathcal{G} = (\Upsilon \bm{E})^{-1} \Upsilon \mathcal{G}$ are bounded at infinity. The non-vanishing condition of the Theorem now follows
from \cite[Remark (5.8)]{EGinv}, finishing the proof.
\end{proof}
\end{theorem}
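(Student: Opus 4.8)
The plan is to realize the decomposition through left multiplication by the matrix $\bm{E} := (\Ecal_1,\tau(\Ecal_1))$, whose columns are the vectorial Eisenstein series of weights $1$ and $q$; that is, I would show that $\binom{h_1}{h_2}\mapsto \bm{E}\binom{h_1}{h_2}=h_1\Ecal_1+h_2\tau(\Ecal_1)$ is an isomorphism $\MM_{k-1}^m\oplus\MM_{k-q}^m\xrightarrow{\sim}\MM_k^m(\rho^*_t)$. The first task is to pin down $\det\bm{E}$. Substituting \eqref{modularity} for $\Ecal_1$ (weight $1$, type $0$) and for $\tau(\Ecal_1)=\Ecal_q$ (weight $q$, type $0$) gives $\bm{E}(\gamma z)=\rho^*_t(\gamma)\,\bm{E}(z)\left(\begin{smallmatrix} j(\gamma,z)&0\\0&j(\gamma,z)^q\end{smallmatrix}\right)$; taking determinants and using $\det\rho^*_t(\gamma)=\delta^{-1}$ shows that $\det\bm{E}$ transforms as a scalar form of weight $q+1$ and type $1$, so $\det\bm{E}\in\MM_{q+1}^1(\boldsymbol{1})^!$. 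The $u$-expansions of Proposition \ref{AexpE1prop}, together with Lemma \ref{chipropslem}, show it is holomorphic at the cusp, hence lies in the free rank-one $\TT$-module $\MM_{q+1}^1(\boldsymbol{1})=\TT h$; comparing the coefficient of $u$ identifies $\det\bm{E}=-\pitilde L(\chi_t,q)\,h$. Since $L(\chi_t,q)\in\TT^\times$ and $h$ is nonvanishing on $\Omega$ (because $\Delta=-h^{q-1}$ does not vanish there), I conclude $\bm{E}(z)\in\GL_2(\TT)$ for every $z\in\Omega$, so $\bm{E}^{-1}$ has rigid-analytic, $\TT$-valued entries.

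For the forward inclusion $\bm{E}\binom{h_1}{h_2}\in\MM_k^m(\rho^*_t)$, multiplicativity of weight, type and representation places $h_1\Ecal_1+h_2\Ecal_q$ in $\MM_k^m(\rho^*_t)^!$, so only the regularity condition \eqref{vanishcond} remains. Since $\Upsilon\bm{E}\binom{h_1}{h_2}=h_1\,(\Upsilon\Ecal_1)+h_2\,(\Upsilon\Ecal_q)$, and Proposition \ref{AexpE1prop} gives $\Upsilon\Ecal_1,\Upsilon\Ecal_q\to\binom{0}{0}$ while $h_1,h_2$ are bounded near the cusp, the product tends to $\binom{0}{0}$, as required.

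The converse is where I expect the genuine difficulty to lie. Given $\mathcal{G}\in\MM_k^m(\rho^*_t)$, the transformation computation above shows that the two coordinates of $\bm{E}^{-1}\mathcal{G}$ obey the scalar modular law in weights $k-1$ and $k-q$ and type $m$; the delicate point is to prove they are holomorphic at the cusp, i.e. lie in $\TT[[u]]$ rather than merely in $\TT((u))$. Here I would write $\bm{E}^{-1}\mathcal{G}=(\Upsilon\bm{E})^{-1}(\Upsilon\mathcal{G})$ and combine two inputs: the regularity of $\mathcal{G}$ together with Corollary \ref{infequiv1} and the identity \eqref{upsiloncommeq} yields $\Upsilon\mathcal{G}\in u\Psi_1\TT[[u]]^2$, while the explicit $u$-expansion of $\bm{E}$ forces the entries of $(\Upsilon\bm{E})^{-1}$ to grow no faster than $u^{-1}$ at infinity. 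The resulting product is therefore bounded at the cusp, and a bounded function satisfying the scalar modular transformation law is holomorphic there. This is precisely the step in which the regularity condition \eqref{vanishcond} of Definition \ref{VMFdef} is indispensable: it is what controls $\Upsilon\mathcal{G}$ (not merely $\mathcal{G}$), exactly compensating the $u^{-1}$ growth of $(\Upsilon\bm{E})^{-1}$. Injectivity is then immediate from $\det\bm{E}(z)\neq 0$.

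Finally, the non-vanishing criterion is read off from the isomorphism: $\MM_k^m(\rho^*_t)\neq\{0\}$ if and only if at least one of $\MM_{k-1}^m$, $\MM_{k-q}^m$ is nonzero, and since $q\equiv 1\pmod{q-1}$ the two summands carry the same weight class modulo $q-1$. By the classical existence criterion for scalar Drinfeld modular forms of a prescribed weight and type \cite[Remark (5.8)]{EGinv} --- equivalently, by evaluating \eqref{modularity} at the scalar matrices $\lambda I$ with $\lambda\in\FF_q^\times$ --- this occurs exactly for the weight--type congruence asserted in the statement.
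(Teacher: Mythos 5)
Your proposal is correct and follows essentially the same route as the paper's own proof: the same matrix $\bm{E}=(\Ecal_1,\tau(\Ecal_1))$, the same identification $\det\bm{E}=-\pitilde L(\chi_t,q)h$ with non-vanishing on $\Omega$, the same easy forward inclusion via $\Upsilon\bm{E}\cdot\mathcal{H}\to 0$, the same converse via boundedness of $(\Upsilon\bm{E})^{-1}\Upsilon\mathcal{G}$, and the same appeal to \cite[Remark (5.8)]{EGinv} for the weight--type congruence. Your added observations (the explicit transformation law giving the weights $k-1$, $k-q$ and type $m$ of the coordinates of $\bm{E}^{-1}\mathcal{G}$, and that $L(\chi_t,q)\in\TT^\times$ so that $\bm{E}(z)\in\GL_2(\TT)$) only make explicit what the paper leaves implicit.
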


\begin{remark}
Clearly now, $\MM_k^m(\rho_t^*)$ is a free $\TT$-module of finite rank for all $k \geq 0$ and $m \pmod{q-1}$.
\end{remark}

The following corollary follows immediately.

\begin{corollary}\label{taustability}
Suppose $\mathcal{H} \in \MM^m_k(\rho^*_t)$, then $\tau(\mathcal{H}) \in \MM^m_{kq}(\rho^*_t)$.
\end{corollary}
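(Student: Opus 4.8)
The statement asserts that the Anderson twist $\tau$ multiplies the weight by $q$ while fixing the type, so my plan is to read it off directly from Theorem~\ref{structurethm}, which already packages the delicate regularity at infinity. Assume first $k \ge 1$, the range of that theorem, and write
\[
\Hcal = \Ecal_1\, h_1 + \tau(\Ecal_1)\, h_2, \qquad h_1 \in \MM_{k-1}^m,\ h_2 \in \MM_{k-q}^m .
\]
I would then apply $\tau$. Since $\tau$ is a continuous $\FF_q[t]$-algebra endomorphism of $\TT$ acting coordinatewise on $\TT^2$, it is additive and respects products of scalars against vectors, giving $\tau(\Hcal) = \tau(\Ecal_1)\,\tau(h_1) + \tau^2(\Ecal_1)\,\tau(h_2)$.

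Here I use two inputs recalled before the theorem: $\tau(\Ecal_1) = \Ecal_q$, whence $\tau^2(\Ecal_1) = \Ecal_{q^2}$, and these are genuine VMF, namely $\Ecal_q \in \MM_q^0(\rho^*_t)$ and $\Ecal_{q^2} \in \MM_{q^2}^0(\rho^*_t)$ (legitimate as $q \equiv q^2 \equiv 1 \pmod{q-1}$). I would also record the action of $\tau$ on scalar forms: because the entries of $\rho^*_t(\gamma)$ lie in $\FF_q[t] = \TT^{\tau=1}$ and $\det\gamma \in \FF_q^\times$ is $\tau$-fixed, while $\tau(j(\gamma,z)^w) = j(\gamma,z)^{wq}$, the homogeneous endomorphism $\tau$ sends $\MM_w^l$ into $\MM_{wq}^l$. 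Thus $\tau(h_1) \in \MM_{(k-1)q}^m$ and $\tau(h_2) \in \MM_{(k-q)q}^m$.

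To finish, I would invoke the graded $\MM$-module structure of $\MM(\rho^*_t)$: multiplying a VMF by a scalar modular form adds weights and types and preserves the vanishing condition $\Upsilon(\,\cdot\,) \to \binom{0}{0}$, the scalar factor being bounded at infinity. Hence $\Ecal_q\,\tau(h_1) \in \MM_{q+(k-1)q}^m(\rho^*_t) = \MM_{kq}^m(\rho^*_t)$ and $\Ecal_{q^2}\,\tau(h_2) \in \MM_{q^2+(k-q)q}^m(\rho^*_t) = \MM_{kq}^m(\rho^*_t)$, so $\tau(\Hcal) \in \MM_{kq}^m(\rho^*_t)$, as claimed. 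In this route the conclusion is indeed immediate, since one never leaves the already-defined VMF modules.

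The only point deserving care, and the reason the result is not purely formal, is that $\tau$ must preserve the subtle regularity of Definition~\ref{VMFdef}; above this is automatic, but I expect the genuine work to be the self-contained check that makes it transparent and, moreover, handles all $k$ uniformly. The transformation law for $\tau(\Hcal)$ follows at once from $\tau$-invariance of $\rho^*_t(\gamma)$ and $\det\gamma$. For regularity, write $\Gcal := \Theta_t^{-1}\Hcal \in \TT((u))^2$ with $\Upsilon\Gcal \in u\TT[[u]]^2$; the $\tau$-difference equation \eqref{taudifferencechi} yields $\Theta_t^{-1}\tau(\Theta_t) = \left(\begin{smallmatrix} 1 & 0 \\ -1/(u\tau(\omega)) & 1\end{smallmatrix}\right)$, so that $\Upsilon\,\Theta_t^{-1}\tau(\Hcal)$ is expressed through $\tau(\Gcal) \in \TT((u^q))^2$ and is seen to remain in $u\TT[[u]]^2$; by Corollary~\ref{infequiv1} this is exactly condition \eqref{vanishcond}. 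The crux is reconciling the extra factor $1/(u\tau(\omega))$ produced by twisting $\chi_t$ against the gain supplied by $\Upsilon$, which I anticipate to be the main obstacle.
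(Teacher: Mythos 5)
Your proof is correct and is essentially the paper's own: your main argument is exactly the intended ``immediate'' deduction from Theorem \ref{structurethm}, writing $\Hcal=\Ecal_1 h_1+\tau(\Ecal_1)h_2$, applying $\tau$, and using that $\tau(\Ecal_1)=\Ecal_q$ and $\tau^2(\Ecal_1)=\Ecal_{q^2}$ are vectorial Eisenstein series (hence VMF) together with the graded $\MM$-module structure of $\MM(\rho_t^*)$. Your closing check, based on $\Theta_t^{-1}\tau(\Theta_t)=\left(\begin{smallmatrix}1&0\\ -(u\tau(\omega))^{-1}&1\end{smallmatrix}\right)$ and the gain $\tau(u\TT[[u]])\subset u^q\TT[[u]]$, reproduces the paper's direct proof of Corollary \ref{taustability} given in the remark immediately following it, so nothing is missing.
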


\begin{remark}
Here we prove Corollary \ref{taustability} directly.

Suppose $\Hcal = \Theta_t {h_1 \choose h_2} \in \Theta_t \TT[[u]]^2$. From Lem. \ref{chipropslem}, we obtain
\[\tau(\Hcal) = \left(\Theta_t + \left(\begin{smallmatrix} 0 & 0 \\ (u\tau(\omega))^{-1} & 0 \end{smallmatrix} \right)\right) \left(\begin{smallmatrix} \tau(h_1) \\ \tau(h_2) \end{smallmatrix} \right) = \Theta_t\left(\begin{smallmatrix} \tau(h_1) \\ (u\tau(\omega))^{-1}\tau(h_1) + \tau(h_2) \end{smallmatrix} \right).\]
We deduce that, if $h_1\in\TT[[u]]$, then 
$\tau(\Hcal)\in\Theta_t \TT[[u]]^2$ if and only if $h_1 \in u\TT[[u]]$. 
It follows that, for all non-zero Eisenstein series $E^{(k)}$ as in \eqref{GossESdef} above, 
\[[E^{(k)}\Fcal^*]_1\in\TT[[u]]\setminus u\TT[[u]].\]
\end{remark}

\begin{corollary}
The $\MM\{\tau\}$-module $\MM(\rho^*_t)$ is cyclic, generated by $\mathcal{E}_1$.
\end{corollary}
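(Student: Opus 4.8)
The plan is to prove the equality of graded $\MM\{\tau\}$-modules $\MM(\rho_t^*)=\MM\{\tau\}\cdot\Ecal_1$. The inclusion $\MM\{\tau\}\cdot\Ecal_1\subseteq\MM(\rho_t^*)$ is the easy half: it was already observed that $\MM(\rho_t^*)$ carries the structure of an $\MM\{\tau\}$-module, and $\Ecal_1\in\MM_1^0(\rho_t^*)\subseteq\MM(\rho_t^*)$, so the whole cyclic submodule it generates sits inside. For the reverse inclusion I would exploit the grading $\MM(\rho_t^*)=\bigoplus_{k,m}\MM_k^m(\rho_t^*)$. Since $\MM=\TT[g,h]$ is graded and $\tau$ is homogeneous (Corollary \ref{taustability}), the cyclic submodule $\MM\{\tau\}\cdot\Ecal_1$ is itself graded, so it suffices to show $\MM_k^m(\rho_t^*)\subseteq\MM\{\tau\}\cdot\Ecal_1$ for every $k$ and $m$.

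For $k\geq 1$ this is essentially a reformulation of Theorem \ref{structurethm}. That result gives $\MM_k^m(\rho_t^*)=\MM_{k-1}^m\,\Ecal_1\oplus\MM_{k-q}^m\,\tau(\Ecal_1)$, where the coefficient modules $\MM_{k-1}^m$ and $\MM_{k-q}^m$ are $\TT$-submodules of the scalar algebra $\MM$. Writing $\Ecal_1=1\cdot\Ecal_1$ and $\tau(\Ecal_1)=\tau\cdot\Ecal_1$, both elements lie in $\MM\{\tau\}\cdot\Ecal_1$, whence $\MM_k^m(\rho_t^*)\subseteq\MM\,\Ecal_1+\MM\,\tau(\Ecal_1)\subseteq\MM\{\tau\}\cdot\Ecal_1$. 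It is worth stressing that no higher twists $\tau^i(\Ecal_1)$ with $i\geq 2$ are needed here: the force of Theorem \ref{structurethm} is precisely that every homogeneous piece is already expressed using only $\Ecal_1$ and its single twist, with coefficients drawn from $\MM$.

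The one point requiring a genuinely separate argument, and the only real obstacle, is that the cyclic module is concentrated in weights $\geq 1$: every term $f\,\tau^i(\Ecal_1)$ with $f\in\MM$ nonzero has weight at least $q^i\geq 1$, since $\MM$ lives in non-negative weights and $\tau^i(\Ecal_1)$ has weight $q^i$ by Corollary \ref{taustability}. I must therefore rule out nonzero forms of non-positive weight, i.e. establish $\MM_k^m(\rho_t^*)=0$ for $k\leq 0$. For this I would re-run the division argument from the converse half of the proof of Theorem \ref{structurethm}: given $\mathcal{G}\in\MM_k^m(\rho_t^*)$, the matrix $\bm{E}=(\Ecal_1,\tau(\Ecal_1))$ has determinant $-\pitilde L(\chi_t,q)\,h$, which is nonvanishing on $\Omega$, and the transformation laws of $\Ecal_1$ and $\tau(\Ecal_1)$ force the two entries of $\bm{E}^{-1}\mathcal{G}$ to be scalar weak modular forms of type $m$ and weights $k-1$ and $k-q$. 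The boundedness-at-infinity estimate used there shows these entries are holomorphic at infinity, hence genuine Drinfeld modular forms; for $k\leq 0$ both weights are negative, so both vanish and $\mathcal{G}=\bm{E}\,(\bm{E}^{-1}\mathcal{G})=0$.

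Combining the two cases gives $\MM_k^m(\rho_t^*)\subseteq\MM\{\tau\}\cdot\Ecal_1$ for all $k$ and $m$, and summing over the grading yields $\MM(\rho_t^*)\subseteq\MM\{\tau\}\cdot\Ecal_1$. Together with the easy reverse inclusion, this shows that $\MM(\rho_t^*)$ is cyclic with generator $\Ecal_1$. I expect the weight-$\leq 0$ vanishing to be the only step demanding care; everything else is a formal consequence of Theorem \ref{structurethm} together with the graded $\MM\{\tau\}$-module structure and Corollary \ref{taustability}.
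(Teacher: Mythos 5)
Your proof is correct and takes essentially the paper's route: the paper presents this corollary as an immediate consequence of Theorem \ref{structurethm}, i.e.\ precisely the decomposition $\MM_k^m(\rho_t^*)=\MM_{k-1}^m\,\Ecal_1\oplus\MM_{k-q}^m\,\tau(\Ecal_1)$ that you invoke, together with the graded $\MM\{\tau\}$-module structure. Your additional step ruling out nonzero forms of weight $k\leq 0$ (by re-running the division argument with $\bm{E}^{-1}$ and using that there are no nonzero modular forms of negative weight) correctly fills in a detail the paper leaves implicit, since Theorem \ref{structurethm} is stated only for positive $k$.
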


As another corollary, we re-obtain the main result of \cite{FPannals}, namely Theorem 8 there.

\begin{corollary} \label{e1totaufstar}
We have $\Ecal_1 = -\pitilde h \tau(\Fcal^*)$. In particular, this gives analytic continuation to $\Ecal_1$ in the variable $t$ to all of $\CC_\infty$. 
\end{corollary}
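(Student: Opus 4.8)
The plan is to show that both sides of the asserted identity lie in the rank-one free $\TT$-module $\MM_1^0(\rho_t^*)$, and then to pin down the scalar of proportionality by comparing a single $u$-expansion coefficient. First I would identify the ambient module. By Theorem \ref{structurethm} we have $\MM_1^0(\rho_t^*) \cong \MM_0^0\,\Ecal_1 \oplus \MM_{1-q}^0\,\tau(\Ecal_1)$; since $\MM_0^0 = \TT$ (the weight-zero, type-zero forms being the constants) and $\MM_{1-q}^0 = \{0\}$ (there are no nonzero holomorphic forms of negative weight $1-q$), the module $\MM_1^0(\rho_t^*)$ is free of rank one, generated by $\Ecal_1$. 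Hence it suffices to prove that $-\pitilde h\,\tau(\Fcal^*)$ belongs to $\MM_1^0(\rho_t^*)$ and then to compute the proportionality constant.

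Next I would check the weight, type, and weak modularity. Recall $\Fcal^*\in\MM_{-1}^{-1}(\rho_t^*)^!$. Applying $\tau$ to its functional equation, and using that $\rho_t^*(\gamma)$ and $\det\gamma\in\FF_q^\times$ are $\tau$-invariant while $j(\gamma,z)^{-1}$ becomes $j(\gamma,z)^{-q}$, gives $\tau(\Fcal^*)\in\MM_{-q}^{-1}(\rho_t^*)^!$ (as in Corollary \ref{taustability}, but for the weak module). Multiplying by $h\in M_{q+1}^1(\boldsymbol{1})$ and by the scalar $\pitilde$ then yields $-\pitilde h\,\tau(\Fcal^*)\in\MM_1^0(\rho_t^*)^!$, the weights summing to $-q+(q+1)=1$ and the types to $-1+1\equiv 0 \pmod{q-1}$.

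The heart of the argument is the verification of regularity at infinity, and I expect this to be the main obstacle. I would use $\Fcal^*=\Theta_t\binom{-\bm{d}_2}{\bm{d}_3}$ with $\bm{d}_2,\bm{d}_3\in\TT[[u]]$, together with the twist formula from the remark following Corollary \ref{taustability}, to write
\[
\tau(\Fcal^*)=\Theta_t\binom{-\tau(\bm{d}_2)}{-(u\tau(\omega))^{-1}\tau(\bm{d}_2)+\tau(\bm{d}_3)}.
\]
The second entry carries a genuine $u^{-1}$ pole, which is precisely the mechanism by which $\Fcal^*$ and its twist fail condition \eqref{vanishcond}. Since $h\in u\TT[[u]]$, multiplication by $h$ clears this pole:
\[
h\,\tau(\Fcal^*)=\Theta_t\binom{-h\tau(\bm{d}_2)}{-(h/u)\tau(\omega)^{-1}\tau(\bm{d}_2)+h\tau(\bm{d}_3)},
\]
whose first entry lies in $u\TT[[u]]$ and whose second entry lies in $\TT[[u]]$ (using $h/u\in\TT[[u]]$ and $\omega\in\TT^\times$). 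By condition \eqref{uexpcond} of Corollary \ref{infequiv1} this places $-\pitilde h\,\tau(\Fcal^*)$ in $\MM_1^0(\rho_t^*)$. The delicate point is keeping track of the low-order $u$-terms of $\bm{d}_2$, $\bm{d}_3$, and $h$ so that no spurious polar term survives in either coordinate.

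Finally I would determine the constant and conclude. By the preceding steps $-\pitilde h\,\tau(\Fcal^*)=c\,\Ecal_1$ for a unique $c\in\TT$, and I compare first coordinates, which are untouched by $\Theta_t$. From Proposition \ref{AexpE1prop}, $[\Ecal_1]_1=-\pitilde\sum_{a\in A_+}a(t)\,u(az)$, whose coefficient of $u=u(z)$ arises only from $a=1$ and equals $-\pitilde$; meanwhile $[-\pitilde h\,\tau(\Fcal^*)]_1=\pitilde h\,\tau(\bm{d}_2)$ has $u$-coefficient $\pitilde\cdot(-1)\cdot 1=-\pitilde$, since $h=-u+o(u)$ and $\tau(\bm{d}_2)=1+o(1)$. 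Thus $c=1$, which is the claimed identity. For the last assertion, since $\bm{d}_1,\bm{d}_2$ are entire in $t$ (valued in $\EE$) and $h$ is independent of $t$, the right-hand side $-\pitilde h\,\tau(\Fcal^*)$ is entire in $t$, furnishing the analytic continuation of $\Ecal_1$ in the variable $t$ to all of $\CC_\infty$.
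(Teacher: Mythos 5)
Your proof is correct and follows essentially the same route as the paper's: write $\Fcal^* = \Theta_t{-\bm{d}_2 \choose \bm{d}_3}$, twist via Lemma \ref{chipropslem}, observe that multiplication by the cusp form $h$ clears the $(u\tau(\omega))^{-1}$ pole so that $h\tau(\Fcal^*)\in\MM_1^0(\rho_t^*)$, invoke Theorem \ref{structurethm} to identify this module as $\TT\cdot\Ecal_1$, and pin down the constant from the coefficient of $u$ in the first coordinates. (The sign you carry on the $(u\tau(\omega))^{-1}\tau(\bm{d}_2)$ term is immaterial, since that term lives in the second coordinate and is killed by $h$ either way; your explicit bookkeeping of weights, types, and the rank-one identification just spells out what the paper leaves implicit.)
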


\begin{proof}
As we have noticed above, we may write $$\Fcal^* = {-\bm{d}_2 \choose \bm{d}_3 + \chi_t\bm{d}_2},$$ for some $\bm{d}_3 \in \TT[[u]]$. Thus, $$\tau(\Fcal^*) = {-\tau(\bm{d}_2) \choose \tau(\bm{d}_3) + \chi_t\tau(\bm{d}_2) + (\tau(\omega)u)^{-1}\tau(\bm{d}_2)},$$ as follows from Lem. \ref{chipropslem}. Now, $$-h\tau(\bm{d}_2) = u + o(u^2) \in \TT[[u]]$$ and both $h\tau(\bm{d}_3)$ and $h\chi_t\tau(\bm{d}_2)$ vanish as $|z|_\Im \rightarrow \infty$. Hence, $h\tau(\Fcal^*) \in \MM_1^0(\rho_t^*)$. By the previous theorem, we must have $h\tau(\Fcal^*)$ is a $\TT$-multiple of $\Ecal_1$. Comparing the coefficients of $u$ in the first coordinates, we deduce the result.
\end{proof}

\subsection{The $\tau$-difference equation for $\Ecal_1$}\label{differenceE1}
Of course, one may determine the $\tau$-difference equation satisfied by $\Ecal_1$ from Corollary \ref{e1totaufstar} and the $\tau$-difference equation for $\Fcal^*$, which is well-known (e.g. \cite[Proposition 16]{FPannals}). Here we deduce it from the finite $\TT$-rank of the modules $\MM_k^m(\rho_t^*)$, which follows from Theorem \ref{structurethm} above. We recall from \cite{EGinv} the $A$-expansions: 
\[\Delta(z) = \sum_{a \in A_+} a^{q^2 - q} u(az)^{q-1} \ \text{ and } \ g(z) = 1 - (\theta^q - \theta) \sum_{a \in A_+} u(az)^{q-1}.\]

\begin{proposition} \label{Eisdiffeq}
We have 
\[ \tau^2\left(\small{\frac{1}{L(\chi_t,1)}}\Ecal_1\right) =  \frac{(\theta^q - t) \Delta}{L(\chi_t,1)}\Ecal_1 + g^q \tau\left(\frac{1}{L(\chi_t,1)}\Ecal_1\right).\]
%$\tau^2(\Ecal_1) = -\frac{\pitilde^{q^2 - 1}}{t-\theta^{q^2}}\Delta \Ecal_1 + \frac{\pitilde^{q^2 - q}}{t - \theta^{q^2}}g^q \tau(\Ecal_1)$.
\end{proposition}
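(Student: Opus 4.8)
The plan is to realize $\tau^2(\tfrac{1}{L}\Ecal_1)$ as an element of a finite-rank module whose structure is given by Theorem \ref{structurethm}, reducing the claim to the determination of three scalars in $\TT$, and then to pin those scalars down by reading off a handful of leading coefficients at the cusp. Write $L := L(\chi_t,1)$ and note $L \in \TT^\times$ (since $L = -\pitilde/\tau(\omega)$ with $\omega \in \TT^\times$), so $\tfrac1L\Ecal_1 \in \MM_1^0(\rho^*_t)$. Two applications of Corollary \ref{taustability} give $\tau^2(\tfrac1L\Ecal_1) \in \MM_{q^2}^0(\rho^*_t)$, and because $\tau$ raises weights by a factor $q$ and $\tau(\Ecal_1) = \Ecal_q$, this form equals $\tfrac{1}{\tau^2(L)}\Ecal_{q^2}$. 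By Theorem \ref{structurethm}, $\MM_{q^2}^0(\rho^*_t) = \MM_{q^2-1}^0\Ecal_1 \oplus \MM_{q^2-q}^0\tau(\Ecal_1)$, and since $M = \CC_\infty[g,h]$ has graded pieces $M_{q^2-1}^0 = \langle g^{q+1}, \Delta\rangle$ and $M_{q^2-q}^0 = \langle g^q\rangle$ (one dimensional), there are unique $\alpha,\beta,c \in \TT$ with
\[\Ecal_{q^2} = \tau^2(L)\,(\alpha g^{q+1} + \beta \Delta)\,\Ecal_1 + \tau^2(L)\,c\,g^q\,\Ecal_q.\]
Dividing by $\tau^2(L)$ and using $g^q \Ecal_q/\tau(L) = g^q\tau(\tfrac1L\Ecal_1)$, the asserted identity is equivalent to the three conditions $\alpha = 0$, $\beta = (\theta^q - t)/L$ and $c = 1/\tau(L)$.

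First I would exploit that the first coordinates are $A$-periodic, with the explicit expansions of Proposition \ref{AexpE1prop}. Applying the Anderson twist (which sends $u \mapsto u^q$ and raises $\CC_\infty$-coefficients to the $q$th power) yields $[\Ecal_{q^i}]_1 = -\pitilde^{q^i}\sum_{a \in A_+} a(t)\,u(az)^{q^i}$, of lowest order $u^{q^i}$. Reading the coefficient of $u$ in the first coordinate of the displayed identity (the left-hand side starts only at order $u^{q^2}$) immediately forces $\alpha = 0$; reading the coefficient of $u^q$ then gives $\pitilde \beta + \pitilde^q c = 0$, that is, $\beta = -\pitilde^{q-1}c$.

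To fix $c$ I would pass to the normalized forms $\Theta_t^{-1}\Ecal_{q^i} \in \TT[[u]]^2$, available by Corollary \ref{infequiv1}; their second coordinates have constant term $-\tau^i(L)$. This follows by writing the normalized second coordinate as $[\Ecal_{q^i}]_2 + \chi_t[\Ecal_{q^i}]_1$ and invoking the identity $\psi_1 = u\chi_t$ of \eqref{perkinsid} together with the growth bound \eqref{chigrowtheq}, which force the non-constant contributions to tend to $0$ as $|z|_\Im \to \infty$. Applying $\Theta_t^{-1}$ to the displayed identity (now with $\alpha = 0$) and extracting the constant term of the second coordinate, while using $\Delta|_{u=0} = 0$ and $g^q|_{u=0} = 1$, gives $-\tau^2(L) = -\tau^2(L)\tau(L)c$, hence $c = 1/\tau(L)$. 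Combined with $\beta = -\pitilde^{q-1}c$ and the explicit formulas $L = -\pitilde/\tau(\omega)$, $\tau(L) = \pitilde^q/((\theta^q - t)\tau(\omega))$, this yields $\beta = (\theta^q - t)/L$, completing the proof.

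The hard part will be legitimizing the term-by-term coefficient comparisons rather than performing them: a VMF is an honest power series in $u$ only after the twist by $\Theta_t^{-1}$, so I must consistently work with $\Theta_t^{-1}\Ecal_{q^i}$, track how $\tau$ interacts with this normalization (as in the remark following Corollary \ref{taustability}), and verify via \eqref{chigrowtheq} and Lemma \ref{periodicitylem} that the correction terms genuinely vanish at infinity, so that the constant terms are exactly $-\tau^i(L)$. Correctly identifying the small graded pieces of $M$ and bookkeeping the powers of $\pitilde$ produced by the twist are the remaining, purely routine, points.
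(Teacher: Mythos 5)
Your proposal is correct and follows essentially the same route as the paper's proof: invoke Theorem \ref{structurethm} to write $\tau^2(\Ecal_1)$ (equivalently $\Ecal_{q^2}$) as a combination $(\alpha g^{q+1}+\beta\Delta)\Ecal_1 + c\,g^q\,\tau(\Ecal_1)$ with coefficients in $\TT$, kill the $g^{q+1}$ term by comparing orders of vanishing of the first coordinates at infinity, determine $c$ from the constant terms of the second coordinates (the ratio $\tau^2(L)/\tau(L) = \pitilde^{q^2-q}/(t-\theta^{q^2})$), and get $\beta$ from cancellation of the $u^q$ terms. The only differences are cosmetic: you normalize by $L(\chi_t,1)$ and pass through $\Theta_t^{-1}$ from the outset, and you obtain the needed expansions of $[\Ecal_{q^i}]_1$ by twisting Proposition \ref{AexpE1prop} rather than citing the later Theorem \ref{eisAexp}, as the paper does.
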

\begin{proof}
We have that $\MM_{q^2 - q}^0(\boldsymbol{1})$ is the $\TT$-span of $g^q$, and $\MM_{q^2 - 1}^0(\boldsymbol{1})$ is the $\TT$-span of $\Delta$ and $g^{q+1}$. Thus, it follows from Theorem \ref{structurethm} that there exist $a,b,c \in \TT$ such that $$\tau^2(\Ecal_1) = (a \Delta + b g^{q+1}) \Ecal_1 + c g^q \tau(\Ecal_1).$$ Now, by Theorem \ref{eisAexp}, we see that $[\tau^2(\Ecal_1)]_1 \rightarrow 0$ like $u^{q^2}$, as $|z|_\Im \rightarrow \infty$. Similarly, $[g^q\tau(\Ecal_1)]_1 \rightarrow 0$ like $u^{q}$, $[\Delta\Ecal_1]_1 \rightarrow 0$ like $u^{q}$, and $[g^{q+1}\Ecal_1]_1 \rightarrow 0$ like $u$. Thus, we must have $b = 0$ above. 

Now, $[\Delta \Ecal_1]_2 \rightarrow 0$, and hence we can compare the constant terms of $[\tau^2(\Ecal_1)]_2$ and $[g^q\tau(\Ecal_1)]_2$ to determine $c$. The constant term of the former is $-L(\chi_t,q^2)$ and of the latter is $-L(\chi_t,q)$. It follows from the equation $$L(\chi_t,1) = \frac{-\pitilde}{(t-\theta)\omega(t)}$$ that $$L(\chi_t,q^2) = \frac{\pitilde^{q^2 - q}}{t - \theta^{q^2}} L(\chi_t,q).$$ Hence, $$c = \frac{\pitilde^{q^2 - q}}{t - \theta^{q^2}}.$$

Finally, appealing again to Theorem \ref{eisAexp}, and observing that we must cancel the $u^q$ terms in $[g^q\tau(\Ecal_1)]_1$ and $[\Delta\Ecal_1]_1$, we conclude that $$a = -\frac{\pitilde^{q^2 - 1}}{t-\theta^{q^2}}.$$ Renormalizing gives the result.
\end{proof}

A more compact matrix equation will be useful in the sequel. Define the square matrix
\[ \bm{E}^\star := \left( \frac{1}{L(\chi_t,1)}\Ecal_1, \tau\left(\frac{1}{L(\chi_t,1)}\Ecal_1\right)\right) = \bm{E}\cdot\left( \begin{matrix} L(\chi_t,1)^{-1} & 0 \\ 0 & L(\chi_t,q)^{-1} \end{matrix} \right),\]
with $\bm{E}$ from the proof of Theorem \ref{structurethm}, and for a matrix $(f_{ij})$ with coefficients in $\Hol(\Omega, \TT)$, let 
\[\tau^k(f_{ij}) :=  ( \tau^k(f_{ij})).\]

The next result follows immediately from the last, and expresses $\tau^k(\Ecal_1/L(\chi_t,1))$ as an explicit linear combination of $\Ecal_1/L(\chi_t,1)$ and $\tau(\Ecal_1/L(\chi_t,1))$ for all $k \geq 2$. We set
\[\bm{B} := \left(\begin{matrix} 0 & (\theta^q - t)\Delta \\ 1 & g^q  \end{matrix} \right).\]

\begin{corollary}
 For all positive integers $k \geq 1$, we have
\[\tau^k(\bm{E}^\star) = \bm{E}^\star\bm{B}\tau(\bm{B})\cdots\tau^{k-1}(\bm{B}).\]
\end{corollary}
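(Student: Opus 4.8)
The plan is to prove the identity by induction on $k$, with the base case $k=1$ being nothing more than a matrix reformulation of Proposition \ref{Eisdiffeq}. First I would set $v := \frac{1}{L(\chi_t,1)}\Ecal_1$, so that by the defining formula $\bm{E}^\star = (v, \tau(v))$, viewed as a $2\times 2$ matrix whose columns are the vectors $v$ and $\tau(v)$. Applying $\tau$ entrywise then gives $\tau(\bm{E}^\star) = (\tau(v), \tau^2(v))$, since $\tau$ sends the column $v$ to $\tau(v)$ and the column $\tau(v)$ to $\tau^2(v)$.

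Next I would compute the product $\bm{E}^\star\bm{B}$ by hand. The first column of $\bm{B}$ is $\binom{0}{1}$, so the first column of $\bm{E}^\star\bm{B}$ is $\tau(v)$; the second column of $\bm{B}$ is $\binom{(\theta^q-t)\Delta}{g^q}$, so the second column of $\bm{E}^\star\bm{B}$ is $(\theta^q-t)\Delta\,v + g^q\,\tau(v)$. By Proposition \ref{Eisdiffeq}, this last vector is exactly $\tau^2(v)$. Hence $\bm{E}^\star\bm{B} = (\tau(v),\tau^2(v)) = \tau(\bm{E}^\star)$, which establishes the case $k=1$ (where the product on the right has the single factor $\bm{B}$).

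For the inductive step I would use that the entrywise Anderson twist on matrices with entries in $\MM$ is a ring homomorphism compatible with matrix multiplication, i.e. $\tau(XY) = \tau(X)\tau(Y)$, which follows at once from the multiplicativity of $\tau$ on $\MM$. Assuming the formula for a given $k$, I apply $\tau$ to both sides and then invoke the base relation $\tau(\bm{E}^\star)=\bm{E}^\star\bm{B}$ to obtain
\[\tau^{k+1}(\bm{E}^\star) = \tau(\bm{E}^\star)\,\tau(\bm{B})\cdots\tau^{k}(\bm{B}) = \bm{E}^\star\bm{B}\,\tau(\bm{B})\cdots\tau^{k}(\bm{B}),\]
which is precisely the claimed formula with $k$ replaced by $k+1$, completing the induction.

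The argument is a routine induction and presents no substantial obstacle. The only point requiring genuine care is the base case: one must check that the matrix identity $\tau(\bm{E}^\star)=\bm{E}^\star\bm{B}$ faithfully encodes the vector-valued difference equation of Proposition \ref{Eisdiffeq}, together with the bookkeeping observation that the second column of $\bm{E}^\star$ is $\tau$ applied to its first column. Everything beyond this follows formally from the multiplicativity of $\tau$.
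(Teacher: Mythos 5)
Your proof is correct and matches the paper's intended argument: the paper simply remarks that the corollary ``follows immediately'' from Proposition \ref{Eisdiffeq}, and your matrix reformulation of that proposition (giving $\tau(\bm{E}^\star)=\bm{E}^\star\bm{B}$) followed by induction using the multiplicativity of the entrywise twist is exactly the intended ``immediate'' verification.
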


%%%Might mention Anderson's analytic continuation result 

\subsection{Evaluations at $t = \theta, \theta^q, \theta^{q^2}$ and quasi-modularity} \label{qthpowersevalsect}

We recall the definition of a {\em Drinfeld quasi-modular form of weight $k$, type $m$ and depth $\leq l$} from 
the paper \cite{BvPfimrn}. This is a holomorphic function $$f : \Omega \rightarrow \CC_\infty$$ such that there also exist $A$-periodic holomorphic functions $$f_j : \Omega \rightarrow \CC_\infty,\quad  j = 0,\dots,l,$$ 
with $u$-expansions at infinity in $\CC_\infty[[u]]$, and satisfying
\[f\left( \left(\begin{matrix} a & b \\ c & d  \end{matrix}\right)z\right) = \frac{(cz+d)^k}{(ad-bc)^m} \sum_{j = 0}^l \left(\frac{c}{cz+d}\right)^j f_j(z)\]  
for all $\left(\begin{smallmatrix} a & b \\ c & d  \end{smallmatrix}\right) \in \GL_2(A)$ and $z \in \Omega$. We write $\widetilde{M}_{k,m}^{\leq l}$ for the $\CC_\infty$ vector space of functions of weight $k$, type $m$ and depth $\leq l$. 
In \cite[Theorem 1]{BvPfimrn}, Bosser and Pellarin have shown that the full $\CC_\infty$-algebra of Drinfeld quasimodular forms of all weights, types and depths is exactly the $\CC_\infty$-algebra $\CC_\infty[E,g,h]$, which is of dimension three, and is graded by the weights and filtered by the depths. 
%%%Continue here!!!

\begin{proposition} \label{quasimodprop}
Let $\Hcal := {h_1 \choose h_2} \in \MM_k^m(\rho_t^*)$. If for some positive integer $j$ the function $\Hcal$ may be evaluated at $t = \theta^{q^j}$, then we have
\[ h_1|_{t = \theta^{q^j}} \in \widetilde{M}_{k+q^j,m+1}^{\leq q^j}.\] 
Further,
\begin{align*} h_2|_{t = \theta^{q^j}} + z^{q^j} h_1|_{t = \theta^{q^j}} \in M_{k-q^j}^{m+1-q^j}, & \hfill \text{ if $k \geq q^j$, and } \\ 
h_1|_{t = \theta^{q^j}} \in M_{k+q^j}^{m+1}, & \hfill  \text{ if $q^j > k$} . \end{align*}
\end{proposition}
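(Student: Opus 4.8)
The plan is to specialize the vectorial functional equation \eqref{modularity} at $t=\theta^{q^j}$ and read off scalar (quasi-)modular transformation laws for the coordinate evaluations $\tilde h_i := h_i|_{t=\theta^{q^j}}$. Writing $\gamma=\left(\begin{smallmatrix} a & b \\ c & d\end{smallmatrix}\right)$ and $\delta=\det\gamma\in\FF_q^\times$, the identity \eqref{chitevals} of Lemma \ref{chipropslem} gives $\rho_t^*(\gamma)|_{t=\theta^{q^j}}=\delta^{-1}\left(\begin{smallmatrix} d^{q^j} & -c^{q^j} \\ -b^{q^j} & a^{q^j}\end{smallmatrix}\right)$, so the two evaluations satisfy
\begin{align*}
\tilde h_1(\gamma z) &= \frac{j(\gamma,z)^k}{\delta^{m+1}}\bigl(d^{q^j}\tilde h_1(z) - c^{q^j}\tilde h_2(z)\bigr), \\
\tilde h_2(\gamma z) &= \frac{j(\gamma,z)^k}{\delta^{m+1}}\bigl(-b^{q^j}\tilde h_1(z) + a^{q^j}\tilde h_2(z)\bigr).
\end{align*}
Throughout I will use the characteristic-$p$ Frobenius identities $(x+y)^{q^j}=x^{q^j}+y^{q^j}$; in particular $j(\gamma,z)^{q^j}=c^{q^j}z^{q^j}+d^{q^j}$ and $(az+b)^{q^j}=a^{q^j}z^{q^j}+b^{q^j}$. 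I also record, by specializing the quasi-periodicity $\Hcal(z+a)=\Theta_t(a)\Hcal(z)$ at $t=\theta^{q^j}$, that $\tilde h_1(z+a)=\tilde h_1(z)$ and $\tilde h_2(z+a)=\tilde h_2(z)-a^{q^j}\tilde h_1(z)$.

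The crux is the single function $\phi:=z^{q^j}\tilde h_1+\tilde h_2$. Substituting the two laws above into $\phi(\gamma z)=(\gamma z)^{q^j}\tilde h_1(\gamma z)+\tilde h_2(\gamma z)$, clearing the denominator $j(\gamma,z)^{q^j}$, and collecting the coefficients of $\tilde h_1,\tilde h_2$ produces the two Moore-type evaluations $(az+b)^{q^j}d^{q^j}-j(\gamma,z)^{q^j}b^{q^j}=\delta^{q^j}z^{q^j}$ and $-(az+b)^{q^j}c^{q^j}+j(\gamma,z)^{q^j}a^{q^j}=\delta^{q^j}$. These collapse the bracket to $\delta^{q^j}\phi(z)$, yielding the clean scalar law
\[ \phi(\gamma z)=\frac{j(\gamma,z)^{k-q^j}}{\delta^{m+1-q^j}}\,\phi(z),\qquad \gamma\in\GLA, \]
so $\phi$ transforms exactly as a form of weight $k-q^j$ and type $m+1-q^j$. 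The computation above also shows $\phi$ is $A$-periodic: since $(z+a)^{q^j}=z^{q^j}+a^{q^j}$, the two periodicity relations for $\tilde h_1,\tilde h_2$ cancel.

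Next I would put the law for $\tilde h_1$ into quasi-modular shape. Using $d^{q^j}=j(\gamma,z)^{q^j}-c^{q^j}z^{q^j}$, the first displayed law rewrites as $\tilde h_1(\gamma z)=\frac{j(\gamma,z)^{k+q^j}}{\delta^{m+1}}\bigl[\tilde h_1(z)-\bigl(c/j(\gamma,z)\bigr)^{q^j}\phi(z)\bigr]$, which is precisely the defining relation of a Drinfeld quasi-modular form of weight $k+q^j$, type $m+1$ and depth $\le q^j$, with $f_0=\tilde h_1$, $f_{q^j}=-\phi$, and intermediate $f_i=0$. To conclude $\tilde h_1\in\widetilde M_{k+q^j,m+1}^{\leq q^j}$ it remains to check that $f_0,f_{q^j}$ are $A$-periodic (done above), holomorphic (clear), and have $u$-expansions in $\CC_\infty[[u]]$. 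For the last point I invoke the VMF growth condition through Corollary \ref{infequiv1} and Lemma \ref{periodicitylem}, which give $h_1\in u\TT[[u]]$ and $\chi_t h_1+h_2\in\TT[[u]]$; since evaluation at $t=\theta^{q^j}$ is a continuous $\CC_\infty$-algebra map commuting with the $u$-expansion, it follows that $\tilde h_1\in u\CC_\infty[[u]]$ and $\phi=(\chi_t h_1+h_2)|_{t=\theta^{q^j}}\in\CC_\infty[[u]]$.

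Finally I split into the two cases. If $k\geq q^j$, then $\phi$ is holomorphic on $\Omega$, holomorphic at infinity, and transforms with nonnegative weight $k-q^j$, so $\phi\in M_{k-q^j}^{m+1-q^j}$, which is exactly the claimed membership of $h_2|_{t=\theta^{q^j}}+z^{q^j}h_1|_{t=\theta^{q^j}}$. If instead $q^j>k$, the weight $k-q^j$ is negative; since $M=\CC_\infty[g,h]$ with $\deg g,\deg h>0$ contains no nonzero form of negative weight, $\phi=0$. Then $f_{q^j}=0$ in the quasi-modular relation, so $\tilde h_1$ obeys the genuine modular law of weight $k+q^j$, type $m+1$, and being holomorphic at infinity it lies in $M_{k+q^j}^{m+1}$. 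I expect the main obstacle to be not the algebra but the bookkeeping at infinity: correctly extracting from the carefully chosen VMF regularity condition that $f_0,f_{q^j}$ possess honest $\CC_\infty[[u]]$-expansions, so that the bare transformation laws can be upgraded to membership in the stated spaces of (quasi-)modular forms.
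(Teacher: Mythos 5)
Your proof is correct, and its skeleton coincides with the paper's: both arguments turn on the same auxiliary function $\eta_3:=h_2|_{t=\theta^{q^j}}+z^{q^j}h_1|_{t=\theta^{q^j}}$ (your $\phi$, the specialization of $h_3=h_2+\chi_t h_1$), both recast the specialized functional equation of $h_1|_{t=\theta^{q^j}}$ as a quasi-modular law of weight $k+q^j$, type $m+1$ and depth $\leq q^j$ with top coefficient $\pm\eta_3$, and both conclude by the same case split on the sign of $k-q^j$. The one genuine difference is how the membership $\eta_3\in M_{k-q^j}^{m+1-q^j}$ is obtained: the paper cites \cite[Lemma 2.5]{BvPfimrn}, i.e.\ the general fact that the top-depth coefficient of a Drinfeld quasi-modular form is itself modular of the dropped weight and type, whereas you prove it directly by substituting the two coordinate laws into $\phi(\gamma z)$ and collapsing the bracket via the identities $(az+b)^{q^j}d^{q^j}-(cz+d)^{q^j}b^{q^j}=(\det\gamma)^{q^j}z^{q^j}$ and $(cz+d)^{q^j}a^{q^j}-(az+b)^{q^j}c^{q^j}=(\det\gamma)^{q^j}$. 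Your computation makes the proof self-contained and the weight/type drop transparent; the citation is shorter and situates the statement inside the general theory of quasimodular forms. The peripheral steps match: you extract $h_1\in u\TT[[u]]$ and $\chi_t h_1+h_2\in\TT[[u]]$ from Corollary \ref{infequiv1}, while the paper gets $h_3\in\TT[[u]]$ from $A$-periodicity and the growth bound \eqref{chigrowtheq}, which is the same information; and your vanishing of $\phi$ when $q^j>k$ (no nonzero forms of negative weight) is exactly what the paper absorbs into its citation. Two small remarks: your sign $f_{q^j}=-\phi$ is the correct one (the ``$+$'' in the paper's displayed formula is a harmless slip, immaterial for membership in $\widetilde{M}_{k+q^j,m+1}^{\leq q^j}$); and, like the paper, you pass evaluation at $t=\theta^{q^j}$ through the $u$-expansion coefficients without further justification --- this is precisely what the hypothesis that $\Hcal$ ``may be evaluated'' at $t=\theta^{q^j}$ is there to license, so the two proofs gloss this point at the same level.
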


\begin{proof}
One checks directly from the definitions that $$h_3 := h_2+\chi_t h_1$$ is an $A$-periodic $\TT$-holomorphic function. Further, by the assumption that $\Hcal$ is a VMF and \eqref{chigrowtheq}, we see that 
\[u(z) h_3(z) \rightarrow 0 \text{ as } |z|_\Im \rightarrow \infty.\]
Hence, $h_3 \in \TT[[u]]$. 

Assume that $\eta_1 := h_1|_{t=\theta^{q^j}}$ and $\eta_2 := h_2|_{t = \theta^{q^j}}$ are defined. Then, by \eqref{chitevals},
\[\eta_3(z) := h_3|_{t = \theta^{q^j}}(z) = \eta_2(z) + z^{q^j} \eta_1(z) \in \CC_\infty[[u]].\] 
Hence, from the definition of VMF we learn, for all $\gamma = \left(\begin{smallmatrix} a & b \\ c & d  \end{smallmatrix}\right) \in \GL_2(A)$,
\[ \eta_1(\gamma z) = \frac{j(\gamma,z)^k}{(\det\gamma)^{m+1}}  (d^{q^j} \eta_1(z) -c^{q^j}(\eta_3(z) - z^{q^j}\eta_1(z)))\]
\[= \frac{j(\gamma,z)^{k+q^{j}}}{(\det\gamma)^{m+1}}\left(\eta_1(z) + \left(\frac{c}{cz+d}\right)^{q^j}\eta_3\right).  \]
Now we notice that by \cite[Lemma 2.5]{BvPfimrn}, the function $\eta_3$ lies in $ M_{k-q^j}^{m+1-q^j}$, and this space is zero if $q^j > k$. This implies the final two assertions of the proposition.
\end{proof}

\begin{remark}
Notice that by Theorem \ref{structurethm} and the analytic continuation to $\CC_\infty$ obtained for the variable $t$ in Corollary \ref{e1totaufstar}, any VMF in the span $M_{k-1}^m \Ecal_1 + M_{k-q}^m \Ecal_q$ has entire coordinate functions in the variable $t$ and may be evaluated at $t = \theta^{q^j}$ for all $j \geq 0$. 
\end{remark}

\subsubsection{Modularity of specializations of $\Ecal_1$ at $t = \theta^{q^k}, k \geq 1$} \label{evalE1thetapowers}

As an example of the evaluations $t = \theta, \theta^q,...$ of \S \ref{qthpowersevalsect}, we show that these functions may be explicitly determined in the case of the first vectorial Eisenstein series $\Ecal_1$ via Corollary \ref{e1totaufstar}.

Recall the definitions of $\bm{d}_1, \bm{d}_2$ and $\omega$ from \eqref{d1d2def} and \eqref{omegadef}, respectively. 
We deduce that 
\begin{align*}
\tau(\bm{d}_j)(z;\theta^{q^k}) = \left. \frac{ \sum_{i \geq 0} \frac{\alpha_i(z)^q}{\theta^{q^{i+1}} - t} z^{q^{i+1}(2-j)}}{\sum_{i \geq 0} \frac{\pitilde^{q^{i+1}}}{(\theta^{q^{i+1}} - t) D_i^q}} \right|_{t = \theta^{q^{k+1}}} = \left(\frac{D_k\alpha_{k}(z)  }{\pitilde^{q^{k}}}\right)^q z^{q^{k+1}(2-j)},
\end{align*}
for both $j = 1,2$ and all $k \geq 0$. 
Thus, by Corollary \ref{e1totaufstar} and Lemma \ref{chipropslem}, we obtain 
\[ \Ecal_1|_{t = \theta^{q^{k+1}}}(z) = - \pitilde h(z) \left(\frac{D_k\alpha_{k}(z)  }{\pitilde^{q^{k}}}\right)^q \left( \begin{matrix} -1 \\ z^{q^{k+1}}  \end{matrix}  \right). \]
In particular, we deduce that the $q$-th power of each normalized (\footnote{Normalized: the first non-zero coefficient of its $u$-expansion equals 1.}) para-Eisenstein series is a ratio of two single cuspidal Hecke eigenforms with $A$-expansion
\begin{align}
\left(\frac{D_k\alpha_{k}(z)  }{\pitilde^{q^{k}}}\right)^q = \frac{\sum_{a \in A_+} a^{q^{k+1}}u(az)}{\sum_{a \in A_+} a^q u(az)}.
\end{align}

\subsubsection{The connection of VMF with Petrov's special family}
Petrov, building on the work of B. L\'opez \cite{BLAdM10}, discovered in his thesis (see \cite{PAjnt} for the published version) a family of Drinfeld modular forms with special expansions at the cusp at infinity, similar to those discovered by Goss for his Eisenstein series \cite[(6.3)]{EGinv}, which Petrov dubbed {\em $A$-expansions} (\footnote{The terminology $A$-expansion refers to the fact that the expansion at the cusp at infinity determining these forms is given over the monic elements of $A$, rather than over the positive integers as originally required by the $u$-expansions of Goss. }). 
He has shown that the functions represented near infinity by the series
\[f_s(z) := \sum_{a \in A_+} a^{1+s(q-1)} u(az) \] are single-cuspidal elements of $M_{2+s(q-1)}^1(\GL_2(A))$, for all positive integers $s$. 
By an argument of Gekeler \cite[Theorem 3.1]{BLadm11}, such $A$-expansions are uniquely determined by the coefficients of the functions $\{u(az)\}_{a \in A_+}$, and their interest comes in part due to their good behavior with respect to Hecke operators \cite{PAjnt}.
%%%Now need to mention this below in example for evaluated eisenstein series. 9.3.16 -R
%Clearly, the expansion for $\epsilon_1^\zeta$ given in \eqref{Aexpepsilon1} is of this shape, while the expansion given for $\epsilon_2^\zeta|[\alpha_\pfrak]$ in \eqref{epsilon2nice} is completely new.

%%%Adding details to evaluations after identity between hF and E. 22.02.16 -R
\begin{theorem}
For all $d \geq 1$, letting $s = \frac{q^d - 1}{q-1}$ we have
\[ [\Ecal_1]_1|_{t = \theta^{q^d}} = -\pitilde f_s.\]
\end{theorem}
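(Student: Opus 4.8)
The goal is to identify the specialization $[\Ecal_1]_1|_{t=\theta^{q^d}}$ with the Petrov form $-\pitilde f_s$ where $s = \frac{q^d-1}{q-1}$. My plan is to combine two pieces of information already established in the excerpt: the explicit formula for the specialization of $\Ecal_1$ at prime powers of $\theta$ from \S \ref{evalE1thetapowers}, and the $A$-expansion characterization of $f_s$. First I would recall from the displayed computation in \S \ref{evalE1thetapowers} that, taking $k+1 = d$ (so $k = d-1$),
\[
\Ecal_1|_{t=\theta^{q^d}}(z) = -\pitilde h(z)\left(\frac{D_{d-1}\alpha_{d-1}(z)}{\pitilde^{q^{d-1}}}\right)^q \binom{-1}{z^{q^d}}.
\]
Reading off the first coordinate gives
\[
[\Ecal_1]_1|_{t=\theta^{q^d}} = \pitilde\, h(z)\left(\frac{D_{d-1}\alpha_{d-1}(z)}{\pitilde^{q^{d-1}}}\right)^q.
\]

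Next I would invoke the $A$-expansion identity established at the end of \S \ref{evalE1thetapowers}, namely
\[
\left(\frac{D_{d-1}\alpha_{d-1}(z)}{\pitilde^{q^{d-1}}}\right)^q = \frac{\sum_{a\in A_+}a^{q^d}u(az)}{\sum_{a\in A_+}a^q u(az)}
\]
(this is the cited identity with $k = d-1$). Since $h = \partial g$ has the $A$-expansion $h(z) = -\sum_{a\in A_+}a^q u(az)$ (the single-cuspidal Poincaré series of weight $q+1$, type $1$, normalized as in \cite{PAjnt,BLadm11}), the denominator $\sum_{a\in A_+}a^q u(az)$ is exactly $-h$. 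Substituting, the factor $h$ cancels against this denominator up to sign, leaving
\[
[\Ecal_1]_1|_{t=\theta^{q^d}} = \pitilde\, h(z)\cdot\frac{\sum_{a\in A_+}a^{q^d}u(az)}{-h(z)} = -\pitilde\sum_{a\in A_+}a^{q^d}u(az).
\]

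Finally I would match the exponent: with $s = \frac{q^d-1}{q-1}$ we have $1 + s(q-1) = 1 + (q^d - 1) = q^d$, so $f_s(z) = \sum_{a\in A_+}a^{q^d}u(az)$ is precisely the sum appearing above. Hence $[\Ecal_1]_1|_{t=\theta^{q^d}} = -\pitilde f_s$, as claimed; one should also note that $2 + s(q-1) = 1 + q^d$ is a weight $\equiv 2\pmod{q-1}$, consistent with $f_s \in M_{2+s(q-1)}^1(\GL_2(A))$ being a genuine single-cuspidal Petrov form. The main subtlety, rather than any computational obstacle, is bookkeeping: one must correctly line up the index shift $k = d-1$ between the running index in \S \ref{evalE1thetapowers} and the exponent $d$ here, and correctly track the normalization and sign of Gekeler's $h$ (whose leading $u$-coefficient is $-1$) so that the cancellation of $h$ against the denominator produces the single negative sign in the stated identity. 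Everything else follows directly from Corollary \ref{e1totaufstar} and the identities already proved.
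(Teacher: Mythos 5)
Your algebra is sound, but as a proof this is circular. The pivot you rely on --- the identity
\[
\left(\frac{D_{d-1}\alpha_{d-1}(z)}{\pitilde^{q^{d-1}}}\right)^{q}
=\frac{\sum_{a\in A_+}a^{q^{d}}u(az)}{\sum_{a\in A_+}a^{q}u(az)}
\]
from \S\ref{evalE1thetapowers} --- is not an independent input. In the paper it is \emph{deduced} from the specialization formula
$\Ecal_1|_{t=\theta^{q^{d}}}=-\pitilde h\left(D_{d-1}\alpha_{d-1}/\pitilde^{q^{d-1}}\right)^{q}\binom{-1}{z^{q^{d}}}$
by comparing first coordinates with the evaluation at $t=\theta^{q^{d}}$ of the $A$-expansion $[\Ecal_1]_1=-\pitilde\sum_{a\in A_+}a(t)\,u(az)$ of Proposition~\ref{AexpE1prop}; and that evaluation is precisely the statement $[\Ecal_1]_1|_{t=\theta^{q^{d}}}=-\pitilde f_s$ you are trying to prove. (The deduction, like your argument, also needs L\'opez's expansion $h=-\sum_{a\in A_+}a^{q}u(az)$, which is nowhere stated in the paper; it is true, and recoverable from Petrov's description of $f_1$, the one-dimensionality of single-cuspidal forms of weight $q+1$ and type $1$, and the normalization $h=-u+O(u^{2})$, but it is a second unacknowledged ingredient.) Given the specialization formula and the expansion of $h$, the ratio identity and the theorem are equivalent by the very cancellation you perform; so deriving the theorem from the ratio identity presupposes the theorem, unless one supplies an independent proof of the ratio identity, which neither you nor the paper does.

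The repair is the direct route, which is also what the paper does and is strictly shorter: Corollary~\ref{e1totaufstar} provides the analytic continuation in $t$ that legitimizes the evaluation at $t=\theta^{q^{d}}$, Proposition~\ref{quasimodprop} gives modularity of the specialization, and then one evaluates the $A$-expansion of Proposition~\ref{AexpE1prop} termwise using $\chi_t(a)|_{t=\theta^{q^{d}}}=a^{q^{d}}$ (see \eqref{chitevals}), obtaining
\[
[\Ecal_1]_1|_{t=\theta^{q^{d}}}=-\pitilde\sum_{a\in A_+}a^{q^{d}}u(az)=-\pitilde f_s
\]
near the cusp at infinity, hence on all of $\Omega$ by rigid-analytic continuation, since $1+s(q-1)=q^{d}$. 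Your exponent matching and sign bookkeeping at the end are correct; what fails is the choice of intermediate pivot, not the computation.
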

\begin{proof}
Such evaluations are possible by the analytic continuation which follows from Corollary \ref{e1totaufstar}, and their modularity follows from Proposition \ref{quasimodprop}. Finally, apply Proposition \ref{AexpE1prop}.
\end{proof}

\begin{remark}
The previous theorem also gives analytic continuation for all $z \in \Omega$ to the $A$-expansions, which at first only converge in some rigid analytic neighborhood of the infinity cusp, for these $f_s$.
The rigid analytic extension of $f_s$ to all of $\Omega$ was one of the trickier parts of Petrov's work. 
We expect to obtain all of Petrov's forms $f_s$ defined above from Eisenstein series of weight 1 via symmetric powers of the representation $\rho_t^*$ after evaluation of the variable $t$ at powers $\theta^{q^j}$. The authors hope to work out the details in a future work. 
%%%New 9.3.16 -R
We note that it is not nearly as straightforward to evaluate the higher weight Eisenstein series $\Ecal_k$ at the points $t = \theta^{q^j}$ when $q^j > k$, but that more forms with $A$-expansions may be obtained from those in his special family through hyperdifferentiation in the variable $z$; see \cite{APhyp} where this is carried out. 
%%%Not possible to go much deeper for cheap. -F 11.2.16
\end{remark}

\subsection{$A$-expansions for vectorial Eisenstein series} \label{expansions forvectorialEisenstein}
Now we calculate vectorial $A$-expansions for the non-zero Eisenstein series $\Ecal_k$. We use the formalism of hyperderivatives in the variable $z$ to expedite this task. The $A$-expansions obtained in this section are useful in the proof below that the vectorial Eisenstein series are eigenforms for the Hecke operators defined in the previous section.

\subsubsection{Hyperderivatives in $z$} 
For $f \in \Thol(\Omega,\TT)$ and $z \in \CC_\infty$, we define the family $\{\hyp_{z}^{(n)} f, n \geq 0 \}$ of hyperderivatives of $f$ at $z$ via
\[f(z+\epsilon) = \sum_{n \geq 0} (\hyp_{z}^{(n)} f)(z) \epsilon^n, \]
where $\epsilon \in \CC_\infty$ is taken sufficiently small for the formula above to make sense. This definition gives rise to a family $\{\hyp_{z}^{(n)}, n \geq 0\}$ of hyperdifferential operators on $\Thol(\Omega, \TT)$ as follows from the work done in \cite{SUmathann}.

We shall use two properties of these hyperderivatives. First, they satisfy a \emph{Leibniz rule}. That is, for all $f,g \in \Thol(\Omega,\TT)$ we have
\begin{equation} \label{leibniz}
\hyp_{z}^{(n)}(fg)= \sum_{k = 0}^n (\hyp_{z}^{(k)} f) (\hyp_{z}^{(n-k)}g).
\end{equation}
Second, for all non-negative integers $k$,
\begin{equation} \label{reciphyp}
\hyp_{z}^{(n)} \frac{1}{(z+x)} = \frac{(-1)^n}{ (z+x)^{n+1} }.
\end{equation}

We use this formalism to prove the following generalization of Proposition \ref{AexpE1prop}. We recall that the functions $E^{(k)}$ are Goss' Eisenstein series in Gekeler's notation \cite[(5.9)]{EGinv}.
%Should we insert the L-series identities here? Eval at roots of unity also very strange in this identity\ldots

\begin{theorem} \label{eisAexp}
For positive integers $k \equiv 1 \pmod{q-1}$, let 
\[\lambda_k := -\frac{L(\chi_t,k)}{\pitilde^k} + \frac{1}{\omega(t)} \sum_{l = 0}^{\lfloor \log_q(k-1) \rfloor} \frac{ \zeta(k-q^l) + E^{(k-q^l)}(z)}{\pitilde^{k-q^l} (\theta^{q^l} - t) D_l},\] 
where $\lfloor \cdot \rfloor$ denotes the floor function, and the sum is understood to be empty when $k = 1$. For all such $k$, we have
\begin{flalign*}
\pitilde^{-k}\Ecal_k(z) =& {0 \choose 1} \lambda_k + \sum_{a \in A_+} {- \chi_t(a) \choose \chi_t(az)} G_k(u(az)).
\end{flalign*}
\end{theorem}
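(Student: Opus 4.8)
The plan is to compute the two coordinate functions of $\Ecal_k$ separately, reducing each to a sum over monic $a$, and then to extract the second-coordinate expansion from a ``twisted'' Goss-polynomial identity obtained by hyperdifferentiating the weight-one case of Proposition \ref{AexpE1prop}.

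First I would carry out the monic reduction. Writing $a=\zeta a'$ with $\zeta\in\FF_q^\times$ and $a'\in A_+$, the hypothesis $k\equiv 1\pmod{q-1}$ forces $\sum_{\zeta\in\FF_q^\times}\zeta^{1-k}=-1$ (since $\zeta^{1-k}=1$), which collapses each defining series to a sum over $A_+$. For the first coordinate $\sideset{}{'}\sum_{a,b}\chi_t(a)(az+b)^{-k}$ the term $a=0$ vanishes because $\chi_t(0)=0$, and the classical identity $\sum_{b\in A}(w+b)^{-k}=\pitilde^k G_k(u(w))$ (with $G_1(X)=X$, matching $\sum_b(w+b)^{-1}=\pitilde u(w)$) yields at once $\pitilde^{-k}[\Ecal_k]_1=-\sum_{a\in A_+}\chi_t(a)G_k(u(az))$, which is the first coordinate of the claimed expansion. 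For the second coordinate $\sideset{}{'}\sum_{a,b}\chi_t(b)(az+b)^{-k}$, the same reduction isolates the $a=0$ term as $-L(\chi_t,k)$ and leaves $-\sum_{a\in A_+}S_k(az)$, where $S_k(w):=\sum_{b\in A}\chi_t(b)(w+b)^{-k}$.

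The heart of the argument is the identity
\[S_k(w)=-\pitilde^k\chi_t(w)G_k(u(w))+\frac{\pitilde^k}{\omega(t)}\sum_{l=0}^{\lfloor\log_q(k-1)\rfloor}\frac{G_{k-q^l}(u(w))}{(\theta^{q^l}-t)D_l}.\]
I would prove it by applying $\hyp_z^{(k-1)}$ to the weight-one identity $\sum_{b\in A}\chi_t(b)(w+b)^{-1}=-\pitilde u(w)\chi_t(w)$ used in Proposition \ref{AexpE1prop} (equivalently \eqref{perkinsid}), with $w=az$. By the reciprocal formula \eqref{reciphyp}, term-by-term hyperdifferentiation recovers $S_k$ up to an explicit power of $a$ and a sign, and these powers of $a$ cancel against the prefactor from $\hyp_z^{(k-1)}(az+b)^{-1}$; so it suffices to compute $\hyp^{(k-1)}(u\chi_t)$ via the Leibniz rule \eqref{leibniz}. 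Two inputs are needed: first, the hyperderivatives of $u$ are again Goss polynomials, $\hyp^{(n)}u=(-1)^n\pitilde^n G_{n+1}(u)$, obtained by hyperdifferentiating $\sum_b(w+b)^{-1}=\pitilde u$; and second, crucially, the hyperderivatives of $\chi_t$ are \emph{sparse}. Indeed, from the series $\chi_t=\omega(t)^{-1}\sum_{i\geq 0}\pitilde^{q^i}w^{q^i}/((\theta^{q^i}-t)D_i)$ of Lemma \ref{chipropslem} and the vanishing of $\binom{q^i}{n}$ modulo $p$ unless $n\in\{0,q^i\}$, one finds $\hyp^{(n)}\chi_t=0$ except for $n=0$ (giving $\chi_t$) and $n=q^l$ (giving $\pitilde^{q^l}/(\omega(t)(\theta^{q^l}-t)D_l)$). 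Feeding these into the Leibniz expansion leaves exactly the term $j=0$ together with the terms $j=q^l$ for $q^l\le k-1$, and after collecting powers of $\pitilde$ and signs (using that each $q^l$ is odd, or that $\mathrm{char}=2$) one obtains the displayed identity with the correct floor bound $l\le\lfloor\log_q(k-1)\rfloor$.

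Finally I would reassemble the second coordinate. Summing $S_k(az)$ over $a\in A_+$, the first piece contributes $\pitilde^k\sum_{a\in A_+}\chi_t(az)G_k(u(az))$, the desired non-constant part. For the correction terms I would invoke the companion identity $\zeta(m)+E^{(m)}(z)=-\pitilde^m\sum_{a\in A_+}G_m(u(az))$ for $m=k-q^l\equiv 0\pmod{q-1}$ — itself a monic reduction of the Goss-polynomial identity applied to Goss' Eisenstein series $E^{(m)}$ — which converts $\sum_{a\in A_+}G_{k-q^l}(u(az))$ into $-(\zeta(k-q^l)+E^{(k-q^l)}(z))/\pitilde^{k-q^l}$. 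Combined with the isolated term $-L(\chi_t,k)$, this is precisely $\pitilde^k\lambda_k$, matching the statement after dividing by $\pitilde^k$. I expect the main obstacle to be exactly the sparse-hyperderivative computation for $\chi_t$ and its bookkeeping: justifying term-by-term hyperdifferentiation of the infinite series (convergence on each $\Omega_n$, via \eqref{chigrowtheq} and Lemma \ref{chipropslem}), identifying which Leibniz terms survive, and tracking the precise powers of $\pitilde$, the signs, and the floor bound so that everything matches the normalization of $\lambda_k$.
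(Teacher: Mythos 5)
Your proposal is correct and follows essentially the same route as the paper's proof: monic reduction via $k\equiv 1\pmod{q-1}$, hyperdifferentiation of the weight-one identity $\sum_{b\in A}\chi_t(b)/(z+b)=-\pitilde\, u(z)\chi_t(z)$ through the Leibniz rule, the sparseness of the hyperderivatives of $\chi_t$ (nonzero only at orders $0$ and $q^l$), the Goss-polynomial formula for hyperderivatives of $u$, and the identity $-\pitilde^{m}\sum_{a\in A_+}G_m(u(az))=\zeta(m)+E^{(m)}(z)$. Your only departures are organizational --- you isolate a clean per-argument identity for $S_k(w)$ before summing over monics and you prove the first-coordinate expansion directly rather than citing \cite{FPannals} --- and you additionally supply the Lucas-type justification of the sparse-derivative fact \eqref{chihyp} that the paper states without proof.
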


%%%New. 25.02.2016 -R
\begin{remark}
NB. that $\lambda_k \in \EE[[u]]$, so that after applying Lemma \ref{AGFfe}, this can be made to represent an expansion for $\Ecal_k \in \Theta_t \TT[[u]]^{2\times 1}$.
\end{remark}

\begin{proof}
%\subsubsection{The coordinate $e_1^k$}
%SAY MORE HERE?
Write $\Ecal_k := { e_1^k \choose e_2^k}$. We have already handled the case where $k = 1$ in Proposition \ref{AexpE1prop}, and we now assume $k > 1$. 

It was proved in \cite{FPannals}, that $e_1^k(z) = -\pitilde^k \sum_{a \in A_+} \chi_t(a) G_k(u(az))$. Now let us examine the second coordinate $e-2^{k}(z)$.

We need the following preliminary observations. From \eqref{reciphyp}, the assumption $k \equiv 1 \mod (q-1)$, and the identity 
\begin{equation} %\label{explicitform}
\sum_{d \in A} \frac{\chi_t(d)}{z+d} = -\pitilde u(z) \chi_t(z),
\end{equation}
proved in \cite[Theorem 1.1]{RPmathz}, we obtain
\begin{equation} \label{hyp1}
\sum_{d \in A} \frac{\chi_t(d)}{(z+d)^k} = (-1)^{k-1} \hyp_{z}^{(k-1)} \left( \sum_{d \in A} \frac{\chi_t(d)}{z+d} \right) = (-1)^k \pitilde \hyp_{z}^{(k-1)}(u \chi_t) (z).
\end{equation}
We shall also use
\begin{align} \label{hyp2} %BEWARE NOTATION u_c\ldots
(\hyp_{z}^{(k-1)} u)(cz) &= (-\pitilde)^{k-1} G_k(u(cz)), \\
\label{chihyp} \hyp_z^{q^l} \chi_t &= \frac{\pitilde^{q^l}}{(\theta^{q^l - t})D_j}, \forall l \geq 0 \text{ and } \hyp_z^{q^l} \chi_t = 0, \text{ otherwise}.
\end{align}
%%%Also, recall the hyperderivatives of chi_t(z).  22.02.16

Hence, removing the constant term via $e_2^k(z) + \sum_{d \in A_+} \frac{\chi_t(d)}{d^k} $ we obtain
\begin{align*}
 \sideset{}{'}\sum_{c \in A} \sum_{d \in A} \chi_t(d) (cz+d)^{-k} &= \\
&= -(-1)^k\pitilde \sum_{c \in A_+} (\hyp_{z}^{(k-1)} u \chi_t)(cz)
\end{align*}
\begin{align*}
=-(-1)^k\pitilde \sum_{c \in A_+} \sum_{j = 0}^{k-1} (\hyp_{z}^{(k-1-j)} u)(cz) (\hyp_{z}^{(j)}\chi_t)(cz) \\
= -\frac{(-1)^k\pitilde}{\omega(t)} \sum_{c \in A_+} \sum_{l = 0}^{\lfloor \log_q(k-1) \rfloor} (\hyp_{z}^{(k-1-q^l)} u)(cz) \frac{\pitilde^{q^l}}{(\theta^{q^l} - t) D_l} \\
 -{(-1)^k\pitilde}\sum_{c \in A_+}(\hyp_{z}^{(k-1)}u)(cz)\chi_t(cz) \\
= \frac{1}{\omega(t)} \sum_{l = 0}^{\lfloor \log_q(k-1) \rfloor} \frac{\pitilde^{q^l}}{(\theta^{q^l} - t) D_l} \left(-\pitilde^{k-q^l} \sum_{c \in A_+} G_{k-q^l}(u(cz))\right) \\
 + \pitilde^k \sum_{c \in A_+} G_k(u(cz))\chi_t(cz) \\
= \frac{1}{\omega(t)} \sum_{l = 0}^{\lfloor \log_q(k-1) \rfloor} \frac{\pitilde^{q^l}}{(\theta^{q^l} - t) D_l}\left( \sum_{a \in A_+} \frac{1}{a^{k-q^l}} + E^{(k-q^l)}(z) \right) \\
 + \pitilde^k\sum_{c \in A_+} G_k(u(cz))\chi_t(cz).
\end{align*}
%DOUBLE CHECK MOVE TO LAST LINE. MIGHT BE WRONG.

From the first line to the second we have collected the elements of $A$ according to their leading coefficient, using that $k \equiv 1 \mod (q-1)$, and we have used the description of the second sum on the right side in terms of the hyperdifferential operator $\hyp_{z}^{(k-1)}$, as in \eqref{hyp1} above. 
From the second to the third lines, we have used the Leibniz rule \eqref{leibniz}.
From the third to the fourth, we use \eqref{chihyp}.
From the fourth to the fifth lines we have used \eqref{hyp2} above. Finally, from the fifth to the sixth lines we have used \cite[(6.3)]{EGinv}.
This concludes the calculation. 
\end{proof}

%%%Moved: 25.2.16 -R
\subsection{Determinant maps and Ramanujan-Serre derivatives}
\label{Ramanujan-Serre}
%%%Changed language from paring (which should be \TT-valued, I think) to map. 25.2.16 -R
We quickly digress to remind the reader of a determinant map that exists between weak VMF and was already considered in \cite{FPtaurecur, FPannals} in the construction of various important Drinfeld modular forms of full level. We use these determinant maps and the $A$-expansions obtained above for the vectorial Eisenstein series to make a connection with the Ramanujan-Serre derivatives introduced by Gekeler in the Drinfeld modular setting.

Given two VMF$^!$ $\Hcal_1,\Hcal_2$, let $[\Hcal_1, \Hcal_2]$ denote the square matrix whose first and second columns contain the entries of $\Hcal_1$ and $\Hcal_2$, respectively. The following result is immediate. 

\begin{proposition}
Let $\Hcal_1 \in \MM_{k_1}^{m_1}(\rho_t^*)^!$ and $\Hcal_2 \in \MM_{k_2}^{m_2}(\rho_t^*)^!$. We have
\[\det[\Hcal_1 \ \Hcal_2] \in \MM_{k_1+k_2}^{m_1+m_2+1}(\bm{1})^!.\]
\end{proposition}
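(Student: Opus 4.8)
The plan is to verify directly that $M := \det[\Hcal_1 \ \Hcal_2]$ satisfies the scalar functional equation defining a weak $\TT$-valued modular form of weight $k_1+k_2$ and type $m_1+m_2+1$, and that it lies in $\Thol(\Omega,\TT)$; everything else follows from the definition of $\MM_{k_1+k_2}^{m_1+m_2+1}(\bm{1})^!$.

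First I would assemble the two columns into a single matrix identity. Writing $\gamma = \left(\begin{smallmatrix} a & b \\ c & d\end{smallmatrix}\right)$ and abbreviating $c_i := j(\gamma,z)^{k_i}\det\gamma^{-m_i}$, the modularity \eqref{modularity} for each $\Hcal_i$ gives
\[[\Hcal_1(\gamma z) \ \Hcal_2(\gamma z)] = \rho_t^*(\gamma)\,[\Hcal_1(z) \ \Hcal_2(z)]\begin{pmatrix} c_1 & 0 \\ 0 & c_2\end{pmatrix}.\]
Taking determinants and using multiplicativity, I obtain
\[M(\gamma z) = \det(\rho_t^*(\gamma))\, j(\gamma,z)^{k_1+k_2}\det\gamma^{-m_1-m_2}\, M(z).\]

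The only point requiring any thought is the evaluation $\det(\rho_t^*(\gamma)) = \det\gamma^{-1}$, which is exactly what produces the $+1$ shift in the type. From the explicit formula for $\rho_t^*$ with $\delta := \det\gamma \in \FF_q^\times$, one has
\[\det(\rho_t^*(\gamma)) = \delta^{-2}\bigl(\chi_t(a)\chi_t(d) - \chi_t(b)\chi_t(c)\bigr).\]
Since $\chi_t : A \to \FF_q[t]$ is an $\FF_q$-algebra homomorphism, the parenthesis equals $\chi_t(ad-bc) = \chi_t(\delta) = \delta$ (constants in $\FF_q$ being fixed by $\chi_t$), so $\det(\rho_t^*(\gamma)) = \delta^{-1} = \det\gamma^{-1}$. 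Substituting yields $M(\gamma z) = j(\gamma,z)^{k_1+k_2}\det\gamma^{-(m_1+m_2+1)}M(z)$, precisely the transformation law of weight $k_1+k_2$ and type $m_1+m_2+1$.

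Finally I would check the analytic and growth hypotheses. Since $M = [\Hcal_1]_1[\Hcal_2]_2 - [\Hcal_1]_2[\Hcal_2]_1$ is a polynomial combination of the coordinate functions, and $\Thol(\Omega,\TT)$ is closed under sums and products (if $u^n f \to 0$ and $u^m g \to 0$ as $|z|_\Im \to \infty$, then $u^{n+m}fg \to 0$ and $u^{\max(n,m)}(f+g)\to 0$), it follows that $M \in \Thol(\Omega,\TT)$. Combined with the functional equation, this places $M$ in $\MM_{k_1+k_2}^{m_1+m_2+1}(\bm{1})^!$. I expect essentially no obstacle: the entire content is the homomorphism property of $\chi_t$ giving $\det\rho_t^* = \det^{-1}$, which is why the proposition is recorded as immediate.
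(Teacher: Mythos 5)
Your proof is correct and is exactly the routine verification the authors intend: the paper states this proposition without proof (``The following result is immediate''), and your argument---taking determinants in the matrix form of \eqref{modularity}, observing $\det\rho_t^*(\gamma)=\det(\gamma)^{-1}$ via the $\FF_q$-algebra homomorphism property of $\chi_t$ (equivalently, $\det\rho_t^*(\gamma)=\det\rho_t(\gamma)^{-1}=\chi_t(\det\gamma)^{-1}=\det(\gamma)^{-1}$ since $\det\gamma\in\FF_q^\times$), and noting that $\Thol(\Omega,\TT)$ is closed under sums and products---is precisely the content behind the $+1$ shift in the type.
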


For example, the forms $\det[\Ecal_1,\Ecal_k] \in \MM_{k+1}^1(\bm{1})$, where $k \geq q$, give representatives for all of the  single and not double cuspidal Drinfeld modular forms of full level. 
%%%Pose more general question for single cuspidal forms for other congruence subgroups? 25.2.16 -R

We have not yet been able to understand the behavior of the Hecke operators under this determinant map. In particular, we do not know if the forms $\det[\Ecal_1 \ \Ecal_k]$ are Hecke eigenforms.
 
\subsubsection{Relations with Ramanujan-Serre derivative} \label{RamSerDer}
In \cite[\S 8]{EGinv}, Gekeler defines a family of derivations $M_k^m(\GL_2(A)) \rightarrow M_{k+2}^{m+1}(\GL_2(A))$ given by
\[ \partial_k f := \frac{1}{\pitilde} \hyp_{z}^{(1)} f + k Ef,\]
where $E(z) := \sum_{a \in A_+} a u(az)$ is the false Eisenstein series of weight 2, represented here by its $A$-expansion for the cusp at infinity. 
These derivations are analogous to the Ramanujan-Serre derivative classically. 

It is evident from the $A$-expansion for $\Ecal_1$ that the first coordinate of $-\pitilde^{-1}\Ecal_1$ specializes at $t = \theta$ to $E$. We wish to relate the linear map 
\[\ev_\theta\det[\Ecal_1 \ \cdot \ ]:\MM_k^m(\rho_t^*) \rightarrow M_{k+1}^{m+1} \text{ given by } \Hcal \mapsto \ev_\theta\det[\Ecal_1 \ \Hcal]\] 
to Gekeler's $\partial$ when $\Hcal = \Ecal_k$. 

%%%Should discuss surjectivity of this map, before evaluation, and that the kernel is $M(\bm{1})\Ecal_1$. Does it give a way to find the coefficients of the higher eisenstein series expressed in E_1 and E_q? 2.25 - R

\begin{proposition}
For all integers $k > 1$ such that $k \equiv 1 \pmod{q-1}$ we have
\[\ev_\theta \det[ \Ecal_1, (k-1)\Ecal_k ] = -\pitilde \partial_{k-1} E^{(k-1)}.\]
\end{proposition}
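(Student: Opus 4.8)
The plan is to evaluate both sides at $t=\theta$ coordinate-by-coordinate and to reduce everything to the \emph{first} coordinate functions of $\Ecal_1$ and $\Ecal_k$, which are the best-behaved pieces. First I would unwind the right-hand side. Since $\hyp_z^{(1)}$ is the ordinary derivative, differentiating $E^{(k-1)}(z)=\sideset{}{'}\sum_{a,b\in A}(az+b)^{-(k-1)}$ termwise gives
\[ \hyp_z^{(1)} E^{(k-1)} = -(k-1)\sideset{}{'}\sum_{a,b\in A} a\,(az+b)^{-k} = -(k-1)\,\ev_\theta[\Ecal_k]_1, \]
using $\ev_\theta\chi_t(a)=a$ together with the series definition of $\Ecal_k$. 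Likewise Proposition \ref{AexpE1prop} gives $\ev_\theta[\Ecal_1]_1=-\pitilde E$, i.e.\ $\pitilde E=-\ev_\theta[\Ecal_1]_1$. Substituting these into the definition of $\partial_{k-1}$ collapses the Ramanujan--Serre side to
\[ -\pitilde\,\partial_{k-1}E^{(k-1)} = (k-1)\left( \ev_\theta[\Ecal_k]_1 + \ev_\theta[\Ecal_1]_1\cdot E^{(k-1)}\right), \]
so it suffices to prove $\ev_\theta\det[\Ecal_1,\Ecal_k]=\ev_\theta[\Ecal_k]_1+\ev_\theta[\Ecal_1]_1\cdot E^{(k-1)}$.

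Next I would expand $\det[\Ecal_1,\Ecal_k]=[\Ecal_1]_1[\Ecal_k]_2-[\Ecal_k]_1[\Ecal_1]_2$ and exploit two ``column relations'' at $t=\theta$ coming from the elementary identity $z\cdot a(az+b)^{-k}+b(az+b)^{-k}=(az+b)^{-(k-1)}$. Summing over $a,b$ and specializing at $t=\theta$ (where $\ev_\theta\chi_t(a)=a$ and $\ev_\theta\chi_t(az)=az$) yields
\[ z\,\ev_\theta[\Ecal_k]_1 + \ev_\theta[\Ecal_k]_2 = E^{(k-1)}, \qquad z\,\ev_\theta[\Ecal_1]_1 + \ev_\theta[\Ecal_1]_2 = -\ev_\theta L(\chi_t,1), \]
the second of which can also be read off directly from the $A$-expansion of $\Ecal_1$. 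Inserting $\ev_\theta[\Ecal_k]_2=E^{(k-1)}-z\,\ev_\theta[\Ecal_k]_1$ and $\ev_\theta[\Ecal_1]_2=-\ev_\theta L(\chi_t,1)-z\,\ev_\theta[\Ecal_1]_1$ into the determinant, the two $z$-linear terms cancel, leaving
\[ \ev_\theta\det[\Ecal_1,\Ecal_k] = \ev_\theta[\Ecal_1]_1\cdot E^{(k-1)} + \big(\ev_\theta L(\chi_t,1)\big)\,\ev_\theta[\Ecal_k]_1. \]

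To finish I would compute $\ev_\theta L(\chi_t,1)=1$. This follows from $L(\chi_t,1)=-\pitilde/\tau(\omega)$ together with the functional equation $\tau(\omega)=(t-\theta)\omega$: the series \eqref{omegadef} shows that near $t=\theta$ only the $j=0$ summand $\pitilde/(\theta-t)$ is singular, so $\ev_\theta\tau(\omega)=\lim_{t\to\theta}(t-\theta)\omega=-\pitilde$, whence $\ev_\theta L(\chi_t,1)=1$. With this value the displayed expression becomes exactly $\ev_\theta[\Ecal_k]_1+\ev_\theta[\Ecal_1]_1\cdot E^{(k-1)}$, matching the reduced right-hand side, and multiplying through by $(k-1)$ gives the claim.

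I expect the main obstacle to be not this algebra but the analytic justification that $\Ecal_k$ (and hence the determinant) may be specialized at $t=\theta$ at all: a priori the $A$-expansion of $\Ecal_k$ in Theorem \ref{eisAexp} contains $\lambda_k$, whose $l=0$ summand carries a pole $(\theta^{q^0}-t)^{-1}$ at $t=\theta$. The key point is that this pole is cancelled by the zero of $1/\omega(t)$ at $t=\theta$, so that $\lambda_k$, and hence each coordinate of $\Ecal_k$, is regular at $t=\theta$; the evaluability of $\Ecal_1$ is handled via Corollary \ref{e1totaufstar}. Once this regularity is secured and the value $\ev_\theta L(\chi_t,1)=1$ is pinned down, the remaining steps are the routine manipulations above, and the resulting membership $-\pitilde\,\partial_{k-1}E^{(k-1)}\in M_{k+1}^1$ is consistent with $\det[\Ecal_1,\Ecal_k]\in\MM_{k+1}^1(\bm{1})^!$ coming from the determinant map.
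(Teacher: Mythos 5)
Your argument is correct in substance and terminates in exactly the same determinant cancellation as the paper's proof, but it assembles the two key inputs for $\Ecal_k$ differently. The paper reads both coordinates of $\ev_\theta\Ecal_k$ off the $A$-expansion of Theorem \ref{eisAexp} and then invokes Gekeler's identity \eqref{preQkeq} to recognize $\pitilde^{k}\sum_{a\in A_+}aG_k(u(az))$ as $\tfrac{1}{k-1}\hyp_z^{(1)}E^{(k-1)}$; you bypass the Goss-polynomial expansion entirely, obtaining $\hyp_z^{(1)}E^{(k-1)}=-(k-1)\ev_\theta[\Ecal_k]_1$ by termwise differentiation of the lattice sum, and the second-coordinate relation $z\,\ev_\theta[\Ecal_k]_1+\ev_\theta[\Ecal_k]_2=E^{(k-1)}$ from the identity $za(az+b)^{-k}+b(az+b)^{-k}=(az+b)^{-(k-1)}$. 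You also make explicit the value $\ev_\theta L(\chi_t,1)=1$, which the paper uses implicitly when it writes $\ev_\theta\Ecal_1=\binom{-\pitilde E}{-1+\pitilde zE}$. What your route buys is independence from the machinery of Theorem \ref{eisAexp} (except as a crutch for evaluability); what the paper's route buys is that all analytic issues are already discharged in that theorem.

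One caveat deserves attention: your computation needs $\ev_\theta$ to commute with the infinite lattice sums, and this is not automatic, since evaluation at $t=\theta$ (a point with $|\theta|>1$) is not continuous on $\TT$; regularity of $\lambda_k$ at $t=\theta$ gives the \emph{existence} of $\ev_\theta\Ecal_k$, which is a weaker statement than the termwise formula you use. For $k\geq 2$ the interchange is legitimate and easy to justify: for $a\neq 0$ one has $|az+b|\geq|a|\,|z|_\Im$, so the sup norm of $a(t)(az+b)^{-k}$ on the disk $|t|\leq R$ is at most $R^{\deg a}\bigl(|a|\,|z|_\Im\bigr)^{-k}$, which tends to $0$ for any $R<q^{k}$; hence the defining series of both coordinates of $\Ecal_k$ converge on $|t|\leq q$, where evaluation at $\theta$ is continuous. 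By contrast, for the weight-one column this genuinely fails: the termwise-evaluated series contains the terms $a(az)^{-1}=1/z$ infinitely often, so it diverges, and deriving $z\,\ev_\theta[\Ecal_1]_1+\ev_\theta[\Ecal_1]_2=-1$ from the ``elementary identity'' is not an option. There the $A$-expansion of Proposition \ref{AexpE1prop}, resting on the analytic continuation of Corollary \ref{e1totaufstar}, is the necessary route (exactly as in the paper), not merely an alternative; your fallback remark covers this, but the distinction should be made explicit.
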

\begin{proof}
From Proposition \ref{AexpE1prop}, we obtain $(\ev_\theta \Ecal_1)(z) = {-\pitilde E(z) \choose -1 + \pitilde z E(z)}$. 
By Theorem \ref{eisAexp} above, for all $k > 1$ such that $k \equiv 1 \pmod{q-1}$, we have
\[\ev_\theta \Ecal_k(z) = {-1 \choose z} \pitilde^{k}\sum_{a \in A_+} a G_k(u(az)) + {0 \choose 1}E^{(k-1)}(z).\]
Further, observe that by \cite[(3.4)(vii), (6.3), \S 8]{EGinv} 
\begin{align} \label{preQkeq}
\hyp_{z}^{(1)} E^{(k-1)}(z) = (k-1)\pitilde^{k}\sum_{a \in A_+} a G_k(u(az)).\end{align}
Thus, 
\[(k-1)\ev_\theta \Ecal_k = {-1 \choose z} \hyp_{z}^{(1)} E^{(k-1)} + {0 \choose 1}(k-1)E^{(k-1)},\]
%CAREFUL NOT TO DIVIDE BY ZERO\ldots k-1\ldots
which gives
\begin{align*}
\ev_\theta \det[\Ecal_1 \ (k-1) \Ecal_k] = \\
= -\pitilde E(z \hyp_{z}^{(1)} E^{(k-1)} + (k-1)E^{(k-1)}) - (z\pitilde E - 1)(-\hyp_{z}^{(1)} E^{(k-1)}) \\
 = -(k-1)\pitilde E E^{(k-1)} - \hyp_{z}^{(1)} E^{(k-1)} \\
 = -\pitilde \partial_{k-1} E^{(k-1)}.
\end{align*}
\end{proof}

\begin{remark}
We notice that when the characteristic of $K$ divides $k-1$, one has
\[\partial_k f = \frac{1}{\pitilde}\hyp_t^{(1)}f,\]
and one is led to ask about the modularity of $Q_k := \sum_{a \in A_+} a G_k(u(az))$ which one pulls from
\[\hyp_{z}^{(1)} E^{(k-1)}(z) = (k-1)\pitilde^{k}\sum_{a \in A_+} a G_k(u(az)),\]
for such $k$. By the proof of Proposition \ref{quasimodprop}, the function $Q_k$ is quasimodular, and not modular. 
\end{remark}

%I want to say something about this, but it needs to be cleaned up: The interesting thing about the forms $\det[\Ecal_1 \ \Ecal_k]$ is that before evaluation of the variable $t$, they are already modular with respect to $GL_2(A)$. In particular, after hyperdifferentiating and evaluating at roots of unity one obtains all sorts of interesting relations between forms of varying levels. 10.3.16

\section{Interpolation of Drinfeld modular forms of prime power levels}
\label{InterpolationofDrinfeldmodularforms}
One of the most intriguing features of the VMF studied in this note is that their coordinate functions specialize to Drinfeld modular forms of prime level $\pfrak$ upon making the replacement $t = \zeta$ for a root $\zeta \in \FF_q^{ac} \subset \CC_\infty$ of $\pfrak$. By introducing hyperderivatives in the variable $t$, we shall be able to show that such hyperderivatives of the coordinate functions of VMF specialize at $t = \zeta$ to forms of prime power levels. 
%%%!!!NEW:
First we set up some preliminaries concerning Drinfeld modular forms. 

\subsection{Basic congruence subgroups} 
Throughout this section $\mfrak \in A$  denotes an arbitrary monic polynomial in $A$, although later we are going to make certain restrictions suitable with our purposes.

We recall that $\Gamma(\mfrak)$, \emph{the principal congruence subgroup of level $\mfrak$ in $\Gamma(1) := \GL_2(A)$}, is defined via the exact sequence
\[ 1 \rightarrow \Gamma(\mfrak) \rightarrow \Gamma(1) \rightarrow \GL_2(A/\mfrak A),\]
where the third arrow is the reduction of matrix coefficients modulo $\mfrak$; as Gekeler points out \cite[(3.5)]{EGjnt01}, this last arrow in the exact sequence above does not surject but lands in the subgroup of matrices with determinants in $\FF_q^\times$. 

We also have the \emph{Hecke congruence subgroup of level} $\mfrak$ given by
\[\Gamma_0(\mfrak) := \left\{ \gamma \in \Gamma(1) : \gamma \equiv \left( \begin{smallmatrix} * & * \\ 0 & *  \end{smallmatrix}  \right) \pmod{\mfrak} \right\}.\]
On $\Gamma_0(\mfrak)$ we have the character 
\[\eta : \Gamma_0(\mfrak) \rightarrow (A/\mfrak A)^\times\] 
given by 
\begin{equation}
\eta \left( \begin{smallmatrix} * & * \\ * & d  \end{smallmatrix}  \right) := d \pmod{\mfrak}. 
\end{equation}

We define $\Gamma_1(\mfrak)$ as the kernel of $\eta$. Explicitly,
\[\Gamma_1(\mfrak) := \left\{ \gamma \in \Gamma_0(\mfrak) : \gamma \equiv \left( \begin{smallmatrix} \xi & * \\ 0 & 1  \end{smallmatrix}  \right) \pmod{\mfrak}, \text{ for some } \xi \in \FF_q^\times \right\}.\]
The map $\eta$ surjects, and hence, $\Gamma_0(\mfrak)/\Gamma_1(\mfrak)$ is isomorphic to $(A/\mfrak A)^\times$. 

Similarly, the map
\[\Gamma_1(\mfrak)\xrightarrow{\beta} {A}/{\mfrak A}\]
defined by 
\begin{equation}
\beta \left( \begin{smallmatrix} * & b \\ * & *  \end{smallmatrix}  \right) := b \pmod{\mfrak}. 
\end{equation}
is a surjective group homomorphism with kernel $\Gamma(\mfrak)$ so that $\Gamma_1(\mfrak)/\Gamma(\mfrak)\cong A/\mfrak A$.
%%%Do we need to say this? 2.2.16 -Rudy
Similar results can be proved in the classical setting, see e.g. Diamond and Shurman's book \cite[\S 1.2]{DiShu}.

\subsubsection{Cusps for congruence subgroups}
The group $\GL_2(A)$ acts on the set $K \cup \{\infty\}$ via linear fractional transformations.  
Hence, given a subgroup $\Gamma'$ of $\GL_2(A)$, we can define a $\Gamma'$-equivalence relation
over $\mathbb{P}^1(K)=K \cup \{\infty\}$ in the following way: if $x,y\in \mathbb{P}^1(K)$,
then $x\sim_{\Gamma'} y$ if and only if there exists $\gamma\in \Gamma'$ such that $\gamma x=y$.
\begin{definition}
For each congruence subgroup $\Gamma(\mfrak) \subset \Gamma' \subset \GL_2(A)$, we call the $\Gamma'$-equivalence classes of points of $\mathbb{P}^1(K)$ the {\em cusps} of $\Gamma'$.
\end{definition}
All points of $K$ are $\Gamma'$-equivalent to $\infty$ if $\Gamma'=\GL_2(A)$. 
For the three groups $\Gamma(\mfrak)$, $\Gamma_1(\mfrak)$, and $\Gamma_0(\mfrak)$, with $\mfrak\neq 1$, Gekeler has given explicit descriptions of the cusps, including their number in \cite[\S 6]{EGjnt01}. In particular, since we will be dealing below with Drinfeld modular forms with character for $\Gamma_0(\pfrak)$, it is relevant to know the following result which follows from Gekeler's \cite[Proposition 6.7 (i)]{EGjnt01}.

\begin{lemma}
Let $\pfrak$ be a monic irreducible polynomial in $A$. There are $2$ cusps for $\Gamma_0(\pfrak)$ which may be represented by $0$ and $\infty$. \hfill \qed
\end{lemma}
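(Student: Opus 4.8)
The plan is to compute the $\Gamma_0(\pfrak)$-orbits on $\mathbb{P}^1(K)$ directly, rather than extracting the count from Gekeler's general formula. Since $A$ is a principal ideal domain, every point of $\mathbb{P}^1(K)$ has a representative $[a:c]$ with $a,c\in A$ and $\gcd(a,c)=1$, and such a representative is unique up to simultaneous scaling by $\FF_q^\times$. I would first record the two candidate representatives $\infty=[1:0]$ and $0=[0:1]$, and then show that they exhaust the cusps and are inequivalent.

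The heart of the argument is a $\Gamma_0(\pfrak)$-invariant. For a coprime representative $[a:c]$ set $\iota([a:c]):=1$ if $\pfrak\mid c$ and $\iota([a:c]):=0$ otherwise. This is well defined, since scaling $(a,c)$ by $\lambda\in\FF_q^\times$ does not change divisibility by $\pfrak$. To see $\Gamma_0(\pfrak)$-invariance, take $\gamma=\left(\begin{smallmatrix}\alpha & \beta \\ \gamma' & \delta\end{smallmatrix}\right)\in\Gamma_0(\pfrak)$, so $\gamma'\equiv 0\pmod{\pfrak}$ and $\det\gamma=\alpha\delta-\beta\gamma'\in\FF_q^\times$. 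Then $\gamma\cdot[a:c]=[\alpha a+\beta c:\gamma' a+\delta c]$ is again a coprime pair (as $\gamma\in\GLA$ preserves unimodularity), and reducing the lower entry gives $\gamma' a+\delta c\equiv\delta c\pmod{\pfrak}$. Because $\alpha\delta\equiv\det\gamma$ is a nonzero constant modulo the irreducible $\pfrak$, the entry $\delta$ is invertible modulo $\pfrak$; hence $\pfrak\mid(\gamma' a+\delta c)$ if and only if $\pfrak\mid c$, so $\iota$ is constant on orbits. Since $\iota(\infty)=1$ while $\iota(0)=0$, the cusps $\infty$ and $0$ are inequivalent, giving at least two cusps.

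It then remains to show every coprime $[a:c]$ is equivalent to $\infty$ or to $0$ according to the value of $\iota$. If $\pfrak\mid c$, then $\pfrak\nmid a$, and Bezout yields $\beta,\delta\in A$ with $a\delta-c\beta=1$; the matrix $\left(\begin{smallmatrix} a & \beta \\ c & \delta\end{smallmatrix}\right)$ lies in $\Gamma_0(\pfrak)$ (its lower-left entry $c$ is divisible by $\pfrak$) and carries $\infty$ to $[a:c]$. If $\pfrak\nmid c$, then $\gcd(a\pfrak,c)=1$, using $\gcd(a,c)=1$ together with the irreducibility of $\pfrak$, so I can solve $\alpha c-(a\pfrak)s=1$ for $\alpha,s\in A$; the matrix $\left(\begin{smallmatrix}\alpha & a \\ \pfrak s & c\end{smallmatrix}\right)\in\Gamma_0(\pfrak)$ carries $0$ to $[a:c]$. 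Thus there are exactly two cusps, represented by $0$ and $\infty$.

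The only genuinely delicate point is this last construction: producing a matrix with prescribed second column $(a,c)$ whose lower-left entry is \emph{forced} to be divisible by $\pfrak$. This is precisely where the irreducibility of $\pfrak$ enters, through the coprimality $\gcd(a\pfrak,c)=1$; everything else is bookkeeping with coprime representatives and the observation that the diagonal entries of a $\Gamma_0(\pfrak)$-matrix are units modulo $\pfrak$. Of course, the same count also drops out of Gekeler's \cite[Proposition 6.7(i)]{EGjnt01}, which one may simply cite instead.
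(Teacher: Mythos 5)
Your proof is correct, but note that the paper does not actually prove this lemma at all: it records it as an immediate consequence of Gekeler's \cite[Proposition 6.7 (i)]{EGjnt01}, which describes the cusps of $\Gamma_0(\mfrak)$ (and of $\Gamma_1(\mfrak)$, $\Gamma(\mfrak)$) for arbitrary monic $\mfrak$, hence the \qed{} attached to the statement. Your argument is therefore a genuinely different, self-contained route: you classify the $\Gamma_0(\pfrak)$-orbits on $\mathbb{P}^1(K)$ directly, using coprime representatives (available because $A$ is a PID with unit group $\FF_q^\times$), the orbit invariant recording whether $\pfrak$ divides the lower entry $c$ --- whose invariance rests precisely on the fact that the diagonal entries of a matrix in $\Gamma_0(\pfrak)$ are units modulo $\pfrak$ --- and explicit unimodular matrices carrying $\infty$ (resp.\ $0$) to any point with $\pfrak\mid c$ (resp.\ $\pfrak\nmid c$); all three steps check out, including the key coprimality $\gcd(a\pfrak,c)=1$ that forces the lower-left entry of your second matrix to be divisible by $\pfrak$. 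What your approach buys is transparency and explicitness: one sees exactly where irreducibility enters, and the explicit matrices realizing the equivalences are the kind of data one wants anyway when computing expansions at the cusp $0$ (as the paper later does via $W_\pfrak$). What the paper's citation buys is brevity and generality: Gekeler's result covers composite levels, where your single divisibility bit would have to be refined to the divisor $\gcd(c,\mfrak)$ together with additional congruence data, and the cusp count is no longer $2$.
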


\subsubsection{Petersson Slash Operators}
We recall a family of Petersson slash operators on rigid analytic functions $f :\Omega \rightarrow \CC_\infty$ defined for all non-negative integers $k,m$ and $\gamma \in \GL_2(K)$ by
\[(f|_k^m[\gamma])(z) := (\det\gamma)^m j(\gamma,z)^{-k} f(\gamma (z)).\]
When $k = m = 0$, we write more simply $f|[\gamma]$. 

One readily checks that we have $$(f|_k^m[\gamma_1])|_k^m[\gamma_2] = f|_k^m[\gamma_1\gamma_2]$$ for all $\gamma_1,\gamma_2 \in \GL_2(K)$, and non-negative integers $k,m$.

\subsubsection{Modular forms for congruence subgroups}   
%\newline

The following definition is taken from \cite[(4.1)]{EGjnt01}. 

\begin{definition} \label{defDMF} %(\cite[(4.1)]{EGjnt01})
Let $\Gamma(\mfrak) \subset \Gamma \subset \Gamma(1) := \GL_2(A)$ be any congruence subgroup.

A rigid analytic function $f :\Omega \rightarrow \CC_\infty$ shall be called \emph{modular} for $\Gamma$ of \emph{weight} $k$ and type $m \pmod{q-1}$ if the following two conditions are satisfied:
\begin{align}\label{Gammapmod}
& \text{for each } \gamma \in \Gamma, f|_k^m[\gamma] = f, \text{ and} \\ \label{0inftycusps}
& \text{for each } \gamma \in \Gamma(1), f|_k^m[\gamma] \text{ is bounded on } \{z \in \Omega : |z|_\Im \geq 1 \}.
\end{align}
We write $M_k^m(\Gamma)$ for the space of such functions, and we let
\[S_k^m(\Gamma) := \{ f \in M_k^m(\Gamma) : \forall \gamma \in \Gamma(1), f|_k^m[\gamma] \rightarrow 0 \text{ as } |z|_\Im \rightarrow \infty \}.\]
Functions in $S_k^m(\Gamma)$ are called {\it cuspidal} or {\it cusp forms} for $\Gamma$.
\end{definition}

The spaces $M_k^m(\Gamma)$ are finite dimensional, as they are subvector spaces of the finite dimensional space $M_k(\Gamma(\mfrak))$ (NB. that the type $m$ plays no role for the groups $\Gamma(\mfrak)$ when $\mfrak \neq 1$), whose exact dimension has even been calculated by Gekeler \cite[VII.6]{EGbook}.

\subsubsection{Expansions at the cusps} 
Let $\Gamma(\mfrak) \subset \Gamma \subset  \Gamma(1)$ be any congruence subgroup, as above. 

By {\it Goss' Lemma} \cite[Theorem 4.2]{DGbams}, for each $f \in M_k^m(\Gamma)$ and $\gamma \in \Gamma(1)$,  the function $f|_k^m[\gamma]$ has power series expansion in the uniformizer
\[u_\mfrak(z) := u(z/\mfrak) = \frac{\mfrak}{\pitilde} \sum_{a \in \mfrak A} \frac{1}{z-a} = \frac{1}{\ec(z/\mfrak)}.\]
with coefficients in $\CC_\infty$ which converges for all $z \in \Omega$ such that $|z|_\Im \gg 1$. 
%%%Perhaps one wants a more precise statement of convergence on an admissible set here? 2.2.16 -Rudy
We call this the {\it $u_\mfrak$-expansion for $f|_k^m[\gamma]$}.

Observe that this expansion determines $f$ uniquely since $\Omega$ is a connected rigid analytic space. In particular, we deduce the equivalence of Definition \ref{defDMF} above with the usual definition requiring a $u_\mfrak$-expansion at all cusps of $\Gamma$.

For the groups we are interested in below, namely $\Gamma_i(\mfrak)$, $i = 0,1$, the type $m$ plays a non-trivial role and the uniformizer $u_\mfrak$ (and not some power of it) is the proper uniformizer to use at all cusps for these groups.

%%%Too vague...
%\begin{remark}
%The primary use of these $u_\mfrak$-expansions is that they allow us to measure, in a consistent manner, the vanishing of a modular form at the various cusps. They also allow us to analytically discover rational structures on the algebras of modular forms for congruence subgroups by consideration of the coefficients of $u_\mfrak$-expansions. Such concerns will only appear peripherally in this note. 
%\end{remark}

\subsubsection{Example: Eisenstein series for principal congruence subgroups}
As a first basic example, we point out that for $\Gamma(\mfrak)$ the space of Eisenstein series has been explicitly described. We quote some properties contained, for example, in \cite{GCjnt}.

For $ \bm{v} \in (A/\mfrak A)^2\setminus\{(0,0)\}$, following Goss \cite{GossCM}, one may define
\[E^{(k)}_{\bm{v}} := \sum_{(a,b) \equiv \bm{v} \pmod{\mfrak}} \frac{1}{(az+b)^k}.\]
This Eisenstein series is a non-zero modular form of weight $k$ for $\Gamma(\mfrak)$. 

Cornelissen \cite[Proposition (1.12)]{GCjnt}
has shown that when the $\bm{v} \in (A/\mfrak A)^2\setminus\{(0,0)\}$ are restricted to a set $\mathcal{S}$ of representatives for the cusps of $\Gamma(\mfrak)$ the functions $E^{(k)}_{\bm{v}}$ are linearly independent
and span the complement in $M_k(\Gamma(\mfrak))$
of the subspace of cusp forms. 
%%%We don't need this information. 2.4.16 -Rudy
%The dimension of this space of Eisenstein series is the cardinality of $\mathcal{S}$, where $\mathcal{S}$ is as in (\ref{setS}).

It is also worth noting that, for each $z \in \Omega$, $1/E_{\bm v}^{(1)}(z)$ is an element of $\mathfrak{n}$-torsion for the Drinfeld module arising from the lattice $Az+A$. In particular, $E_{\bm v}^{(1)}(z) \neq 0$, for all $z \in \Omega$.

%%%!!!Another potential example is the modular forms for $\Gamma_0(\pfrak)$ coming from the false Eisenstein series of Gekeler. 2.2.16 - Rudy

\subsubsection{Gekeler's false Eisenstein series}
Recall the {\it false Eisenstein series} $E$ of Gekeler, which is a \emph{Drinfeld quasi-modular form} in the sense of \cite{BvPfimrn} and is determined by its expansion at infinity
\[E = \sum_{a \in A_+} a \cdot u|[\alpha_a] \in A[[u]]; \]
NB. this would be $f_0$ in the notation introduced for Petrov's forms above. 
%%%!!!Be more explicit!! Give cocycle condition. Mention Crelle's paper.
We obtain the analytic continuation to all $z \in \Omega$ of this form as well as its quasi-modularity from Corollary \ref{e1totaufstar} and Proposition \ref{quasimodprop}, since $E = [\Ecal_1]_1|_{t = \theta}$.

Since we have not seen it written elsewhere, we point out the following fact.

\begin{proposition}
For all monic irreducibles $\pfrak \in A$, %even all monics \mfrak??? 4.2.16 -Rudy
\[E_\pfrak := E - E|_2^1[\alpha_\pfrak] \in M_2^1(\Gamma_0(\pfrak)).\]
Further,
\[E_\pfrak|_2^1[W_\pfrak] = -E_\pfrak.\]
\hfill \qed
\end{proposition}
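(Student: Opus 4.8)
The plan is to unwind the slash operators, reduce everything to Gekeler's quasi-modular transformation law \eqref{formE} for $E$, and exploit the fact that although $E$ is only quasi-modular of depth $1$, the additive error term it produces cancels in the combination defining $E_\pfrak$. Throughout I write $\alpha_\pfrak = \left(\begin{smallmatrix} \pfrak & 0 \\ 0 & 1 \end{smallmatrix}\right)$, so that $(E|_2^1[\alpha_\pfrak])(z) = \pfrak E(\pfrak z)$ and hence $E_\pfrak(z) = E(z) - \pfrak E(\pfrak z)$; and I record, from \eqref{formE} together with the cocycle relation for the slash operators, that for every $\gamma = \left(\begin{smallmatrix} a & b \\ c & d \end{smallmatrix}\right) \in \Gamma$ one has $E|_2^1[\gamma] = E - \frac{c}{\pitilde(cz+d)}$. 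I will establish in turn (i) the $\Gamma_0(\pfrak)$-invariance, (ii) the Fricke functional equation, and (iii) the growth condition at the cusps.

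For (i), let $\gamma = \left(\begin{smallmatrix} a & b \\ c & d \end{smallmatrix}\right) \in \Gamma_0(\pfrak)$, so that $\pfrak \mid c$. The matrix identity $\alpha_\pfrak \gamma = \gamma' \alpha_\pfrak$ with $\gamma' = \left(\begin{smallmatrix} a & \pfrak b \\ c/\pfrak & d \end{smallmatrix}\right) \in \Gamma$ (note $\det\gamma' = \det\gamma \in \FF_q^\times$ and $c/\pfrak \in A$) yields $(E|_2^1[\alpha_\pfrak])|_2^1[\gamma] = (E|_2^1[\gamma'])|_2^1[\alpha_\pfrak]$. Pushing the transformation law for $\gamma'$ through $|_2^1[\alpha_\pfrak]$ turns its error term $-\frac{c/\pfrak}{\pitilde((c/\pfrak)z+d)}$ into exactly $-\frac{c}{\pitilde(cz+d)}$, the same error term produced by $E|_2^1[\gamma]$. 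Subtracting the two, the error terms cancel and $E_\pfrak|_2^1[\gamma] = E - \pfrak E(\pfrak z) = E_\pfrak$.

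For (ii), set $S := \left(\begin{smallmatrix} 0 & -1 \\ 1 & 0 \end{smallmatrix}\right)$ and take the Fricke involution $W_\pfrak = \left(\begin{smallmatrix} 0 & -1 \\ \pfrak & 0 \end{smallmatrix}\right)$. The two factorizations $W_\pfrak = S\alpha_\pfrak$ and $\alpha_\pfrak W_\pfrak = \pfrak S$ are the crux. The first gives $E|_2^1[W_\pfrak] = (E|_2^1[S])|_2^1[\alpha_\pfrak] = \pfrak E(\pfrak z) - \frac{1}{\pitilde z}$, using $E|_2^1[S] = E - \frac{1}{\pitilde z}$. The second, together with the fact that the scalar matrix $\pfrak I$ acts trivially in weight $2$ and type $1$ (since $\det(\pfrak I)^{1} j(\pfrak I,z)^{-2} = 1$), gives $(E|_2^1[\alpha_\pfrak])|_2^1[W_\pfrak] = E|_2^1[\pfrak S] = E|_2^1[S] = E - \frac{1}{\pitilde z}$. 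Subtracting these, the $\frac{1}{\pitilde z}$ terms cancel and $E_\pfrak|_2^1[W_\pfrak] = \pfrak E(\pfrak z) - E = -E_\pfrak$, which is the second assertion.

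For (iii), the $u$-expansion \eqref{explicitE} shows $E$ vanishes at infinity, and since $|\pfrak z|_\Im \to \infty$ together with $|z|_\Im$, so does $E(\pfrak z)$; hence $E_\pfrak \to 0$ as $|z|_\Im \to \infty$, i.e. $E_\pfrak$ vanishes at the cusp $\infty$. By the two-cusp Lemma above, $\Gamma_0(\pfrak)$ has exactly the cusps $\infty$ and $0$, and $S\infty = 0$; writing $S = W_\pfrak \alpha_\pfrak^{-1}$ and applying (ii) gives $E_\pfrak|_2^1[S] = (-E_\pfrak)|_2^1[\alpha_\pfrak^{-1}] = -\frac{1}{\pfrak} E_\pfrak(z/\pfrak) \to 0$ as well, so $E_\pfrak$ also vanishes at $0$. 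Thus \eqref{0inftycusps} holds (indeed $E_\pfrak$ is cuspidal) and $E_\pfrak \in M_2^1(\Gamma_0(\pfrak))$. The conceptual heart, and the step most prone to sign and normalization slips, is the bookkeeping of the additive error $\frac{c}{\pitilde(cz+d)}$ coming from the depth-one quasi-modularity of $E$: the whole proposition rests on this term cancelling precisely under $\pfrak \mid c$ in (i) and under the Fricke factorizations in (ii). The only other point needing care is that the representatives $\infty, 0$ genuinely exhaust the cusps, which is exactly the content of the cited two-cusp Lemma.
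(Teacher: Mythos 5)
Your proof is correct. The paper actually states this proposition without giving any proof (it is left to the reader with the \qed attached to the statement), and your argument — unwinding everything to Gekeler's law \eqref{formE} via the cocycle property of the slash operators, the matrix identities $\alpha_\pfrak\gamma = \gamma'\alpha_\pfrak$, $W_\pfrak = S\alpha_\pfrak$ and $\alpha_\pfrak W_\pfrak = \pfrak S$, and the reduction to the cusps $0,\infty$ via the two-cusp lemma — is precisely the intended one, with all signs, determinant factors, and the cancellation of the quasi-modular error term $\frac{c}{\pitilde(cz+d)}$ checking out.
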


%%%New 11.3.16.
\begin{remark}
From this result and the $A$-expansion for $E$, one easily obtains the $u_\pfrak$-expansions at infinity and at zero for the function $E_\pfrak$ and learns that this function gives a canonical representative for the one dimensional space of single cuspidal forms of weight two for $\Gamma_0(\pfrak)$. 
\end{remark}

\subsection{Modular forms for $\Gamma_0(\pfrak)$ with character via specialization} 
%%%!!!New 1.2.16
Now we turn to those Drinfeld modular forms which may be obtained from the individual coordinate functions of VMF via specialization of the variable $t$ in the roots of unity $\FF_q^{ac} \subset \CC_\infty$. 
We begin with a summary of certain results connected with the Carlitz module for motivation.

\subsubsection{Specialization for the Carlitz module} \label{GTsumssection}
Given an element $\phi = \sum_{i \geq 0} c_i t^i \in \TT$, we may specialize the variable $t$ at an element $\zeta$ in the algebraic closure $\FF_q^{ac} \subset \CC_\infty$ of $\FF_q$ to obtain an element $$\ev_\zeta(\phi)=\sum_{i\geq 0}c_i\zeta^i \in \CC_\infty.$$ If $\phi$ is the function $\omega$ of Anderson and Thakur, which arises in connection with the Carlitz module, we have the following instance of this --- discovered originally by Angl\`es and the first author \cite{APinv} and appearing with another proof in \cite{FPRP}.

Let $\zeta \in \FF_q^{ac}$ and $\pfrak \in A$ its minimal polynomial. Associated to the $\FF_q$-algebra map $\chi_\zeta : A \rightarrow \FF_q^{ac}$ determined by $\theta \mapsto \zeta$, we have the {\it basic Gauss-Thakur sum}
\[ \gfrak(\chi_\zeta) := \sum_{a \in (A/\pfrak A)^\times} \chi_\zeta(a)^{-1} \mathfrak{e_c}(\frac{a}{\pfrak}). \]
Angl\`es and the first author proved that for all $\zeta$, as above,
\begin{align} \label{APgtsumseq} \ev_\zeta(\omega) = \chi_\zeta(\pfrak') \gfrak(\chi_\zeta), \end{align}
where $\pfrak'$ denotes the formal derivative of $\pfrak$ with respect to $\theta$.
%Say a bit more here. Maybe advertise again AP's theorem on Carlitz torsion extensions here or in intro. 14.3.16-R

\begin{definition}
For a rigid analytic function $f : \Omega \rightarrow \TT$ and $\zeta \in \FF_q^{ac}$, we define
the associated {\it evaluation at $\zeta$} 
\[\ev_\zeta( f) : \Omega\rightarrow \CC_\infty,\] 
as the composition map $\Omega\xrightarrow{f} \TT \xrightarrow{\ev_\zeta}\CC_\infty$; $\ev_\zeta(f)$ is a rigid analytic function of the variable $z$ on $\Omega$.
More generally, for a vectorial function $\mathcal{H} : \Omega \rightarrow \TT^l$, we define $\ev_\zeta(\mathcal{H})$ by applying $\ev_\zeta$ on each coordinate. \end{definition}

The functions $\ev_\zeta(\psi_1)$ and $\ev_\zeta(\chi_t)$ are well-defined,
and satisfy interesting properties, as shown in \cite{FPRP}. For convenience, we recall in the next lemma
the main results obtained there, in this connection.

\begin{lemma} \label{Psievallem} %GET ALL \efrak_\cfrak's straight!!!
For all $\zeta \in \FF_q^{ac}$, let $\chi = \chi_\zeta$, as above. We have
\begin{align} \pfrak\ev_\zeta(\chi_t)(\pfrak z) &\in A[\zeta,\mathfrak{e_c}(\pfrak^{-1})][\mathfrak{e_c}(z)], \\
\pfrak \ev_\zeta(\psi_{1})(\pfrak z) &\in A[\zeta, \mathfrak{e_c}(\pfrak^{-1})][[u(z)]], \text{ and} \\
\frac{\pfrak \ev_\zeta(\psi_{1})(\pfrak z)}{u(z)^{\frac{|\pfrak|}{q}(q-1)}} & \rightarrow (-1)^{\deg \pfrak +1} \gfrak(\chi^{-1}) \chi^{-1}(\pfrak')  \text{ as } |z|_{\Im} \rightarrow \infty. 
\end{align} \hfill \qed
\end{lemma}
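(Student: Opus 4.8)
The plan is to reduce all three assertions to a single finite sum over the residues of $A$ modulo $\pfrak$, obtained by specializing the defining series of $\psi_1$. Since $\pfrak$ is irreducible, $\chi_\zeta$ factors through $A/\pfrak A$, so $\ev_\zeta(\chi_t)(a) = \chi_\zeta(a)$ depends only on $a \bmod \pfrak$. Writing $a = \pfrak b + r$ with $r$ ranging over a system of representatives of $A/\pfrak A$ and $b \in A$, and recalling that $\sum_{a \in A}(w-a)^{-1} = \pitilde\, u(w)$, I would evaluate $\psi_1 = \frac1{\pitilde}\sum_{a \in A}\chi_t(a)(z-a)^{-1}$ at $t = \zeta$, collect terms according to $r$, and substitute $z \mapsto \pfrak z$ to obtain the key identity
\[
\pfrak\,\ev_\zeta(\psi_1)(\pfrak z) = \sum_{r \in A/\pfrak A}\chi_\zeta(r)\,u\!\left(z - \tfrac{r}{\pfrak}\right).
\]
Everything then follows from analyzing this expression, using that $\ec$ is $\FF_q$-linear and that the values $\lambda_r := \ec(r/\pfrak)$ are exactly the $\pfrak$-torsion of the Carlitz module, i.e.\ the roots of the Carlitz operator $C_\pfrak$.

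For the first two assertions I would argue as follows. Each $\lambda_r = C_r(\ec(\pfrak^{-1}))$ lies in $A[\ec(\pfrak^{-1})]$, and $\chi_\zeta(r) \in \FF_q[\zeta]$, which accounts for the coefficient ring $A[\zeta,\ec(\pfrak^{-1})]$. For assertion (2), $\FF_q$-linearity gives $\ec(z - r/\pfrak) = \ec(z) - \lambda_r$, so for $|z|_\Im$ large
\[
u\!\left(z - \tfrac{r}{\pfrak}\right) = \frac{1}{\ec(z)-\lambda_r} = \sum_{n \geq 0}\lambda_r^{\,n}\,u(z)^{n+1},
\]
whence $\pfrak\,\ev_\zeta(\psi_1)(\pfrak z) = \sum_{n \geq 0}c_n\,u(z)^{n+1}$ with $c_n = \sum_r \chi_\zeta(r)\lambda_r^{\,n} \in A[\zeta,\ec(\pfrak^{-1})]$, giving membership in $A[\zeta,\ec(\pfrak^{-1})][[u(z)]]$. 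For (1), I would use $\chi_t = \ec\cdot\psi_1$ (from $\psi_1 = u\chi_t$) to write $\pfrak\,\ev_\zeta(\chi_t)(\pfrak z) = \ec(\pfrak z)\cdot\pfrak\,\ev_\zeta(\psi_1)(\pfrak z)$, and then invoke the factorization $C_\pfrak(X) = \prod_{r}(X-\lambda_r)$ together with $\ec(\pfrak z) = C_\pfrak(\ec(z))$ to get $\ec(\pfrak z)\,u(z - r/\pfrak) = \prod_{r' \neq r}(\ec(z)-\lambda_{r'})$, a polynomial in $\ec(z)$ with coefficients in $A[\zeta,\ec(\pfrak^{-1})]$; summing over $r$ proves (1).

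For the third assertion I would pin down the order of vanishing and the leading coefficient of $\sum_n c_n u^{n+1}$. Identifying $W := C[\pfrak]$ with $A/\pfrak A$ via $r \mapsto \lambda_r$, the assignment $\lambda_r \mapsto \chi_\zeta(r)$ is an $\FF_q$-linear functional on $W$, hence on $W$ it is represented by a $q$-polynomial $\psi(v) = \sum_{j=0}^{d-1}\beta_j v^{q^j}$ with unique $\beta_j \in \CC_\infty$, where $d = \deg\pfrak$. Using the moment identities over a $d$-dimensional $\FF_q$-vector space, namely $\sum_{v \in W}v^m = 0$ for $0 \leq m < |\pfrak|-1$ and $\sum_{v \in W}v^{|\pfrak|-1} = \pfrak$ (the coefficient of $X$ in $C_\pfrak$), I obtain $c_n = \sum_j \beta_j\sum_v v^{\,n+q^j} = 0$ for $n < n_0$ and $c_{n_0} = \pfrak\,\beta_{d-1}$, where $n_0 := |\pfrak|-1-q^{d-1} = \frac{|\pfrak|}{q}(q-1) - 1$. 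This yields exactly the asserted order of vanishing $u^{\frac{|\pfrak|}{q}(q-1)}$, and the limit equals $c_{n_0}$.

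The main obstacle is the final identification of this leading coefficient
\[
c_{n_0} = \pfrak\,\beta_{d-1} = \sum_{r \in A/\pfrak A}\chi_\zeta(r)\,\lambda_r^{\,n_0} \quad\text{with}\quad (-1)^{\deg\pfrak+1}\,\gfrak(\chi_\zeta^{-1})\,\chi_\zeta^{-1}(\pfrak').
\]
This is a Gauss--Thakur reciprocity statement, expressing the top $q$-power coefficient $\beta_{d-1}$ of the functional $\lambda_r \mapsto \chi_\zeta(r)$ through the Gauss--Thakur sum of the inverse character. I would deduce it from Thakur's theory of these sums together with the evaluation $\ev_\zeta(\omega) = \chi_\zeta(\pfrak')\gfrak(\chi_\zeta)$ recorded in \eqref{APgtsumseq}, or simply import it from \cite{FPRP}, where precisely this computation is carried out; everything preceding it is elementary bookkeeping with the residue-class formula above.
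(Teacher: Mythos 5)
The paper offers no internal proof to measure you against: Lemma \ref{Psievallem} is stated with a qed mark and an explicit attribution, the text before it saying that these are ``the main results obtained'' in \cite{FPRP}. So your write-up is in effect a reconstruction of the computation that the paper only cites, and the parts you actually carry out are correct. The residue-class identity $\pfrak\,\ev_\zeta(\psi_1)(\pfrak z)=\sum_{r\in A/\pfrak A}\chi_\zeta(r)\,u(z-r/\pfrak)$ follows, as you say, from $\chi_\zeta(\pfrak)=0$, from $\sum_{b\in A}(w-b)^{-1}=\pitilde\,u(w)$, and from the fact that ultrametric convergence (terms tending to $0$) licenses the rearrangement; the expansion $u(z-r/\pfrak)=\sum_{n\geq0}\lambda_r^{n}u(z)^{n+1}$ with $\lambda_r=\ec(r/\pfrak)=C_r(\ec(\pfrak^{-1}))\in A[\ec(\pfrak^{-1})]$ gives item (2); the factorization $C_\pfrak(X)=\prod_{r}(X-\lambda_r)$ together with $\ec(\pfrak z)=C_\pfrak(\ec(z))$ gives item (1); and the moment identities $\sum_{v\in C[\pfrak]}v^{m}=0$ for $0\leq m<|\pfrak|-1$ and $\sum_{v}v^{|\pfrak|-1}=\pfrak$ (read off from $C_\pfrak'(X)/C_\pfrak(X)=\pfrak/C_\pfrak(X)$) do produce vanishing to the exact order $u^{|\pfrak|(q-1)/q}$ with leading coefficient $c_{n_0}=\pfrak\beta_{d-1}$, exactly as you claim. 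A sanity check at $d=1$, $\pfrak=\theta-\zeta$, where your formula gives leading term $-\ec(\pfrak^{-1})^{q-2}u^{q-1}$, confirms consistency with the stated limit.

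The one place where you are too casual is the step you defer, and the issue is not just that it is unproved but that its very statement needs care. The ``Gauss--Thakur sum of the inverse character'' is \emph{not} the naive sum $\sum_{a}\chi_\zeta(a)\ec(a/\pfrak)$: since $a\mapsto\ec(a/\pfrak)$ is $\FF_q$-linear in the residue $\bar a\in\FF_{q^{d}}$, say $\ec(a/\pfrak)=\sum_{i=0}^{d-1}c_i\bar a^{q^i}$, that naive sum equals $\sum_i c_i\sum_{\bar a\in\FF_{q^d}^\times}\bar a^{1+q^i}$ and vanishes identically whenever $|\pfrak|>4$, whereas the limit in item (3) is nonzero. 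So $\gfrak(\chi^{-1})$ must be understood in Thakur's sense, as the product of basic Gauss--Thakur sums dictated by the base-$q$ digits of $q^{d}-2$, and the identification $\pfrak\beta_{d-1}=(-1)^{\deg\pfrak+1}\gfrak(\chi^{-1})\chi^{-1}(\pfrak')$ is genuinely the hard, non-formal content of the lemma; it will not fall out of \eqref{APgtsumseq} by bookkeeping alone. Deferring it to \cite{FPRP} is legitimate --- that is precisely what the paper does for the entire lemma --- but in your proof that citation should be flagged as the core input, not an optional shortcut, and your reduction of the limit to the explicit quantity $\sum_{r}\chi_\zeta(r)\lambda_r^{|\pfrak|-1-|\pfrak|/q}$ is then a useful complement to it rather than a proof of it.
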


\subsubsection{Modular forms with character for $\Gamma_0(\pfrak)$}
Let $\zeta \in \CC_\infty$ be a fixed root of the monic irreducible polynomial $\pfrak \in A$, i.e. $\pfrak(\zeta) = 0$, and define the character 
\[\eta_\zeta : \Gamma_0(\pfrak) \rightarrow \FF_q(\zeta)^\times \subset \CC_\infty^\times \text{ via} \]
\[ \left( \begin{matrix} a & b  \\ c & d \end{matrix}  \right) \mapsto d(\zeta). \] 
We have $\Gamma_1(\pfrak) = \ker \eta_\zeta$.

As $\Gamma_1(\pfrak)$ is a normal subgroup of $\Gamma_0(\pfrak)$, this latter group acts $\CC_\infty$-linearly on $M_k^m(\Gamma_1(\pfrak))$ via the Petersson slash operator by 
\[f \mapsto (f|_{k}^m{[\gamma]}), \ \forall \gamma \in \Gamma_0(\pfrak).\] 

\begin{definition}
Let $k$ be a positive integer and $m$ a residue $\pmod{q-1}$. For each $l = 0,1,\dots,|\pfrak|-2$, define
\[M_k^m(\pfrak,\eta_\zeta^l) := \{f \in M_k^m(\Gamma_1(\pfrak)) : f|_{k}^m [\gamma] = \eta_\zeta(\gamma)^{l} f \text{ for all } \gamma \in \Gamma_0(\pfrak)\}.\]
We call the functions in $M_k^m(\pfrak,\eta_\zeta^l)$ {\it Drinfeld modular forms of weight $k$, type $m$ and character $\eta_\zeta^l$}.
We may refer to these functions more loosely as {\it Drinfeld modular forms with character}.
\end{definition}

%%%!!!Can probably remove.

\begin{lemma}
1. If $M_k^m(\pfrak,\eta_\zeta^l) \neq 0$, necessarily $l+k \equiv 2m \pmod{q-1}$.

2. $M_k^m(\Gamma_0(\pfrak)) = M_k^m(\pfrak,\eta_\zeta^{0})$.

3. $M_k^m(\Gamma_1(\pfrak)) = \oplus_l M_k^{m}(\pfrak,\eta_\zeta^l)$.
\end{lemma}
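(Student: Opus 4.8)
The plan is to treat the three assertions separately. For \emph{Part 1} I would test the defining identity $f|_k^m[\gamma] = \eta_\zeta(\gamma)^l f$ on the central scalar matrices: for $\xi \in \FF_q^\times$ set $\gamma_\xi := \left(\begin{smallmatrix} \xi & 0 \\ 0 & \xi \end{smallmatrix}\right)$, which lies in $\Gamma_0(\pfrak)$ since its lower-left entry vanishes. As $\gamma_\xi$ acts trivially on $\Omega$, with $j(\gamma_\xi,z) = \xi$ and $\det\gamma_\xi = \xi^2$, the definition of the slash operator gives $f|_k^m[\gamma_\xi] = \xi^{2m-k}f$, whereas $\eta_\zeta(\gamma_\xi) = \chi_\zeta(\xi) = \xi$. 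Hence, if $f \neq 0$, then $\xi^{2m-k} = \xi^l$ for every $\xi \in \FF_q^\times$, and since $\FF_q^\times$ is cyclic of order $q-1$ this forces $l + k \equiv 2m \pmod{q-1}$.

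For \emph{Part 2} I would merely unwind definitions. Since $\eta_\zeta^0$ is the trivial character, membership in $M_k^m(\pfrak,\eta_\zeta^0)$ amounts to $f \in M_k^m(\Gamma_1(\pfrak))$ together with $f|_k^m[\gamma] = f$ for all $\gamma \in \Gamma_0(\pfrak)$. Because $\Gamma_1(\pfrak) \subset \Gamma_0(\pfrak)$, invariance \eqref{Gammapmod} under all of $\Gamma_0(\pfrak)$ already subsumes invariance under $\Gamma_1(\pfrak)$, while the boundedness condition \eqref{0inftycusps} is imposed over all of $\Gamma(1)$ and is thus literally the same requirement for the two groups. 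Consequently both $M_k^m(\pfrak,\eta_\zeta^0)$ and $M_k^m(\Gamma_0(\pfrak))$ describe the same rigid analytic functions that are $\Gamma_0(\pfrak)$-invariant and bounded at every cusp, giving the desired equality.

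\emph{Part 3} is the substantive step, and I would phrase it as the character-eigenspace decomposition of a representation of the finite abelian group $G := \Gamma_0(\pfrak)/\Gamma_1(\pfrak)$. First one checks that the slash operators make $M_k^m(\Gamma_1(\pfrak))$ into a $\CC_\infty$-linear representation of $G$: the cocycle $(f|_k^m[\gamma_1])|_k^m[\gamma_2] = f|_k^m[\gamma_1\gamma_2]$ yields an action of $\Gamma_0(\pfrak)$ preserving $M_k^m(\Gamma_1(\pfrak))$, and it factors through $G$ because $\Gamma_1(\pfrak) \trianglelefteq \Gamma_0(\pfrak)$ acts trivially (write $\gamma\gamma' = (\gamma\gamma'\gamma^{-1})\gamma$ for $\gamma' \in \Gamma_1(\pfrak)$). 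Next, $\eta_\zeta$ is the composite of $\eta$ with the reduction of $\chi_\zeta$ modulo $\pfrak$, so it identifies $G \cong (A/\pfrak A)^\times$ with $\FF_q(\zeta)^\times = \FF_{q^d}^\times$, $d = \deg\pfrak$; thus $G$ is cyclic of order $|\pfrak|-1 = q^d - 1$, which is nonzero in $\CC_\infty$ (it is $\equiv -1$ modulo the characteristic), and $\CC_\infty \supset \FF_q^{ac}$ contains all $(q^d-1)$-th roots of unity, namely $\FF_{q^d}^\times$. By Maschke's theorem the representation is semisimple, and since the powers $\eta_\zeta^l$ for $l = 0,\dots,|\pfrak|-2$ are distinct and exhaust $\hat G$ (as $\eta_\zeta$, an isomorphism onto $\FF_{q^d}^\times$, has order $|G|$ in $\hat G$), the decomposition into simultaneous eigenspaces gives $M_k^m(\Gamma_1(\pfrak)) = \bigoplus_l M_k^m(\pfrak,\eta_\zeta^l)$, the $\eta_\zeta^l$-component being the image of the projector $f \mapsto \frac{1}{|G|}\sum_{\gamma \in G}\eta_\zeta(\gamma)^{-l}\,(f|_k^m[\gamma])$. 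The one genuine obstacle is this last item: one must confirm that the action is well defined on $M_k^m(\Gamma_1(\pfrak))$ and that $|G| = q^d - 1$ is invertible in $\CC_\infty$ with all required roots of unity available; granting these, the eigenspace decomposition is formal, and Parts 1 and 2 are routine.
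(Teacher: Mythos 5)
Your proof is correct and takes essentially the same route as the paper's: Part 1 via the scalar-matrix identity $f|_k^m\bigl[\left(\begin{smallmatrix} \xi & 0 \\ 0 & \xi \end{smallmatrix}\right)\bigr] = \xi^{2m-k}f = \xi^l f$ for $\xi \in \FF_q^\times$, Part 2 by unwinding the trivial-character definition, and Part 3 by Maschke's theorem applied to the action of $\Gamma_0(\pfrak)/\Gamma_1(\pfrak) \cong (A/\pfrak A)^\times$, whose order $|\pfrak|-1$ is prime to the characteristic of $\CC_\infty$. The additional details you supply (well-definedness of the slash action on $M_k^m(\Gamma_1(\pfrak))$, the explicit projector, and the fact that the powers of $\eta_\zeta$ exhaust the character group) are correct elaborations of what the paper leaves implicit.
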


\begin{proof}
The first comes in the usual way by consideration of 
%the scalar matrices $\left(\begin{smallmatrix} \lambda & 0 \\ 0 & \lambda \end{smallmatrix} \right)$ coming from $\lambda \in \FF_q^\times$; i.e
\begin{equation} \label{sclrmattrickeq} \lambda^{2m-k}f = f|_k^m[\left(\begin{smallmatrix} \lambda & 0 \\ 0 & \lambda \end{smallmatrix} \right)] = \lambda^l f,\end{equation}
which holds for all $\lambda \in \FF_q^\times$.
The second follows since $\eta_\zeta^{0}$ is the trivial character for $\Gamma_0(\pfrak)/\Gamma_1(\pfrak)$.
The final claim follows from Maschke's Theorem \cite[Ch. XVIII, Theorem 1.2]{Lang}, since the cardinality $|(A/\pfrak A)^\times| = |\pfrak| - 1$ is coprime to the characteristic of $\CC_\infty$.
\end{proof}

\subsubsection{Holomorphy / expansions at cusps for Drinfeld modular forms with character}

Each function $f \in M_k^m(\pfrak, \eta_\zeta^l)$ is a modular form for $\Gamma_1(\pfrak)$, and hence has a $u_\pfrak$-expansion at the cusps of $\Gamma_1(\pfrak)$. By the transformation rule $f|_{k}^m [\gamma] = \eta_\zeta(\gamma)^{l} f$, which holds for all $\gamma \in \Gamma_0(\pfrak)$, one easily observes that it is enough to check holomorphy at the zero and infinity cusps, i.e. representatives for the cusps of $\Gamma_0(\pfrak)$. We observe that the matrix 
\[W_\pfrak := \left( \begin{smallmatrix} 0 & -1 \\ \pfrak & 0 \end{smallmatrix} \right) \in \GL_2(K) \]
is in the normalizer in $\GL_2(K)$ of $\Gamma_0(\pfrak)$,  
and the following simple consequence is easily checked; one may also consult \cite{RPmf16}. Observe that  $W_\pfrak$ sends the cusp at $\infty$ to the cusp at $0$ and vice-versa. % for a more detailed proof.

\begin{lemma} \label{invollem}
For each $f \in M_k^m(\pfrak, \eta_\zeta^l)$, we have
\[f|_k^m[ W_\pfrak] \in M_k^{m-l}(\pfrak,\eta_\zeta^{-l}).\]
In particular,  both $f$ and $f|_k^m[W_\pfrak]$ have expansions in $\CC_\infty[[u]]$. \hfill \qed
\end{lemma}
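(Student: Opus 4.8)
The plan is to track how the Atkin--Lehner-type matrix $W_\pfrak$ conjugates $\Gamma_0(\pfrak)$ and twists the character $\eta_\zeta$, and then to transport the transformation law of $f$ through the cocycle relation $(\cdot|_k^m[\gamma_1])|_k^m[\gamma_2] = \cdot|_k^m[\gamma_1\gamma_2]$ for the slash operators.

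First I would record the conjugation formula. Since $W_\pfrak^2 = -\pfrak I$ is central, a direct computation shows that for $\gamma = \left(\begin{smallmatrix} a & b \\ c & d\end{smallmatrix}\right) \in \Gamma_0(\pfrak)$,
\[ W_\pfrak \gamma W_\pfrak^{-1} = \begin{pmatrix} d & -c/\pfrak \\ -\pfrak b & a \end{pmatrix} \in \Gamma_0(\pfrak), \]
which indeed lies in $\Gamma_0(\pfrak)$ because $\pfrak \mid c$. The crucial point is the effect on the character: the lower-right entry of the conjugate is $a$, so $\eta_\zeta(W_\pfrak\gamma W_\pfrak^{-1}) = a(\zeta)$. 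Because $\det\gamma = ad - bc \in \FF_q^\times$ and $bc \equiv 0 \pmod{\pfrak}$, evaluating at the root $\zeta$ of $\pfrak$ gives $a(\zeta)\,d(\zeta) = \det\gamma$, whence $\eta_\zeta(W_\pfrak \gamma W_\pfrak^{-1}) = \det\gamma \cdot \eta_\zeta(\gamma)^{-1}$.

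Next I would feed this into the cocycle relation. Writing $g := f|_k^m[W_\pfrak]$ and $\gamma' := W_\pfrak\gamma W_\pfrak^{-1}$, the identity $W_\pfrak \gamma = \gamma' W_\pfrak$ and the transformation law of $f$ yield
\[ g|_k^m[\gamma] = f|_k^m[\gamma' W_\pfrak] = (f|_k^m[\gamma'])|_k^m[W_\pfrak] = \eta_\zeta(\gamma')^{l}\, g = (\det\gamma)^{l}\, \eta_\zeta(\gamma)^{-l}\, g. \]
Comparing types via $g|_k^m[\gamma] = (\det\gamma)^{l}\, g|_k^{m-l}[\gamma]$ (the two slash operators differ only by the nonzero scalar $(\det\gamma)^{l}$) and cancelling, I obtain exactly $g|_k^{m-l}[\gamma] = \eta_\zeta(\gamma)^{-l}\, g$ for all $\gamma \in \Gamma_0(\pfrak)$, the defining transformation law of $M_k^{m-l}(\pfrak,\eta_\zeta^{-l})$; specializing to $\gamma \in \Gamma_1(\pfrak) = \ker\eta_\zeta$ recovers the $\Gamma_1(\pfrak)$-invariance of $g$ in type $m-l$.

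The remaining, and I expect most delicate, step is holomorphy at the cusps, i.e.\ verifying condition \eqref{0inftycusps} for $g$. The difficulty is that $W_\pfrak \notin \GL_2(A)$, so the boundedness of $f|_k^m[\delta]$ over $\delta \in \GL_2(A)$ does not apply directly to $g|_k^{m-l}[\delta]$, which equals $f|_k^m[W_\pfrak\delta]$ up to a nonzero scalar. My plan is to use that $\GL_2(A)$ acts transitively on $\mathbb{P}^1(K)$ (since $A$ is a PID): for each $\delta \in \GL_2(A)$ I choose $\delta_1 \in \GL_2(A)$ with $\delta_1(\infty) = W_\pfrak\delta(\infty)$, so that $\delta_1^{-1} W_\pfrak \delta$ fixes $\infty$ and is upper triangular over $K$. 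Slashing by an upper-triangular matrix only scales and translates $z$, hence preserves the regime $|z|_\Im \to \infty$ and boundedness; thus the boundedness of $g|_k^{m-l}[\delta]$ reduces to that of $f|_k^m[\delta_1]$, which holds because $f$ is a modular form for $\Gamma_1(\pfrak)$. This places $g$ in $M_k^{m-l}(\Gamma_1(\pfrak))$ and completes the inclusion. For the final assertion, I would note that any form with character is invariant (up to trivial automorphy factor) under the unipotent translations $\left(\begin{smallmatrix} 1 & a \\ 0 & 1\end{smallmatrix}\right) \in \Gamma_1(\pfrak)$, $a \in A$, hence is $A$-periodic and, being holomorphic at $\infty$, expands in $\CC_\infty[[u]]$ rather than merely in $\CC_\infty[[u_\pfrak]]$; applying this to both $f$ and $g = f|_k^m[W_\pfrak]$ gives the ``in particular'' clause.
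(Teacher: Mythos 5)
Your algebraic half of the argument is correct and is exactly the route the paper intends: the paper records only that $W_\pfrak$ normalizes $\Gamma_0(\pfrak)$ in $\GL_2(K)$ and interchanges the cusps $0$ and $\infty$, leaving the check to the reader. Your conjugation formula, the character identity $\eta_\zeta(W_\pfrak\gamma W_\pfrak^{-1}) = \det\gamma\cdot\eta_\zeta(\gamma)^{-1}$ (valid because $\pfrak \mid c$ forces $a(\zeta)d(\zeta)=\det\gamma$), and the passage through the cocycle relation and the type shift $g|_k^m[\gamma]=(\det\gamma)^l\,g|_k^{m-l}[\gamma]$ together give the transformation law $g|_k^{m-l}[\gamma]=\eta_\zeta(\gamma)^{-l}g$ on $\Gamma_0(\pfrak)$, hence in particular $\Gamma_1(\pfrak)$-invariance in type $m-l$. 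This part is airtight.

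The gap is in the cusp step. Condition \eqref{0inftycusps} demands boundedness on the \emph{fixed} region $\{|z|_\Im\geq 1\}$, and your assertion that slashing by the upper-triangular $T=\delta_1^{-1}W_\pfrak\delta$ ``preserves boundedness'' is not automatic. Here $T$ has entries in $A$ and determinant in $\pfrak\,\FF_q^\times$, so its diagonal entries $\alpha,\gamma$ satisfy $\{|\alpha|,|\gamma|\}=\{1,|\pfrak|\}$; in the case $|\alpha|=1$, $|\gamma|=|\pfrak|$, the map $z\mapsto Tz$ carries $\{|z|_\Im\geq 1\}$ onto $\{|w|_\Im\geq|\pfrak|^{-1}\}$, which strictly contains $\{|w|_\Im\geq 1\}$. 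This case genuinely occurs: for $\delta=\left(\begin{smallmatrix}0&-1\\1&0\end{smallmatrix}\right)$ one finds $W_\pfrak\delta=\left(\begin{smallmatrix}-1&0\\0&-\pfrak\end{smallmatrix}\right)$, so verifying $g$ at the cusp $0$ requires control of $f(z/\pfrak)$, i.e.\ of $f$ on $\{|w|_\Im\geq|\pfrak|^{-1}\}$, whereas \eqref{0inftycusps} for $f$ only speaks of $\{|w|_\Im\geq 1\}$. The repair is standard but is a genuine missing step: $h:=f|_k^m[\delta_1]$ is invariant under $z\mapsto z+a$ for all $a\in\pfrak A$ (since $\Gamma(\pfrak)$ is normal in $\GL_2(A)$ and contained in $\Gamma_1(\pfrak)$), and any $\pfrak A$-periodic rigid analytic function on $\Omega$ bounded on $\{|w|_\Im\geq 1\}$ is bounded on $\{|w|_\Im\geq c\}$ for every $c>0$: a point $w$ with $c\leq|w|_\Im\leq 1$ may be translated by a suitable $a\in\pfrak A$ into the affinoid $\{|v|\leq|\pfrak|,\ |v|_\Im\geq c\}\subset\Omega_n$, on which $h$ has finite sup norm. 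Once this lemma is inserted, your proof is complete, including the final assertion that $f$ and $g$ lie in $\CC_\infty[[u]]$, which correctly exploits their $A$-periodicity (the unipotent matrices $\left(\begin{smallmatrix}1&a\\0&1\end{smallmatrix}\right)$, $a\in A$, lie in $\Gamma_1(\pfrak)=\ker\eta_\zeta$).
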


\begin{remark}
Notice that in contrast to the case of modular forms for $\GL_2(A)$ and $\Gamma_0(\pfrak)$, for $\Gamma_1(\pfrak)$ we can have modular forms of the same weight yet with \emph{different} types! Indeed, we give an example in \S \ref{exEWCwt1}. 

The usual trick used \eqref{sclrmattrickeq} cannot be applied in this situation since the matrices $\left(\begin{smallmatrix} \lambda & 0 \\ 0 & \lambda \end{smallmatrix} \right)$, with $\lambda \in \FF_q^\times$, do not belong to $\Gamma_0(\pfrak)$, for any irreducible $\pfrak$. 
%This is due to the fact that the scalar matrices coming from $\zeta \in \FF_q^\times$ do not belong to $\Gamma_1(\pfrak)$ for any prime $\pfrak$ while there are matrices in this group with non-trivial determinants. 
\end{remark}

\subsubsection{Forms with character via specialization}
Finally, we arrive at our first result connecting the specialized coordinate functions of $\TT$-valued VMF to Drinfeld modular forms with character. For all $\mfrak \in A_+$, we introduce the matrices
\[ \alpha_\mfrak := \left( \begin{matrix} \mfrak & 0 \\ 0 & 1 \end{matrix} \right) \in \GL_2(K).\]

\begin{proposition} \label{eigenevalprop}
For all $\Hcal = {h_1 \choose h_2} \in \MM_k^m(\rho_t^*)$, all $\zeta \in \FF_q^{ac} \subset \CC_\infty$ with minimal polynomial $\pfrak \in A$, and each $l = 0,1,\dots,\deg (\pfrak) - 1$, we have 
\begin{align} \label{evalpropeq1}
& \ev_{\zeta^{q^l}}(h_1) \in M_k^{m+1}(\pfrak, \eta_\zeta^{q^l}), \text{ and } \\
\label{evalpropeq2} & \ev_{\zeta^{q^l}}(h_1)|_k^m[W_\pfrak] = -\ev_{\zeta^{q^l}}(h_2)|_k^m[\alpha_\pfrak] \in M_k^{m}(\pfrak, \eta_\zeta^{-q^l}).\end{align} 
\end{proposition}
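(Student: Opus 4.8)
The plan is to start from the modularity property of $\Hcal$ under $\rho_t^*$ and extract, coordinate by coordinate, the transformation behavior of the specializations $\ev_{\zeta^{q^l}}(h_1)$ and $\ev_{\zeta^{q^l}}(h_2)$ under $\Gamma_0(\pfrak)$. The key observation is that specializing $t = \zeta^{q^l}$ collapses the entries of $\rho_t^*(\gamma)$ — which involve $a(t), b(t), c(t), d(t) = \chi_t$ evaluated on matrix entries — into the character values. Concretely, for $\gamma = \left(\begin{smallmatrix} a & b \\ c & d \end{smallmatrix}\right) \in \Gamma_0(\pfrak)$ we have $c \equiv 0 \pmod \pfrak$, so $\ev_{\zeta^{q^l}}(\chi_t(c)) = 0$, and $\ev_{\zeta^{q^l}}(\chi_t(d)) = d(\zeta^{q^l}) = d(\zeta)^{q^l} = \eta_\zeta(\gamma)^{q^l}$ since $\zeta^{q^l}$ is conjugate to $\zeta$ over $\FF_q$. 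This is the mechanism that produces the character $\eta_\zeta^{q^l}$.

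First I would write out the first coordinate of the modularity equation \eqref{modularity}. Recalling $\rho_t^*(\gamma) = \delta^{-1}\left(\begin{smallmatrix} d(t) & -c(t) \\ -b(t) & a(t)\end{smallmatrix}\right)$ with $\delta = \det\gamma \in \FF_q^\times$, the first coordinate reads
\[h_1(\gamma z) = \frac{j(\gamma,z)^k}{\delta^m}\,\delta^{-1}\bigl(d(t)\,h_1(z) - c(t)\,h_2(z)\bigr).\]
Specializing at $t = \zeta^{q^l}$ and restricting to $\gamma \in \Gamma_0(\pfrak)$ kills the $c(t)$ term and turns $d(t)$ into $\eta_\zeta(\gamma)^{q^l}$, giving exactly $\ev_{\zeta^{q^l}}(h_1)|_k^{m+1}[\gamma] = \eta_\zeta(\gamma)^{q^l}\ev_{\zeta^{q^l}}(h_1)$, i.e. the transformation law placing it in $M_k^{m+1}(\pfrak,\eta_\zeta^{q^l})$ (the type shift by $1$ comes from the factor $\delta^{-m-1}$). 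The range $l = 0,\dots,\deg\pfrak - 1$ is exactly the set of distinct conjugates $\zeta^{q^l}$, hence the distinct characters $\eta_\zeta^{q^l}$. For \eqref{evalpropeq2}, I would similarly read off the second coordinate and then apply the slash operator $|_k^m[W_\pfrak]$; using $W_\pfrak = \left(\begin{smallmatrix} 0 & -1 \\ \pfrak & 0\end{smallmatrix}\right)$ and the relation $W_\pfrak \alpha_\pfrak^{-1} \in \Gamma_0(\pfrak)$-coset manipulation, together with Lemma \ref{invollem} which already tells us $W_\pfrak$ flips the character to $\eta_\zeta^{-q^l}$, one matches $\ev_{\zeta^{q^l}}(h_1)|_k^m[W_\pfrak]$ with $-\ev_{\zeta^{q^l}}(h_2)|_k^m[\alpha_\pfrak]$.

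The holomorphy/growth conditions are where the VMF regularity condition \eqref{vanishcond} earns its keep, and I expect this to be the \textbf{main obstacle}. Establishing the transformation laws is essentially formal, but to conclude membership in $M_k^{m+1}(\pfrak,\eta_\zeta^{q^l})$ I must verify boundedness at both cusps $0$ and $\infty$. At infinity, the condition $\Upsilon\Hcal \to \binom{0}{0}$ controls $h_2$ (with its $u$ factor) while $h_1 \in \TT[[u]]$ is already bounded; specialization preserves these $u$-expansion bounds. The delicate point is the cusp at $0$, reached via $W_\pfrak$: here I would invoke Lemma \ref{Psievallem}, whose three displayed estimates precisely govern the growth of $\ev_\zeta(\chi_t)(\pfrak z)$ and $\ev_\zeta(\psi_1)(\pfrak z)$ after the scaling $z \mapsto \pfrak z$ implemented by $\alpha_\pfrak$. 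These estimates guarantee that the $\alpha_\pfrak$-twisted specialization of $h_2$ lands in $A[\zeta,\ec(\pfrak^{-1})][[u]]$ with the correct vanishing, which is exactly what is needed for the identity and membership in \eqref{evalpropeq2}. The interplay between the matrix uniformizer identity \eqref{upsiloncommeq}, $\psi_1 = u\chi_t$, and the $W_\pfrak$-twist is the technical heart, and I would organize the proof so that all cusp-growth bounds are reduced to Lemma \ref{Psievallem} rather than re-derived.
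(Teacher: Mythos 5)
Your proposal is correct and follows essentially the same route as the paper: the transformation law comes from specializing the first coordinate of \eqref{modularity} at $t=\zeta^{q^l}$ (killing $c(t)$ for $\gamma\in\Gamma_0(\pfrak)$ and turning $d(t)$ into the character value), and holomorphy at the cusps is handled exactly as you describe, by writing $h_1\in u\TT[[u]]$ and $h_2=h_3-\chi_t h_1$ with $h_3\in\TT[[u]]$ via Corollary \ref{infequiv1} and then reducing all growth estimates to Lemma \ref{Psievallem}, with \eqref{evalpropeq2} obtained from \eqref{modularity} together with Lemma \ref{invollem}.
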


\begin{proof}
Equation \eqref{evalpropeq1} follows directly from the definition of a $\TT$-valued VMF, Lemma \ref{Psievallem}, and the equivalent conditions of Corollary \ref{infequiv1}. 
%where one uses the relation 
%\[h_1|_k^m[\left( \begin{smallmatrix} 0 & -1 \\ 1 & 0 \end{smallmatrix} \right)](z) = z^{-k}h_1(-1/z) = -h_2(z)\]
%to check good behavior at the zero and infinity cusps. 
Indeed, from \eqref{modularity} we obtain
\[h_1(\gamma z) =  \frac{j(\gamma,z)^k}{(\det \gamma)^{m+1}} (d(t) h_1(z) -c(t) h_2(z)),\]
for all $\gamma \in \Gamma(1)$ and $z \in \Omega$. If $\gamma \in \Gamma_0(\pfrak)$, then $c \in \pfrak A$, and hence we obtain the desired modular transformation after evaluating at $t = \zeta^{q^l}$. By Corollary \ref{infequiv1}, $h_1 \in u\TT[[u]]$, and we may write
\[h_2 = h_3 - \chi_t h_1\]
for some $h_3 \in \TT[[u]]$. Thus, by Lemma \ref{Psievallem}, $(\ev_{\zeta^{q^l}}h_1)|_k^m[\gamma]$ is bounded on $\{|z|_\Im \geq 1\}$ for all $\gamma \in \Gamma(1)$.

The equality in \eqref{evalpropeq2} follows directly from \eqref{modularity}, and one sees that 
\[-\ev_{\zeta^{q^l}}(h_2)|_k^m[\alpha_\pfrak] \in M_k^{m}(\pfrak, \eta_\zeta^{|\pfrak|-1-q^l})\] 
either directly from the definition of a $\TT$-valued VMF or by appealing to Lemma \ref{invollem}.
\end{proof}

%%%Mention anywhere (1.2.16): It is interesting to ask about the dimension of the various spaces $M_k^m(\pfrak,\eta_\zeta^l)$, but we leave this question for a future time. 

\subsection{Example: Eisenstein series of weight 1 with character} \label{exEWCwt1}

Recall the vectorial Eisenstein series of weight one considered in \S \ref{Eisseriessec}:
\[\Ecal_1(z) := {\epsilon_1(z) \choose \epsilon_2(z)} = \sideset{}{'}\sum_{a,b \in A} \frac{1}{az+b} {a(t) \choose b(t)}.\]
We fix $\zeta \in \FF_q^{ac} \subset \CC_\infty$ with minimal polynomial $\pfrak \in A_+$ and write $\epsilon_i^\zeta$ for $\ev_\zeta(\epsilon_i)$. 
By Proposition \ref{eigenevalprop} we have
\[\epsilon_1^\zeta \in M_1^1(\pfrak,\eta_\zeta) \ \text{ and } \]
\[\epsilon_2^\zeta|_1^1[\alpha_\pfrak] =  -\epsilon_1^\zeta|^1_1[W_\pfrak] \in  M_1^0(\pfrak,\eta_\zeta^{|\pfrak|-2}).\]  

Using use the $A$-expansion for $\Ecal_1$ given above in Proposition \ref{AexpE1prop}, we immediately obtain 
\begin{align} \label{Aexpepsilon1} \epsilon_1^\zeta(z) = -\pitilde \sum_{a \in A_+} a(\zeta) u(az) \in A[\zeta][[u]],
\end{align}
which is an $A$-expansion in the sense of Petrov, as for $f_s$.
Similarly, from the $A$-expansion for $\Ecal_1$ (Proposition \ref{AexpE1prop}), Lemma \ref{Psievallem}, and equation \eqref{APgtsumseq}, one may obtain some kind of series expansion for $\epsilon_2^\zeta$ indexed by the monics of $A$. We do not use it, and we refrain from writing it here.

From the expansion \eqref{Aexpepsilon1} for $\epsilon_1^\zeta$, which is valid in some neighborhood of the cusp at infinity, we see that this function is not identically zero. 
Thus, allowing the root $\zeta$ of $\pfrak$ to vary gives $2\deg \pfrak$ linearly independent (indeed, they lie in different eigenspaces of the $\Gamma_0(\pfrak)$ action), non-cuspidal(\footnote{We remind the reader, that Cornelissen proves \cite[Theorem (6.9.1)]{GCsurvey} --- using a proof which he attributes to Gekeler --- the surprising fact that there are no cusp forms of weight one for any congruence subgroup of $\GL_2(A)$.}) forms in the various spaces $M_1^m(\Gamma_1(\pfrak))$ after evaluation. 
In \cite{RPmf16}, using a classical approach, we are successful in constructing as many linearly independent Eisenstein series of weight one with character as there are cusps for $\Gamma_1(\pfrak)$, namely $2\frac{|\pfrak|-1}{q-1}$; see \cite[Proposition 6.6 (i)]{EGjnt01} for the number of cusps of $\Gamma_1(\mfrak)$ for general $1 \neq \mfrak \in A_+$.

By the discussion above, both $\epsilon_1^\zeta$ and $\epsilon_2^\zeta|[\alpha_\pfrak]$ are non-cuspidal modular forms of weight 1 for $\Gamma(\pfrak)$, and are thus expressible in the basis given by Cornelissen-Gekeler-Goss above. We have
\begin{align} \label{epsilonsincongeis}
\epsilon_1^\zeta = \sum_{0 \neq |c| < |\pfrak| } c(\zeta) \sum_{d \in A/\pfrak A} E_{(c,d)}^{(1)} \ \text{ and } \
\epsilon_2^\zeta|[\alpha_\pfrak] = \sum_{0 \neq |d| < |\pfrak| } d(\zeta) E_{(0,d)}^{(1)}.
\end{align} 
These are completely classical in shape. One easily takes the expasion for $\epsilon_1^\zeta$ in \eqref{epsilonsincongeis} to the $A$-expansion given in \eqref{Aexpepsilon1}. %NTS: Check \pm sign! 2.2.16 - Rudy
%%%Next: Another expansion for \epsilon_2^\zeta
One can also obtain the following series expansion index by the monics of $A$ for $\epsilon_2^\zeta|[\alpha_\pfrak]$ from \eqref{epsilonsincongeis},
%%%I need to double check it! -R
\begin{align} \label{epsilon2nice}
\epsilon_2^\zeta|[\alpha_\pfrak] = -L(\chi_t,1) + \frac{-\pitilde}{ \pfrak} \sum_{a \in A_+} \sum_{0 \neq |d| < |\pfrak|}  \frac{d(\zeta) \ec(d/\pfrak)^{q-2} u^{q-1}|[\alpha_a]}{\ec(d/\pfrak)^{q-1} u^{q-1}|[\alpha_a] + 1}.
\end{align}
This final expansion, which is not an $A$-expansion in the sense of Petrov, but something new, should be compared with those in \cite{RPmf16} wherein such examples are considered and explained.

\subsubsection{A family of congruences} 
We obtain the following immediate corollary to the $A$-expansion for $\epsilon_1^\zeta$ given in \eqref{Aexpepsilon1}. 

\begin{theorem}
For each $\zeta \in \FF_q^{ac}$ with minimal polynomial $\pfrak \in A$, there exists a form $f_\zeta \in M_1^1(\pfrak,\eta_\zeta) \cap A[\zeta][[u]]$ such that $E \equiv f_\zeta \pmod{(\theta - \zeta)}$. 
\end{theorem}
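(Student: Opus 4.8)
The plan is to exhibit the required form explicitly as a normalized specialization of the first coordinate of the weight-one vectorial Eisenstein series. Concretely, set $f_\zeta := -\pitilde^{-1}\epsilon_1^\zeta$, where $\epsilon_1^\zeta := \ev_\zeta([\Ecal_1]_1)$. First I would recall, from the discussion of \S\ref{exEWCwt1} (which rests on Proposition \ref{eigenevalprop} applied with $l = 0$), that $\epsilon_1^\zeta \in M_1^1(\pfrak, \eta_\zeta)$. Since $-\pitilde^{-1} \in \CC_\infty^\times$ is a nonzero scalar, multiplication by it preserves weight, type and character, so $f_\zeta \in M_1^1(\pfrak, \eta_\zeta)$ as well.

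Next I would read off the $A$-expansion. By \eqref{Aexpepsilon1} we have $\epsilon_1^\zeta = -\pitilde\sum_{a\in A_+} a(\zeta)\, u(az)$, whence
\[ f_\zeta = \sum_{a \in A_+} a(\zeta)\, u(az). \]
To see that $f_\zeta \in A[\zeta][[u]]$ I would use that each $u(az)$ already lies in $A[[u]]$: writing $C_a$ for the Carlitz operator (an $\FF_q$-linear polynomial in $\tau$ with coefficients in $A$, monic of $\tau$-degree $\deg a$), the functional equation $\mathfrak{e_c}(az) = C_a(\mathfrak{e_c}(z))$ together with $\mathfrak{e_c} = u^{-1}$ gives $u(az) = C_a(u^{-1})^{-1} = u^{q^{\deg a}}(1 + \cdots) \in A[[u]]$. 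Writing $u(az) = \sum_{n \geq q^{\deg a}} c_{a,n} u^n$ with $c_{a,n} \in A$, for each fixed power $u^n$ only the finitely many $a$ with $q^{\deg a} \leq n$ contribute, so the $u^n$-coefficient of $f_\zeta$ is the finite sum $\sum_a a(\zeta) c_{a,n} \in A[\zeta]$; hence indeed $f_\zeta \in A[\zeta][[u]]$.

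Finally I would establish the congruence by a term-by-term comparison with Gekeler's false Eisenstein series $E = \sum_{a\in A_+} a\, u(az) \in A[[u]]$. We have
\[ E - f_\zeta = \sum_{a \in A_+} \bigl(a - a(\zeta)\bigr) u(az), \]
and for every $a \in A_+$ the elementary divisibility $(\theta - \zeta) \mid \bigl(a(\theta) - a(\zeta)\bigr)$ in $A[\zeta]$ shows that each $u$-coefficient of $E - f_\zeta$ lies in $(\theta - \zeta)A[\zeta]$. Thus $E \equiv f_\zeta \pmod{(\theta - \zeta)}$ in $A[\zeta][[u]]$, completing the argument. The genuinely substantive points --- and the only places where care is needed --- are the normalization by $-\pitilde^{-1}$, which is exactly what cancels the transcendental period and places $f_\zeta$ in $A[\zeta][[u]]$ rather than $\pitilde A[\zeta][[u]]$, and the verification that the formal $A$-expansion $\sum_a a(\zeta)u(az)$ genuinely resums to a $u$-power series with coefficients in $A[\zeta]$ (equivalently, that $u(az) \in A[[u]]$). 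Everything else is a routine coefficient comparison; the interesting content is that the \emph{quasi}-modular form $E$ of full level is congruent modulo $(\theta - \zeta)$ to a \emph{true} modular form of level $\pfrak$ with character $\eta_\zeta$.
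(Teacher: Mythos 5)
Your proof is correct and is essentially the paper's own argument: the paper also takes $f_\zeta$ to be the specialization at $t=\zeta$ of $[\Ecal_1]_1$ and simply ``compares $A$-expansions,'' the congruence coming from the divisibility of $a - a(\zeta)$ by $\theta - \zeta$ exactly as you write. The only differences are that you make explicit the normalization by $-\pitilde^{-1}$ (which the paper leaves implicit --- indeed its display \eqref{Aexpepsilon1} asserts membership in $A[\zeta][[u]]$ while carrying the factor $-\pitilde$, a normalization glitch your version fixes) and that you verify $u(az) \in A[[u]]$ so the termwise comparison is legitimate; both are details the paper's one-line proof suppresses.
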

\begin{proof}
Indeed, just let $f_\zeta := \epsilon_1^\zeta$ and compare $A$-expansions.
\end{proof}

\begin{remark}
The previous result also gives an example of two forms for $\Gamma_1(\pfrak)$, with different weights, which have congruent $u$-expansions, namely $E_\pfrak$ and the $f_\zeta$ just defined.
\end{remark}

\subsubsection{$\vfrak$-adic modular forms}
We have seen above in Proposition \ref{quasimodprop} --- and more explicitly in \S \ref{evalE1thetapowers} --- that the forms $\frac{-1}{\pitilde} \ev_{\theta^{q^d}}[\Ecal_1]_1$ are modular for all $d \geq 1$ and have $u$-expansions in $A[[u]]$. Now we look at their $\pfrak$-adic convergence for $\zeta \in \FF_q^{ac}$ with minimal polynomial $\pfrak$. These notions were first investigated for Drinfeld modular forms by C. Vincent \cite{CVjnt} and D. Goss \cite{DGvadic}.

First, fix $\zeta \in \FF_q^{ac}$ with minimal polynomial $\pfrak$, as above, and embed $\FF_q[\zeta]$ in the $\pfrak$-adic completion $A_\pfrak$ of $A$ so that $\theta - \zeta$ has $\pfrak$-adic valuation $1$. 

\begin{definition}
We shall say that a modular form $f$ with $u$-expansion at infinity in $A[\zeta][[u]] \subset A_\pfrak[[u]]$ is a $\pfrak$-adic modular form for $\GL_2(A)$, if there exists a sequence of modular forms $\{f_n\}_{n \gg 0} \subset M(\GL_2(A))$, with $u$-expansion at infinity in $A[[u]]$, such that the smallest $\pfrak$-adic valuation of all of the coefficients of $f - f_n$ tends to infinity with $n$.
\end{definition}

\begin{proposition}
For each $\zeta \in \FF_q^{ac}$ with minimal polynomial $\pfrak \in A$ of degree $d$, the form 
\[\frac{-1}{\pitilde}\ev_\zeta[\Ecal_1]_1 \in A[\zeta][[u]] \cap M_1^1(\Gamma_1(\pfrak))\] 
is a $\pfrak$-adic modular form for $M(\GL_2(A))$. 
\end{proposition}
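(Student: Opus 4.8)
The plan is to realize $f := \frac{-1}{\pitilde}\ev_\zeta[\Ecal_1]_1$ as a $\pfrak$-adic limit of the genuine full-level forms obtained by specializing $[\Ecal_1]_1$ at the points $t = \theta^{q^{dn}}$. Concretely, I set
\[f_n := \frac{-1}{\pitilde}\ev_{\theta^{q^{dn}}}[\Ecal_1]_1, \qquad n \geq 1.\]
These specializations are defined by the analytic continuation of Corollary \ref{e1totaufstar}, and by Proposition \ref{quasimodprop} together with the explicit computation of \S\ref{evalE1thetapowers} each $f_n$ is a Drinfeld modular form of weight $q^{dn}+1$ and type $1$ for $\GL_2(A)$, namely a Petrov form with integral $A$-expansion
\[f_n(z) = \sum_{a \in A_+} a^{q^{dn}} u(az) \in A[[u]],\]
where I have used $1 + \tfrac{q^{dn}-1}{q-1}(q-1) = q^{dn}$. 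On the other hand, the $A$-expansion \eqref{Aexpepsilon1} gives $f = \sum_{a \in A_+} a(\zeta)\, u(az)$. Thus everything reduces to controlling, \emph{uniformly in} $a$, the $\pfrak$-adic size of the coefficient differences $a^{q^{dn}} - a(\zeta)$.

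The heart of the matter is a clean $\pfrak$-adic estimate, which is especially transparent because $A_\pfrak$ has equal characteristic $p$. Since $\deg \pfrak = d$ we have $\zeta \in \FF_{q^d}$, so $\zeta^{q^{dn}} = \zeta$; as the Frobenius $x \mapsto x^q$ is additive in characteristic $p$, this yields
\[\theta^{q^{dn}} - \zeta = \theta^{q^{dn}} - \zeta^{q^{dn}} = (\theta - \zeta)^{q^{dn}}.\]
With the chosen embedding $\FF_q[\zeta] \hookrightarrow A_\pfrak$ for which $v_\pfrak(\theta - \zeta) = 1$, I obtain $v_\pfrak(\theta^{q^{dn}} - \zeta) = q^{dn}$. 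Next, writing $a = \sum_i c_i \theta^i$ with $c_i \in \FF_q$, one has $a^{q^{dn}} = \sum_i c_i (\theta^{q^{dn}})^i = a(\theta^{q^{dn}})$ and $a(\zeta) = \sum_i c_i \zeta^i$ (compare \eqref{chitevals}), so that $a^{q^{dn}} - a(\zeta)$ is the value at $X = \theta^{q^{dn}}$ of the polynomial $a(X) - a(\zeta)$, which is divisible by $X - \zeta$ over $\FF_q[\zeta]$. Hence
\[v_\pfrak\!\left(a^{q^{dn}} - a(\zeta)\right) \geq v_\pfrak\!\left(\theta^{q^{dn}} - \zeta\right) = q^{dn}\]
for every $a \in A_+$, a bound independent of $a$.

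Finally I would pass from this uniform bound on the $A$-expansion coefficients to a bound on the ordinary $u$-expansion coefficients. The difference $f_n - f = \sum_{a \in A_+}\left(a^{q^{dn}} - a(\zeta)\right) u(az)$ has each $u(az) \in A[[u]]$, and for a fixed power $u^j$ only finitely many $a$ contribute, since $u(az)$ has $u$-order $q^{\deg a}$, which grows with $\deg a$ (this is exactly what makes Petrov's $A$-expansions converge $u$-adically). Therefore the coefficient of $u^j$ in $f_n - f$ is a finite $A$-linear combination of the quantities $a^{q^{dn}} - a(\zeta)$, and so has $\pfrak$-adic valuation at least $q^{dn}$. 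Consequently the smallest $\pfrak$-adic valuation among all coefficients of $f - f_n$ is $\geq q^{dn} \to \infty$, which is precisely the definition of $f$ being a $\pfrak$-adic modular form for $M(\GL_2(A))$. The one genuinely delicate point is this last conversion: one must ensure that the $u$-adic assembly of the $A$-expansion does not spoil the uniform $\pfrak$-adic estimate, i.e. that each $u^j$-coefficient is a \emph{finite}, integral combination over $a$ so that the bound $q^{dn}$ survives intact; the rest is formal.
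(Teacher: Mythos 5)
Your proposal is correct and follows essentially the same route as the paper: compare $\frac{-1}{\pitilde}\ev_\zeta[\Ecal_1]_1$ with the Petrov forms $\frac{-1}{\pitilde}\ev_{\theta^{q^{dn}}}[\Ecal_1]_1$ via their $A$-expansions and bound the coefficient differences $\pfrak$-adically, where your factorization of $a(X)-a(\zeta)$ through $X-\zeta$ at $X=\theta^{q^{dn}}$ is just a reshuffling of the paper's identity $a^{q^{nd}}-a(\zeta)=(a-a(\zeta))^{q^{nd}}$. Your closing paragraph spelling out why the uniform bound survives the passage from $A$-expansion to $u$-expansion coefficients (finitely many $a$ contribute to each $u^j$, with integral coefficients) makes explicit a step the paper asserts without comment, which is a welcome addition but not a different proof.
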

\begin{proof}
Let $\zeta$ and $\pfrak$ be as in the statement; so, $d = \deg \pfrak$. We have $A$-expansions for the forms $\frac{-1}{\pitilde}\ev_\zeta[\Ecal_1]_1$ and $\frac{-1}{\pitilde} \ev_{\theta^{q^{nd}}}[\Ecal_1]_1$, and we examine their difference. We have 
\begin{flalign*} \frac{-1}{\pitilde}\ev_{\theta^{q^{nd}}}[\Ecal_1]_1(z) + \frac{1}{\pitilde} \ev_\zeta[\Ecal_1]_1(z) &= \sum_{a \in A_+} (a^{q^{nd}} - a(\zeta)) u(az) \\
& = \sum_{a \in A_+} (a - a(\zeta))^{q^{nd}} u(az),
\end{flalign*}
and $\theta - \zeta$ divides $a - a(\zeta)$, for all $a \in A$. Hence $(\theta - \zeta)^{q^{nd}}$ divides the right side above, which in turn implies that it divides each $u$-expansion coefficient of the difference of coordinate functions above and finishes the proof.
\end{proof}

%%%Not doing anymore. Cite tau-recursive sequences paper... 25.02.16
%\subsection{Congruences via $\tau$-difference equations} \label{taudiffcong}
%Now we wish to use the $\tau$-difference equation of Prop. \ref{Eisdiffeq} and Lem. \ref{gklem} to obtain congruences for the forms $g_k := -\zeta(q^k - 1)^{-1}E^{(q^k - 1)} \in A[[u]]$, introduced for non-negative integers $k$ in \cite[(6.8)]{EGinv}; note that $g_0 = 1$. 

\subsection{Prime power levels via hyperdifferentiation and specialization} 
Now we consider how VMF give rise to Drinfeld modular forms for the congruence subgroups $\Gamma_1(\pfrak^n)$, for monic irreducibles $\pfrak \in A$. 
%%%NTS: Precisely mention vectorial modular forms which arise below. 

\subsubsection{Hyperderivatives in $t$}
%%%Note to self: Make this section consistant in notation. 
%%%Potentially mention Voloch's formulas. -R

We define a family of higher derivations or hyperderivatives $\hyp_{t}^{(n)} : \TT \rightarrow \TT$ in the variable $t$ via
\[\phi(t+\epsilon) = \sum_{n \geq 0} (\hyp_{t}^{(n)} \phi)(t) \epsilon^n \in \TT[[\epsilon]]. \]
We extend this to rigid analytic functions $f: \Omega \rightarrow \TT$ in the obvious way and observe that $\hyp_t^{(n)} f : \Omega \rightarrow \TT$ is again a rigid analytic function. 

The main property we shall use of this family is that each member satisfies a Leibniz rule: for rigid analytic $f,g :\Omega \rightarrow \TT$ and all positive integers $n$, we have
\[\hyp_{t}^{(n)}(fg) = \sum_{j  = 0}^n \hyp_{t}^{(j)} f \cdot \hyp_{t}^{(n-j)}g .\]
We extend the family $\hyp_{t}^{(n)}$ to matrices. For $(a_{ij}) \in \Mat_{m,n}(\Hol(\Omega, \TT))$, let 
\[\hyp_{t}^{(n)}(a_{ij}) := \left( \hyp_{t}^{(n)} a_{ij} \right).\] 

We require the following facts. In all, $\varpi \in \FF_q[t]$ is an arbitrary non-constant polynomial. 

\begin{lemma} \label{hypdivislem}
Let $n$ be a positive integer, and suppose $\alpha \in \FF_q + \varpi^n \FF_q[t]$, then, for all $j = 1,..,n-1$,
\[\varpi \text{ divides } \hyp_{t}^{(j)}(\alpha) \text{ in } \FF_q[t].\]
\end{lemma}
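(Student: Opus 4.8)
The plan is to reduce the statement to a clean divisibility fact about the hyperderivatives of a pure power $\varpi^n$, which I would then establish via the defining Taylor (generating-function) expansion. First I would write $\alpha = c + \varpi^n\beta$ with $c \in \FF_q$ and $\beta \in \FF_q[t]$. Since $c$ is constant, $\hyp_t^{(j)}(c) = 0$ for all $j \geq 1$, so $\hyp_t^{(j)}(\alpha) = \hyp_t^{(j)}(\varpi^n\beta)$. Applying the Leibniz rule recorded just before the lemma,
\[
\hyp_t^{(j)}(\varpi^n\beta) = \sum_{i=0}^{j} \hyp_t^{(i)}(\varpi^n)\,\hyp_t^{(j-i)}(\beta),
\]
and since every factor $\hyp_t^{(j-i)}(\beta)$ lies in $\FF_q[t]$, it suffices to prove that $\varpi$ divides $\hyp_t^{(i)}(\varpi^n)$ for each $i$ in the range $0 \leq i \leq j \leq n-1$.

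The key step is the sharper claim that $\varpi^{\,n-i}$ divides $\hyp_t^{(i)}(\varpi^n)$ for all $0 \leq i \leq n$; taking $n - i \geq 1$ (which holds since $i \leq n-1$) then gives exactly the divisibility by $\varpi$ we want. To prove this claim I would use the definition of the hyperderivatives through the expansion in $\epsilon$. Writing $\varpi(t+\epsilon) = \varpi(t) + \epsilon R(\epsilon)$ with $R(\epsilon) := \sum_{l \geq 1} \hyp_t^{(l)}(\varpi)\,\epsilon^{l-1} \in \FF_q[t][\epsilon]$, the binomial theorem gives
\[
\varpi(t+\epsilon)^n = \bigl(\varpi + \epsilon R(\epsilon)\bigr)^n = \sum_{k=0}^{n} \binom{n}{k}\, \varpi^{\,n-k}\, \epsilon^k\, R(\epsilon)^k .
\]
Extracting the coefficient of $\epsilon^i$, which equals $\hyp_t^{(i)}(\varpi^n)$ by definition, only the terms with $k \leq i$ contribute, and each such term carries the factor $\varpi^{\,n-k}$ with $n - k \geq n - i$. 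Hence $\varpi^{\,n-i}$ divides $\hyp_t^{(i)}(\varpi^n)$, as claimed.

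Combining the two steps completes the argument: for $1 \leq j \leq n-1$, every summand $\hyp_t^{(i)}(\varpi^n)\,\hyp_t^{(j-i)}(\beta)$ in the Leibniz expansion is divisible by $\varpi^{\,n-i}$ with $n-i \geq 1$, hence by $\varpi$, so $\varpi \mid \hyp_t^{(j)}(\alpha)$. The only point requiring care — and the main (mild) obstacle — is the behaviour of hyperderivatives in positive characteristic: one cannot simply iterate an ordinary derivation and count multiplicities, since $\hyp_t^{(i)}$ is \emph{not} $\tfrac{1}{i!}\partial_t^{\,i}$ once $p \mid i!$. The generating-function computation sidesteps this completely, as it relies only on the multiplicativity encoded in $\varpi(t+\epsilon)^n$ and on the binomial expansion, both of which are valid over any commutative ring.
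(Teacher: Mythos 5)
Your proof is correct. The opening reduction coincides with the paper's: write $\alpha = \xi + \varpi^n\beta$, note that the constant dies under $\hyp_t^{(j)}$ for $j \geq 1$, and apply the Leibniz rule to $\varpi^n\beta$, so that the lemma hinges on showing $\varpi \mid \hyp_t^{(i)}(\varpi^n)$ for $0 \leq i \leq n-1$. Where you genuinely diverge is in proving that key divisibility. The paper argues by strong induction on $n$, splitting $\varpi^n = \varpi\cdot\varpi^{n-1}$ with the Leibniz rule and feeding the lower power into the induction hypothesis; this yields exactly divisibility by $\varpi$ and nothing more. You instead go back to the defining substitution $t \mapsto t+\epsilon$: writing $\varpi(t+\epsilon) = \varpi + \epsilon R(\epsilon)$ and expanding $\bigl(\varpi + \epsilon R(\epsilon)\bigr)^n$ binomially, the coefficient of $\epsilon^i$ is a sum of terms each carrying a factor $\varpi^{n-k}$ with $k \leq i$, giving the sharper statement $\varpi^{n-i} \mid \hyp_t^{(i)}(\varpi^n)$ for all $0 \leq i \leq n$, with no induction at all. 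Your route buys a quantitatively stronger conclusion (the positive-characteristic analogue of the classical fact that $f^{n-i}$ divides the $i$-th derivative of $f^n$), and it makes transparent that the only input is the multiplicativity of the substitution homomorphism $t \mapsto t + \epsilon$ --- correctly sidestepping, as you emphasize, the unavailability of $\hyp_t^{(i)} = \tfrac{1}{i!}\partial_t^{i}$ in characteristic $p$. The paper's induction buys brevity and stays strictly within the one property it records for hyperderivatives (the Leibniz rule), which is all that the application to forms for $\Gamma_1(\pfrak^n)$ requires.
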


\begin{proof}
Let $\alpha = \xi + \varpi^n \beta \in \FF_q + \varpi^n \FF_q[t]$. Using the Leibniz rule and linearity, we have
\[\hyp_{t}^{(j)} \alpha =  \sum_{k = 0}^j (\hyp_{t}^{(k)}\varpi^n) (\hyp_{t}^{(j-k)}\beta).\]
Thus, it suffices to show: for all $j = 0, 1, \dots, n-1$, 
\[ \varpi \text{ divides } \hyp_{t}^{(j)}(\varpi^n) \text{ in } \FF_q[t].\]
We prove this by strong-induction on $n$. The result is clear for $n = 1$. Let $j \in \{0,1,\dots,n-1\}$. By the Leibniz rule, we have
\[\hyp_{t}^{(j)} (\varpi^n) = \varpi \hyp_{t}^{(j)} (\varpi^{n-1}) + \sum_{k = 1}^j \hyp_{t}^{(k)}(\varpi)\hyp_{t}^{(j-k)}(\varpi^{n-1}).\]
By the induction hypothesis, $\varpi$ divides $\hyp_{t}^{(j-k)} (\varpi^{n-1})$ for all $k \in \{1,2,\dots,j\}$. Thus $\varpi$ divides the right side above, and we are done.
%and by the previous lemma, $\varpi$ divides the right side. 
\end{proof}

\begin{lemma} \label{chihypers}
Let $\zeta \in \FF_q^{ac}$ with minimal polynomial $\pfrak \in A$. For all positive integers $j$, $\ev_\zeta (\hyp_{t}^{(j-1)} \chi_t)$ is $\pfrak^j A$-periodic and we have that 
\[ u(z)(\hyp_{t}^{(j-1)}\chi_t)(z) \rightarrow 0,  \text{ as } |z|_\Im \rightarrow \infty.\] 
\end{lemma}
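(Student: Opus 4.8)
The plan is to prove both assertions by differentiating, in the variable $t$, the two defining properties of $\chi_t$ recorded in Lemma \ref{chipropslem}, and then specializing. First I would establish the periodicity. For each $a \in A$ we know $\chi_t(z+a) = \chi_t(z) + \chi_t(a) = \chi_t(z) + a(t)$, where $a(t) = \chi_t(a) \in \FF_q[t]$ is a polynomial in $t$. Applying $\hyp_{t}^{(j-1)}$ to this identity in the variable $t$ (the operator acts on $z$-rigid-analytic, $\TT$-valued functions coordinatewise, and commutes with the $A$-translation in $z$) yields
\[
(\hyp_{t}^{(j-1)}\chi_t)(z+a) = (\hyp_{t}^{(j-1)}\chi_t)(z) + \hyp_{t}^{(j-1)}(a(t)).
\]
Thus $\hyp_{t}^{(j-1)}\chi_t$ is quasiperiodic with ``period correction'' $\hyp_{t}^{(j-1)}(a(t))$, and it becomes genuinely $a$-periodic precisely when this correction vanishes after evaluating at $t = \zeta$. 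So the task reduces to showing that $\ev_\zeta \hyp_{t}^{(j-1)}(a(t)) = 0$ whenever $a \in \pfrak^j A$.

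The key input here is Lemma \ref{hypdivislem}, applied with $\varpi = \pfrak$. If $a \in \pfrak^j A$, then $a(t) = \chi_t(a) \in \pfrak(t)^j \FF_q[t] \subset \FF_q + \pfrak(t)^j\FF_q[t]$ (with the $\FF_q$ constant equal to $0$), where $\pfrak(t) = \chi_t(\pfrak)$ is the image of $\pfrak$ under $\chi_t$ and is, up to the substitution $\theta \mapsto t$, the minimal polynomial of $\zeta$. Lemma \ref{hypdivislem} then gives that $\pfrak(t)$ divides $\hyp_{t}^{(j-1)}(a(t))$ in $\FF_q[t]$ for the relevant indices $j-1 \in \{0,1,\dots,j-1\}$; note $\hyp_t^{(0)}(a(t)) = a(t)$ itself is divisible by $\pfrak(t)^j$, so the index $j-1$ is covered. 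Since $\pfrak(\zeta) = 0$, evaluating at $t = \zeta$ annihilates this correction term, proving that $\ev_\zeta(\hyp_{t}^{(j-1)}\chi_t)$ is $\pfrak^j A$-periodic. The one point to check carefully is the boundary index: Lemma \ref{hypdivislem} is stated for $j = 1,\dots,n-1$ with $n$ the exponent, but the $\hyp^{(0)}$ term is directly divisible by the full power $\pfrak(t)^j$, so every needed order is handled.

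For the growth statement, I would start from the $\tau$-difference equation \eqref{taudifferencechi}, namely $\tau(\chi_t)(z) = \chi_t(z) + (u(z)\tau(\omega(t)))^{-1}$, or more simply from the growth bound \eqref{chigrowtheq}, $\|\chi_t(z)\| \le \max\{1, |\mathfrak{e_c}(z)|^{1/q}\}$. Since $\mathfrak{e_c}(z) = u(z)^{-1}$ for $z \notin A$, this reads $\|\chi_t(z)\| \le \max\{1, |u(z)|^{-1/q}\}$, so $\|u(z)\chi_t(z)\| \le |u(z)| \cdot |u(z)|^{-1/q} = |u(z)|^{1-1/q} \to 0$ as $|z|_\Im \to \infty$. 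This settles the $j=1$ case directly. For general $j$, the cleanest route is the identity \eqref{perkinsid}, $\psi_1 = u\chi_t$, which exhibits $u\chi_t$ as a genuine element of $u\TT[[u]]$ near infinity; applying $\hyp_t^{(j-1)}$ (which preserves $\TT[[u]]$ since it acts only on the $t$-coefficients, the $u$-expansion being over $\TT$) gives $u \hyp_t^{(j-1)}\chi_t = \hyp_t^{(j-1)}\psi_1 \in \TT[[u]]$, and the Leibniz rule with the $u$-independent $\hyp_t^{(j-1)}$ keeps the expansion in $u\TT[[u]]$ so long as $u\chi_t \in u\TT[[u]]$ with $u$-order at least one — which is exactly \eqref{perkinsid}. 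Hence $u(z)(\hyp_t^{(j-1)}\chi_t)(z) \to 0$.

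The main obstacle I anticipate is the bookkeeping around the index range in Lemma \ref{hypdivislem} and making sure that hyperdifferentiation in $t$ genuinely commutes with the $A$-translation in $z$ and with taking $u$-expansions — these are the ``obvious but must be said'' compatibilities. The divisibility itself is handed to us by Lemma \ref{hypdivislem}, so the substantive content is really just organizing these commutations correctly; there is no deep analytic difficulty beyond the growth bound \eqref{chigrowtheq}, which is already available.
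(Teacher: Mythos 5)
Your periodicity argument is exactly the paper's: differentiate $\chi_t(z+a)=\chi_t(z)+a(t)$ in $t$, then kill the correction $\hyp_t^{(j-1)}(a(t))$ at $t=\zeta$ via Lemma \ref{hypdivislem}; your care with the boundary index (the case $\hyp_t^{(0)}(a(t))=a(t)\in\pfrak(t)^j\FF_q[t]$) is correct and is all the paper implicitly uses. The gap is in the growth statement for $j\ge 2$. You assert that \eqref{perkinsid} ``exhibits $u\chi_t$ as a genuine element of $u\TT[[u]]$'' and rest the whole argument on applying $\hyp_t^{(j-1)}$ coefficientwise to that $u$-expansion. But $\psi_1=u\chi_t$ is \emph{not} $A$-periodic: from $\chi_t(z+a)=\chi_t(z)+a(t)$ and the $A$-periodicity of $u$ one gets $\psi_1(z+a)=\psi_1(z)+a(t)\,u(z)$. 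Under the paper's convention, which reserves the notation $f\in\TT[[u]]$ for tempered \emph{periodic} functions, $\psi_1$ has no $u$-expansion at all; only the composite $\pfrak\,\ev_\zeta(\psi_1)(\pfrak z)$ does (Lemma \ref{Psievallem}). So there is no expansion for $\hyp_t^{(j-1)}$ to act on, and the step ``the Leibniz rule keeps the expansion in $u\TT[[u]]$'' has no content.

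The conclusion is salvageable by either of two short repairs, the first being what the paper actually does. (i) Hyperdifferentiate the defining series rather than a nonexistent $u$-expansion: since $\pitilde$ and each $1/(z-a)$ are independent of $t$ and $\hyp_t^{(j-1)}$ is continuous on $\TT$, the identity $\pitilde\, u(z)\chi_t(z)=\sum_{a\in A}a(t)/(z-a)$ gives $\pitilde\, u(z)(\hyp_t^{(j-1)}\chi_t)(z)=\sum_{a\in A}\hyp_t^{(j-1)}(a(t))/(z-a)$; as $\|\hyp_t^{(j-1)}(a(t))\|\le 1$ and $|z-a|\ge |z|_\Im$ for all $a\in A$, the right-hand side has norm at most $1/|z|_\Im\rightarrow 0$. (ii) Alternatively, extend your own $j=1$ argument: for $f=\sum_i f_it^i\in\TT$ one has $\hyp_t^{(n)}f=\sum_i\binom{i}{n}f_it^{i-n}$ with $|\binom{i}{n}|\le 1$, so $\|\hyp_t^{(n)}f\|\le\|f\|$; applying this to \eqref{chigrowtheq} yields $\|u(z)(\hyp_t^{(j-1)}\chi_t)(z)\|\le |u(z)|^{1-1/q}\rightarrow 0$ for every $j$, with no reference to $\psi_1$ at all.
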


\begin{proof}
From $\chi_t(z+a) = \chi_t(z) + a(t)$, we obtain 
\[(\hyp_{t}^{(j-1)}\chi_t)(z+a) = (\hyp_{t}^{(j-1)}\chi_t)(z) + \hyp_{t}^{(j-1)}a(t).\] 
Thus if $a \in \pfrak^j A$, the previous corollary gives the periodicity after evaluation at $t = \zeta$. 

For the second claim, from $\psi_1 = u \chi_t$ we obtain 
\[ \pitilde u(z) (\hyp_{t}^{(j-1)}\chi_t)(z) = \sum_{a \in A} \frac{\hyp_{t}^{(j-1)}a(t)}{z-a},\] 
and the right side clearly vanishes at infinity since $||\hyp_{t}^{(j-1)}a(t)|| \leq 1$, for all $j \geq 1$ and $a \in A$. 
\end{proof}

\subsubsection{Modular forms for $\Gamma_1(\pfrak^n)$}

\begin{lemma} \label{modularleibniz}
Let $\Hcal = {h_1 \choose h_2} \in \MM_k^m(\rho_t^*)^!$. For all non-negative integers $n$, we have 
\[(\hyp_{t}^{(n)} \Hcal)(\gamma (z)) = \frac{j(\gamma, z)^k}{(\det\gamma)^m} \sum_{j = 0}^n (\hyp_{t}^{(j)} \rho_t^*(\gamma)) (\hyp_{t}^{(n-j)}\Hcal)(z). \]
\end{lemma}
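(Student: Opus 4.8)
The plan is to obtain the claimed transformation law by hyperdifferentiating the defining functional equation \eqref{modularity} of a VMF$^!$ in the variable $t$. Fix $\gamma = \left(\begin{smallmatrix} a & b \\ c & d\end{smallmatrix}\right) \in \GLA$ and $z \in \Omega$. Since $\Hcal \in \MM_k^m(\rho_t^*)^!$, part 1 of Definition \ref{VMFdef} gives the identity of elements of $\TT^2$
\[\Hcal(\gamma(z)) = \frac{j(\gamma,z)^k}{(\det\gamma)^m}\,\rho_t^*(\gamma)\,\Hcal(z).\]
The crucial observation is that the automorphy factor $j(\gamma,z)^k/(\det\gamma)^m = (cz+d)^k/(\det\gamma)^m$ depends only on $z$ and $\gamma$ and is entirely independent of the variable $t$; it therefore behaves as a constant under $\hyp_t^{(n)}$ and pulls out of the operator.

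First I would record that hyperdifferentiation in $t$ commutes with the substitution $z \mapsto \gamma(z)$. Since $\gamma(z) = (az+b)/(cz+d)$ involves no $t$, evaluating the $z$-argument of each coordinate $h_i$ at $\gamma(z)$ and then applying $\hyp_t^{(n)}$ gives the same result as first applying $\hyp_t^{(n)}$ and then evaluating at $\gamma(z)$; that is, $\hyp_t^{(n)}[\,\Hcal(\gamma(z))\,] = (\hyp_t^{(n)}\Hcal)(\gamma(z))$. Applying $\hyp_t^{(n)}$ to both sides of the displayed equation and using this commutation on the left then yields
\[(\hyp_t^{(n)}\Hcal)(\gamma(z)) = \frac{j(\gamma,z)^k}{(\det\gamma)^m}\,\hyp_t^{(n)}\!\left(\rho_t^*(\gamma)\,\Hcal(z)\right).\]

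Next I would expand the right-hand side with the Leibniz rule. The scalar Leibniz rule for $\hyp_t^{(n)}$ extends entrywise to the product of the matrix $\rho_t^*(\gamma)$ with the column vector $\Hcal$: writing the $i$-th coordinate of $\rho_t^*(\gamma)\Hcal$ as $\sum_l \rho_t^*(\gamma)_{il}\,h_l$, applying the scalar rule in each summand, and interchanging the two finite sums, one obtains
\[\hyp_t^{(n)}\!\left(\rho_t^*(\gamma)\,\Hcal\right) = \sum_{j=0}^n \left(\hyp_t^{(j)}\rho_t^*(\gamma)\right)\left(\hyp_t^{(n-j)}\Hcal\right),\]
which is exactly the matrix Leibniz rule recorded just above the statement. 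Substituting this into the previous display produces the asserted identity.

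There is no genuine obstacle here: the only points requiring care are the $t$-independence of the automorphy factor and the verification that the Leibniz rule passes from scalars to the matrix--vector product, both of which are routine. I would remark in passing that the formula is the natural non-homogeneous analogue of \eqref{modularity}: the $j = 0$ term reproduces the original modularity with $\rho_t^*(\gamma)$ acting on $\hyp_t^{(n)}\Hcal$, while the lower-order contributions $\hyp_t^{(j)}\rho_t^*(\gamma)$ for $j \geq 1$ measure the failure of $\hyp_t^{(n)}\Hcal$ to transform strictly according to $\rho_t^*$. This is precisely the structure that, after specialization $t = \zeta$, will be controlled by Lemma \ref{hypdivislem} and Lemma \ref{chihypers} to yield modular forms for the deeper congruence groups $\Gamma_1(\pfrak^n)$.
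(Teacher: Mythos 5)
Your proposal is correct and follows essentially the same route as the paper: both apply $\hyp_t^{(n)}$ to the defining functional equation \eqref{modularity}, use the $t$-independence of the automorphy factor $j(\gamma,z)^k/(\det\gamma)^m$, and expand via the scalar Leibniz rule applied entrywise (the paper writes this out coordinate-by-coordinate for $h_1$ and $h_2$, whereas you phrase it at the matrix--vector level, a purely cosmetic difference). Your explicit remark that $\hyp_t^{(n)}$ commutes with the substitution $z \mapsto \gamma(z)$ is a point the paper leaves implicit, and it is a worthwhile clarification rather than a deviation.
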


\begin{proof}
Let $\gamma = \left(\begin{smallmatrix} a & b \\ c & d \end{smallmatrix} \right)$, so that $\rho_t^*\gamma = \frac{1}{ad-bc}\left(\begin{smallmatrix} d(t) & -c(t) \\ -b(t) & a(t) \end{smallmatrix} \right)$. We have 
\[h_1(\gamma (z)) = \frac{j(\gamma,z)^k}{(\det\gamma)^{m+1}}(d(t) h_1(z) -c(t) h_2(z)).\] 
Applying the $\hyp_{t}^{(n)} $-difference operator, and using the Leibniz rule, we see that 
\[(\hyp_{t}^{(n)} h_1)(\gamma (z)) \text{ equals}\]
\[ \frac{j(\gamma,z)^k}{(\det\gamma)^{m+1}}\sum_{j = 0}^n \left( (\hyp_{t}^{(j)} d(t)) (\hyp_{t}^{(n-j)} h_1)(z) -(\hyp_{t}^{(j)} c(t)) (\hyp_{t}^{(n-j)} h_2)(z)\right). \]
Similarly for $h_2$. Putting everything together finishes the proof.
\end{proof}

\begin{proposition} \label{hyperdiffprop}
Let $\Hcal = {h_1 \choose h_2} \in \MM_k^m(\rho_t^*)$, $\zeta \in \FF_q^{ac}$ with minimal polynomial $\pfrak$, and $n$ a positive integer. We have 
\[\ev_\zeta(\hyp_{t}^{(n-1)}h_1) \in M_k^{m+1}(\Gamma_1(\pfrak^n)) \setminus M_k^{m+1}(\Gamma_1(\pfrak^{n-1})).\]
\end{proposition}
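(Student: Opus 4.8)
The plan is to use Lemma \ref{modularleibniz} as the main engine and to verify separately the two defining properties of a modular form for $\Gamma_1(\pfrak^n)$: invariance \eqref{Gammapmod} under the group, and holomorphy \eqref{0inftycusps} at every cusp. Writing $f := \ev_\zeta(\hyp_t^{(n-1)} h_1)$, I would first record the transformation law of the top coordinate. Taking the first entry in Lemma \ref{modularleibniz}, then slashing by $[\gamma]$ with weight $k$ and type $m+1$ and specializing at $\zeta$ (which commutes with the slash, since $\det\gamma$ and $j(\gamma,z)$ do not involve $t$), yields for every $\gamma = \left(\begin{smallmatrix} a & b \\ c & d\end{smallmatrix}\right) \in \GL_2(A)$:
\[ (f|_k^{m+1}[\gamma])(z) = \sum_{j=0}^{n-1}\left(\ev_\zeta(\hyp_t^{(j)} d(t))\,\ev_\zeta(\hyp_t^{(n-1-j)} h_1)(z) - \ev_\zeta(\hyp_t^{(j)} c(t))\,\ev_\zeta(\hyp_t^{(n-1-j)} h_2)(z)\right). \]

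For the invariance I would take $\gamma \in \Gamma_1(\pfrak^n)$, so $c \equiv 0$ and $d \equiv 1 \pmod{\pfrak^n}$; applying $\chi_t$ gives $c(t) \in \pfrak(t)^n \FF_q[t]$ and $d(t) \in 1 + \pfrak(t)^n \FF_q[t]$. Lemma \ref{hypdivislem} (with $\varpi = \pfrak$) then shows that $\pfrak$ divides $\hyp_t^{(j)}(c(t))$ for $0 \le j \le n-1$ and $\hyp_t^{(j)}(d(t))$ for $1 \le j \le n-1$, so after evaluating at $\zeta$ (where $\pfrak(\zeta)=0$) every term above dies except the $j=0$ term of the $d$-part, which is $\ev_\zeta(d(t))\,f = d(\zeta)\,f = f$; hence $f|_k^{m+1}[\gamma] = f$. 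For holomorphy at all cusps I would reuse the displayed formula for arbitrary $\gamma\in\GL_2(A)$ and bound each summand as $|z|_\Im\to\infty$: the scalar coefficients $\ev_\zeta(\hyp_t^{(j)}c(t)),\ev_\zeta(\hyp_t^{(j)}d(t))$ are constants, so it suffices to bound $\ev_\zeta(\hyp_t^{(i)}h_1)$ and $\ev_\zeta(\hyp_t^{(i)}h_2)$ near infinity. By Corollary \ref{infequiv1} one has $h_1\in u\TT[[u]]$ and may write $h_2 = h_3 - \chi_t h_1$ with $h_3\in\TT[[u]]$; applying $\hyp_t^{(i)}$ coefficientwise preserves $\hyp_t^{(i)}h_1\in u\TT[[u]]$ and $\hyp_t^{(i)}h_3\in\TT[[u]]$, and the Leibniz rule turns $\hyp_t^{(i)}(\chi_t h_1)$ into a sum of products $(\hyp_t^{(l)}\chi_t)(\hyp_t^{(i-l)}h_1)$. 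Pairing the factor of $u$ carried by $\hyp_t^{(i-l)}h_1\in u\TT[[u]]$ with $\hyp_t^{(l)}\chi_t$ and invoking Lemma \ref{chihypers} (which gives $u(z)(\hyp_t^{(l)}\chi_t)(z)\to 0$) makes these products vanish at infinity; since $\ev_\zeta$ is norm non-increasing, the specialized summands are bounded near infinity, giving \eqref{0inftycusps} and thus $f\in M_k^{m+1}(\Gamma_1(\pfrak^n))$.

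The main obstacle is the strict non-membership $f\notin M_k^{m+1}(\Gamma_1(\pfrak^{n-1}))$, and the plan is to exhibit a single element of $\Gamma_1(\pfrak^{n-1})$ violating invariance. I would take $\gamma_0 := \left(\begin{smallmatrix} 1 & 0 \\ \pfrak^{n-1} & 1\end{smallmatrix}\right) \in \Gamma(\pfrak^{n-1}) \subseteq \Gamma_1(\pfrak^{n-1})$, which lies outside $\Gamma_0(\pfrak^n)\supseteq\Gamma_1(\pfrak^n)$ since $\pfrak^{n-1}\not\equiv 0\pmod{\pfrak^n}$. For $\gamma_0$ one has $d(t)=1$ and $c(t)=\pfrak(t)^{n-1}$; Lemma \ref{hypdivislem} kills every $c$-term with $j\le n-2$, while the top order gives $\ev_\zeta(\hyp_t^{(n-1)}\pfrak(t)^{n-1}) = \pfrak'(\zeta)^{n-1}$, the coefficient of $\epsilon^{n-1}$ in $\pfrak(\zeta+\epsilon)^{n-1}=(\pfrak'(\zeta)\epsilon + O(\epsilon^2))^{n-1}$ (using $\pfrak(\zeta)=0$). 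The displayed formula then collapses to
\[ f|_k^{m+1}[\gamma_0] - f = -\,\pfrak'(\zeta)^{n-1}\,\ev_\zeta(h_2). \]
Here $\pfrak$ is separable, being irreducible over the perfect field $\FF_q$, so $\pfrak'(\zeta)\neq 0$; and the nonvanishing of $\ev_\zeta(h_2)$ follows from $\ev_\zeta(\Hcal)\neq 0$ by the identity \eqref{evalpropeq2} together with the invertibility of the slash operators by $W_\pfrak$ and $\alpha_\pfrak$ (which forces $\ev_\zeta(h_2)=0 \Rightarrow \ev_\zeta(h_1)=0$). Thus $f|_k^{m+1}[\gamma_0]\neq f$ and $f\notin M_k^{m+1}(\Gamma_1(\pfrak^{n-1}))$. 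I expect the two delicate points to be the uniform bookkeeping of which hyperderivatives of $c(t),d(t)$ survive specialization (handled cleanly by Lemma \ref{hypdivislem}) and the nonvanishing of $\ev_\zeta(h_2)$, which is exactly where one needs the (necessary) hypothesis that $\Hcal$ does not specialize to zero at $\zeta$ — indeed for $\Hcal=\pfrak(t)\Hcal'$ the conclusion would otherwise drop a level, so this nondegeneracy is the substantive case of the statement.
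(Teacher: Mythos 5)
Your proof of the membership half coincides with the paper's: the same chain, Lemma \ref{modularleibniz} followed by Lemma \ref{hypdivislem} for the invariance under $\Gamma_1(\pfrak^n)$, and Corollary \ref{infequiv1} plus the Leibniz rule plus Lemma \ref{chihypers} for holomorphy at the cusps, so there is nothing to add there. Where you genuinely depart from the paper is the non-membership half. The paper disposes of it in one sentence: for $\gamma \in \Gamma_1(\pfrak^j)$ with $j<n$ one does not have $\pfrak(t) \mid \hyp_t^{(n-1)}c(t)$ ``in general'', hence the functional equation fails ``in general'' --- no witness matrix is produced and no defect is computed. You instead exhibit the explicit element $\gamma_0 = \left(\begin{smallmatrix} 1 & 0 \\ \pfrak^{n-1} & 1\end{smallmatrix}\right) \in \Gamma(\pfrak^{n-1}) \subset \Gamma_1(\pfrak^{n-1})$ and compute the failure exactly, $f|_k^{m+1}[\gamma_0] - f = -\pfrak'(\zeta)^{n-1}\,\ev_\zeta(h_2)$, with $\pfrak'(\zeta)\neq 0$ because $\pfrak$ is separable. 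This buys two things: a checkable argument in place of a genericity claim, and a precise identification of the obstruction --- the defect vanishes exactly when $\ev_\zeta(h_2)=0$, which by \eqref{evalpropeq2} (or directly from \eqref{modularity} applied to $\left(\begin{smallmatrix} 0 & -1 \\ 1 & 0\end{smallmatrix}\right)$) happens exactly when $\ev_\zeta(\Hcal)=0$.

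Your computation also surfaces the one point on which the statement itself is imprecise: as literally written the proposition fails for degenerate $\Hcal$ --- for instance $\Hcal = \pfrak(t)^n\Hcal'$ gives $\ev_\zeta(\hyp_t^{(n-1)}h_1)=0$, which certainly lies in $M_k^{m+1}(\Gamma_1(\pfrak^{n-1}))$ --- so a nondegeneracy hypothesis such as $\ev_\zeta(\Hcal)\neq 0$ is genuinely needed. You flag this and use it explicitly; the paper's ``in general'' silently carries the same caveat. So your proposal is not weaker than the paper's proof; it is the paper's proof made precise, at the cost of making visible the hypothesis that the published statement omits.
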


\begin{proof}
Let $\gamma = \left(\begin{smallmatrix} a & b \\ c & d \end{smallmatrix} \right) \in \GL_2(A)$. From Lemma \ref{modularleibniz}, we see that 
\begin{align}\label{hypertrans}
(\hyp_{t}^{(n-1)}h_1)(\gamma (z)) = \frac{j(\gamma,z)^k}{(\det\gamma)^{m+1}} \sum_{j = 0}^{n-1} \left( \begin{smallmatrix} \hyp_{t}^{(j)}d(t), -\hyp_{t}^{(j)}c(t) \end{smallmatrix}\right) \cdot \left( \begin{smallmatrix} \hyp_{t}^{(n-1-j)}h_1(z) \\ \hyp_{t}^{(n-1-j)}h_2(z) \end{smallmatrix}\right).
\end{align}
If additionally, $\gamma \in \Gamma_1(\pfrak^n)$, then $\pfrak(t)$ divides $c(t)$ and both $\hyp_{t}^{(j)}d(t), \hyp_{t}^{(j)}c(t)$ for $j = 1,2,\dots,n-1$, as shown above. Thus evaluating at $t = \zeta$, we obtain
\[(\ev_\zeta \hyp_{t}^{(n-1)}h_1)(\gamma (z)) = \frac{j(\gamma,z)^k}{(\det\gamma)^{m+1}} (\ev_\zeta \hyp_{t}^{(n-1)}h_1)(z),\]
for all $\gamma \in \Gamma_1(\pfrak^n)$.
Note that had $\gamma$ been in $\Gamma_1(\pfrak^j)$ for some $j<n$, we would not have $\pfrak(t) | \hyp_{t}^{(n-1)}c(t)$ in general, and so the functional equation in the line above does not hold in general, showing that $\ev_\zeta (\hyp_{t}^{(n-1)}h_1)$ does not lie in $M_k^{m+1}(\Gamma_1(\pfrak^j))$ for any $j < n$.

For holomorphy at the cusps, it suffices by \eqref{hypertrans}, which holds for all $\gamma \in \GL_2(A)$, to know that $\ev_\zeta \hyp_{t}^{(j)} h_i$ are holomorphic at infinity for $j =0,1,\dots, n-1$ and $i = 1,2$, and this follows from the relation $h_2(z) = h_3(z) -\chi_t(z)h_1(z)$, with $h_3 \in \TT[[u]]$ and $h_1 \in u\TT[[u]]$ (i.e Cor. \ref{infequiv1}), the Leibniz rule, and Lem. \ref{chihypers}.
\end{proof}

\subsubsection{A non-classical family of vectorial Drinfeld modular forms for $\Gamma_1(\pfrak^n)$}

We are lead to consider the following family $\phi_t^{(n)}$ of faithful $\FF_q$-algebra representations:
\[\FF_q[t] \xrightarrow{\phi_t^{(n)}} \Mat_n(\FF_q[t]) \ \text{ given by } \ a \xmapsto{\phi_t^{(n)}} \left( \begin{matrix} a & \hyp_t^{(1)}a & \cdots & \hyp_t^{(n-1)}a \\ 0 & a & \cdots & \hyp_t^{(n-2)}a \\ \vdots & \ddots & \ddots & \vdots \\ 0 & \cdots & 0 & a \end{matrix} \right).\]
We point out that the action of $A$ via $\phi_t^{(n)}$ corresponds exactly to the action of the $n$-th tensor power of the Carlitz module $C^{\otimes n}$ on the tangent space $\text{Lie}(C^{\otimes n})$.

\begin{lemma}
Let $\zeta \in \FF_q^{ac} \subset \CC_\infty$ with minimal polynomial $\pfrak \in A_+$. The image of the composition
\[\phi_\zeta^{(n)} : \FF_q[t] \xrightarrow{\phi^{(n)}_t} \Mat_n(\FF_q[t]) \xrightarrow{\ev_\zeta} \Mat_n(\FF_q(\zeta))\]
is isomorphic to $A/\pfrak^n A$. In particular, \medskip

$\left\{  \left( \begin{smallmatrix} a(\zeta) & (\hyp_t^{(1)}a)(\zeta) & \hdots & (\hyp_t^{(n-1)}a)(\zeta) \\ 0 & a(\zeta) & \hdots & (\hyp_t^{(n-2)}a)(\zeta) \\ \vdots & \ddots & \ddots & \vdots \\ 0 & \hdots & 0 & a(\zeta) \end{smallmatrix} \right) : |a|<|\pfrak^n| \textit{ and } a(\zeta) \neq 0 \right\} \cong (A/\pfrak^n A)^\times. $
Further, the map
\[\eta_\zeta^{(n)} : \Gamma_0(\pfrak^n) \rightarrow \GL_n(\FF_q(\zeta)) \subset \GL_n(\CC_\infty) \text{ given by } \left( \begin{matrix} a & b \\ c & d \end{matrix} \right) \mapsto \phi_\zeta^{(n)}(\chi_t(d)) \]
is a group homomorphism with kernel $\Gamma_1(\pfrak^n)$.
\end{lemma}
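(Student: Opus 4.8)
The plan is to realize $\phi_\zeta^{(n)}$ as an $\FF_q$-algebra homomorphism and to reduce every assertion to a single computation of its kernel. First I would record that $\phi_t^{(n)}$ is an $\FF_q$-algebra homomorphism: writing $N \in \Mat_n(\FF_q)$ for the nilpotent shift with $1$'s on the first superdiagonal, one has $\phi_t^{(n)}(a) = \sum_{k=0}^{n-1}(\hyp_t^{(k)}a)N^k$, and the Leibniz rule $\hyp_t^{(m)}(ab) = \sum_{k+l=m}(\hyp_t^{(k)}a)(\hyp_t^{(l)}b)$ together with $N^n = 0$ gives $\phi_t^{(n)}(a)\phi_t^{(n)}(b) = \phi_t^{(n)}(ab)$; additivity and $\phi_t^{(n)}(1) = I_n$ are clear. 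Since $\ev_\zeta$ applied entrywise is a ring homomorphism $\Mat_n(\FF_q[t]) \to \Mat_n(\FF_q(\zeta))$, the composite $\phi_\zeta^{(n)}$ is an $\FF_q$-algebra homomorphism. A useful concrete handle is $\phi_\zeta^{(n)}(t) = \zeta I_n + N$, so that $\operatorname{Im}\phi_\zeta^{(n)}$ is the commutative subalgebra $\FF_q[\zeta I_n + N] \subset \Mat_n(\FF_q(\zeta))$.

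The crux is computing $\ker \phi_\zeta^{(n)} = (\pfrak(t)^n)$. From $\phi_\zeta^{(n)}(a) = \sum_k (\hyp_t^{(k)}a)(\zeta)N^k$, vanishing is equivalent to $(\hyp_t^{(k)}a)(\zeta) = 0$ for $k = 0,\dots,n-1$; since $a(\zeta+\epsilon) = \sum_k (\hyp_t^{(k)}a)(\zeta)\epsilon^k$ is the expansion of $a$ about $\zeta$, this says exactly that $a$ vanishes to order $\geq n$ at $\zeta$, i.e. $(t-\zeta)^n \mid a$ in $\FF_q(\zeta)[t]$. As $\pfrak$ is irreducible over the perfect field $\FF_q$ it is separable, so $\pfrak(t) = \prod_\sigma (t - \sigma\zeta)$ over the pairwise distinct Galois conjugates $\sigma\zeta$. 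Applying $\operatorname{Gal}(\FF_q(\zeta)/\FF_q)$ to $(t-\zeta)^n \mid a$, which fixes $a \in \FF_q[t]$, yields $(t - \sigma\zeta)^n \mid a$ for every conjugate, and by coprimality $\pfrak(t)^n \mid a$; the converse is clear. Hence $\ker \phi_\zeta^{(n)} = (\pfrak(t)^n)$, and the first isomorphism theorem together with the isomorphism $\chi_t : A \xrightarrow{\sim} \FF_q[t]$, $\theta \mapsto t$, carrying $\pfrak^n A$ to $(\pfrak(t)^n)$, gives $\operatorname{Im}\phi_\zeta^{(n)} \cong \FF_q[t]/(\pfrak(t)^n) \cong A/\pfrak^n A$. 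This proves the first assertion, and the statement on units is immediate: a ring isomorphism restricts to an isomorphism of unit groups, the units of $A/\pfrak^n A$ are the classes of $a$ with $a(\zeta) \neq 0$ (equivalently $\pfrak \nmid a$, represented by $|a| < |\pfrak^n|$), and under $\phi_\zeta^{(n)} \circ \chi_t$ these are sent precisely to the displayed upper-triangular Toeplitz matrices, which are invertible exactly because their diagonal entry $a(\zeta)$ is nonzero.

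For the final claim I would first note $\eta_\zeta^{(n)}$ lands in $\GL_n$: for $\gamma = \left(\begin{smallmatrix} a & b \\ c & d\end{smallmatrix}\right)\in\Gamma_0(\pfrak^n)$ we have $c \equiv 0$ and $\det\gamma = ad-bc \in \FF_q^\times$, so $ad \equiv \det\gamma \pmod{\pfrak^n}$ forces $d(\zeta) \neq 0$, whence $\phi_\zeta^{(n)}(\chi_t(d))$ is invertible. For the homomorphism property, the lower-right entry of $\gamma_1\gamma_2$ is $c_1 b_2 + d_1 d_2 \equiv d_1 d_2 \pmod{\pfrak^n}$ since $c_1 \equiv 0$; because $\pfrak^n A$ is the kernel of the ring homomorphism $\phi_\zeta^{(n)}\circ\chi_t$, this gives $\eta_\zeta^{(n)}(\gamma_1\gamma_2) = \phi_\zeta^{(n)}(\chi_t(d_1 d_2)) = \eta_\zeta^{(n)}(\gamma_1)\eta_\zeta^{(n)}(\gamma_2)$. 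Finally, $\gamma \in \ker\eta_\zeta^{(n)}$ means $\phi_\zeta^{(n)}(\chi_t(d)) = I_n = \phi_\zeta^{(n)}(\chi_t(1))$, which by injectivity modulo $\pfrak^n$ is equivalent to $d \equiv 1 \pmod{\pfrak^n}$; as this is exactly the condition defining $\Gamma_1(\pfrak^n) = \ker\eta$, we conclude $\ker\eta_\zeta^{(n)} = \Gamma_1(\pfrak^n)$.

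I expect the main obstacle to be the kernel computation in the second paragraph, specifically the descent from the factor $(t-\zeta)^n$ over $\FF_q(\zeta)$ to $\pfrak(t)^n$ over $\FF_q$, for which the separability of $\pfrak$ (equivalently $\pfrak'(\zeta) \neq 0$, so that the conjugates are distinct) is the essential input; once $\ker\phi_\zeta^{(n)} = (\pfrak(t)^n)$ is in hand, all remaining statements are formal consequences of $\phi_\zeta^{(n)}$ being an $\FF_q$-algebra homomorphism and of the defining congruences for $\Gamma_0(\pfrak^n)$ and $\Gamma_1(\pfrak^n)$.
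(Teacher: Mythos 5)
Your proof is correct, and it follows the same overall skeleton as the paper's: compute the kernel of the $\FF_q$-algebra map $\phi_\zeta^{(n)}\circ\chi_t$, invoke the first isomorphism theorem, read off the unit group from the diagonal entries, and then deduce the statements about $\eta_\zeta^{(n)}$ from the ideal property of the kernel. The genuine difference lies in the kernel computation, and it is a difference in your favor. The paper's proof simply says that vanishing of all matrix entries happens ``if and only if $a \in \pfrak^n A$'' by appeal to its divisibility lemma (Lemma \ref{hypdivislem}); but that lemma only yields the easy inclusion $\pfrak^n A \subseteq \ker\phi_\zeta^{(n)}$, while the reverse inclusion --- which is exactly what is needed for the image to be all of $A/\pfrak^n A$ rather than a proper quotient, and for $\ker\eta_\zeta^{(n)}$ to be no larger than $\Gamma_1(\pfrak^n)$ --- is left implicit. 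Your argument via the Hasse--Taylor expansion $a(t) = \sum_k (\hyp_t^{(k)}a)(\zeta)\,(t-\zeta)^k$ and Galois descent supplies precisely this missing direction, and it correctly isolates separability of $\pfrak$ (equivalently $\pfrak'(\zeta)\neq 0$, so the conjugates $\sigma\zeta$ are pairwise distinct) as the essential input; an equivalent inductive alternative is to write $a = \pfrak b$ once $a(\zeta)=0$ and use $(\hyp_t^{(1)}a)(\zeta) = \pfrak'(\zeta)b(\zeta)$ to force $\pfrak \mid b$, and so on. You also verify multiplicativity of $\phi_t^{(n)}$ via the nilpotent shift $N$ and the Leibniz rule, which the paper takes for granted (it asserts beforehand that $\phi_t^{(n)}$ is a faithful $\FF_q$-algebra representation, tied to the action of $C^{\otimes n}$ on $\operatorname{Lie}(C^{\otimes n})$). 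In short: the paper's route stays inside its divisibility-lemma machinery and is terser; yours is self-contained, fills a real gap in the ``only if'' direction, and makes the role of separability explicit.
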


\begin{proof}
The composite map $\phi^{(n)}_\zeta$ surjects on its image, and for an element to be in the kernel, all entries of the matrix in its image must be zero. By the lemmas above, this happens if and only if $a \in \pfrak^n A$. Thus the composite map factors through $A/\pfrak^n A$, and the invertible elements in the image are exactly those matrices whose diagonal entries are non-zero. This gives exactly the condition $a(\zeta) \neq 0$. The final claim follows directly from Lemma \ref{hypdivislem}, finishing the proof.
\end{proof}

The next result gives the first example of a new, non-classical type of vectorial modular form. 

\begin{proposition}
Let $\zeta \in \FF_q^{ac}$, with minimal polynomial $\pfrak$, and let $\Hcal = {h_1 \choose h_2} \in \MM_k^m(\rho_t^*)$. 

The column vector 
\[\left( \ev_\zeta \hyp_t^{(n-1)}h_1,  \ev_\zeta \hyp_t^{(n-2)}h_1,  \dots,   \ev_\zeta h_1 \right)^{tr} \in  \]
\[ M_k^{m+1}(\Gamma_1(\pfrak^n))\times M_k^{m+1}(\Gamma_1(\pfrak^{n-1}))\times \cdots \times M_k^{m+1}(\Gamma_1(\pfrak)) \subset M_k^{m+1}(\Gamma_1(\pfrak^n))^n\]
is a vectorial Drinfeld modular form of weight $k$ and type $m$ for $\Gamma_0(\pfrak^n)$ with representation $\phi_\zeta^{(n)}$. 
\end{proposition}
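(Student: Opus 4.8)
The plan is to verify directly the two defining properties of a vectorial form for $\Gamma_0(\pfrak^n)$ with representation $\eta_\zeta^{(n)}$: the correct transformation law under $\Gamma_0(\pfrak^n)$, and holomorphy of each coordinate at the cusps. Write $V := (\ev_\zeta \hyp_t^{(n-1)}h_1, \dots, \ev_\zeta h_1)^{tr}$, so that its $i$-th coordinate is $V_i = \ev_\zeta(\hyp_t^{(n-i)}h_1)$ for $i = 1,\dots,n$. Since the preceding lemma shows $\eta_\zeta^{(n)}$ is a group homomorphism and $j(\gamma,z)$ is a cocycle, the transformation law I am about to establish is automatically consistent under composition, so the notion is well posed. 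The holomorphy at the cusps, together with the finer membership $V_i \in M_k^{m+1}(\Gamma_1(\pfrak^{n-i+1}))$ asserted in the statement, is exactly Proposition \ref{hyperdiffprop} applied with $n-i+1$ in place of $n$; in particular each $V_i$ is modular for $\Gamma_1(\pfrak^n) \subseteq \Gamma_1(\pfrak^{n-i+1})$ and holomorphic at every cusp. Thus the only remaining point is the $\Gamma_0(\pfrak^n)$-transformation law.

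For that law I would start from the first-coordinate identity established inside the proof of Lemma \ref{modularleibniz}: for $\gamma = \left(\begin{smallmatrix} a & b \\ c & d\end{smallmatrix}\right) \in \GL_2(A)$ and every $N \ge 0$,
\[(\hyp_t^{(N)}h_1)(\gamma z) = \frac{j(\gamma,z)^k}{(\det\gamma)^{m+1}} \sum_{j=0}^N \left( (\hyp_t^{(j)}d(t))(\hyp_t^{(N-j)}h_1)(z) - (\hyp_t^{(j)}c(t))(\hyp_t^{(N-j)}h_2)(z)\right).\]
Now specialize to $\gamma \in \Gamma_0(\pfrak^n)$ and restrict to $0 \le N \le n-1$. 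Since $c \in \pfrak^n A$ we have $c(t) \in \pfrak(t)^n\FF_q[t]$, so Lemma \ref{hypdivislem} (together with the trivial case $j=0$) shows that $\pfrak$ divides $\hyp_t^{(j)}c(t)$ for every $j = 0,1,\dots,n-1$. As all indices $j \le N \le n-1$ occur, every term containing $h_2$ is annihilated by $\ev_\zeta$, and so
\[\ev_\zeta\big((\hyp_t^{(N)}h_1)(\gamma z)\big) = \frac{j(\gamma,z)^k}{(\det\gamma)^{m+1}} \sum_{j=0}^N \ev_\zeta(\hyp_t^{(j)}d(t))\, V_{\,n-N+j}(z),\]
using $\ev_\zeta(\hyp_t^{(N-j)}h_1) = V_{\,n-(N-j)}$.

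Setting $N = n-i$ turns this into $V_i(\gamma z) = \frac{j(\gamma,z)^k}{(\det\gamma)^{m+1}}\sum_{j=0}^{n-i} \ev_\zeta(\hyp_t^{(j)}d(t))\, V_{i+j}(z)$. The inner sum is precisely the $i$-th coordinate of $\phi_\zeta^{(n)}(\chi_t(d))\,V$, because the $(i,i')$ entry of the upper-triangular Toeplitz matrix $\phi_\zeta^{(n)}(\chi_t(d))$ is $\ev_\zeta(\hyp_t^{(i'-i)}d(t))$ for $i \le i'$ and zero otherwise. Hence
\[V(\gamma z) = \frac{j(\gamma,z)^k}{(\det\gamma)^{m+1}}\, \eta_\zeta^{(n)}(\gamma)\, V(z), \qquad \gamma \in \Gamma_0(\pfrak^n),\]
which is the asserted vectorial modularity of weight $k$ and type $m$ for the representation $\eta_\zeta^{(n)} = \phi_\zeta^{(n)}$, the extra factor $(\det\gamma)^{-1}$ being inherited from $\rho_t^*$ exactly as in the original definition of VMF (consistent with the coordinate functions having type $m+1$).

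I expect the main point — more a decisive bookkeeping observation than a deep difficulty — to be the dual role of the truncation $N \le n-1$: it is precisely the bound under which Lemma \ref{hypdivislem} kills all of the $h_2$-contributions (the ones carrying the $c$-entries), and it is also what makes the surviving $d$-convolution match the $n \times n$ triangular Toeplitz shape of $\phi_t^{(n)}$. Had one hyperdifferentiated to order $n$ or higher, the term $\hyp_t^{(n)}c(t)$ need not vanish at $\zeta$ and the clean triangular representation would break down; this is the structural reason the vector must be cut off exactly at the $(n-1)$-st hyperderivative.
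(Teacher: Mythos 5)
Your proof is correct and follows essentially the same route as the paper's: specialize the Leibniz-rule identity \eqref{hypertrans} to $\gamma \in \Gamma_0(\pfrak^n)$, use Lemma \ref{hypdivislem} so that evaluation at $t=\zeta$ annihilates all the $c$-terms (hence all $h_2$-contributions), and invoke Proposition \ref{hyperdiffprop} for the cusp conditions and the finer level memberships. The only difference is one of detail: the paper displays the resulting transformation only for the top coordinate and leaves the assembly into the upper-triangular Toeplitz matrix $\phi_\zeta^{(n)}(\chi_t(d))$ implicit, whereas you carry out that index bookkeeping explicitly.
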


\begin{proof}
From \eqref{hypertrans}, with $\gamma = \left(\begin{smallmatrix} a & b \\ c & d \end{smallmatrix} \right) \in \Gamma_0(\pfrak^n)$, we obtain
\begin{align*} %\label{ppleveleq} 
\ev_\zeta(\hyp_{t}^{(n-1)}h_1)(\gamma (z)) = \frac{j(\gamma,z)^k}{(\det\gamma)^{m+1}} \sum_{j = 0}^{n-1} \ev_\zeta (\hyp_{t}^{(j)}d(t)) \ev_\zeta(\hyp_{t}^{(n-1-j)} h_1)(z). \end{align*}
Evaluation at $t = \zeta$ in the previous displayed equation above plus Proposition \ref{hyperdiffprop}.% \eqref{ppleveleq}.
\end{proof}

%%%Remark that this will be explored more full in a future work. 

%%%Clearly it would be nice to know more about the spaces of such forms! 4.2.16 -Rudy

%\subsubsection{Low weight examples coming from Eisenstein series}
%Should we give some here?

\subsection{Regularity at infinity}

As we have seen, the first coordinate of a VMF of weight $k$ rigid analytically interpolates Drinfeld modular forms of weight $k$ for $\Gamma_1(\pfrak)$ for all monic irreducible polynomials $\pfrak \in A$. The next result sharpens this observation, adding justification of our choice of expansion at the infinite cusp.

Recall that $\Upsilon := \left( \begin{smallmatrix} 1 & 0 \\ 0 & u  \end{smallmatrix} \right)$, $\Psi_1 := \left(\begin{smallmatrix} 1 & 0 \\ -\psi_1 & 1 \end{smallmatrix} \right)$, $\Theta_t := \left(\begin{smallmatrix} 1 & 0 \\ -\chi_t & 1 \end{smallmatrix} \right)$, and $\bm{E} := (\mathcal{E}_1, \tau(\mathcal{E}_1))$. There are now several equivalent formulations for condition \eqref{vanishcond}, which we summarize in the following result.

\begin{theorem} \label{reginftythm}
Let $\mathcal{H} = {h_1 \choose h_2} \in \MM_k^m(\Gamma(1), \rho^*_t)^!$. The following are equivalent:

1. $\Hcal \in \MM_k^m(\rho_t^*)$. \medskip

2. $\Upsilon \Theta_t^{-1} \Hcal = \Psi_1^{-1} \Upsilon \Hcal \in u\TT[[u]]^2$. \medskip

3. $\bm{E}^{-1} \Hcal \in (M_{k-1}^m \oplus M_{k-q}^m) \otimes \TT $. \medskip

4. For infinitely many $\zeta \in \FF_q^{ac} \subset \CC_\infty$ with minimal polynomials $\pfrak \in A$, 
\[\ev_\zeta(h_1) \in M_k^m(\Gamma_1(\pfrak)).\]

\end{theorem}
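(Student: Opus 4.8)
The plan is to establish the chain (1) $\Leftrightarrow$ (2) $\Leftrightarrow$ (3) $\Rightarrow$ (4) together with the single substantive implication (4) $\Rightarrow$ (1). The equivalence (1) $\Leftrightarrow$ (2) is already recorded in Corollary \ref{infequiv1}, being exactly the equivalence of \eqref{vanishcond} and \eqref{uexpcond}, while the identity $\Upsilon\Theta_t^{-1} = \Psi_1^{-1}\Upsilon$ written into (2) is \eqref{upsiloncommeq}. For (1) $\Leftrightarrow$ (3) I would invoke Theorem \ref{structurethm}: since $\det\bm{E} = -\pitilde L(\chi_t,q)h$ is nowhere vanishing on $\Omega$, the product $\bm{E}^{-1}\Hcal$ is defined for every weak form $\Hcal \in \MM_k^m(\rho_t^*)^!$, and a direct computation with \eqref{modularity} shows its two coordinates are scalar weak forms in $\MM_{k-1}^m(\boldsymbol{1})^!$ and $\MM_{k-q}^m(\boldsymbol{1})^!$. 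Theorem \ref{structurethm} says precisely that $\Hcal$ is a genuine VMF if and only if these coordinates are holomorphic at infinity, that is, lie in $\MM_{k-1}^m \oplus \MM_{k-q}^m \cong (M_{k-1}^m \oplus M_{k-q}^m)\otimes\TT$ by Lemma \ref{isomorphismtrivialchar}. Finally (1) $\Rightarrow$ (4) is immediate from Proposition \ref{eigenevalprop}, which gives $\ev_\zeta(h_1) \in M_k^{m+1}(\pfrak,\eta_\zeta) \subseteq M_k^{m+1}(\Gamma_1(\pfrak))$ for every $\zeta$, hence for infinitely many.

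For the converse (4) $\Rightarrow$ (1) I would argue through condition (2), i.e. I would show $h_1 \in u\TT[[u]]$ and $h_3 := h_2 + \chi_t h_1 \in \TT[[u]]$. By Lemma \ref{periodicitylem} we already know $\Theta_t^{-1}\Hcal = \binom{h_1}{h_3} \in \TT((u))^2$, so both $h_1$ and $h_3$ are honest $A$-periodic Laurent series in $u$ with coefficients in $\TT$; the task is only to rule out the negative powers and, for $h_1$, the constant term. The device for passing from infinitely many $\zeta$ to a statement over $\TT$ is a density fact: every element of $\FF_q^{ac}$ is integral over $\FF_q$, hence lies in the closed unit disk $\{|t|\le 1\}$, and a nonzero element of $\TT$ has only finitely many zeros there; consequently, if a fixed coefficient $c \in \TT$ satisfies $\ev_\zeta(c) = 0$ for infinitely many $\zeta \in \FF_q^{ac}$, then $c = 0$.

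The remaining inputs are holomorphy of $\ev_\zeta(h_1)$ at the two cusps $\infty$ and $0$ of $\Gamma_0(\pfrak)$. At $\infty$: since $\ev_\zeta(h_1)$ is $A$-periodic and $u \in u_\pfrak^{|\pfrak|}\CC_\infty[[u_\pfrak]]^\times$, holomorphy in $u_\pfrak$ is equivalent to vanishing of all negative-$u$-power coefficients, and applying the density fact coefficientwise gives $h_1 \in \TT[[u]]$. The constant term is then killed directly from $\Gamma_1(\pfrak)$-modularity by the matrices $\left(\begin{smallmatrix}\xi & 0 \\ 0 & 1\end{smallmatrix}\right) \in \Gamma_1(\pfrak)$, $\xi \in \FF_q^\times$, which force $\xi^{m+1}c_0 = c_0$ and hence $c_0 = 0$ whenever $m \not\equiv -1 \pmod{q-1}$. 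At $0$: I would use \eqref{evalpropeq2}, namely $\ev_\zeta(h_1)|_k^m[W_\pfrak] = -\ev_\zeta(h_2)|_k^m[\alpha_\pfrak] = -\pfrak^m\ev_\zeta(h_2)(\pfrak z)$, so holomorphy at $0$ means $\ev_\zeta(h_3)(\pfrak z) - \ev_\zeta(\chi_t)(\pfrak z)\,\ev_\zeta(h_1)(\pfrak z)$ is holomorphic at $\infty$. Expanding everything in $u=u(z)$ and using Lemma \ref{Psievallem} to pin down the polynomial-in-$u^{-1}$ growth of $\ev_\zeta(\chi_t)(\pfrak z)$ against the high-order vanishing of $\ev_\zeta(\psi_1)(\pfrak z)$, the principal part of this expression becomes, after evaluation, a combination of the principal parts of $h_3$ and $h_1$; requiring it to vanish for infinitely many $\zeta$ and invoking the density fact yields $h_3 \in \TT[[u]]$ and, in the exceptional type $m \equiv -1 \pmod{q-1}$, also $c_0 = 0$. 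Together these give condition (2), completing (4) $\Rightarrow$ (1).

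The main obstacle is the cusp-$0$ analysis. Holomorphy at $\infty$ alone is too weak to detect weak-but-not-VMF forms: the Legendre form $\Fcal^*$ of \S\ref{nonexample} has first coordinate $-\bm{d}_2$ whose specializations are perfectly holomorphic at $\infty$, yet whose constant term does not vanish, so the genuinely new content lives at the cusp $0$, where the unboundedness of $\chi_t$ enters through Lemma \ref{Psievallem}. The delicate bookkeeping is to separate the two Laurent tails $h_3(\pfrak z)$ and $\chi_t(\pfrak z)h_1(\pfrak z)$ in the common uniformizer $u(z)$, tracking the order $\tfrac{|\pfrak|}{q}(q-1)$ vanishing of $\ev_\zeta(\psi_1)(\pfrak z)$ against the polar growth of $\ev_\zeta(\chi_t)(\pfrak z)$, so as to read off exactly which $\TT$-coefficients are forced to vanish. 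This is precisely where the careful choice of the regularity condition \eqref{vanishcond} does its work.
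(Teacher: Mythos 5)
Your proposal is correct and follows essentially the same route as the paper: the equivalences $(1)\Leftrightarrow(2)\Leftrightarrow(3)$ via Corollary \ref{infequiv1} and Theorem \ref{structurethm}, the implication $(1)\Rightarrow(4)$ via the specialization argument of Proposition \ref{eigenevalprop}, and the hard converse $(4)\Rightarrow(1)$ by transferring holomorphy at the cusp $0$ to holomorphy of $\ev_\zeta(h_2)$ at infinity (the paper uses $\gamma_0=\left(\begin{smallmatrix}0 & -1\\ 1 & 0\end{smallmatrix}\right)$, you use $W_\pfrak$, which is the same computation), then playing the order-$\frac{|\pfrak|}{q}(q-1)$ vanishing of $\ev_\zeta(\psi_1)$ from Lemma \ref{Psievallem} against the pole orders coming from $h_3$ and the constant term of $h_1$, and killing coefficients of $\TT$ that vanish at infinitely many $\zeta$. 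Your $\xi$-scaling shortcut for the constant term when $m\not\equiv -1 \pmod{q-1}$ is a harmless variant; the paper handles all types uniformly through the cusp-$0$ analysis, exactly as you do in the exceptional type.
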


\begin{proof}
Taking into account the Definition \ref{VMFdef}, Corollary \ref{infequiv1}, and the proof of Theorem \ref{structurethm}, only the equivalence between 1. and 4. remains to be shown. 

Suppose $\Hcal = {h_1 \choose h_2} \in \MM_k^m(\Gamma(1),\rho^*_t)$.
Equation \eqref{modularity} shows that 
\[j(\gamma,z)^{-k} \ev_\zeta(h_i)(\gamma (z))\] is a $K(\zeta)$-linear combination of $\ev_\zeta(h_1)(z)$ and $\ev_\zeta(h_2)(z)$, for all $\gamma \in GL_2(A)$.
Thus, it suffices to check the holomorphy at infinity for each coordinate function of $\ev_\zeta\Hcal$. We use \eqref{uexpcond} in the equivalent form given through \eqref{upsiloncommeq}, so that
\[\Hcal = \left(\begin{matrix} u & 0 \\ -\psi_1 & 1 \end{matrix} \right) \left(\begin{matrix} h_1 \\ h_3 \end{matrix} \right), \text{ for some } h_1, h_3 \in \TT[[u]].\]
After Lemma \ref{Psievallem} and the obvious inclusion $\TT[[u]] \subset \TT[[u_\pfrak]]$, the check is clear.

Conversely, suppose that $\ev_\zeta(h_1) \in M_k^m(\Gamma_1(\pfrak))$ for infinitely $\zeta \in \FF_q^{ac}$ with minimal polynomial $\pfrak \in A$.
By the first assumption, $\ev_\zeta(h_1)$ is a modular form with character for $\Gamma_0(\pfrak)$ and by the second it has an expansion in $\CC_\infty[[u_\pfrak]]$ for these primes $\pfrak$.
As we have explained above, $\Gamma_0(\pfrak)$ has only two cusps $0$ and $\infty$, and the matrix $\gamma_0 := \left(\begin{smallmatrix} 0 & -1 \\ 1 & 0 \end{smallmatrix} \right)$ sends $\infty$ to $0$. By \eqref{modularity} we see that $j(\gamma_0,z)^{-k} h_1(\gamma_0 (z)) = -h_2(z)$.
Thus as $j(\gamma_0,z)^{-k} \ev_\zeta(h_1)(\gamma_0 (z)) \in \CC_\infty[[u_\pfrak]]$ (by assumption) for infinitely many primes $\pfrak$, we conclude that the same is true for $\ev_\zeta(h_2)$.

By Lemma \ref{periodicitylem}, as above, we have $\Hcal = \left(\begin{smallmatrix} u & 0 \\ -\psi_1 & 1 \end{smallmatrix} \right) \left(\begin{smallmatrix} h_1 \\ h_3 \end{smallmatrix} \right), \text{ for some } h_1, h_3 \in \TT((u))$, and we must show $h_1,h_3 \in \TT[[u]]$. 
Write 
\[h_1 := \sum_{i \geq \nu} a_i u^i \text{ and } h_3 := \sum_{j \geq \mu} b_i u^j.\]
Focusing on $h_1$, we must have $\nu \geq -1$, since $\ev_\zeta(u h_1)$ is holomorphic at infinity, for infinitely many $\pfrak$, and hence for all $i$ strictly less than $-1$, $a_i$ must vanish at infinitely elements of $\FF_q^{ac}$ which implies that they are zero in $\TT$, in virtue of the fact that $\TT$ is factorial.

With this information, we argue similarly that $\mu \geq -1$  in the expansion of $h_3$ using the holomorphy at infinity of $\ev_\zeta(h_2) = \ev_\zeta(-\psi_1 h_1 + h_3)$.
Now, if $b_{-1} \neq 0$, then we obtain a term of order $u_\pfrak^{-|\pfrak|}$ in $\ev_\zeta(h_2)$,
but since $\ev_\zeta(\psi_1)$ vanishes at infinity to the order of $u_\pfrak^{|\pfrak|(1-q^{-1})}$, the order of the possible pole for $\ev_\zeta (- \psi_1 h_1)$ must be strictly less than $|u_\pfrak|^{-|\pfrak|}$, \emph{i.e.} the maximal order of the pole coming from $\ev(h_1)$. Thus $b_{-1}$ must also vanish, and $h_3 \in \TT[[u]]$.
Finally, by Lemma \ref{Psievallem},
$\ev_\zeta (\psi_1) = \kappa u_\pfrak^{|\pfrak|(1-q^{-1})}+ \sum_{l > |\pfrak|(1-q^{-1})} \kappa_l u_\pfrak^l,$
for some non-zero $\kappa \in \CC_\infty$, and this is not enough to cancel the $u_\pfrak^{-|\pfrak|}$ coming from the assumption that $a_{-1} \neq 0$.
Thus we conclude that $h_1$ is also in $\TT[[u]]$, finishing the proof.
\end{proof}

\section{Hecke operators}\label{Heckeoperators}
%\subsubsection{Slash operators}
\begin{definition}
Let $k,m$ be non-negative integers and $\rho_t^*$ as above. For each $\gamma \in \GL_2(K) \cap \Mat_2(A)$ and each holomorphic function $\mathcal{F} : \Omega \rightarrow \TT^2$, we define a \textit{vectorial slash operator} $[\gamma] = [\gamma]_{k,m,\rho_t^*}$ by
\[\mathcal{F}_{[\gamma]_{k,m,\rho_t^*}} := j(\gamma,z)^{-k} \det(\gamma)^m \rho_t^*(\gamma)^{-1} \mathcal{F}(\gamma (z)).\]
\end{definition}

We state the following easily checked lemma for the record.

\begin{lemma}\label{slashlem}
Let $k,m$ be fixed, and let $[\cdot] = [\cdot]_{k,m,\rho_t^*}$. We have the the following properties of the slash operators: \medskip

\noindent 1. For $\gamma, \gamma' \in \GL_2(K) \cap \Mat_2(A)$, we have $[\gamma \gamma'] = [\gamma][\gamma']$.

\noindent 2. For $\mathcal{F}$ as in the definition above, $\mathcal{F} \in \ \MM_k^m(\rho^*_t)^!$ if and only if $\mathcal{F}_{[\gamma]} = \mathcal{F}$ for all $\gamma \in \Gamma$.
\end{lemma}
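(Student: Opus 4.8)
The plan is to treat both assertions as formal computations that unwind the definition of the vectorial slash operator, relying on the cocycle property of the factor of automorphy and on the fact that $\rho_t^*$ is a genuine group homomorphism.

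For the first assertion, I would first record the three multiplicative identities that turn the slash operator into a right action: the group action $(\gamma\gamma')(z) = \gamma(\gamma'(z))$ on $\Omega$; the cocycle relation $j(\gamma\gamma',z) = j(\gamma,\gamma'(z))\,j(\gamma',z)$, checked in one line by writing $\gamma'(z) = (a'z+b')/(c'z+d')$ and clearing denominators; and the homomorphism property $\rho_t^*(\gamma\gamma') = \rho_t^*(\gamma)\rho_t^*(\gamma')$, which gives $\rho_t^*(\gamma\gamma')^{-1} = \rho_t^*(\gamma')^{-1}\rho_t^*(\gamma)^{-1}$, together with the multiplicativity of $\det$. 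I would then compute $(\mathcal{F}_{[\gamma]})_{[\gamma']}$ by applying the definition twice. Since the scalars $j(\cdot,\cdot)^{-k}$ and $\det(\cdot)^m$ commute freely with every matrix factor, they recombine through the cocycle and determinant relations into $j(\gamma\gamma',z)^{-k}\det(\gamma\gamma')^m$; the two matrix factors appear in the order $\rho_t^*(\gamma')^{-1}\rho_t^*(\gamma)^{-1}$, which is exactly $\rho_t^*(\gamma\gamma')^{-1}$; and the nested argument $\gamma(\gamma'(z)) = (\gamma\gamma')(z)$ feeds into $\mathcal{F}$. Collecting the pieces yields $\mathcal{F}_{[\gamma\gamma']}$, which is $[\gamma][\gamma'] = [\gamma\gamma']$.

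For the second assertion, I would observe that multiplying the equation $\mathcal{F}_{[\gamma]} = \mathcal{F}$ on the left by the invertible factor $\rho_t^*(\gamma)$ and by the nonzero scalars $j(\gamma,z)^k\det(\gamma)^{-m}$ transforms it verbatim into the modularity relation \eqref{modularity}. Conversely, \eqref{modularity} rearranges back into $\mathcal{F}_{[\gamma]} = \mathcal{F}$. Hence, granting the standing requirement $\mathcal{F} \in \Thol(\Omega,\TT^2)$ built into the definition of VMF$^!$, membership in $\MM_k^m(\rho_t^*)^!$ is equivalent to the invariance $\mathcal{F}_{[\gamma]} = \mathcal{F}$ for all $\gamma \in \Gamma$.

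I expect no real obstacle, as both parts are formal. The sole point needing care is the bookkeeping of the non-commutative $\rho_t^*$-factors in the first assertion: one must preserve the correct left-to-right order of the matrix products and invoke the anti-automorphism behaviour of inversion, whereas the scalar automorphy and determinant factors may be rearranged at will.
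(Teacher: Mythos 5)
Your proof is correct and is precisely the routine verification the paper has in mind: the paper states this lemma without proof ("easily checked"), and your computation via the cocycle relation for $j$, the multiplicativity of $\det$ and of $\rho_t^*$ (hence the reversed order of the inverse factors), and the direct rearrangement of \eqref{modularity} for part 2 is the standard argument being alluded to. No gaps; the one point worth keeping, which you already flag, is that part 2 presupposes the temperedness condition $\mathcal{F}\in\Thol(\Omega,\TT^2)$ built into the definition of $\MM_k^m(\rho_t^*)^!$, since invariance under the slash operators alone does not encode it.
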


%\subsubsection{Hecke operators}
%Mention total multiplicativity
To follow $\pfrak$ denotes a monic irreducible polynomial, and $\pfrak A$ is the ideal it generates. Let $\Mat_\pfrak$ be the subset of $\Mat_2(A)$ consisting of those matrices with determinant in $\pfrak\FF_q^\times$. As usual, the group $\Gamma(1)$ acts on $\lquo{\Gamma(1)}{M_\pfrak}$ by right multiplication, permuting the cosets.

%We may now define the Hecke operators on the graded $\TT$-module of VMF$^!$.

\begin{definition}
%For each monic irreducible $\pfrak$ in $A$, 
We define the Hecke operator $T_\pfrak$ on $\MM_k^m(\rho_t^*)$ by
\[T_\pfrak \mathcal{G} := \pfrak^{k-m}\sum_{\beta \in \mbox{\small $\lquo{\Gamma(1)}{M_\pfrak}$}} \mathcal{G}_{[\beta]_{k,m,\rho}}, \text{ for all } \Gcal \in \MM_k^m(\rho_t^*)^!.\]
Notice that here $m$ may be any integer.
\end{definition}

The next result follows immediately from the properties of the slash operators in Lemma \ref{slashlem}.
\begin{lemma}
%For all $\pfrak,k,m$ as above, 
The operators $T_\pfrak$ do not depend on the choice of representatives for $\lquo{\Gamma(1)}{\Mat_\pfrak}$, and $T_\pfrak$ induces a $\TT$-module endomorphism of $\MM_k^m(\rho_t^*)^!$. \hfill $\qed$
%\begin{proof}
%Let $k,m,\rho$ be fixed as above. The follows from Lemmas \ref{slashlem} and \ref{cosetlem}. Indeed, for all $\gamma \in \Gamma$,
%\[\widetilde{T}_\pfrak f_{[\gamma]} := \sum_{\beta \in \Gamma \backslash M_\pfrak} (f_{[\beta]_{k,m,\rho}})_{[\gamma]_{k,m,\rho}} = \sum_{\beta \in \Gamma \backslash M_\pfrak} (f_{[\beta \gamma]_{k,m,\rho}}) = \widetilde{T}_\pfrak f .\]
%\end{proof}
\end{lemma}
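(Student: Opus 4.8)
The plan is to reduce everything to the two formal properties of the vectorial slash operator recorded in Lemma \ref{slashlem}: the cocycle relation $\Gcal_{[\gamma\gamma']}=(\Gcal_{[\gamma]})_{[\gamma']}$ (equivalently $[\gamma\gamma']=[\gamma][\gamma']$), valid for all $\gamma,\gamma'\in\GL_2(K)\cap\Mat_2(A)$, together with the characterization $\Gcal\in\MM_k^m(\rho_t^*)^!$ if and only if $\Gcal_{[\gamma]}=\Gcal$ for every $\gamma\in\Gamma(1)$. Since the slash operator is visibly $\TT$-linear in $\Gcal$ --- the scalar $\det(\gamma)^m j(\gamma,z)^{-k}$ and the constant matrix $\rho_t^*(\gamma)^{-1}=\rho_t(\gamma)^{tr}$, whose entries lie in $\FF_q[t]\subset\TT$, commute with multiplication by elements of $\TT$ --- the normalising factor $\pfrak^{k-m}$ is harmless and $T_\pfrak$ is automatically $\TT$-linear; it remains only to establish well-definedness and stability of $\MM_k^m(\rho_t^*)^!$.

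For independence of the choice of representatives, I would take $\beta'=g\beta$ with $g\in\Gamma(1)$, representing the same left coset in $\lquo{\Gamma(1)}{\Mat_\pfrak}$. The cocycle relation then gives $\Gcal_{[\beta']}=\Gcal_{[g\beta]}=(\Gcal_{[g]})_{[\beta]}$, and since $\Gcal_{[g]}=\Gcal$ by Lemma \ref{slashlem} (as $g\in\Gamma(1)$), we conclude $\Gcal_{[\beta']}=\Gcal_{[\beta]}$. Hence each summand depends only on the coset $\Gamma(1)\beta$, so $\sum_\beta \Gcal_{[\beta]}$ is well-defined.

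For stability I would verify the two requirements for membership in $\MM_k^m(\rho_t^*)^!$ separately. Modular invariance follows from the same two ingredients: for $\gamma\in\Gamma(1)$,
\[(T_\pfrak\Gcal)_{[\gamma]}=\pfrak^{k-m}\sum_\beta (\Gcal_{[\beta]})_{[\gamma]}=\pfrak^{k-m}\sum_\beta \Gcal_{[\beta\gamma]},\]
and because $\Mat_\pfrak$ is stable under right multiplication by $\Gamma(1)$ (the determinant remains in $\pfrak\FF_q^\times$ and the entries remain in $A$), the assignment $\Gamma(1)\beta\mapsto\Gamma(1)\beta\gamma$ permutes the cosets; by the independence just proved, the sum is unchanged, so $(T_\pfrak\Gcal)_{[\gamma]}=T_\pfrak\Gcal$, and Lemma \ref{slashlem} yields $T_\pfrak\Gcal\in\MM_k^m(\rho_t^*)^!$ provided $T_\pfrak\Gcal\in\Thol(\Omega,\TT^2)$. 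Each summand $\Gcal_{[\beta]}$ is rigid analytic, being the composite of $\Gcal$ with the M\"obius action of $\beta$ (which preserves $\Omega$) multiplied by rigid analytic factors; temperedness at the cusp is preserved because every coset representative sends the region $\{|z|_\Im\gg0\}$ near the cusp into itself and $u\circ\beta$ is comparable there to a positive power of $u$, so that $u^N\,T_\pfrak\Gcal\to 0$ for $N$ large.

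The only genuinely analytic point is this last temperedness check, and it is mild. I therefore expect the real obstacle to lie not in the present lemma but in the finer statement that $T_\pfrak$ preserves the regularity condition \eqref{vanishcond} distinguishing $\MM_k^m(\rho_t^*)$ from the weak module $\MM_k^m(\rho_t^*)^!$, which is precisely why that preservation is treated separately in \S\ref{regularityhecke}.
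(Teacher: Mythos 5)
Your proposal is correct and is exactly the argument the paper intends: the paper dismisses this lemma as following ``immediately from the properties of the slash operators in Lemma \ref{slashlem}'', and your proof simply fleshes out that formal argument (right-action/cocycle property plus $\Gamma(1)$-invariance gives well-definedness, the permutation of cosets under right multiplication gives invariance of $T_\pfrak\Gcal$, and $\TT$-linearity is immediate since $\rho_t^*(\gamma)^{-1}=\rho_t(\gamma)^{tr}$ has entries in $\FF_q[t]$). Your added remark on temperedness, comparing $u\circ\beta$ with powers of $u$ near the cusp, is a detail the paper glosses over entirely, and you correctly identify that the genuinely delicate issue --- preservation of the regularity condition \eqref{vanishcond} --- is deferred to \S\ref{regularityhecke}.
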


For each non-negative integer $d$, let $A(d)$ be the $\FF_q$-vector subspace of $A$ of polynomials whose degree is strictly less than $d$. The following lemma is readily checked using elementary techniques.

\begin{lemma}%Is A(d) defined yet?
The matrices $\left(\begin{smallmatrix} 1 & b \\ 0 & \pfrak \end{smallmatrix} \right), \left(\begin{smallmatrix} \pfrak & 0 \\ 0 & 1 \end{smallmatrix} \right)$, for $b \in A(\deg \pfrak)$, give a full set of representatives for the coset space $\lquo{\Gamma(1)}{\Mat_\pfrak}$.
\end{lemma}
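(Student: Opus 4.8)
The plan is to exploit that $A=\FF_q[\theta]$ is a Euclidean domain, so that left multiplication by $\Gamma(1)=\GLA$ realizes arbitrary row operations and lets us bring any $M\in\Mat_\pfrak$ into Hermite normal form. I would first prove \emph{surjectivity} (every left coset $\Gamma(1)M$ meets the proposed list) and then \emph{distinctness} (the listed matrices lie in pairwise distinct cosets); the count $|A(\deg\pfrak)|+1=|\pfrak|+1$ is then automatic.

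For surjectivity, given $M=\left(\begin{smallmatrix} a & b\\ c & d\end{smallmatrix}\right)\in\Mat_\pfrak$, clearing the lower-left entry by the Euclidean algorithm (a finite sequence of row operations, i.e.\ left multiplication by elementary matrices of $\GLA$) yields $\gamma M=\left(\begin{smallmatrix}\alpha & \beta\\ 0 & \delta\end{smallmatrix}\right)$. Scaling the two rows by suitable constants in $\FF_q^\times$ (left multiplication by a diagonal matrix of $\GLA$, which does not reintroduce a nonzero lower-left entry) we may take $\alpha,\delta$ monic; both are nonzero since $\det M\neq 0$. As $\det(\gamma M)=\alpha\delta\in\pfrak\FF_q^\times$ and the left-hand side is monic, we get $\alpha\delta=\pfrak$. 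Here the irreducibility and monicity of $\pfrak$ enter: the only factorizations into monic polynomials are $\pfrak=1\cdot\pfrak=\pfrak\cdot 1$, giving the two cases $(\alpha,\delta)=(1,\pfrak)$ and $(\alpha,\delta)=(\pfrak,1)$. In the first case $\gamma M=\left(\begin{smallmatrix}1 & \beta\\ 0 & \pfrak\end{smallmatrix}\right)$, and adding an $A$-multiple of the second row to the first (this leaves the entry $1$ and the bottom row untouched) reduces $\beta$ to its unique representative $b\in A(\deg\pfrak)$ modulo $\pfrak$, landing on $\left(\begin{smallmatrix}1 & b\\ 0 & \pfrak\end{smallmatrix}\right)$. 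In the second case $\gamma M=\left(\begin{smallmatrix}\pfrak & \beta\\ 0 & 1\end{smallmatrix}\right)$, and subtracting $\beta$ times the second row from the first clears $\beta$ entirely, giving $\left(\begin{smallmatrix}\pfrak & 0\\ 0 & 1\end{smallmatrix}\right)$.

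For distinctness I would simply compute the putative transition matrices. If $\gamma\left(\begin{smallmatrix}1&b\\0&\pfrak\end{smallmatrix}\right)=\left(\begin{smallmatrix}1&b'\\0&\pfrak\end{smallmatrix}\right)$ with $\gamma\in\GLA$, then $\gamma=\left(\begin{smallmatrix}1&b'\\0&\pfrak\end{smallmatrix}\right)\left(\begin{smallmatrix}1&b\\0&\pfrak\end{smallmatrix}\right)^{-1}=\left(\begin{smallmatrix}1&(b'-b)/\pfrak\\0&1\end{smallmatrix}\right)$, whose upper-right entry lies in $A$ only when $\pfrak\mid(b'-b)$; since $\deg(b'-b)<\deg\pfrak$, this forces $b=b'$. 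Likewise the matrix relating $\left(\begin{smallmatrix}\pfrak&0\\0&1\end{smallmatrix}\right)$ to any $\left(\begin{smallmatrix}1&b\\0&\pfrak\end{smallmatrix}\right)$ equals $\left(\begin{smallmatrix}1/\pfrak&b\\0&\pfrak\end{smallmatrix}\right)$, whose entry $1/\pfrak\notin A$ rules out membership in $\GLA$. Hence the $|\pfrak|+1$ listed matrices represent exactly $|\pfrak|+1$ distinct cosets.

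All computations are elementary; the only place demanding a little care is the normalization/irreducibility step, where one must ensure that forcing $\alpha,\delta$ to be monic stays inside $\GLA$ (diagonal $\FF_q^\times$-matrices suffice) and that irreducibility of $\pfrak$ genuinely limits the monic factorizations of $\alpha\delta=\pfrak$ to the two cases above. No real obstacle arises, so the statement is indeed readily checked as claimed.
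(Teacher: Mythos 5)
Your proof is correct, and it is exactly the kind of elementary verification the paper intends: the authors give no argument at all, stating only that the lemma ``is readily checked using elementary techniques.'' Your Hermite-normal-form reduction (Euclidean row operations to reach upper-triangular form, monic normalization, the factorization $\alpha\delta=\pfrak$ forced by irreducibility, reduction of the upper-right entry modulo $\pfrak$, and the explicit transition-matrix computation for distinctness of cosets) supplies that routine check completely and accurately.
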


\begin{corollary}
For all $\mathcal{F} \in \MM_k^m(\rho_t^*)^!$, we have
\[ (T_{\pfrak} \mathcal{F})(z) :=  \pfrak^k \left(\begin{matrix} \chi_t(\pfrak) & 0 \\ 0 & 1 \end{matrix} \right) \mathcal{F}(\pfrak z) + \sum_{b \in A(\deg \pfrak)} \left( \begin{matrix} 1 & 0 \\ \chi_t(b) & \chi_t(\pfrak) \end{matrix} \right) \mathcal{F}\left(\frac{z+b}{\pfrak} \right).\]
\end{corollary}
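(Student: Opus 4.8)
The plan is to unwind the definition of $T_\pfrak$ against the explicit coset representatives furnished by the preceding lemma, so that the whole argument reduces to evaluating the vectorial slash operator on each of the two types of representatives and collecting the scalar factors. Since $\mathcal{F} \in \MM_k^m(\rho_t^*)^!$ is invariant under $[\gamma]$ for $\gamma \in \Gamma(1)$ (Lemma \ref{slashlem}), each summand $\mathcal{F}_{[\beta]}$ depends only on the left coset $\Gamma(1)\beta$, so it is legitimate to sum $\mathcal{F}_{[\beta]}$ over the representatives given.

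The one preliminary point to settle is how $\rho_t^*$ behaves on matrices with determinant in $\pfrak\FF_q^\times$, which lie in $\GL_2(K) \cap \Mat_2(A)$ but not in $\Gamma(1)$. Writing $\rho_t^*(\gamma) = \rho_t(\gamma)^{-tr}$ with $\rho_t(\gamma) = (\chi_t(a_{ij}))$, one has
\[\rho_t^*(\gamma)^{-1} = \rho_t(\gamma)^{tr} = \begin{pmatrix} \chi_t(a) & \chi_t(c) \\ \chi_t(b) & \chi_t(d) \end{pmatrix} \qquad \text{for } \gamma = \begin{pmatrix} a & b \\ c & d \end{pmatrix},\]
whose entries lie in $\FF_q[t] \subset \TT$. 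The crucial observation is that only the \emph{inverse} $\rho_t^*(\gamma)^{-1}$ enters the slash operator, so no division by $\chi_t(\det\gamma)$ occurs; this is what keeps the slash operator well-defined on $\Mat_\pfrak$ even though $\chi_t(\det\gamma) = \chi_t(\pfrak)$ times a unit is not a unit in $\FF_q[t]$.

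Next I substitute the two families. For $\alpha_\pfrak = \left(\begin{smallmatrix} \pfrak & 0 \\ 0 & 1 \end{smallmatrix}\right)$ I read off $j(\alpha_\pfrak,z) = 1$, $\det\alpha_\pfrak = \pfrak$, $\alpha_\pfrak(z) = \pfrak z$, and $\rho_t^*(\alpha_\pfrak)^{-1} = \left(\begin{smallmatrix} \chi_t(\pfrak) & 0 \\ 0 & 1 \end{smallmatrix}\right)$; the outer factor $\pfrak^{k-m}$ together with $j^{-k}\det^m = \pfrak^{m}$ combines to $\pfrak^{k}$, producing the first term $\pfrak^k \left(\begin{smallmatrix} \chi_t(\pfrak) & 0 \\ 0 & 1 \end{smallmatrix}\right)\mathcal{F}(\pfrak z)$. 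For $\gamma_b = \left(\begin{smallmatrix} 1 & b \\ 0 & \pfrak \end{smallmatrix}\right)$ with $b \in A(\deg\pfrak)$ I read off $j(\gamma_b,z) = \pfrak$, $\det\gamma_b = \pfrak$, $\gamma_b(z) = (z+b)/\pfrak$, and $\rho_t^*(\gamma_b)^{-1} = \left(\begin{smallmatrix} 1 & 0 \\ \chi_t(b) & \chi_t(\pfrak) \end{smallmatrix}\right)$; here $\pfrak^{k-m}$ times $j^{-k}\det^m = \pfrak^{m-k}$ telescopes to $1$, so this representative contributes exactly $\left(\begin{smallmatrix} 1 & 0 \\ \chi_t(b) & \chi_t(\pfrak) \end{smallmatrix}\right)\mathcal{F}((z+b)/\pfrak)$. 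Summing over all $b$ and adding the $\alpha_\pfrak$ contribution gives the asserted formula.

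There is no genuine obstacle in this computation; the only step demanding a moment of care is the bookkeeping of the powers of $\pfrak$ arising from the three independent sources $\pfrak^{k-m}$, $j(\gamma,z)^{-k}$ and $\det(\gamma)^m$, together with the verification that $\rho_t^*(\gamma)^{-1}$ really has polynomial (rather than merely rational) entries, so that the final expression is literally the stated matrix of $\chi_t$-values rather than a rescaling thereof.
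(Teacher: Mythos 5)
Your proposal is correct and is precisely the computation the paper intends: the corollary is stated as an immediate consequence of the lemma giving the coset representatives $\left(\begin{smallmatrix} 1 & b \\ 0 & \pfrak \end{smallmatrix}\right)$, $\left(\begin{smallmatrix} \pfrak & 0 \\ 0 & 1 \end{smallmatrix}\right)$ for $\lquo{\Gamma(1)}{\Mat_\pfrak}$, and your substitution of these into the definitions of $T_\pfrak$ and the vectorial slash operator, with the bookkeeping of the powers of $\pfrak$, is exactly that argument. Your side remark that only $\rho_t^*(\gamma)^{-1} = \rho_t(\gamma)^{tr}$ (a polynomial matrix) enters, so no inversion of $\chi_t(\pfrak)$ in $\TT$ is needed, is a correct and worthwhile observation that the paper leaves implicit.
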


The explicit description above gives the following immediate relation with the Anderson twist $\tau$.

\begin{corollary} \label{Hecketaucompat}
For all $\pfrak$, we have $\tau T_\pfrak = T_\pfrak \tau$. 
\end{corollary}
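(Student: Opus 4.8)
The plan is to prove the identity $\tau T_\pfrak = T_\pfrak\tau$ by directly substituting the explicit coset-representative formula for $T_\pfrak$ from the preceding corollary and pushing the Frobenius $\tau$ through it term by term. Recall that $\tau$ acts on $\TT$ as an $\FF_q[t]$-linear ring endomorphism, extended entrywise to vectors and matrices; in particular it is additive and multiplicative, so it commutes with the matrix--vector products appearing in the formula, i.e. $\tau(M v)=\tau(M)\tau(v)$. The two facts I would isolate first are: (i) every entry $\chi_t(b)$, $\chi_t(\pfrak)$ of the matrices in the formula lies in $\FF_q[t]=\TT^{\tau=1}$, hence is fixed by $\tau$; and (ii) for a vector function $\mathcal F$, writing $\tau\mathcal F$ for $z\mapsto\tau(\mathcal F(z))$, composition with the homographies $z\mapsto\pfrak z$ and $z\mapsto(z+b)/\pfrak$ commutes with $\tau$ trivially, since $(\tau\mathcal F)(\gamma z)=\tau(\mathcal F(\gamma z))$ by the very definition of $\tau\mathcal F$.

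With these in hand, I would apply $\tau$ to
\[ (T_\pfrak \mathcal F)(z) = \pfrak^k \begin{pmatrix}\chi_t(\pfrak)&0\\0&1\end{pmatrix}\mathcal F(\pfrak z) + \sum_{b\in A(\deg\pfrak)}\begin{pmatrix}1&0\\\chi_t(b)&\chi_t(\pfrak)\end{pmatrix}\mathcal F\!\left(\tfrac{z+b}{\pfrak}\right). \]
By additivity and multiplicativity of $\tau$, together with (i) and (ii), every matrix is left unchanged and every $\mathcal F(\gamma z)$ becomes $(\tau\mathcal F)(\gamma z)$; the only coefficient that moves is the scalar $\pfrak^k$, which is an element of $A\subset\CC_\infty$ and therefore satisfies $\tau(\pfrak^k)=\pfrak^{kq}$. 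The key bookkeeping point is that by Corollary \ref{taustability} the operator $\tau$ raises the weight from $k$ to $kq$, so the instance of $T_\pfrak$ appearing in the composite $T_\pfrak\tau$ is the weight-$kq$ operator, whose formula carries precisely the scalar $\pfrak^{kq}$. Thus $\tau(T_\pfrak\mathcal F)$ is visibly equal to $T_\pfrak(\tau\mathcal F)$ computed at weight $kq$, which is the desired identity.

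The computation is entirely mechanical, so the only real obstacle is conceptual: one must carefully distinguish between $\pfrak\in A=\FF_q[\theta]$, which is emphatically \emph{not} $\tau$-invariant (indeed $\tau(\pfrak)=\pfrak^q$), and its image $\chi_t(\pfrak)=\pfrak(t)\in\FF_q[t]$, which \emph{is} $\tau$-invariant; and one must match the resulting factor $\pfrak^{kq}$ against the weight shift $k\mapsto kq$ dictated by Corollary \ref{taustability}. Once this matching is observed, no further estimates or convergence arguments are required, since both sides are finite sums of exactly the same rigid analytic terms.
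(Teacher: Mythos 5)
Your proof is correct and follows the same route as the paper, which presents this corollary as an immediate consequence of the explicit coset-representative formula for $T_\pfrak$: one pushes $\tau$ through that formula using that the matrix entries lie in $\FF_q[t]=\TT^{\tau=1}$ and that $\tau$ commutes with composition by homographies. Your extra care in matching $\tau(\pfrak^k)=\pfrak^{kq}$ with the weight-$kq$ normalization of the operator acting on $\tau(\mathcal{F})$ (via Corollary \ref{taustability}) is exactly the bookkeeping the paper leaves implicit.
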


The following result, standard in the theory of Drinfeld modular forms, follows, and the verification is left to the reader.

\begin{corollary}
The Hecke operators are totally multiplicative.
\end{corollary}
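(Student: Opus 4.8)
The plan is to deduce the statement from the single structural identity $T_\pfrak T_\qfrak = T_{\pfrak\qfrak}$, valid for all monic irreducibles $\pfrak,\qfrak$ \emph{including} the case $\pfrak=\qfrak$. Here, for an arbitrary monic $\mathfrak{n}\in A$, I define $T_\mathfrak{n}$ by the same recipe used for primes: set $\Mat_\mathfrak{n}:=\{\gamma\in\Mat_2(A):\det\gamma\in\mathfrak{n}\FF_q^\times\}$ and $T_\mathfrak{n}\Gcal:=\mathfrak{n}^{k-m}\sum_{\beta\in\lquo{\Gamma(1)}{\Mat_\mathfrak{n}}}\Gcal_{[\beta]}$. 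Since the normalizing factor is itself multiplicative and slash operators compose (Lemma \ref{slashlem}), composition gives $T_\pfrak T_\qfrak\Gcal=(\pfrak\qfrak)^{k-m}\sum_{\beta,\beta'}\Gcal_{[\beta\beta']}$, where $\beta,\beta'$ run over the explicit representatives $\alpha_\pfrak=\left(\begin{smallmatrix}\pfrak&0\\0&1\end{smallmatrix}\right)$ and $\left(\begin{smallmatrix}1&b\\0&\pfrak\end{smallmatrix}\right)$, $b\in A(\deg\pfrak)$ (and similarly for $\qfrak$). As $\det(\beta\beta')\in\pfrak\qfrak\,\FF_q^\times$, each product lies in $\Mat_{\pfrak\qfrak}$, and the whole problem reduces to computing how the products $\beta\beta'$ distribute, with multiplicity, over the cosets of $\lquo{\Gamma(1)}{\Mat_{\pfrak\qfrak}}$ via Hermite normal form over the principal ideal domain $A$.

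When $\pfrak\neq\qfrak$ the two primes are coprime, the number $(1+|\pfrak|)(1+|\qfrak|)$ of products equals the number of cosets of $\lquo{\Gamma(1)}{\Mat_{\pfrak\qfrak}}$, and the assignment $(\beta,\beta')\mapsto\Gamma(1)\beta\beta'$ is a bijection; this is the classical coprime argument and involves no correction, so $T_\pfrak T_\qfrak=T_{\pfrak\qfrak}$ follows at once.

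The only delicate case is $\pfrak=\qfrak$, where the product map is no longer injective. A direct Hermite reduction shows that every coset of $\lquo{\Gamma(1)}{\Mat_{\pfrak^2}}$ is hit exactly once, \emph{except} the scalar coset $\Gamma(1)\left(\begin{smallmatrix}\pfrak&0\\0&\pfrak\end{smallmatrix}\right)$, which is hit with multiplicity $1+|\pfrak|$: the $|\pfrak|$ surplus hits come precisely from the products $\alpha_\pfrak\left(\begin{smallmatrix}1&b'\\0&\pfrak\end{smallmatrix}\right)=\pfrak\left(\begin{smallmatrix}1&b'\\0&1\end{smallmatrix}\right)$, all of which reduce to $\Gamma(1)\left(\begin{smallmatrix}\pfrak&0\\0&\pfrak\end{smallmatrix}\right)$. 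Writing $\pfrak I:=\left(\begin{smallmatrix}\pfrak&0\\0&\pfrak\end{smallmatrix}\right)$, this yields $T_\pfrak^2\Gcal = T_{\pfrak^2}\Gcal + \pfrak^{2(k-m)}\,|\pfrak|\,\Gcal_{[\pfrak I]}$. Since $\pfrak I$ acts trivially as a homography, $j(\pfrak I,z)=\pfrak$, and $\rho_t^*(\pfrak I)^{-1}=\chi_t(\pfrak)I$, the operator $\Gcal\mapsto\Gcal_{[\pfrak I]}$ is multiplication by the scalar $\pfrak^{2m-k}\chi_t(\pfrak)$; hence the surplus term equals $|\pfrak|\,\pfrak^{k}\,\chi_t(\pfrak)\,\Gcal$.

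The hard part — and the whole reason one obtains \emph{total} multiplicativity rather than the classical relation with a nonzero correction — is the observation that this surplus term vanishes identically: its coefficient carries the factor $|\pfrak|=q^{\deg\pfrak}$, which is a power of the characteristic $p$ of $\TT$ and is therefore $0$ in $\TT$. Thus $T_\pfrak^2=T_{\pfrak^2}$. The same bookkeeping, applied to $T_\pfrak T_{\pfrak^n}$, produces $T_{\pfrak^{n+1}}$ plus a correction supported on the $\pfrak$-divisible cosets $\pfrak\cdot\Mat_{\pfrak^{n-1}}$ whose coefficient is again divisible by $|\pfrak|$ and hence zero; by induction $T_{\pfrak^n}=T_\pfrak^{\,n}$, and combined with the coprime case this gives $T_\mathfrak{m}T_\mathfrak{n}=T_{\mathfrak{m}\mathfrak{n}}$ for all monic $\mathfrak{m},\mathfrak{n}$. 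I expect essentially all of the genuine work to be the careful Hermite-normal-form verification of the coset multiplicities; once these are pinned down, the vanishing of every correction term is immediate from $\operatorname{char}(\TT)=p$.
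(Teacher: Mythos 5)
Your proof is correct, and it is precisely the ``standard'' verification that the paper alludes to but omits: the authors give no argument at all, stating only that the result follows and that verification is left to the reader, so there is nothing in the paper your approach could diverge from. Your coset bookkeeping (bijection of products with cosets in the coprime case; the scalar coset $\Gamma(1)\left(\begin{smallmatrix}\pfrak&0\\0&\pfrak\end{smallmatrix}\right)$ picking up surplus multiplicity $|\pfrak|$ in the case $\pfrak=\qfrak$, and more generally corrections supported on $\pfrak$-divisible cosets with coefficient divisible by $|\pfrak|$) together with the observation that $|\pfrak|=q^{\deg\pfrak}$ annihilates any $\TT$-module in characteristic $p$ is exactly the classical Drinfeld-setting argument, and your multiplicity counts check out.
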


We record here the following description of the action on the coordinates of the Hecke operators which will be useful to follow.

\begin{corollary} \label{heckcoordesc}
Let $\mathcal{H} = {h_1 \choose h_2} \in  \MM_k^m(\rho_t^*)^!$. One has
\[T_\pfrak{h_1 \choose h_2}(z) = \left( \begin{array}{c} \pfrak^k \chi_t(\pfrak) h_1(\pfrak z) + \sum h_1\left(\frac{z+b}{\pfrak}\right) \\ \pfrak^k h_2(\pfrak z) + \chi_t(\pfrak) \sum h_2\left(\frac{z+b}{\pfrak}\right) \end{array} \right) + \left( \begin{array}{c} 0 \\ \sum \chi_t(b) h_1 \left(\frac{z+b}{\pfrak}\right) \end{array} \right) .\]
Here each sum $\sum$ is over ${b \in A(\deg \pfrak)}$.
\end{corollary}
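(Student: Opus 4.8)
The plan is to obtain the coordinate description as a direct unwinding of the explicit matrix formula furnished by the preceding corollary, so that no new idea beyond bookkeeping is required. The starting point is the identity
\[ (T_{\pfrak} \mathcal{F})(z) =  \pfrak^k \begin{pmatrix} \chi_t(\pfrak) & 0 \\ 0 & 1 \end{pmatrix} \mathcal{F}(\pfrak z) + \sum_{b \in A(\deg \pfrak)} \begin{pmatrix} 1 & 0 \\ \chi_t(b) & \chi_t(\pfrak) \end{pmatrix} \mathcal{F}\left(\frac{z+b}{\pfrak} \right),\]
valid for every $\mathcal{F} \in \MM_k^m(\rho_t^*)^!$, which itself rests on the chosen coset representatives $\left(\begin{smallmatrix} 1 & b \\ 0 & \pfrak \end{smallmatrix}\right), \left(\begin{smallmatrix} \pfrak & 0 \\ 0 & 1 \end{smallmatrix}\right)$ together with the definition of the vectorial slash operator.

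First I would substitute $\mathcal{F} = {h_1 \choose h_2}$ and carry out the two matrix--vector products. Applying the diagonal matrix to ${h_1(\pfrak z) \choose h_2(\pfrak z)}$ and scaling by $\pfrak^k$ produces the ``$\pfrak z$'' contributions $\pfrak^k\chi_t(\pfrak)h_1(\pfrak z)$ and $\pfrak^k h_2(\pfrak z)$ in the first and second coordinates, respectively. Next, the lower-triangular matrix inside the sum, applied to ${h_1((z+b)/\pfrak) \choose h_2((z+b)/\pfrak)}$, leaves the first coordinate as $h_1((z+b)/\pfrak)$ and turns the second coordinate into $\chi_t(b) h_1((z+b)/\pfrak) + \chi_t(\pfrak) h_2((z+b)/\pfrak)$. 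Summing over $b \in A(\deg\pfrak)$ and adding the two contributions yields exactly the stated right-hand side, once the purely $h_2$-part $\chi_t(\pfrak)\sum_b h_2((z+b)/\pfrak)$ is grouped with the first column vector and the mixed part $\sum_b \chi_t(b) h_1((z+b)/\pfrak)$ is isolated as the second column vector.

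There is essentially no genuine obstacle here; the only point requiring a little care is tracking how the factor $\chi_t(\pfrak)$ is distributed --- it multiplies $h_1(\pfrak z)$ in the first coordinate but multiplies $h_2$ in the summed terms of the second coordinate --- and keeping the off-diagonal contribution $\sum_b \chi_t(b) h_1((z+b)/\pfrak)$ separate, since this is precisely the term responsible for the nontrivial coupling between the two coordinates under $T_\pfrak$. Displaying the result as a sum of two column vectors, as in the statement, makes this coupling transparent and completes the verification.
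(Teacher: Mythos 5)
Your proposal is correct and follows exactly the route the paper intends: Corollary \ref{heckcoordesc} is stated there as an immediate record of the explicit matrix formula $(T_{\pfrak}\mathcal{F})(z) = \pfrak^k \left(\begin{smallmatrix} \chi_t(\pfrak) & 0 \\ 0 & 1 \end{smallmatrix}\right)\mathcal{F}(\pfrak z) + \sum_{b}\left(\begin{smallmatrix} 1 & 0 \\ \chi_t(b) & \chi_t(\pfrak) \end{smallmatrix}\right)\mathcal{F}\left(\frac{z+b}{\pfrak}\right)$, and your unwinding of the two matrix--vector products reproduces both coordinates precisely. Nothing is missing.
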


\begin{remark}
Upon specialization of $t$ at a root $\zeta$ of the monic irreducible polynomial $\pfrak$, the function $\ev_\zeta (\sum \chi_t(b) h_1 \left(\frac{z+b}{\pfrak}\right))$ is closely related to one of the twisted Hecke operators from \cite{RPmf16} applied to $\ev_\zeta(h_1)$.
\end{remark}

\subsection{Hecke operators preserve regularity at infinity.}\label{regularityhecke}
We have argued above that the Hecke operators stabilize the $\TT$-modules $\MM_k^m(\rho^*_t)^!$, for all $k$ and $m \pmod{q-1}$. It remains to show that they preserve the regularity at infinity of Theorem \ref{reginftythm}. We require a preliminary result on $\chi_t$ and some further notation.

For all $d \geq 0$, define
\[E_d(z) := D_d^{-1} \prod_{a \in A(d)}(z-a) \ \text{ and } b_d(t) := \prod_{j = 0}^{d-1} (t - \theta^{q^j}),\]
with $b_0(t) = 1$.

 \begin{lemma} \label{AGFfe}
For all $a \in A$ of degree $d$, %Should write $t$'s everywhere???
\[\chi_t(az) = \chi_t(a)\chi_t(z) + \omega^{-1}\sum_{l = 0}^{d-1} \sum_{l < i \leq d} E_i(a) \tau^{i - l} (b_l) \ec(z)^{q^{i - (l+1)}}.\]
In particular, $u(z)^{|a|}\chi_t(az) \rightarrow 0$ as $|z|_\Im \rightarrow \infty$.
\begin{proof}
%Perhaps add more detail here, that this follows from \tau diff-eq satisfied by f_c.
Recall $f_t$ from \eqref{AGFdef} and its $\tau$-difference equation which may be deduced from \eqref{taudifferencechi}. One readily computes by induction that for all non-negative integers $n$,
\[\tau^n(f_t) = b_n f_t  + \sum_{l = 0}^{n-1} \tau^{n-l}(b_l) \tau^{n-(l+1)}(\ec).\]
Then using the fact that $f_t(az) = \mathfrak{c}_a(f_t(z))$, we obtain the first identity.

When $d = \deg a$ equals $0$, the second claim follows from Lem. \ref{chipropslem}, and then it is also clear for all $d \geq 1$.
\end{proof}
\end{lemma}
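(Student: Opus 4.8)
The plan is to work throughout with the Anderson generating function $f_t$ and only at the very end divide by $\omega$, using $\chi_t=\omega^{-1}f_t$. The whole argument rests on two facts about $f_t$: a first-order $\tau$-difference equation, which I iterate into a closed form for $\tau^n(f_t)$, and the compatibility of $f_t$ with the Carlitz action under $z\mapsto az$, whose $\tau$-expansion coefficients are precisely the $E_i(a)$.

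First I would extract the $\tau$-difference equation for $f_t$ from \eqref{taudifferencechi}. Writing $\chi_t=\omega^{-1}f_t$ and $\ec=u^{-1}$ in \eqref{taudifferencechi} and clearing $\tau(\omega)$ gives $\tau(f_t)=\tfrac{\tau(\omega)}{\omega}f_t+\ec$. From the product expansion of $\omega$ in \eqref{omegadef} one computes, by telescoping, $\tfrac{\tau(\omega)}{\omega}=\iota_\theta^{q-1}\bigl(1-t/\theta\bigr)=t-\theta=b_1(t)$, so that $\tau(f_t)=b_1f_t+\ec$. I would then prove by induction on $n$ that
\[\tau^n(f_t)=b_nf_t+\sum_{l=0}^{n-1}\tau^{n-l}(b_l)\,\tau^{n-(l+1)}(\ec).\]
The base case $n=1$ is the displayed relation (with $b_0=1$). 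For the step one applies $\tau$, substitutes $\tau(f_t)=b_1f_t+\ec$, and uses $\tau(b_n)\,b_1=b_{n+1}$ (clear from $b_{n+1}=\prod_{j=0}^n(t-\theta^{q^j})$ and $\tau(b_n)=\prod_{j=1}^n(t-\theta^{q^j})$); the freshly produced term $\tau(b_n)\ec$ is exactly the new $l=n$ summand.

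Next I would invoke the Carlitz compatibility $f_t(az)=\sum_{i=0}^d E_i(a)\,\tau^i(f_t)(z)$, the point being that the $E_i(a)$ are the coefficients of the Carlitz operator $C_a=\sum_i E_i(a)\tau^i$. Substituting the closed form above and dividing by $\omega$ splits $\chi_t(az)$ into a ``diagonal'' piece $\chi_t\sum_{i=0}^d E_i(a)b_i$ and an ``$\ec$-piece''. The diagonal piece equals $\chi_t(a)\chi_t(z)$ once one knows $\sum_{i=0}^d E_i(a)b_i(t)=a(t)$; I would justify this by noting that both sides are $\FF_q$-algebra maps $A\to\FF_q[t]$ sending $\theta\mapsto t$ (equivalently: $C_\theta=\theta+\tau$ acts as multiplication by $t$ on any $v$ with $\tau(v)=(t-\theta)v$, since $\tau^i(v)=b_iv$, so the ring homomorphism property forces $C_a(v)=a(t)v=\sum_iE_i(a)b_iv$). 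For the $\ec$-piece, swapping the order of summation turns $\sum_{i}E_i(a)\sum_{l<i}\tau^{i-l}(b_l)\tau^{i-(l+1)}(\ec)$ into $\sum_{l=0}^{d-1}\sum_{l<i\le d}E_i(a)\tau^{i-l}(b_l)\,\ec(z)^{q^{i-(l+1)}}$, using that $\ec$ is constant in $t$ so $\tau^{j}(\ec)(z)=\ec(z)^{q^{j}}$; this is exactly the stated double sum. Finally the ``in particular'' claim is read off: for $\deg a=0$ it is $\FF_q$-linearity plus \eqref{chigrowtheq} (Lemma \ref{chipropslem}), while for $d\ge1$ the most singular term of the formula is $\ec(z)^{q^{d-1}}=u(z)^{-q^{d-1}}$, and since $|a|=q^d>q^{d-1}$ multiplication by $u(z)^{|a|}$ sends every term to $\binom{0}{0}$.

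The main obstacle I expect is the third step: pinning down that the operator $\mathfrak{c}_a$ with $f_t(az)=\mathfrak{c}_a(f_t)(z)$ has $\tau$-coefficients exactly $E_i(a)$ (tracing through $\ec(az)=C_a(\ec(z))$ and the $\FF_q[t]$-linear action on $f_t$), together with the clean derivation of $\sum_i E_i(a)b_i(t)=a(t)$. Everything else --- the induction of the second step and the reindexing of the double sum --- is bookkeeping, provided one keeps careful track of the index shift $n-(l+1)$ and the distinction between the Anderson twist $\tau$ on $\TT$ and ordinary $q$-power Frobenius on the $t$-constant quantity $\ec$.
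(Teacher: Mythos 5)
Your proof is correct and takes essentially the same route as the paper: extract the first-order $\tau$-difference equation for $f_t$ from \eqref{taudifferencechi}, iterate it by induction to the closed form for $\tau^n(f_t)$, and combine with the Carlitz compatibility $f_t(az)=\mathfrak{c}_a(f_t(z))$ (whose $\tau$-coefficients are the $E_i(a)$) before dividing by $\omega$. You merely fill in details the paper leaves implicit --- the computation $\tau(\omega)/\omega=t-\theta$, the identity $\sum_{i}E_i(a)b_i(t)=a(t)$, and the reindexing of the double sum --- and the only slip is notational: the scalar $u(z)^{|a|}\chi_t(az)$ tends to $0$, not to the vector $\binom{0}{0}$.
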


\begin{proposition} \label{Heckestable}
The operator $T_\pfrak$ stabilizes both $\MM_k^m(\rho^*_t)$ and $\SSS_k^m(\rho^*_t)$.
\end{proposition}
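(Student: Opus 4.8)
The plan is to reduce everything to the explicit coordinate description of $T_\pfrak$ in Corollary \ref{heckcoordesc} and then control the resulting $u$-expansions. Since we already know that $T_\pfrak$ preserves $\MM_k^m(\rho_t^*)^!$, the form $T_\pfrak\Hcal$ is tempered and $\chi_t$-quasiperiodic, so by Lemma \ref{periodicitylem} I may write $\Theta_t^{-1}T_\pfrak\Hcal = \binom{H_1}{H_3}$ with $H_1,H_3 \in \TT((u))$ of finite pole order, where $H_1 = [T_\pfrak\Hcal]_1$ and $H_3 = \chi_t H_1 + [T_\pfrak\Hcal]_2$. By Corollary \ref{infequiv1} (equivalently condition 2 of Theorem \ref{reginftythm}), membership in $\MM_k^m(\rho_t^*)$ is precisely the assertion that $H_1 \in u\TT[[u]]$ and $H_3 \in \TT[[u]]$. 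Writing $\Hcal$ itself as $\Theta_t\binom{h_1}{h_3}$ with $h_1 \in u\TT[[u]]$ and $h_3 \in \TT[[u]]$ (so $h_2 = h_3 - \chi_t h_1$), this is what must be verified.

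For the first coordinate, Corollary \ref{heckcoordesc} gives $H_1 = \pfrak^k\chi_t(\pfrak)h_1(\pfrak z) + \sum_{b\in A(\deg\pfrak)} h_1(\tfrac{z+b}{\pfrak})$. Each summand is $h_1$ composed with a homography sending $|z|_\Im\to\infty$ to large imaginary part (indeed $|\pfrak z|_\Im = |\pfrak|\,|z|_\Im$ and $|\tfrac{z+b}{\pfrak}|_\Im = |\pfrak|^{-1}|z|_\Im$), so $H_1\to 0$ as $|z|_\Im\to\infty$ because $h_1\to 0$. A Laurent series in $\TT((u))$ tending to $0$ as $u\to 0$ can have neither a pole nor a constant term, whence $H_1\in u\TT[[u]]$.

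The substantive part is the second coordinate. I would set $R_\pfrak := \chi_t(\pfrak\,\cdot) - \chi_t(\pfrak)\chi_t(\cdot)$; by Lemma \ref{AGFfe} this is $A$-periodic and is an explicit $\TT$-linear combination of the functions $\ec(\cdot)^{q^j} = u^{-q^j}$ with $0\le j\le \deg\pfrak - 1$, hence lies in $u^{-q^{\deg\pfrak-1}}\TT[[u]]$ and in particular has pole order $<|\pfrak|$. Substituting $h_2 = h_3 - \chi_t h_1$ into Corollary \ref{heckcoordesc} and using the additivity $\chi_t(z) + \chi_t(b) = \chi_t(z+b) = \chi_t(\pfrak\cdot\tfrac{z+b}{\pfrak})$, the cross terms reorganize as
\[H_3 = -\pfrak^k R_\pfrak(z)\,h_1(\pfrak z) + \pfrak^k h_3(\pfrak z) + \chi_t(\pfrak)\sum_{b\in A(\deg\pfrak)}h_3(\tfrac{z+b}{\pfrak}) + \sum_{b\in A(\deg\pfrak)}(R_\pfrak h_1)(\tfrac{z+b}{\pfrak}).\]
The first term lies in $u\TT[[u]]$ because $h_1(\pfrak z)\in u^{|\pfrak|}\TT[[u]]$ absorbs the pole of $R_\pfrak$, and the second lies in $\TT[[u]]$ since $h_3\in\TT[[u]]$. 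Everything therefore comes down to the two trace-type sums.

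The hard part, and the key lemma I would isolate, is the following: if $G$ is $A$-periodic, tempered, with $u$-expansion of pole order $<|\pfrak|$, then $\sum_{b\in A(\deg\pfrak)}G(\tfrac{z+b}{\pfrak})\in\TT[[u]]$. To prove it I would use that $\{\ec(\tfrac{z+b}{\pfrak})\}_b$ is exactly the set of roots of $\mathfrak{c}_\pfrak(Y) = \ec(z) = u^{-1}$, where $\mathfrak{c}_\pfrak$ is the Carlitz operator; equivalently the reciprocals $u(\tfrac{z+b}{\pfrak})$ are the roots of the reversed polynomial, whose elementary symmetric functions are all divisible by $u(z)$. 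Newton's identities then show that the power sums $p_n = \sum_b u(\tfrac{z+b}{\pfrak})^n$ lie in $u\TT[[u]]$ for $n\ge1$ and that $p_{-m}=\sum_b\ec(\tfrac{z+b}{\pfrak})^m$ is a constant for $0\le m<|\pfrak|$ (being a symmetric function of the roots of $\mathfrak{c}_\pfrak(Y)-\ec(z)$ of degree $<|\pfrak|$, it does not involve the only $z$-dependent coefficient $\ec(z)$). Expanding $G$ in $u$ and summing term by term gives $\sum_b G(\tfrac{z+b}{\pfrak})\in\TT[[u]]$. Applying this to $G=h_3$ (pole order $0$) and to $G=R_\pfrak h_1$ (pole order $q^{\deg\pfrak-1}-1 < |\pfrak|$) completes the proof that $H_3\in\TT[[u]]$, hence $T_\pfrak\Hcal\in\MM_k^m(\rho_t^*)$. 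Cuspidality is then easy: if $\Hcal\to\binom{0}{0}$, then since every entry of $T_\pfrak\Hcal$ in Corollary \ref{heckcoordesc} is a finite sum of the $h_i$ composed with homographies increasing $|\cdot|_\Im$ and scaled by the constants $\pfrak^k,\chi_t(\pfrak),\chi_t(b)$, we get $T_\pfrak\Hcal\to\binom{0}{0}$, which together with the membership just proved shows that $T_\pfrak$ stabilizes $\SSS_k^m(\rho_t^*)$ as well.
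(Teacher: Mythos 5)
Your proof is correct, and its skeleton coincides with the paper's: both start from Corollary \ref{heckcoordesc}, write $\Hcal=\Theta_t\binom{h_1}{h_3}$ with $h_1\in u\TT[[u]]$, $h_3\in\TT[[u]]$, and reduce regularity of $T_\pfrak\Hcal$ to the same four terms. Indeed, your two $R_\pfrak$-terms are exactly the quantities the paper controls: $-\pfrak^kR_\pfrak(z)h_1(\pfrak z)$ is what remains of $-\pfrak^k\chi_t(\pfrak z)h_1(\pfrak z)$ after the cancellation against $\chi_t H_1$, and $\sum_b(R_\pfrak h_1)\left(\frac{z+b}{\pfrak}\right)$ is precisely the paper's $r_0$ of \eqref{r0def}, both pulled out of Lemma \ref{AGFfe}. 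Where you genuinely differ is in the treatment of the crux, the trace sums. The paper splits them as $U+V$, handling the positive $u$-powers $V$ by Gekeler's Goss-polynomial facts ($\sum_b u\left(\frac{z+b}{\pfrak}\right)^j=G_{\pfrak,j}(\pfrak u)$, with $u\mid G_{\pfrak,j}(\pfrak u)$) and proving the negative-power part $U$ vanishes outright via $\FF_q$-linearity of $\ec$, the binomial theorem, and the vanishing of $\sum_{\lambda\in C[\pfrak]}\lambda^l$ for $0\le l<|\pfrak|-1$; the trace of $h_3$ itself is delegated to \cite[(7.3)]{EGinv}. You instead isolate a single reusable lemma --- the trace over $A(\deg\pfrak)$ of any $A$-periodic, tempered function of pole order $<|\pfrak|$ lies in $\TT[[u]]$ --- and prove it by symmetric-function theory for $\mathfrak{c}_\pfrak(Y)-\ec(z)$: divisibility by $u$ of the elementary symmetric functions of the $u\left(\frac{z+b}{\pfrak}\right)$ gives the positive powers (equivalent to the Goss-polynomial divisibility), while for the negative powers Newton's identities in degree $<|\pfrak|$ never reach the unique $z$-dependent coefficient $-\ec(z)$, so those power sums are constants. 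This buys uniformity (one statement covers both the $h_3$-trace and $r_0$) and requires only constancy rather than the sharper vanishing $U=0$; the cost is re-deriving internally what the paper can cite. Your soft arguments for the first coordinate (a finite-pole-order Laurent series tending to $0$ lies in $u\TT[[u]]$) and for cuspidality (every homography involved increases $|\cdot|_\Im$) are also correct and substitute for the paper's appeal to Gekeler at those points.
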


\begin{proof}
We use both 1. and 2. from Theorem \ref{reginftythm}. By 1. of the aforementioned theorem, we may write 
\[\Hcal = \Theta_t{h_1 \choose h_2} = {h_1 \choose h_2 - \chi_t h_1}, \text{ with both } h_1, u h_2 \in u\TT[[u]].\]
From Cor. \ref{heckcoordesc} we obtain,
\begin{flalign*}
T_\pfrak \Hcal =  \left( \begin{smallmatrix} \pfrak^k \chi_t(\pfrak) h_1(\pfrak z) + \sum h_1\left(\frac{z+b}{\pfrak}\right) \\ \pfrak^k (h_2 - \chi_t h_1)(\pfrak z) + \chi_t(\pfrak) \sum (h_2  - \chi_t h_1)\left(\frac{z+b}{\pfrak}\right)  \end{smallmatrix} \right) 
  + \left( \begin{smallmatrix} 0 \\ \sum \chi_t(b) h_1 \left(\frac{z+b}{\pfrak}\right) \end{smallmatrix} \right),
\end{flalign*}
where again each sum $\sum$ is over ${b \in A(d)}$.

Abbreviate  
\[T_\pfrak^\star \Hcal := \left( \begin{smallmatrix} \pfrak^k \chi_t(\pfrak) h_1(\pfrak z) + \sum h_1\left(\frac{z+b}{\pfrak}\right) \\ \pfrak^k h_2(\pfrak z)  + \chi_t(\pfrak) \sum h_2\left(\frac{z+b}{\pfrak}\right)  \end{smallmatrix} \right), \text{ and}\]

$\upsilon := -\pfrak^k \chi_t(\pfrak z) h_1(\pfrak z) - \chi_t(\pfrak) \sum \chi_t\left(\frac{z+b}{\pfrak}\right)h_1\left(\frac{z+b}{\pfrak}\right) + \sum \chi_t(b) h_1 \left(\frac{z+b}{\pfrak}\right).$ \newline
Then 
\[T_\pfrak\Hcal = T_\pfrak^\star\Hcal + {0 \choose \upsilon}.\]
One checks as in \cite[(7.3)]{EGinv} that $\Upsilon T_\pfrak^\star \Hcal \in u\TT[[u]]^2$.
If one additionally assumes that $h_2 \in u\TT[[u]]$, then $T_\pfrak^\star \Hcal \in u\TT[[u]]^2$.

Now we focus on $\upsilon$.
Already by Lem. \ref{AGFfe}, we see that $\pfrak^k \chi_t(\pfrak z) h_1(\pfrak z)$ vanishes at infinity, since $u(z)^{|\pfrak|}$ divides $h_1(\pfrak z)$ in $\TT[[u(z)]]$. 
%This appears to be the crucial use of of the assumption on regularity at infinity! Label this sum, and add remark on failure below.
By the same lemma and letting $\deg(\pfrak) = d$, we have equality between $\chi_t(\pfrak) \chi_t\left( \frac{z+b}{\pfrak} \right)$ and
\[\chi_t(z)+\chi_t(b) - \omega^{-1}\sum_{l = 0}^{d-1} \sum_{l < i \leq d} E_i(a) \tau^{i - l} (b_l) \ec\left( \frac{z+b}{\pfrak}  \right)^{q^{i - (l+1)}}.\]
Hence, letting 
\begin{align} \label{r0def} 
r_0 := \omega^{-1} \sum_{l = 0}^{d-1} \sum_{l < i \leq d} E_i(a) \tau^{i - l} (b_l) \sum_{b \in A(d)} h_1\left(\frac{z+b}{\pfrak}\right) \ec\left( \frac{z+b}{\pfrak}  \right)^{q^{i - (l+1)}}\end{align}
we see \medskip

$-\chi_t(\pfrak) \sum \chi_t\left(\frac{z+b}{\pfrak}\right) h_1\left(\frac{z+b}{\pfrak}\right) + \sum \chi_t(b) h_1 \left(\frac{z+b}{\pfrak}\right) = r_0 -\chi_t(z)\sum h_1\left(\frac{z+b}{\pfrak}\right),$ \medskip 

\noindent and by \cite[(7.3)]{EGinv} 
\[\chi_t(z)\sum h_1\left(\frac{z+b}{\pfrak}\right) \in u\chi_t \TT[[u]].\]

Focusing on the sum over $A(d)$ in $r_0$, we write $h_1 = \sum_{j \geq 1} \eta_j u^j$ in its $u$-expansion, using our assumption that $h_1 \in u \TT[[u]]$. Fixing $0 \leq m_0 \leq d-1$ and $m_0 < i_0 \leq d$ and letting $\mu := q^{i_0 - (m_0+1)} \leq |\pfrak|/q$, the sum over $A(d)$ becomes
\begin{flalign*}
\sum_{b \in A(d)} h_1 \left( \frac{z+b}{\pfrak} \right) \ec\left( \frac{z+b}{\pfrak} \right)^{\mu} =& \sum_{j \geq 1} \eta_j \sum_{b \in A(d)} u\left( \frac{z+b}{\pfrak} \right)^{j - \mu} \\
 = & \sum_{1 \leq j \leq \mu} \eta_j \sum_{b \in A(d)} \ec\left( \frac{z+b}{\pfrak} \right)^{\mu - j} \\
 & + \sum_{j > \mu} \eta_j \sum_{b \in A(d)} u\left( \frac{z+b}{\pfrak} \right)^{j - \mu} \\
 & =: U + V.
\end{flalign*}
%\begin{equation}\label{heckrem3}
%\sum_{1 \leq j \leq q^{d-1} - 1} \eta_{q^{d-1}-j} \sum_{b \in A(d)} e \left( \frac{z+b}{v} \right)^{j}+ \sum_{j \geq 0} \eta_{q^{d-1}+j} G_{\vfrak,j}(u(z)),\end{equation}
%for some negative integer $j_0$ with $-j_0 \leq q^{d-1}-1$.
Gekeler has shown that for all $j \geq 1$, we have both 
\[\sum_{b \in A(d)} u\left( \frac{z+b}{\pfrak} \right)^j = G_{\pfrak,j}(\pfrak u(z)),\] 
where $G_{\pfrak,j}$ is the Goss polynomial for the lattice of Carlitz $\pfrak$-torsion $C[\pfrak]$, see \cite[(7.3)]{EGinv}, and $u$ divides $G_{\pfrak,j}(\pfrak u)$, see \cite[(3.9)]{EGinv}.
Thus $V$ is in $u\TT[[u]]$.
One sees that $U$ vanishes by use of the linearity of the Carlitz exponential function, the binomial theorem and the vanishing of the sums $\sum_{\lambda \in C[\pfrak]} \lambda^l$ for $0 \leq l < |\pfrak|-1$ while noting again that for all choices of $i_0$ and $m_0$ as above $\mu \leq |\pfrak|/q$.
Thus we conclude that $r_0 \in u\TT[[u]]$, finishing the proof.
%%%Finish by relating U, V, r_0, \upsilon, etc...
\end{proof}

\begin{remark}[Necessity of vanishing of first coordinate for $T_\pfrak$-stability]
Closer inspection of the proof demonstrates that the assumption that $h_1 \in u\TT[[u]]$ in the expansion $\Hcal = \Theta_t {h_1 \choose h_2}$ plays a crucial role in showing that the $\pfrak^k \chi_t(\pfrak z) h_1(\pfrak z)$ term appearing in the second coordinate vanishes at infinity after multiplication by $u$. Indeed, if we only assume that $h_1 \in \TT[[u]]$, by Lem. \ref{AGFfe} we see that $\pfrak^k \chi_t(\pfrak z) h_1(\pfrak z)$ grows like $u^{-|\pfrak|/q}$, and thus for all monic irreducibles $\pfrak$, $u(z)\pfrak^k \chi_t(\pfrak z) h_1(\pfrak z)$ does not tend to zero as $|z|_\Im \rightarrow \infty$.

In \S \ref{nonexample}, we have seen that $E^{(k)}\Fcal$ is not regular at infinity for any $k \geq 1$, and further $(uE^{(k)}\Fcal)(z)$ does not tend to zero as $|z|_\Im \rightarrow \infty$, which is a necessary condition for all VMF. In particular, for all such one dimensional Eisenstein series $E^{(k)}$, the vectorial function $u T_\pfrak (E^{(k)}\Fcal^*)$ does not tend to zero as $|z|_\Im \rightarrow \infty$. %Should I put z in here?
\end{remark}

\begin{remark}
We examine the effect of the Hecke operators on the expansion of a VMF $\Hcal = \Theta_t {h_1 \choose h_2}$ in $\Theta_t\TT[[u]]$. A closer inspection of the proof of Prop. \ref{Heckestable} shows that we may write
\[ T_\pfrak \left( \begin{smallmatrix} h_1 \\ h_2 - \chi_t h_1 \end{smallmatrix} \right)(z) = \left( \begin{matrix} 1 & 0 \\ -\chi_t(z) & 1 \end{matrix} \right) \left( \begin{smallmatrix} \pfrak^k \chi_t(\pfrak) h_1(\pfrak z) + \sum h_1\left(\frac{z+b}{\pfrak}\right) \\ \pfrak^k h_2(\pfrak z)  + \chi_t(\pfrak) \sum h_2\left(\frac{z+b}{\pfrak}\right) + r_0 + r_1 \end{smallmatrix} \right) \in \Theta_t\TT[[u]], \]
where $r_0 \in u\TT[[u]]$ is as defined in \eqref{r0def}, and 
\[r_1 := \frac{\pfrak^k}{\omega} h_1(\pfrak z) \sum_{m = 0}^{d-1} \sum_{m < i \leq d} E_i(\pfrak) \tau^{i-m}(b_m) \mathfrak{e_c}(z)^{q^{i-(m+1)}} \in u^{\frac{|\pfrak|(q-1)}{q}}\TT[[u]]\]
comes from $-\pfrak^k \chi_t(\pfrak z) h_1(\pfrak z)$ using Lemma \ref{AGFfe}.
\end{remark}

\subsection{Hecke eigenforms: First examples}\label{eigenformsFirstexamples}
\subsubsection{Weight one forms}
The $\TT$-module $\MM_1^0(\rho_t^*)$ has rank one, and hence the Eisenstein series $\Ecal_1$ is a Hecke eigenform for all $\pfrak$. We compute that $T_\pfrak \Ecal_1 = \pfrak \Ecal_1$ below. 

%Emphasize in intro, or where??? 22.02.16 -R
As a corollary of the $A$-expansion of $\Ecal_1$ given above, we see that through the evaluations $t \mapsto \theta^{q^k}$, for $k \geq 1$, this single VMF gives rise to infinitely many Drinfeld $T_\pfrak$-eigenforms for $GL_2(A)$ with eigenvalue $\pfrak$. These specialized forms are nothing more than than Petrov's forms $f_s$ with $s = \frac{q^k - 1}{q-1}$.

\subsubsection{Weight $q$ forms}
%Should mention the compatibility of Hecke and twists\ldots
The $\TT$-module $\MM_q^0(\rho_t^*)$ has rank two with generators the non-cuspidal Eisenstein series $\Ecal_q$ and the cuspidal $h\Fcal^* := h {-\bm{d}_2 \choose \bm{d}_1}$. We show below that for all $\pfrak$, we have $T_\pfrak \Ecal_q = \pfrak^q \Ecal_q$. 

Now we focus on $-h\Fcal^*$. Since $\SSS_q^0(\rho_t^*)$ has rank one, it suffices to determine the coefficient of $u$ in the first coordinate of $T_\pfrak(h \Fcal^*)$. Writing $\bm{d}_2 = 1 + \sum_{j \geq q-1} c_j u^j \in A[t][[u]]$, as in \eqref{eqd2}, and acting by $T_\pfrak$, the first coordinate becomes
\[\pfrak^q  h(\pfrak z) (\chi_t(\pfrak) - \pfrak) + \pfrak^{q+1} h(\pfrak z) + \sum h\left(\frac{z+b}{\pfrak}\right) +  o(u^2),\] by the same reasoning as in \cite[(7.6)]{EGinv} because $u^q$ divides $ h \sum_{j \geq q-1}c_j u^j$ and where we have added and subtracted $\pfrak^{q+1} h(\pfrak z)$. Now $u^{|\pfrak|}$ divides $h(\pfrak z)$, and $\pfrak^{q+1} h(\pfrak z) + \sum h\left(\frac{z+b}{\pfrak}\right) = \pfrak h(z)$, by \emph{ibid}. Thus we conclude that $T_\pfrak(h \Fcal^*) = \pfrak h\Fcal^*$. 

\begin{remark}
Here again we see, as in the classical Drinfeldian setting over $\CC_\infty$, that two forms of different weights may have the same Hecke eigenvalues, as we have just observed for $\Ecal_1$ and $h \Fcal^*$. Perhaps the fact that both of these vectors appear as columns of the inverse of the rigid analytic trivialization matrix $\Psi := \left( \begin{smallmatrix} \bm{d}_1 & \bm{d}_2 \\ \tau(\bm{d}_1) & \tau(\bm{d}_2) \end{smallmatrix} \right)$ may have some explanatory power. 
\end{remark}

\subsubsection{Remark: Weight $q+2$ forms}
Finally, we consider the form $h\Ecal_1$, which lies in the rank one $\TT$-module $\SSS_{q+2}^1(\rho_t^*)$. Necessarily, this is a Hecke eigenform, and A. Petrov has computed a handful of examples which suggest that $T_\pfrak (h\Ecal_1) = \pfrak^2 h \Ecal_1$. A proof would appear to take somewhat more work than required in the previous subsections, essentially due to the double cuspidality of the first coordinate and the more complicated behavior of the Goss polynomials for Carlitz torsion lattices in this circumstance. 
We do not pursue this here.

\subsection{Hecke properties of the vectorial Eisenstein series}
\begin{proposition} \label{EisHeckEFs}
For all monic irreducibles $\pfrak \in A$ and $k \equiv 1 \pmod{q-1}$, we have
\[T_\pfrak \Ecal_k = \pfrak^k \Ecal_k.\]
\end{proposition}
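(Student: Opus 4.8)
The plan is to compute $T_\pfrak \Ecal_k$ directly from the defining series $\Ecal_k(z) = \sideset{}{'}\sum_{a,b \in A}(az+b)^{-k}\binom{\chi_t(a)}{\chi_t(b)}$ via the explicit coordinate description of the Hecke operator in Corollary \ref{heckcoordesc} (applied to $\Ecal_k \in \MM_k^0(\rho_t^*)$), and to show that after reindexing the two contributions reassemble into $\pfrak^k \Ecal_k$. Writing $d := \deg \pfrak$, Corollary \ref{heckcoordesc} gives $T_\pfrak \Ecal_k = T_1 + S$, where $T_1 := \pfrak^k \left(\begin{smallmatrix} \chi_t(\pfrak) & 0 \\ 0 & 1 \end{smallmatrix}\right)\Ecal_k(\pfrak z)$ and $S := \sum_{b_0 \in A(d)} \left(\begin{smallmatrix} 1 & 0 \\ \chi_t(b_0) & \chi_t(\pfrak) \end{smallmatrix}\right)\Ecal_k\!\left(\frac{z+b_0}{\pfrak}\right)$. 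For $T_1$, substituting the series and using $\left(\begin{smallmatrix}\chi_t(\pfrak)&0\\0&1\end{smallmatrix}\right)\binom{\chi_t(a)}{\chi_t(b)} = \binom{\chi_t(\pfrak a)}{\chi_t(b)}$ (multiplicativity of $\chi_t$), the reindexing $a^{*} = \pfrak a$ yields $T_1 = \pfrak^k \sideset{}{'}\sum_{a^{*} \in \pfrak A,\, b \in A}(a^{*} z + b)^{-k}\binom{\chi_t(a^{*})}{\chi_t(b)}$, i.e. $\pfrak^k$ times the partial sum of $\Ecal_k$ over those $(a,b)$ with $\pfrak \mid a$.

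For $S$, I expand $\Ecal_k\!\left(\frac{z+b_0}{\pfrak}\right) = \pfrak^k \sideset{}{'}\sum_{a,b}(az + ab_0 + \pfrak b)^{-k}\binom{\chi_t(a)}{\chi_t(b)}$, where the factor $\pfrak^k$ arises from the scaling, and then apply the crucial identity $\left(\begin{smallmatrix}1&0\\\chi_t(b_0)&\chi_t(\pfrak)\end{smallmatrix}\right)\binom{\chi_t(a)}{\chi_t(b)} = \binom{\chi_t(a)}{\chi_t(ab_0 + \pfrak b)}$, valid by additivity and multiplicativity of $\chi_t$, so that each summand depends only on the pair $(a,c)$ with $c := ab_0 + \pfrak b$. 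The heart of the argument is then the reindexing $(b_0,b) \mapsto c$ for fixed $a$, as $b_0$ ranges over the $q^d$ representatives $A(d)$ of $A/\pfrak A$ and $b$ over $A$: when $\pfrak \nmid a$ the residue $ab_0 \bmod \pfrak$ is a bijection of $A/\pfrak A$, so $c$ ranges over all of $A$ exactly once; when $\pfrak \mid a$ every attained $c$ lies in $\pfrak A$ and is hit with multiplicity $q^d = |\pfrak|$, which equals $0$ in $\CC_\infty$ since $\operatorname{char}\CC_\infty = p$. Hence the $\pfrak \mid a$ contribution to $S$ vanishes, giving $S = \pfrak^k \sum_{a:\,\pfrak \nmid a}\sum_{c \in A}(az+c)^{-k}\binom{\chi_t(a)}{\chi_t(c)}$, i.e. $\pfrak^k$ times the partial sum of $\Ecal_k$ over $(a,b)$ with $\pfrak \nmid a$. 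Adding $T_1$ and $S$, the two complementary partial sums combine into the full primed sum and yield $T_\pfrak \Ecal_k = \pfrak^k \Ecal_k$.

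The main obstacle is not the algebra but the justification of these rearrangements. For $k \geq 2$ the series for $\Ecal_k$ converges absolutely in the non-archimedean sense (its terms tend to $0$), so all reindexings are legitimate; for $k = 1$, where the series is only conditionally convergent, I would instead perform the manipulation at the level of the $A$-expansion of Proposition \ref{AexpE1prop}, or simply invoke that $\MM_1^0(\rho_t^*)$ is free of rank one (Theorem \ref{structurethm}), so that $T_\pfrak \Ecal_1 = \lambda \Ecal_1$ for some $\lambda \in \TT$, and read off $\lambda = \pfrak$ by comparing the leading $A$-expansion coefficient $-\pitilde \sum_{a \in A_+}\chi_t(a)\,u(az)$. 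A secondary bookkeeping point to record is the treatment of the single excluded term $(a,b) = (0,0)$ across the reindexings, which one checks does not disturb the reassembly.
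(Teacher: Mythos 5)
Your proof is correct, but it takes a genuinely different route from the paper's. You treat the vector as a single object and exploit the fact that the slash-operator matrices act on the character vector exactly as the corresponding substitution acts on the index pair, namely $\left(\begin{smallmatrix}1&0\\ \chi_t(b_0)&\chi_t(\pfrak)\end{smallmatrix}\right)\binom{\chi_t(a)}{\chi_t(b)}=\binom{\chi_t(a)}{\chi_t(ab_0+\pfrak b)}$, so that the entire Hecke sum is a reindexing of the defining series, with the $\pfrak\mid a$ part of the second piece killed because each value $c\in\pfrak A$ is hit with multiplicity $q^{\deg\pfrak}$, which vanishes in characteristic $p$. The paper instead argues coordinate by coordinate: the first coordinate is handled through its $A$-expansion and the orthogonality property of Goss polynomials (Lemma \ref{Gorthlem}), while the second requires a separate lemma evaluating $\sum_{c\in A(d)}\sum_{b\in A}\chi_t(\pfrak b)(az+ac+\pfrak b)^{-k}$, proved via Perkins' identity $\sum_{b}\chi_t(b)/(z+b)=-\pitilde u\chi_t$, hyperdifferentiation in $z$, and the vanishing of power sums, after which the two coordinates are recombined through \eqref{equivheckepfeq}. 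Your argument is shorter, avoids Goss polynomials entirely, and is uniform in $k$; the paper's longer computation produces reusable by-products (the twisted Goss-polynomial sums it compares with Lemma \ref{Gorthlem}, which connect to the twisted Hecke operators of \cite{RPmf16}) and stays inside the $A$-expansion formalism used throughout the rest of the paper. One correction to your convergence discussion: the hedge at $k=1$ is unnecessary. Unlike Gekeler's $E$, whose unbounded numerators $a$ force conditional convergence, the terms of $\Ecal_k$ have numerators $\chi_t(a),\chi_t(b)$ of Gauss norm at most $1$, so each term has norm at most $|az+b|^{-k}\rightarrow 0$ and the series converges unconditionally in $\TT$ for every $k\geq 1$, including $k=1$; all your reindexings are therefore legitimate in all cases. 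Your fallback for $k=1$ (rank one of $\MM_1^0(\rho_t^*)$ from Theorem \ref{structurethm}, plus stability of regularity at infinity under $T_\pfrak$ from Proposition \ref{Heckestable}, then reading off the eigenvalue from the leading $A$-expansion coefficient) is also valid, just not needed.
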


The proof that the $\Ecal_k$ are Hecke eigenforms takes up the next several subsections. We use heavily the coordinate description of the action of $T_\pfrak$ from Cor. \ref{heckcoordesc}.

\subsubsection{The first coordinate}
The next result is well-known, see e.g. it is implicit in the calculation of the Hecke eigenvalues of the Eisenstein series studied in \cite{GossCM}. 

\begin{lemma} \label{Gorthlem}
For all $a \in A_+$ and positive integers $k$,
\[\sum_{b \in A(d)} G_k \left(u_a \left( \frac{z+b}{\pfrak} \right) \right) = \left\{\begin{array}{ll} \pfrak^k G_k(u_a(z)) & (a,\pfrak) = 1, \\ 0 & (a,\pfrak) \neq 1. \end{array} \right. \]
\hfill \qed \end{lemma}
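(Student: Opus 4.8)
The plan is to reduce the whole identity to the defining property of Gekeler's Goss polynomials, namely the sum representation
\[\pitilde^k G_k(u(w)) = \sum_{\alpha \in A} \frac{1}{(w-\alpha)^k}, \qquad w \in \Omega,\ k \geq 1,\]
which follows from $\pitilde\, u(w) = \sum_{\alpha \in A}(w-\alpha)^{-1}$ together with \eqref{reciphyp} and \eqref{hyp2}. Here $u_a(z) = u(az)$. Since $\CC_\infty$ is non-archimedean and the summands tend to $0$ as $|\alpha| \to \infty$, every series below converges absolutely and may be rearranged freely; as both sides of the asserted identity are rigid analytic on $\Omega$, it suffices to check equality on the locus where the sums converge, and connectedness of $\Omega$ does the rest.

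I would dispatch the case $(a,\pfrak) \neq 1$ first, since $\pfrak$ irreducible forces $\pfrak \mid a$; writing $a = \pfrak a'$ gives $\tfrac{a(z+b)}{\pfrak} = a'z + a'b$, and because $a'b \in A$ and $u$ is $A$-periodic, $u\!\left(\tfrac{a(z+b)}{\pfrak}\right) = u(a'z)$ independently of $b$. Hence the sum collapses to $|A(d)|\,G_k(u(a'z)) = q^{\deg\pfrak} G_k(u(a'z))$, which vanishes because $q^{\deg\pfrak} = 0$ in the characteristic-$p$ field $\CC_\infty$. For the coprime case $(a,\pfrak) = 1$ I would substitute $w = a(z+b)/\pfrak$ into the Goss sum and clear the denominator to obtain
\[\sum_{b \in A(d)} G_k\!\left(u_a\!\left(\tfrac{z+b}{\pfrak}\right)\right) = \pitilde^{-k}\pfrak^k \sum_{b \in A(d)} \sum_{\alpha \in A} \frac{1}{(az + ab - \pfrak\alpha)^k}.\]
The key combinatorial point is that $d = \deg\pfrak$ makes $A(d)$ a complete residue system modulo $\pfrak$, and for each fixed $b$ the element $ab - \pfrak\alpha$ sweeps out the coset $ab + \pfrak A$ as $\alpha$ runs over $A$. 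Since $A/\pfrak A$ is a field and $(a,\pfrak)=1$, multiplication by $a$ permutes its classes, so the cosets $\{ab + \pfrak A : b \in A(d)\}$ partition $A$; the double sum therefore reassembles into $\sum_{\beta \in A}(az+\beta)^{-k} = \pitilde^k G_k(u(az))$, and the prefactor $\pitilde^{-k}\pfrak^k$ yields exactly $\pfrak^k G_k(u_a(z))$.

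The only genuinely delicate points are the validity of the Goss-polynomial sum identity for all $k \geq 1$ rather than merely in the naive range of convergence, and the careful bookkeeping of the coset partition; both are routine in the non-archimedean setting. The conceptual heart is the dichotomy between the coprime case, where residues are permuted and the sum reassembles into a single Goss polynomial, and the divisible case, where a $q^{\deg\pfrak}$-fold repetition is annihilated by the characteristic. I expect no real obstacle beyond organizing this dichotomy cleanly, as the computation is essentially the classical one showing that Goss' Eisenstein series are Hecke eigenforms.
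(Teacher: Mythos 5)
Your proof is correct and is essentially the argument the paper intends: the paper omits the proof entirely, noting only that the lemma is implicit in Goss's computation of the Hecke eigenvalues of his Eisenstein series, and that classical computation is precisely the coset decomposition you carry out. Both halves of your dichotomy are sound---when $\pfrak \mid a$ the $A$-periodicity of $u$ collapses the sum to $q^{\deg \pfrak}\,G_k(u(a'z)) = 0$ in characteristic $p$, and when $(a,\pfrak)=1$ the cosets $\{ab + \pfrak A : b \in A(d)\}$ partition $A$, so the double sum reassembles into $\sum_{\beta \in A}(az+\beta)^{-k} = \pitilde^k G_k(u_a(z))$, with the non-archimedean rearrangements justified exactly as you say.
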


From Cor. \ref{heckcoordesc} we must compute 
\[(T_\pfrak^\sharp e_1^k)(z) :=  \pfrak^k \chi_t(\pfrak) e_1^k(\pfrak z) + \sum_{b \in A(d)} e_1^k \left(\frac{z+b}{\pfrak} \right).\] 
Writing everything out and using the previous lemma, we have
\[ (T_\pfrak^\sharp e_1^k)(z) = \pfrak^k \sum_{a \in A_+} \chi_t(\pfrak a) G_k(u_{\pfrak a}(z)) + \sum_{a \in A_+} \chi_t(a) \sum_{b \in A(d)} G_k \left(u_a \left( \frac{z+b}{\pfrak} \right) \right) \]
\[ = \pfrak^k \sum_{a \in A_+} \chi_t(\pfrak a) G_k(u_{\pfrak a}(z)) + \pfrak^k \sum_{a \in A_+ \setminus \pfrak A_+} \chi_t(a) \sum_{b \in A(d)} G_k \left(u_a \left( z \right) \right) = \pfrak^k e_1^k(z), \]
finishing the calculation. 

\subsubsection{The second coordinate}
\begin{lemma}
Let $a \in A_+$, $\pfrak$ a monic irreducible of positive degree $d$, and $k$ a positive integer $k \equiv 1 (q-1)$. We have that $\sum_{c \in A(d)} \sum_{b \in A} \frac{\chi_t(\pfrak b)}{(az+ac+\pfrak b)^k}$ equals 
\[ \sum_{b \in A}  \frac{\chi_t(b)}{(az+b)^k}  - \frac{\chi_t(a)}{\pfrak} \sum_{c \in A(d)} \chi_t(c) G_k ( u_a \left( \frac{z+c}{\pfrak} \right) ), \text{ if } (a,\pfrak) =  1,\] 
and equals zero otherwise.

%If $(a,\pfrak) \neq 1$, then
%\[\sum_{b \in A} \sum_{c \in A(d)} \frac{\chi_t(\pfrak b)}{(az+ac+\pfrak b)^k} = 0.\]

%\[ \begin{array}{rr} \displaystyle{\sum_{b \in A} \frac{\chi_t(b)}{(az+b)^k} -\frac{\chi_t(a)}{\pfrak^{k}} \sum_{c \in A(d)} \chi_t(c) G_k \left(u_a \left( \frac{z+c}{\pfrak} \right) \right) } & \text{ if } (a,\pfrak) = 1, \\ \displaystyle{0 = -\frac{ \chi_{t}(a)}{\pfrak^{k}} \sum_{c \in A(d)} \chi_t(c)  G_k \left(u_a \left( \frac{z+b}{\pfrak} \right) \right)} & \text{ if } (a,\pfrak) \neq 1 . \end{array} \]
\end{lemma}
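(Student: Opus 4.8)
The plan is to reduce everything to two facts: that the restriction of $\chi_t$ to $A$ is an $\FF_q$-algebra homomorphism (hence both additive and multiplicative there), and the Goss-polynomial evaluation $\sum_{b\in A}(w+b)^{-k}=\pitilde^k G_k(u(w))$, which is the normalization already used in the first-coordinate computation. The whole calculation is then driven by the arithmetic of the map $(c,b)\mapsto ac+\pfrak b$ from $A(d)\times A$ to $A$.

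First I would split the numerator. Since $ac$, $\pfrak b$ and $ac+\pfrak b$ all lie in $A$, additivity and multiplicativity give $\chi_t(\pfrak b)=\chi_t(ac+\pfrak b)-\chi_t(a)\chi_t(c)$, so the left-hand side $S$ decomposes as $S=S_1-\chi_t(a)S_2$, where $S_1:=\sum_{c\in A(d)}\sum_{b\in A}\chi_t(ac+\pfrak b)(az+ac+\pfrak b)^{-k}$ and $S_2:=\sum_{c\in A(d)}\chi_t(c)\sum_{b\in A}(az+ac+\pfrak b)^{-k}$. For $S_2$ I factor $\pfrak$ out of the inner sum and apply the Goss identity with $w=a(z+c)$, obtaining $\sum_{b\in A}(az+ac+\pfrak b)^{-k}=\pfrak^{-k}\pitilde^k G_k\bigl(u_a(\tfrac{z+c}{\pfrak})\bigr)$; this produces the $G_k$-term of the claim, the coefficient being exactly the one fixed by the normalization above.

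The crux is the fibre structure of $(c,b)\mapsto ac+\pfrak b$, which depends on $\gcd(a,\pfrak)$. If $(a,\pfrak)=1$ then $a$ is invertible modulo $\pfrak$, so $ac$ runs through a full residue system as $c$ runs through $A(d)$ and the map is a bijection onto $A$; hence $S_1=\sum_{m\in A}\chi_t(m)(az+m)^{-k}$, which is the first term of the claimed identity, and together with the $S_2$-term computed above this yields the full expression for $(a,\pfrak)=1$. If instead $\pfrak\mid a$, write $a=\pfrak a'$, so $ac+\pfrak b=\pfrak(a'c+b)$ and the map is $q^d$-to-one onto $\pfrak A$ with summand depending only on $a'c+b$; thus $S_1=q^d\sum_{m\in A}\chi_t(\pfrak m)(\pfrak(a'z+m))^{-k}=0$ because $q^d=0$ in $\CC_\infty$. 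In this case the inner sum of $S_2$, after the shift $b\mapsto b-a'c$, collapses to $\pfrak^{-k}\pitilde^k G_k(u(a'z))$, independent of $c$, so $\chi_t(a)S_2$ is a fixed multiple of $\sum_{c\in A(d)}\chi_t(c)=\chi_t\bigl(\sum_{c\in A(d)}c\bigr)=0$, the last sum being the sum of all elements of the positive-dimensional $\FF_q$-vector space $A(d)$. Hence $S=0$.

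The main obstacle is purely the case analysis of this last step: one must invoke the two distinct vanishing mechanisms — $q^d\equiv 0$ for $S_1$ and $\sum_{c\in A(d)}\chi_t(c)=0$ for $S_2$ — in the divisible case, and keep the $\pitilde$-powers aligned with the normalization of $G_k$ throughout. (The identity $\sum_{c\in A(d)}c=0$ holds for all $q>2$, and for $q=2$ once $\deg\pfrak\ge 2$; the remaining degenerate case, if needed, is treated directly.)
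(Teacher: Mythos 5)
Your handling of the coprime case coincides with the paper's: both split $\chi_t(\pfrak b)=\chi_t(ac+\pfrak b)-\chi_t(a)\chi_t(c)$, use that $(c,b)\mapsto ac+\pfrak b$ is a bijection of $A(d)\times A$ onto $A$ when $(a,\pfrak)=1$, and turn the second piece into Goss polynomials. Where you genuinely diverge is the case $\pfrak\mid a$. The paper does not reuse the splitting there: it first settles $k=1$ by inserting the closed formula $\sum_{b\in A}\chi_t(b)/(w+b)=-\pitilde u(w)\chi_t(w)$ (a consequence of \eqref{perkinsid}), expanding $\chi_t(a'(z+c))=\chi_t(a'z)+\chi_t(a')\chi_t(c)$ and invoking the vanishing of $\sum_{c\in A(d)}1$ and of $\sum_{c\in A(d)}\chi_t(c)$, and then deduces the case of general $k$ by applying $\hyp_z^{(k-1)}$ to the identically vanishing $k=1$ identity. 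You instead keep the splitting $S=S_1-\chi_t(a)S_2$ in both cases, killing $S_1$ by the fibre count ($(c,b)\mapsto\pfrak(a'c+b)$ is $q^d$-to-one and $q^d=0$ in characteristic $p$) and $S_2$ by shift-invariance of the inner sum plus $\sum_{c\in A(d)}\chi_t(c)=0$. Your route is uniform in $k$ and avoids hyperdifferentiation; its only extra cost is the regrouping of the double series, which is legitimate here because the terms tend to zero non-archimedeanly, so the family is summable and may be grouped along fibres --- say this explicitly in a final write-up. The paper's route trades that regrouping for the $L$-series identity it has already exploited for $\Ecal_1$.

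Two caveats. First, both your proof and the paper's hinge on $\sum_{c\in A(d)}\chi_t(c)=0$, which fails exactly when $q=2$ and $d=\deg\pfrak=1$; and in that case the lemma itself is false: for $q=2$, $\pfrak=a=\theta$, $k=1$, the closed formula above gives $\sum_{c\in\FF_2}\sum_{b\in A}\chi_t(\theta b)(\theta z+\theta c+\theta b)^{-1}=-\tfrac{t}{\theta}\,\pitilde\, u(z)\neq 0$. So your parenthetical promise that the degenerate case ``is treated directly'' cannot be kept; this is a defect of the statement which you share with the paper (where it hides behind ``the well-known vanishing of power sums''), not a defect of your strategy, but you should flag it as a needed hypothesis ($q>2$, or $\deg\pfrak\geq 2$) rather than as a case to be handled. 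Second, on normalization: your coefficient $\pitilde^k\pfrak^{-k}\chi_t(a)$ on the $G_k$-term is the one forced by $\sum_{b\in A}(w+b)^{-k}=\pitilde^k G_k(u(w))$, the convention used elsewhere in the paper (e.g.\ in Theorem \ref{eisAexp}); the statement's $\chi_t(a)/\pfrak$ and the paper's own proof's $\chi_t(a)/\pfrak^k$ are mutually inconsistent typos, so pinning the constant to the normalization, as you did, is the right call.
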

\begin{proof}
Suppose first that $(a,\pfrak) \neq 1$ and write $a' = a/\pfrak \in A_+$. We begin with the case $k = 1$. We have
\[ \sum_{c \in A(d)} \sum_{b \in A} \frac{\chi_t(\pfrak b)}{(az+ac+\pfrak b)} = \frac{\chi_t(\pfrak)}{\pfrak}\sum_{c \in A(d)} \sum_{b \in A} \frac{\chi_t(b)}{(a'(z+c)+b)} \]
\begin{eqnarray*}
&=& \frac{\chi_t(\pfrak)}{\pfrak}\sum_{c \in A(d)} \pitilde u(a'(z+c)) \chi_t(a'(z+c)) \\
&=& \frac{\chi_t(\pfrak)}{\pfrak}\pitilde u(a' z)\left( \chi_t(a' z) \sum_{c \in A(d)} 1 + \chi_t(a')\sum_{c \in A(d)} \chi_t(c)\right) \\
&=& 0, 
\end{eqnarray*}
by the well-known vanishing of power sums. We notice that $\sum_{c \in A(d)} \sum_{b \in A} \frac{\chi_t(b)}{(a'(z+c)+b)}$ and $\sum_{c \in A(d)} \sum_{b \in A} \frac{\chi_t(b)}{(a'(z+c)+b)^{k}}$ are connected via simple hyperdifferentiation with respect to $z$, and thus, when $\pfrak | a$, we learn
$\sum_{b \in A} \sum_{c \in A(d)} \frac{\chi_t(\pfrak b)}{(az+ac+\pfrak b)^k} = 0$.

%\begin{eqnarray*}
%&=&  \frac{\chi_t(\pfrak)}{\pfrak^{k}} \sum_{c \in A(d)} (a')^{1-k} \hyp_{k - 1 , z}\left( \sum_{b \in A} \frac{\chi_t(b)}{(a'(z+c)+b)} \right) \\
%&=& \frac{\chi_t(\pfrak)}{\pfrak^{k}} \sum_{c \in A(d)} (a')^{1-k} \hyp_{k - 1 , z}\left( \pitilde u_{a'}(z+c) \chi_{t}(a'(z+c)) \right) \\
%&=& \frac{\chi_t(\pfrak)}{\pfrak^{k}} (a')^{1-k} \hyp_{k - 1 , z}\left( \pitilde u_{a'}(z) \chi_{t}(a'z) \right) \sum_{b \in A(d)} 1 \\
%&& + \ \frac{\chi_{t} (a'\pfrak)}{\pfrak^{k}} (a')^{1-k} \hyp_{k - 1 , z}\left( \pitilde u_{a'}(z)  \right)  \sum_{c \in A(d)} \chi_t(c) 
%\end{eqnarray*}
%and both sums are equal to zero by the well-known results on vanishing of power sums. We remark that that the second sum in the previous equation above may be rewritten, using the $A$-periodicity of $u_a$ as,
%\[0 =  -\frac{ \chi_{t}(a'\pfrak)}{\pfrak^{k}} \sum_{c \in A(d)} \chi_t(c)  G_k \left( u_{a'\pfrak}((z+c)/\pfrak)  \right).\]

Now suppose $(a,\pfrak) = 1$, then the sum of our interest $\sum_{c \in A(d)} \sum_{b \in A} \frac{\chi_t(\pfrak b)}{(az+ac+\pfrak b)^k}$ equals
\[\sum_{c \in A(d)} \sum_{b \in A} \frac{\chi_t(ac + \pfrak b)}{(az+ac+\pfrak b)^k} -  \sum_{c \in A(d)} \sum_{b \in A} \frac{\chi_t(ac)}{(az+ac+\pfrak b)^k} \]
\[= \sum_{b' \in A} \frac{\chi_t(b')}{(az + b')^k} - \frac{\chi_t(a)}{\pfrak^{k}} \sum_{c \in A(d)} \chi_t(c) G_k(u_a((z+c)/\pfrak)) . \]
%\end{eqnarray*}
\end{proof}

The following result should be compared with Lemma \ref{Gorthlem}. %\textbf{And might be helpful in understanding Hecke action on second coord\ldots}
\begin{corollary}
With the same notations,
\[\sum_{c \in A(d)} \chi_t(c) G_k \left(u_a \left( \frac{z+c}{\pfrak} \right) \right) = \left\{ \begin{array}{ll}
 \begin{array}{l} \frac{\pfrak^k}{\chi_t(a)} \sum_{b \in A}  \left( \frac{\chi_t(b)}{(az+b)^k} \right. \\ \left. -\sum_{c \in A(d)}  \frac{\chi_t(\pfrak b)}{(az+ac+\pfrak b)^k} \right) \end{array} & \text{ if } (a,\pfrak) =  1 \\ 0 & \text{ if } (a,\pfrak) \neq 1. \end{array} \right. \] 
\end{corollary}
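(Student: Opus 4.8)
The plan is to obtain the corollary directly from the preceding lemma, handling the two cases $(a,\pfrak)=1$ and $(a,\pfrak)\neq 1$ separately; only the second requires any work beyond bookkeeping.

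When $(a,\pfrak)=1$ the statement is pure algebra. The proof of the preceding lemma establishes the identity
$$\sum_{c\in A(d)}\sum_{b\in A}\frac{\chi_t(\pfrak b)}{(az+ac+\pfrak b)^{k}}=\sum_{b\in A}\frac{\chi_t(b)}{(az+b)^{k}}-\frac{\chi_t(a)}{\pfrak^{k}}\sum_{c\in A(d)}\chi_t(c)\,G_k\!\left(u_a\!\left(\frac{z+c}{\pfrak}\right)\right).$$
Since $a\in A_+$ is nonzero, $\chi_t(a)=a(t)$ is a nonzero element of $\FF_q[t]$, so I would simply isolate the sum $\sum_{c\in A(d)}\chi_t(c)G_k(u_a((z+c)/\pfrak))$ on one side and multiply through by $\pfrak^{k}/\chi_t(a)$. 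This reproduces the first branch of the corollary verbatim, with no further analysis.

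The case $(a,\pfrak)\neq1$ needs a short independent argument, because the preceding lemma only records the vanishing of the double sum on its left, not of the $G_k$-sum. Here $\pfrak$ irreducible together with $(a,\pfrak)\neq1$ forces $\pfrak\mid a$; write $a=\pfrak a'$ with $a'\in A_+$. The decisive simplification is that the factor $\pfrak$ cancels, $u_a((z+c)/\pfrak)=u_{a'}(z+c)$, and that $u_{a'}(z+c)$ is independent of $c\in A(d)$ since $a'c\in A$ and $u_{a'}$ is $A$-periodic. Thus in the base case $k=1$, where $G_1$ is the identity, the sum collapses to $u_{a'}(z)\sum_{c\in A(d)}\chi_t(c)$, which vanishes by exactly the vanishing of power sums invoked in the proof of the preceding lemma. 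To reach a general $k\equiv1\pmod{q-1}$ I would hyperdifferentiate in $z$: by \eqref{hyp2}, as used in that same proof, each $G_k(u_{a'}(z+c))$ is a fixed scalar multiple of $\hyp_z^{(k-1)}(u_{a'}(z+c))$, so the full sum equals that scalar times $\hyp_z^{(k-1)}$ applied to the already-vanishing $k=1$ sum, and hence is zero.

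The one delicate point, which I expect to be the only genuine obstacle, is the power-sum vanishing $\sum_{c\in A(d)}\chi_t(c)=0$ underlying the non-coprime case: it holds for all $d\geq 2$, and for $d=1$ when $q>2$, but it is this very identity, and not the lemma's conclusion, that carries the argument. Since the same power-sum input is already used in the proof of the preceding lemma, I would invoke it under the identical convention there, so that the corollary follows on the same footing; this also makes transparent the parallel with Lemma \ref{Gorthlem}, whose non-coprime vanishing rests on the companion identity $\sum_{c\in A(d)}1=q^{d}=0$.
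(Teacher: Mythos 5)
Your proposal is correct, and on the coprime branch it coincides with the paper: the paper's entire proof reads ``This is a simple restatement of the previous lemma,'' which is accurate there --- one solves the identity from the lemma for $\sum_{c}\chi_t(c)\,G_k\bigl(u_a\bigl(\frac{z+c}{\pfrak}\bigr)\bigr)$ and divides by $\chi_t(a)\neq 0$, exactly as you do (note you correctly use the factor $\pfrak^{k}$ from the lemma's proof rather than the bare $\pfrak$ appearing in its statement, which is a typo). Where you genuinely diverge is the branch $(a,\pfrak)\neq 1$: you are right that the lemma's \emph{statement} only records the vanishing of the double sum, not of the $G_k$-sum, since the identity linking the two is established only under coprimality --- it rests on the bijection $(c,b)\mapsto ac+\pfrak b$ from $A(d)\times A$ onto $A$ --- so the paper's one-line proof is silent on precisely this point, and your independent argument fills a real (if small) gap. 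Your fix, namely that $\pfrak\mid a$ forces $u_a\bigl(\frac{z+c}{\pfrak}\bigr)=u(a'(z+c))=u(a'z)$ by $A$-periodicity, so the sum collapses to $G_k(u(a'z))\sum_{c\in A(d)}\chi_t(c)$ and vanishes by the power-sum identity, is in effect the same computation hidden inside the lemma's own proof of its non-coprime case, so your claim that the corollary then holds ``on the same footing'' is fair. Two remarks: first, your hyperdifferentiation step from $k=1$ to general $k$ is unnecessary, since the collapse $u_a\bigl(\frac{z+c}{\pfrak}\bigr)=u(a'z)$ is independent of $c$ for \emph{every} $k$, so the factorization $G_k(u(a'z))\sum_{c}\chi_t(c)$ is immediate; second, your flag of the edge case $q=2$, $\deg\pfrak=1$, where $\sum_{c\in A(d)}\chi_t(c)=1\neq 0$, is a genuine caveat, and it applies equally to the non-coprime case of the lemma itself, whose proof invokes the same power-sum vanishing.
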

\begin{proof}
This is a simple restatement of the previous lemma.
\end{proof}

Given the previous lemmas, we continue with the calculation of the Hecke action on the second coordinate of the Eisenstein series $\mathcal{E}_k$. To show that the second coordinate is fixed by the Hecke action defined above, it is equivalent to prove
\begin{equation} \label{equivheckepfeq}
\pfrak^k e_2^k(\pfrak z) + \chi_t(\pfrak) \sum_{c \in A(d)} e_{2}^{k}((z+c)/\pfrak) = \pfrak^k e_2^k(z) - \sum_{c \in A(d)} \chi_t(c) e_1^k((z+c)/b).
\end{equation}

 We have, for $k \equiv 1 (q-1)$, 
\[-e_2^{k}(z) = -\sideset{}{'}\sum_{a,b \in A} \frac{\chi(b)}{(az + b)^k} = L(\chi_t,k) + \sum_{a \in A_+} \sum_{b \in A} \frac{\chi_t(b)}{(az+b)^{k}}.\]
%and we let $e_2^{k*}(z) := \sum_{a \in A_+} \sum_{b \in A} \frac{\chi_t(b)}{(az+b)^{k}}$. 
Now we compute using the previous lemma that,
\[ \pfrak(t) \sum_{c \in A(d)} -e_2^{k}((z+c)/\pfrak) = \pfrak^k \sum_{a \in A_+} \sum_{b \in A} \sum_{c \in A(d)}  \frac{\chi_t(\pfrak b)}{(az+ac+\pfrak b)^k} \]
\begin{eqnarray*}
&=& \pfrak^k \sum_{a \in A_+ \setminus \pfrak A_+} \left( \sum_{b \in A} \frac{\chi_t(b)}{(az+b)^k} - \frac{\chi_t(a)}{\pfrak^k} \sum_{c \in A(d)} \chi_t(c) G_k((u_a(z+c)/\pfrak)) \right) \\
 &=& \pfrak^k \sum_{a \in A_+ \setminus \pfrak A_+} \sum_{b \in A} \frac{\chi_t(b)}{(az+b)^k} - \sum_{c \in A(d)} \chi_t(c) \sum_{a \in A_+} \chi_t(a) G_k((u_a(z+c)/\pfrak)) \\
 &=& \pfrak^k \sum_{a \in A_+ \setminus \pfrak A_+} \sum_{b \in A} \frac{\chi_t(b)}{(az+b)^k} + \sum_{c \in A(d)} \chi_t(c) e_1^k((z+c)/\pfrak). 
\end{eqnarray*}

Putting everything together gives \eqref{equivheckepfeq}, and completes the proof of Proposition \ref{EisHeckEFs}. \hfill $\qed$

\subsection{Hecke compatibility: evaluation and hyperdifferentiation}
Now we show that the Hecke operators introduced for VMF above specialize at roots of unity in $\FF_q^{ac} \subset \CC_\infty$ to the Hecke operators for $\Gamma_1(\pfrak^n)$ and commute with hyperdifferentiation; the main result is Theorem \ref{heckespeccommute} below.

\subsubsection{Hecke operators on $\Gamma_1(\mfrak)$}
The following lemma allows us to define Hecke operators on the spaces $M_k^m(\Gamma_1(\mfrak))$. Throughout $\pfrak$ will denote a monic irreducible polynomial in $A$. The following fact is elementary, and we omit the proof.

\begin{lemma}
Let $\mfrak, \pfrak \in A_+$, with $\pfrak$ additionally irreducible. 

If $(\mfrak,\pfrak) = 1$, then the matrices $\left( \begin{smallmatrix} 1 & \beta \\ 0 & \pfrak \end{smallmatrix} \right)$, with $|\beta|< |\pfrak|$, and any matrix $\left( \begin{smallmatrix} \mu & \nu \\ \mfrak & \pfrak \end{smallmatrix} \right)\left( \begin{smallmatrix} \pfrak & 0 \\ 0 & 1 \end{smallmatrix} \right) \in \Mat_2(A)$ such that $\mu \pfrak - \nu \mfrak = 1$ give a full set of distinct representatives for the quotient $\lquo{\Gamma_1(\mfrak)}{\Gamma_1(\mfrak)\left( \begin{smallmatrix} 1 & 0 \\ 0 & \pfrak \end{smallmatrix} \right)\Gamma_1(\mfrak)}$.

If $\pfrak | \mfrak$, the matrices $\left( \begin{smallmatrix} 1 & \beta \\ 0 & \pfrak \end{smallmatrix} \right)$, with $|\beta|< |\pfrak|$ give a full set of distinct representatives. \hfill \qed
\end{lemma}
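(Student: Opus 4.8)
The plan is to verify the three properties that characterize a complete, irredundant system of left--coset representatives for the double coset $\Gamma_1(\mfrak)\alpha\Gamma_1(\mfrak)$, where I abbreviate $\alpha := \left(\begin{smallmatrix} 1 & 0 \\ 0 & \pfrak \end{smallmatrix}\right)$: namely (i) each listed matrix lies in the double coset, (ii) the listed matrices represent pairwise distinct cosets $\Gamma_1(\mfrak)\beta$, and (iii) they exhaust all such cosets. For (i), I would give explicit factorizations. The upper--triangular matrices satisfy $\left(\begin{smallmatrix} 1 & \beta \\ 0 & \pfrak \end{smallmatrix}\right) = \alpha\left(\begin{smallmatrix} 1 & \beta \\ 0 & 1 \end{smallmatrix}\right)$ with the right factor in $\Gamma_1(\mfrak)$, so they lie even in $\alpha\Gamma_1(\mfrak)$. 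In the coprime case, B\'ezout in the PID $A$ produces $\mu,\nu$ with $\mu\pfrak-\nu\mfrak=1$, and I would check the factorization $\left(\begin{smallmatrix} \mu & \nu \\ \mfrak & \pfrak \end{smallmatrix}\right)\left(\begin{smallmatrix} \pfrak & 0 \\ 0 & 1 \end{smallmatrix}\right) = \alpha\left(\begin{smallmatrix} \mu\pfrak & \nu \\ \mfrak & 1 \end{smallmatrix}\right)$, where the right factor has determinant $1$ and reduces to $\left(\begin{smallmatrix} * & \nu \\ 0 & 1 \end{smallmatrix}\right)\pmod{\mfrak}$, hence lies in $\Gamma_1(\mfrak)$.

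For (ii), two representatives $\beta,\beta'$ give the same coset exactly when $\beta\beta'^{-1}\in\Gamma_1(\mfrak)$, and in particular when $\beta\beta'^{-1}\in\GL_2(A)$. For two upper--triangular representatives one computes $\left(\begin{smallmatrix} 1 & \beta \\ 0 & \pfrak \end{smallmatrix}\right)\left(\begin{smallmatrix} 1 & \beta' \\ 0 & \pfrak \end{smallmatrix}\right)^{-1} = \left(\begin{smallmatrix} 1 & (\beta-\beta')/\pfrak \\ 0 & 1 \end{smallmatrix}\right)$, which is integral iff $\pfrak\mid(\beta-\beta')$; since $\deg\beta,\deg\beta'<\deg\pfrak$ this forces $\beta=\beta'$. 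For the extra representative $\delta$, the upper--right entry of $\delta\left(\begin{smallmatrix} 1 & \beta \\ 0 & \pfrak \end{smallmatrix}\right)^{-1}$ is $\nu/\pfrak-\mu\beta$, which is integral iff $\pfrak\mid\nu$; but $\mu\pfrak-\nu\mfrak=1$ forces $\pfrak\nmid\nu$, so $\delta$ lands in a genuinely new coset.

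Step (iii) is the main content, and where the real work lies. I would use the standard bijection $\alpha\gamma\mapsto H\gamma$, which identifies the set of left cosets inside $\Gamma_1(\mfrak)\alpha\Gamma_1(\mfrak)$ with $H\backslash\Gamma_1(\mfrak)$, where $H := \Gamma_1(\mfrak)\cap\alpha^{-1}\Gamma_1(\mfrak)\alpha$; thus their number is $[\Gamma_1(\mfrak):H]$. A direct computation of $\alpha\gamma\alpha^{-1} = \left(\begin{smallmatrix} a & b/\pfrak \\ c\pfrak & d \end{smallmatrix}\right)$ shows that $\gamma=\left(\begin{smallmatrix} a & b \\ c & d \end{smallmatrix}\right)\in\Gamma_1(\mfrak)$ lies in $H$ precisely when $\pfrak\mid b$; that is, $H$ is the preimage under reduction modulo $\pfrak$ of the lower--triangular Borel $B^-$. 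Since the kernel of this reduction is contained in $H$, the index equals $[G:G\cap B^-]$, where $G$ denotes the image of $\Gamma_1(\mfrak)$ in $\GL_2(A/\pfrak A)$. When $(\mfrak,\pfrak)=1$, I would invoke strong approximation for $\operatorname{SL}_2$ over $A=\FF_q[\theta]$ (surjectivity of $\operatorname{SL}_2(A)\to\operatorname{SL}_2(A/\pfrak A)$) together with the Chinese Remainder Theorem to see that $G$ contains $\operatorname{SL}_2(A/\pfrak A)$, so $G$ acts transitively on $\mathbb{P}^1(A/\pfrak A)$ and $[G:G\cap B^-]=|\mathbb{P}^1(A/\pfrak A)|=|\pfrak|+1$. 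When $\pfrak\mid\mfrak$, the congruences defining $\Gamma_1(\mfrak)$ force $G=\{\left(\begin{smallmatrix} \xi & \bar b \\ 0 & 1 \end{smallmatrix}\right):\xi\in\FF_q^\times\}$, which is already upper triangular, so $[G:G\cap B^-]$ is the number of admissible $\bar b$, namely $|\pfrak|$. In both cases the index matches the number of distinct representatives exhibited in (i)--(ii), which closes the argument.

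The routine parts are the factorizations and the integrality computations in (i)--(ii). The delicate point is the exhaustion in (iii): pinning down the image $G$ requires the surjectivity of reduction (strong approximation), and one must keep careful track of the determinant/type bookkeeping forced by the nonstandard definition of $\Gamma_1(\mfrak)$ (the diagonal unit $\xi$), which is exactly what separates the two cases $(\mfrak,\pfrak)=1$ and $\pfrak\mid\mfrak$. An alternative, more self--contained route to (iii) avoiding strong approximation would be to reduce a general element $\Gamma_1(\mfrak)\alpha\gamma$ of the double coset directly to one of the representatives by left multiplication, using the Euclidean algorithm in $A$ on the bottom row modulo $\pfrak$; I expect this to work but to be more computational than the index count.
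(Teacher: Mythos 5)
Your proof is correct; note that there is in fact nothing in the paper to compare it against, since the authors state the lemma with the remark ``The following fact is elementary, and we omit the proof.'' Writing $\alpha=\left(\begin{smallmatrix} 1 & 0 \\ 0 & \pfrak \end{smallmatrix}\right)$ as you do, your three steps are all sound: the factorizations $\left(\begin{smallmatrix} 1 & \beta \\ 0 & \pfrak \end{smallmatrix}\right)=\alpha\left(\begin{smallmatrix} 1 & \beta \\ 0 & 1 \end{smallmatrix}\right)$ and $\left(\begin{smallmatrix} \mu & \nu \\ \mfrak & \pfrak \end{smallmatrix}\right)\left(\begin{smallmatrix} \pfrak & 0 \\ 0 & 1 \end{smallmatrix}\right)=\alpha\left(\begin{smallmatrix} \mu\pfrak & \nu \\ \mfrak & 1 \end{smallmatrix}\right)$ place all listed matrices in $\alpha\Gamma_1(\mfrak)$; the integrality computations give pairwise distinctness, with $\pfrak\nmid\nu$ forced by $\mu\pfrak-\nu\mfrak=1$; and the count of left cosets via $[\Gamma_1(\mfrak):H]$, where $H=\Gamma_1(\mfrak)\cap\alpha^{-1}\Gamma_1(\mfrak)\alpha$ is the preimage in $\Gamma_1(\mfrak)$ of the lower-triangular matrices modulo $\pfrak$, correctly yields $|\pfrak|+1$ in the coprime case (orbit--stabilizer for the action on $\mathbb{P}^1(A/\pfrak A)$) and $|\pfrak|$ when $\pfrak\mid\mfrak$, matching the number of exhibited representatives. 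The one place to tighten the wording is the appeal to strong approximation: to see that the image $G$ of $\Gamma_1(\mfrak)$ in $\GL_2(A/\pfrak A)$ contains $\operatorname{SL}_2(A/\pfrak A)$ when $(\mfrak,\pfrak)=1$, you need lifts that lie in $\Gamma(\mfrak)\subset\Gamma_1(\mfrak)$, i.e.\ surjectivity of $\operatorname{SL}_2(A)\to\operatorname{SL}_2(A/\mfrak\pfrak A)\cong\operatorname{SL}_2(A/\mfrak A)\times\operatorname{SL}_2(A/\pfrak A)$, not merely surjectivity modulo $\pfrak$; since you invoke the Chinese Remainder Theorem alongside, this is a phrasing issue rather than a gap. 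Compared with the ``elementary'' argument the authors presumably had in mind --- and which you sketch as an alternative, namely reducing the bottom row of an arbitrary element of $\Gamma_1(\mfrak)\alpha\Gamma_1(\mfrak)$ by division in $A$ --- your index-counting route is shorter and more conceptual, at the cost of importing the standard surjectivity-of-reduction fact, whereas the direct reduction is longer but entirely self-contained.
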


\begin{definition}
For $f \in M_k^m(\Gamma_1(\mfrak))$ and a monic irreducible $\pfrak \in A$, we define
\[T_\pfrak f := \left\{ \begin{array}{ll} \pfrak^{k-m}\left(\sum_{|\beta|<|\pfrak|} f|_k^m[\left( \begin{smallmatrix} 1 & \beta \\ 0 & \pfrak \end{smallmatrix} \right)] + f|_k^m[\left( \begin{smallmatrix} \mu\pfrak & \nu \\ \mfrak\pfrak & \pfrak \end{smallmatrix} \right)] \right), & (\mfrak, \pfrak) = 1 \\ \pfrak^{k-m}\sum_{|\beta|<|\pfrak|} f|_k^m[\left( \begin{smallmatrix} 1 & \beta \\ 0 & \pfrak \end{smallmatrix} \right)], & (\mfrak,\pfrak) > 1, \end{array} \right. \]
where if $(\mfrak,\pfrak) = 1$ we take $\mu,\nu \in A$ such that $\mu\pfrak - \nu\mfrak = 1$.
\end{definition}

\subsubsection{Compatibility results}
We remind the reader that by Prop. \ref{hyperdiffprop} if $\zeta \in \FF_q^{ac}$ is a root of the monic irreducible $\qfrak \in A$ and $\Hcal \in \MM_k^{m-1}(\rho_t^*)$, then, for all positive integers $n$, 
\[\ev_\zeta(\hyp_{t}^{(n-1)}[\Hcal]_1) \in M_k^{m}(\Gamma_1(\qfrak^n)).\]

\begin{theorem} \label{heckespeccommute}
Let $\Hcal \in \MM_k^{m-1}(\rho_t^*)$, and let $\qfrak \in A_+$ be an irreducible with root $\zeta \in \FF_q^{ac}$. If $(\qfrak, \pfrak) = 1$, then, for all $n \geq 1$, we have
\[\ev_\zeta(\hyp_{t}^{(n-1)}[T_\pfrak \Hcal]_1) = T_\pfrak \ev_\zeta(\hyp_{t}^{(n-1)}[\Hcal]_1) .\]

If $\qfrak = \pfrak$, then, for all $n \geq 1$, we have
\begin{align*}
\ev_\zeta(\hyp_{t}^{(n-1)}[T_\pfrak \Hcal]_1)(z) - T_\pfrak \ev_\zeta(\hyp_{t}^{(n-1)}[\Hcal]_1)(z) = \\
= \pfrak^k\sum_{j = 1}^{n-1} \ev_\zeta((\hyp_{t}^{(j)} \pfrak(t)) (\hyp_{t}^{(n-1-j)} [\Hcal]_1))(\pfrak z).\end{align*}
Hence, $\ev_\zeta(\hyp_{t}^{(n-1)}[T_\pfrak \Hcal]_1) - T_\pfrak \ev_\zeta(\hyp_{t}^{(n-1)}[\Hcal]_1)$ comes from modular forms of lower level. 
\end{theorem}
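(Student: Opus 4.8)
The plan is to reduce everything to the explicit coordinate description of $T_\pfrak$ in Corollary \ref{heckcoordesc} and then commute $\ev_\zeta\circ\hyp_t^{(n-1)}$ through it. Writing $\Hcal={h_1\choose h_2}$, $d=\deg\pfrak$, and recalling $\chi_t(\pfrak)=\pfrak(t)$, that corollary gives
$$[T_\pfrak\Hcal]_1(z)=\pfrak^k\,\pfrak(t)\,h_1(\pfrak z)+\sum_{b\in A(d)}h_1\!\left(\tfrac{z+b}{\pfrak}\right).$$
Since the arguments $\tfrac{z+b}{\pfrak}$ and $\pfrak z$ do not involve $t$, applying $\hyp_t^{(n-1)}$ with the Leibniz rule and then evaluating at $\zeta$ yields
$$\ev_\zeta\hyp_t^{(n-1)}[T_\pfrak\Hcal]_1(z)=\pfrak^k\sum_{j=0}^{n-1}\ev_\zeta\!\big((\hyp_t^{(j)}\pfrak(t))(\hyp_t^{(n-1-j)}h_1)\big)(\pfrak z)+\sum_{b\in A(d)}f\!\left(\tfrac{z+b}{\pfrak}\right),$$
where $f:=\ev_\zeta\hyp_t^{(n-1)}h_1\in M_k^m(\Gamma_1(\qfrak^n))$ by Proposition \ref{hyperdiffprop}. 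A direct slash computation identifies the second summand with $\pfrak^{k-m}\sum_{|\beta|<|\pfrak|}f|_k^m[\left(\begin{smallmatrix}1&\beta\\0&\pfrak\end{smallmatrix}\right)]$, the ``$U$-part'' common to both cases of the $\Gamma_1(\qfrak^n)$-Hecke operator. Everything thus hinges on the first summand.

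For that summand I exploit the double-coset structure. In the \emph{coprime case} $(\qfrak,\pfrak)=1$, the extra Hecke representative is $\gamma'=\left(\begin{smallmatrix}\mu\pfrak&\nu\\\qfrak^n\pfrak&\pfrak\end{smallmatrix}\right)=g\left(\begin{smallmatrix}\pfrak&0\\0&1\end{smallmatrix}\right)$ with $g=\left(\begin{smallmatrix}\mu&\nu\\\qfrak^n&\pfrak\end{smallmatrix}\right)\in\GL_2(A)\cap\Gamma_0(\qfrak^n)$ and $\det g=\mu\pfrak-\nu\qfrak^n=1$. By the slash cocycle property (Lemma \ref{slashlem}), $f|_k^m[\gamma']=(f|_k^m[g])|_k^m[\left(\begin{smallmatrix}\pfrak&0\\0&1\end{smallmatrix}\right)]$, and $|_k^m[\left(\begin{smallmatrix}\pfrak&0\\0&1\end{smallmatrix}\right)]$ is just $F\mapsto\pfrak^m F(\pfrak z)$. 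To evaluate $f|_k^m[g]$ I apply Lemma \ref{modularleibniz} to $\Hcal$ at $\gamma=g$, extract the first coordinate, and evaluate at $\zeta$. The crucial input is that the lower-left entry of $g$ is $\qfrak^n$, so $\chi_t(\qfrak^n)=\qfrak(t)^n$, and Lemma \ref{hypdivislem} with $\varpi=\qfrak(t)$ together with $\qfrak(\zeta)=0$ gives $\ev_\zeta(\hyp_t^{(j)}\chi_t(\qfrak^n))=0$ for all $j=0,\dots,n-1$; this annihilates every $h_2$-contribution. Using $\det g=1$ and lower-right entry $\pfrak$, what survives is $f|_k^m[g]=\sum_{j=0}^{n-1}\ev_\zeta(\hyp_t^{(j)}\pfrak(t))\,\ev_\zeta(\hyp_t^{(n-1-j)}h_1)$, whence $\pfrak^{k-m}f|_k^m[\gamma']=\pfrak^k\sum_j\ev_\zeta((\hyp_t^{(j)}\pfrak(t))(\hyp_t^{(n-1-j)}h_1))(\pfrak z)$, which is exactly the first summand. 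Assembling the two pieces proves $\ev_\zeta\hyp_t^{(n-1)}[T_\pfrak\Hcal]_1=T_\pfrak f$.

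When $\qfrak=\pfrak$ the $\Gamma_1(\pfrak^n)$-Hecke operator has no extra double-coset representative, so $T_\pfrak f$ is only the $U$-part; the difference $\ev_\zeta\hyp_t^{(n-1)}[T_\pfrak\Hcal]_1-T_\pfrak f$ is therefore the entire first summand $\pfrak^k\sum_{j=0}^{n-1}\ev_\zeta((\hyp_t^{(j)}\pfrak(t))(\hyp_t^{(n-1-j)}h_1))(\pfrak z)$. Its $j=0$ term carries the factor $\ev_\zeta(\pfrak(t))=\pfrak(\zeta)=0$ and drops out, leaving precisely the claimed sum over $j=1,\dots,n-1$. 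Finally, each surviving term involves $\ev_\zeta(\hyp_t^{(n-1-j)}h_1)\in M_k^m(\Gamma_1(\pfrak^{n-j}))$ with $n-j<n$ (Proposition \ref{hyperdiffprop}), pulled back by $z\mapsto\pfrak z$, so the discrepancy indeed comes from forms of strictly lower level. The main obstacle is the double-coset bookkeeping of the third paragraph: one must correctly match the single $\left(\begin{smallmatrix}\pfrak&0\\0&1\end{smallmatrix}\right)$-representative of the $\GL_2(A)$-Hecke operator with the extra $\Gamma_1(\qfrak^n)$-double-coset representative and track how the transformation law of Lemma \ref{modularleibniz} degenerates after evaluation at $\zeta$; the clean cancellation of the $h_2$-terms rests entirely on the divisibility of Lemma \ref{hypdivislem} applied to $\chi_t(\qfrak^n)$, and getting the index ranges and determinant normalizations right is where the care is needed.
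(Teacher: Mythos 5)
Your proposal is correct and follows essentially the same route as the paper: both sides are computed from Corollary \ref{heckcoordesc} plus the Leibniz rule, the extra double-coset representative $\left(\begin{smallmatrix}\mu&\nu\\ \qfrak^n&\pfrak\end{smallmatrix}\right)\left(\begin{smallmatrix}\pfrak&0\\0&1\end{smallmatrix}\right)$ is handled via the transformation law \eqref{hypertrans} (Lemma \ref{modularleibniz}) with Lemma \ref{hypdivislem} killing the $h_2$-terms after evaluation at $\zeta$, and the $\qfrak=\pfrak$ case drops the $j=0$ term because $\pfrak(\zeta)=0$. The only differences are presentational (you start from the VMF side rather than the $\Gamma_1(\qfrak^n)$-Hecke side, and you make the cocycle splitting $f|_k^m[g\alpha_\pfrak]=(f|_k^m[g])|_k^m[\alpha_\pfrak]$ explicit), so there is nothing to correct.
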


\begin{proof}
First suppose $(\qfrak,\pfrak) = 1$. From the definition of the Hecke operators on $M_k^{m}(\Gamma_1(\qfrak^n))$, we have
\begin{flalign*}
T_\pfrak \ev_\zeta(\hyp_{t}^{(n-1)}[\Hcal]_1)(z) &- \pfrak^{k-m} \sum_{|\beta|<|\pfrak|} \ev_\zeta(\hyp_{t}^{(n-1)}[\Hcal])|_k^m[\left( \begin{smallmatrix} 1 & \beta \\ 0 & \pfrak \end{smallmatrix} \right)](z) =  \\
&= \pfrak^{k-m}\ev_\zeta(\hyp_{t}^{(n-1)}[\Hcal])|_k^m[\left( \begin{smallmatrix} \mu & \nu \\ \qfrak^n & \pfrak \end{smallmatrix} \right)\left( \begin{smallmatrix} \pfrak & 0 \\  0 & 1 \end{smallmatrix} \right)](z) \\
&= \pfrak^k\sum_{j = 0}^{n-1} \ev_\zeta(\hyp_{t}^{(j)}\pfrak(t)\hyp_{t}^{(n-1-j)}[\Hcal]_1)(\pfrak z), \end{flalign*}
where the last equality follows from \eqref{hypertrans}, since $\left( \begin{smallmatrix} \mu & \nu \\ \qfrak^n & \pfrak \end{smallmatrix} \right) \in \Gamma_0(\qfrak^n)$.

From Cor. \ref{heckcoordesc} and the Leibniz rule, we have
$\hyp_{t}^{(n-1)}[T_\pfrak\Hcal]_1(z)$ equals 
\[\pfrak^k\sum_{j = 0}^{n-1} (\hyp_{t}^{(j)} \pfrak(t)) (\hyp_{t}^{(n-1-j)}[\Hcal]_1)(\pfrak z) + \sum_{|\beta| < |\pfrak| } (\hyp_{t}^{(n-1)} [\Hcal]_1) \left( \frac{z+\beta}{\pfrak} \right). \]
Evaluating at $t = \zeta$ finishes the proof when $(\qfrak, \pfrak) = 1$. 

When $\qfrak = \pfrak$, the claim follows from the previous line. 
\end{proof}

The following corollaries are immediate. 

\begin{corollary}
Let $\Hcal \in \MM_k^{m-1}(\rho_t^*)$ and let $\zeta \in \FF_q^{ac}$, with minimal polynomial $\qfrak$. Let $n \geq 1$. 

For all monic irreducibles $\pfrak \in A$ different from $\qfrak$, if $\Hcal$ is a Hecke eigenform for $T_\pfrak$ with eigenvalue $\lambda_\pfrak \in \CC_\infty$, then $\ev_\zeta(\hyp_{t}^{(n-1)}[\Hcal]_1)$ is a Hecke eigenform for $T_\pfrak$, with the same eigenvalue.

%%%Should be more precise here. 29.2.16 -R
If $\pfrak = \qfrak$, the same statement holds verbatim when $n = 1$, and it holds modulo the space of oldforms when $n > 1$. \hfill $\qed$
\end{corollary}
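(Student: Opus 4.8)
The plan is to read off both assertions directly from Theorem \ref{heckespeccommute}, the only subtlety being to track how the scalar eigenvalue $\lambda_\pfrak \in \CC_\infty$ interacts with the operators $\hyp_t^{(n-1)}$ and $\ev_\zeta$.

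First I would dispose of the case $\pfrak \neq \qfrak$. Since $\pfrak$ and $\qfrak$ are distinct monic irreducibles they are coprime, so the first identity of Theorem \ref{heckespeccommute} applies and gives $\ev_\zeta(\hyp_t^{(n-1)}[T_\pfrak\Hcal]_1) = T_\pfrak\,\ev_\zeta(\hyp_t^{(n-1)}[\Hcal]_1)$. The hypothesis $T_\pfrak\Hcal = \lambda_\pfrak\Hcal$ reads $[T_\pfrak\Hcal]_1 = \lambda_\pfrak[\Hcal]_1$ on first coordinates; because $\lambda_\pfrak$ is independent of the variable $t$, all of its positive-order $t$-hyperderivatives vanish, so by the Leibniz rule it commutes past $\hyp_t^{(n-1)}$, and it trivially commutes past $\ev_\zeta$. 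Hence the left-hand side equals $\lambda_\pfrak\,\ev_\zeta(\hyp_t^{(n-1)}[\Hcal]_1)$, and combining the two identities yields $T_\pfrak\,\ev_\zeta(\hyp_t^{(n-1)}[\Hcal]_1) = \lambda_\pfrak\,\ev_\zeta(\hyp_t^{(n-1)}[\Hcal]_1)$, the desired eigenform assertion with the same eigenvalue.

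Next I would treat $\pfrak = \qfrak$. For $n = 1$ the correction term in the second part of Theorem \ref{heckespeccommute} is an empty sum (the range $1 \le j \le n-1 = 0$ is void), so it vanishes and the computation above carries over verbatim. For $n > 1$ the same computation now produces
\[
T_\pfrak\,\ev_\zeta(\hyp_t^{(n-1)}[\Hcal]_1) - \lambda_\pfrak\,\ev_\zeta(\hyp_t^{(n-1)}[\Hcal]_1) = -\pfrak^k\sum_{j=1}^{n-1}\ev_\zeta\bigl((\hyp_t^{(j)}\pfrak(t))(\hyp_t^{(n-1-j)}[\Hcal]_1)\bigr)(\pfrak z),
\]
whose right-hand side, by the last assertion of Theorem \ref{heckespeccommute} together with Proposition \ref{hyperdiffprop} (which places each surviving factor $\ev_\zeta(\hyp_t^{(n-1-j)}[\Hcal]_1)$ in level $\pfrak^{n-j}$ with $n-j<n$, the pullback along $z \mapsto \pfrak z$ keeping it old relative to level $\pfrak^n$), is an oldform. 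This gives the eigenform property modulo oldforms, exactly as asserted.

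There is essentially no obstacle here beyond bookkeeping. The one point to verify with care is that $\lambda_\pfrak$ genuinely passes through $\hyp_t^{(n-1)}$ and $\ev_\zeta$ unchanged, which holds precisely because it lies in $\CC_\infty$ and is therefore annihilated by every positive-order $t$-hyperderivative; and that the correction term in the ramified case $\pfrak=\qfrak$ is correctly recognized as an oldform contribution rather than a genuine failure of the eigenform equation. Both are immediate from the explicit formula of Theorem \ref{heckespeccommute} and the level bounds of Proposition \ref{hyperdiffprop}, so no new estimates or structural inputs are needed.
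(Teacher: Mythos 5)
Your proposal is correct and follows exactly the route the paper intends: the paper states this corollary as an immediate consequence of Theorem \ref{heckespeccommute}, and your argument supplies precisely the intended bookkeeping (the constant $\lambda_\pfrak \in \CC_\infty$ passes through $\hyp_t^{(n-1)}$ by the Leibniz rule and through $\ev_\zeta$ by linearity, the correction sum is empty when $n=1$, and for $n>1$ it is an oldform contribution by Proposition \ref{hyperdiffprop}). No gaps; this matches the paper's reasoning.
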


\begin{corollary} \label{HEFHLcor}
For all positive $k \equiv 1 \pmod{q-1}$, all $n \geq 1$, and all $\zeta \in \FF_q^{ac}$ with minimal polynomial $\qfrak \in A_+$, the forms $\ev_\zeta(\hyp_{t}^{(n-1)}[\Ecal_k]_1) \in M_k^1(\Gamma_1(\qfrak^{n}))$ are simultaneous Hecke eigenforms for the family $\{T_\pfrak : \pfrak(\zeta) \neq 0\}$ with eigenvalues $\{\pfrak^k\}$. 
\end{corollary}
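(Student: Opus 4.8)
The plan is to derive this as a direct consequence of the $T_\pfrak$-eigenform property of $\Ecal_k$ established in Proposition \ref{EisHeckEFs}, combined with the specialization-commutation Theorem \ref{heckespeccommute}. First I would record the membership: since $k \equiv 1 \pmod{q-1}$, the vectorial Eisenstein series $\Ecal_k$ is a nonzero element of $\MM_k^0(\rho_t^*)$ (see \S \ref{Eisseriessec}), so Proposition \ref{hyperdiffprop} applied to $\Hcal = \Ecal_k$ (with $m = 0$, hence $m+1 = 1$) already places $\ev_\zeta(\hyp_t^{(n-1)}[\Ecal_k]_1)$ in $M_k^1(\Gamma_1(\qfrak^{n}))$, which is the space asserted in the statement.

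Next, for a monic irreducible $\pfrak$ the condition $\pfrak(\zeta) \neq 0$ is equivalent to $\pfrak \neq \qfrak$, and thus, both being monic irreducible, to $(\qfrak,\pfrak) = 1$. For every such $\pfrak$ the first case of Theorem \ref{heckespeccommute}, applied to $\Hcal = \Ecal_k$ (which lies in $\MM_k^{m-1}(\rho_t^*)$ with $m = 1$), gives the exact commutation
\[\ev_\zeta(\hyp_t^{(n-1)}[T_\pfrak \Ecal_k]_1) = T_\pfrak\, \ev_\zeta(\hyp_t^{(n-1)}[\Ecal_k]_1),\]
with no correction term present in this coprime case.

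Finally I would substitute $T_\pfrak \Ecal_k = \pfrak^k \Ecal_k$ from Proposition \ref{EisHeckEFs} into the left-hand side. The only point worth spelling out is that the eigenvalue $\pfrak^k$ is an element of $A \subset \CC_\infty$, hence constant with respect to the variable $t$; consequently it passes through the hyperderivation $\hyp_t^{(n-1)}$ by linearity and survives the evaluation $\ev_\zeta$ unchanged. The left-hand side therefore equals $\pfrak^k\, \ev_\zeta(\hyp_t^{(n-1)}[\Ecal_k]_1)$, and comparing with the right-hand side yields
\[T_\pfrak\, \ev_\zeta(\hyp_t^{(n-1)}[\Ecal_k]_1) = \pfrak^k\, \ev_\zeta(\hyp_t^{(n-1)}[\Ecal_k]_1)\]
for every $\pfrak$ with $\pfrak(\zeta) \neq 0$. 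Since this holds simultaneously for the whole family $\{T_\pfrak : \pfrak(\zeta) \neq 0\}$ with the single function above, it is precisely the claimed simultaneous eigenform property with eigenvalues $\{\pfrak^k\}$. I do not expect any real obstacle: the corollary is immediate once Proposition \ref{EisHeckEFs} and Theorem \ref{heckespeccommute} are available, and the only care needed is the bookkeeping of the type shift $m-1 = 0$ and the $t$-independence of the scalar $\pfrak^k$.
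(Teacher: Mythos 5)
Your proposal is correct and follows exactly the paper's intended route: the paper derives this corollary ``immediately'' from Theorem \ref{heckespeccommute} (coprime case, no correction term) combined with the eigenvalue identity $T_\pfrak \Ecal_k = \pfrak^k \Ecal_k$ of Proposition \ref{EisHeckEFs}, with membership in $M_k^1(\Gamma_1(\qfrak^n))$ supplied by Proposition \ref{hyperdiffprop}. Your explicit bookkeeping of the type shift and of the $t$-independence of the scalar $\pfrak^k$ is precisely the content the paper leaves implicit.
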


\begin{remark}
%%%Rewritten, hopefully improved. Can potentially be of interest in the intro... 15.3.16 -R
We point out a couple of glaring differences in the behavior of the coefficients for Drinfeld Hecke eigenforms with $A$-expansions for $\Gamma_1(\qfrak^n)$ and for classical Hecke eigenforms. 

Consider the form $\ev_\zeta(\hyp_{t}^{(n-1)}[\Ecal_1]_1) \in M_k^1(\Gamma_1(\qfrak^{n}))$ which is a Hecke eigenform with $A$-expansion
\[ \ev_\zeta(\hyp_{t}^{(n-1)}[\Ecal_k]_1) =  -\pitilde\sum_{a \in A_+} \ev_\zeta\hyp^{(n-1)}_t(a(t)) u(az). \]
Firstly, the $A$-expansion coefficient of $u(z)$, which is analogous to the coefficient of $q^1$ in the classical case, can be zero here. Indeed, as soon as $n \geq 2$, this $A$-expansion coefficient vanishes. 
So one cannot hope for the coefficient of $u(\pfrak z)$, analogous to the coefficient of $q^p$ in the classical case, of a Hecke eigenform to come from the Hecke eigenvalue of $T_\pfrak$ and the coefficient of $u(z)$ in the same way as happens classically. 
Second, the Hecke eigenvalues of an eigenform with $A$-expansion are completely determined by which Goss polynomial $G_k$ appears in the $A$-expansion for the form --- this follows from a modified version of Petrov's \cite[Theorem 2.3]{PAjnt} and is essentially Lemma \ref{Gorthlem} above; 
for the form above $\ev_\zeta(\hyp_{t}^{(n-1)}[\Ecal_k]_1)$, $G_1(X) = X$ appears and forces the eigenvalue of $T_\pfrak$ to be $\pfrak$. Still, for each prime $\pfrak$ away from the level $\qfrak$, the coefficient of $u(\pfrak z)$, namely $ \ev_\zeta\hyp^{(n-1)}_t(\pfrak(t))$, is a function of the eigenvalue $\pfrak$ of $T_\pfrak$ and the level. 
\end{remark}


\begin{thebibliography}{999}
\bibitem{APinv} B. Angl\`es, F. Pellarin: \emph{Universal Gauss-Thakur sums and $L$-series}. Invent. Math. 200 (2015), no. 2, 653--669.

\bibitem{APTR} B. Angl\`es, F. Pellarin, F. Tavares-Ribeiro: \emph{Arithmetic of positive characteristic L-series values in Tate algebras}. With an appendix of F. Demeslay. Compos. Math. 152 (2016), no. 1, 1--61.

\bibitem{APTRandstark} B. Angl\`es, F. Pellarin, F. Tavares-Ribeiro: \emph{Anderson-Stark units for $\FF_q[\theta]$}. arXiv:1501.06804 (2015).

\bibitem{BvPfimrn} V. Bosser, F. Pellarin: {\it Hyperdifferential Properties of Drinfeld Quasi-Modular Forms}. Int. Math. Res. Notices. doi:10.1093/imrn/rnn032 (2008).

\bibitem{GCsurvey} G. Cornelissen: \emph{A survey of Drinfeld modular forms.} In Drinfeld Modules, Drinfeld Modular Schemes and Applications (J. Van Geel et al., Ed.), pp. 167--187, World Scientific, Singapore (1997).

\bibitem{GCjnt} G. Cornelissen, \emph{Drinfeld modular forms of weight one}, J. Number Theory {\bf 67} (1997), 215--228.

\bibitem{DiShu} F. Diamond and J. Shurman: \emph{A First Course in Modular Forms.} Springer-Verlag, New York (2005).

\bibitem{EGbook} E.-U. Gekeler: \emph{Drinfeld Modular Curves.} Springer-Verlag, Berlin-Heidelberg-New York (1986).

\bibitem{EGinv} E.-U. Gekeler: \emph{On the coefficients of Drinfeld modular forms}. Invent. Math. \textbf{93} (1988), 667--700.

\bibitem{EGcompmath} E.-U. Gekeler: \emph{Quasi-periodic functions and Drinfeld modular forms}. Comp. Math. \textbf{69} (1989), 277--293.

\bibitem{EGjnt01} E.-U. Gekeler: \emph{Invariants of Some Algebraic Curves Related to Drinfeld Modular Curves.} J. Number Theory \textbf{90} (2001), pp. 166--183. 

\bibitem{EGpes11} E.-U. Gekeler: \emph{Para-Eisenstein series for the modular group $\GL(2, \FF_q[T])$.} \textbf{15} (2011) pp. 1463--1475.

\bibitem{EGjnt0s} E.-U. Gekeler: \emph{Zeros of Eisenstein series for principal congruence subgroups over rational function fields.} J. Number Theory \textbf{132} (2012), 127--143.

\bibitem{GRcrelles} E.-U. Gekeler and M. Reversat: \emph{Jacobians of Drinfeld modular curves}. J. Reine Angew. Math. \textbf{476} (1996), 27--94.

\bibitem{GossCM} D. Goss: \emph{$\pi$-adic Eisenstein series for function fields}. Compos. Math. \textbf{41} (1980), 3--38.

\bibitem{DGbams} D. Goss: {\it The algebraist's upper half-plane.} Bull. Amer. Math. Soc. {\bf 2} (1980) pp. 391--415.

\bibitem{Gbook} D. Goss: \emph{Basic structures of function field arithmetic}. Springer-Verlag, Berlin (1996).

\bibitem{DGvadic} D. Goss: \emph{A construction of $\vfrak$-adic modular forms}. J. Number Theory \textbf{136}, 330--338.

\bibitem{KAN&KOI} M. Kaneko, \& M. Koike. {\em On extremal quasimodular forms.} Kyushu J. Math. 60 (2006), no. 2, 457--470. 

\bibitem{Lang} S. Lang: \emph{Algebra. Third Edition}. Springer, New York (2002).

\bibitem{BLAdM10} B. L\'opez: \emph{A non-standard Fourier expansion for the Drinfeld discriminant function.} Arch. Math. {\bf 95} (2010) pp. 143--150.

\bibitem{BLadm11} B. L\'opez: {\it Action of Hecke operators on two distinguished Drinfeld modular forms.} Arch. Math. {\bf 97} (2011) pp. 423--429. 

\bibitem{MAS}  G. Mason: {\em $2$-dimensional vector-valued modular forms.} Ramanujan J. 17 (2008), no. 3, 405--427. 

%\bibitem{JeongJNT} Jeong, \ldots

\bibitem{PAP} M. A. Papanikolas. {\em Tannakian duality for Anderson-Drinfeld motives and algebraic independence of Carlitz logarithms.}
Invent. Math. 171 (2008), pp. 123--174. 

\bibitem{FPbourbaki} F. Pellarin: \emph{Aspects de l'ind\'ependance alg\'ebrique en caract\'eristic non nulle (d'apr\'es Anderson, Brownawell, Denis, Papanikolas, Thakur, Yu, et al.)}, in S\'eminaire Bourbaki. Vol. 2006/2007, Ast\'erisque 317, 2008, pp. Exp. No. 973, viii, 205--242.

\bibitem{FPtaurecur} F. Pellarin: \emph{$\tau$-recurrent sequences and modular forms}. (2011) arXiv:1105.5819.

\bibitem{FPcrelles} F. Pellarin: \emph{Estimating the order of vanishing at infinity of Drinfeld quasi-modular forms}.  J. Reine Angew. Math. 687 (2014), 1--42.

\bibitem{FPannals} F. Pellarin: \emph{Values of certain $L$-series in positive characteristic}.  Ann. of Math. (2) 176 (2012), no. 3, 2055--2093.

\bibitem{FPRP} F. Pellarin and R. Perkins: \emph{On certain generating functions in positive characteristic}. Preprint (2014). arXiv:1409.6006, to appear in Monat. Math. 2016.

\bibitem{RPmathz} R. Perkins: \emph{Explicit formulae for $L$-values in positive characteristic}. Math. Z. 278 (2014), no. 1-2, 279--299.

\bibitem{RPdvmf} R. Perkins: \emph{The Legendre determinant form for Drinfeld modules of arbitrary rank}. Preprint (2014). arXiv:1409.6693

\bibitem{RPmf16} R. Perkins: \emph{Twisting Drinfeld-Goss Modular Eigensystems by Characters.} Preprint (2016) 15 pp. 

\bibitem{PAjnt} A. Petrov: \emph{$A$-expansions of Drinfeld modular forms}. J. Number Theory \textbf{133} (2013), 2247--2266.

\bibitem{APhyp} A. Petrov: \emph{On hyperderivatives of single-cuspidal Drinfeld modular forms with $A$-expansions.} J. Number Theory, Volume 149, April 2015, Pages 153--165.

\bibitem{SUmathann} T. Satoh and Y. Uchino, \emph{Function field modular forms and higher derivations}. Math. Ann. \textbf{311} (1998), 439--466.

\bibitem{CVpams} C. Vincent: \emph{Drinfeld modular forms modulo $\pfrak$}. Proc. Amer. Math. Soc. \textbf{138} (2010), 4217--4229.

\bibitem{CVjnt} C. Vincent: \emph{On the trace and norm maps from $\Gamma_0(\pfrak)$ to $\GL_2(A)$.} J. Number Theory \textbf{142} (2014), 18--43.

\end{thebibliography}
\end{document}